\numberwithin{equation}{section}
\newtheorem{lemma}{Lemma}[section]
\newtheorem{proposition}[lemma]{Proposition}
\newtheorem{theorem}[lemma]{Theorem}
\newtheorem{corollary}[lemma]{Corollary}
\theoremstyle{definition}
\newtheorem{definition}[lemma]{Definition}
\newtheorem{remark}[lemma]{Remark}
\newtheorem{example}[lemma]{Example}
\newtheorem{discussion}[lemma]{Discussion}
\newtheorem{examples}[lemma]{Examples}
\newcommand{\R}{\mathbb{R}}
\newcommand{\C}{\mathbb{C}}
\newcommand{\N}{\mathbb{N}}
\newcommand{\Z}{\mathbb{Z}}
\newcommand{\E}{\mathbb{E}}
\newcommand{\calS}{\mathscr{S}}
\newcommand{\Skr}{{\bf S}^\otimes}
\newcommand{\Mkr}{{\bf M}^\otimes}
\newcommand{\Kkr}{{\bf K}^\otimes}
\newcommand{\Pkr}{{\bf P}^\otimes}
\newcommand{\Tkr}{{\bf T}^\otimes}
\newcommand{\Akr}{{\bf A}^\otimes}
\newcommand{\Ukr}{{\bf U}^\otimes}
\newcommand{\Jkr}{{\bf J}^\otimes}
\newcommand{\Bkr}{{\bf B}^\otimes}
\newcommand{\Psikr}{{\bf \Psi(A)}^\otimes}
\newcommand{\Fkr}{{\bf \cal F}^\otimes}
\newcommand{\Rad}{\operatorname{Rad}}
\let\cal=\mathcal
\begin{document}

\title[The $H^{\infty}$--Functional Calculus and Square Function
Estimates]{The $H^{\infty}$--Functional Calculus 
and Square Function Estimates}

\author[Kalton and Weis]{Nigel J.~Kalton} 
\address{Department of Mathematics\\
University of Missouri\\
Columbia, MO 65201\\
U.S.A.}
\author{Lutz Weis}
\address{Mathematisches Institut I\\
Universit\"at Karlsruhe\\
Englersta{\ss}e 2\\
76128 Karlsruhe\\
Gemany}
\email{Lutz.Weis@math.uni-karlsruhe.de}

\begin{abstract}
Using notions from the geometry of Banach spaces we introduce square
functions $\gamma(\Omega,X)$ for functions with values in an arbitrary Banach space $X$. We show that they have very convenient function space properties comparable to the Bochner norm of $L_2(\Omega,H)$ for a Hilbert space $H$. In particular all bounded operators $T$ on $H$ can be extended to $\gamma(\Omega,X)$ for all Banach spaces $X$. Our main applications are
characterizations of the $H^{\infty}$--calculus that extend known results
for $L_p$--spaces from \cite{CDMY}.
With these square function estimates
we show, e.~g., that a $C_0$--group of operators $T_s$ on a Banach space with
finite cotype has an $H^{\infty}$--calculus on a strip if and only if $e^{-a|s|}T_s$
is $R$--bounded for some $a > 0$. Similarly, a sectorial operator $A$ has an
$H^{\infty}$--calculus on a sector if and only if $A$ has $R$--bounded
imaginary powers. We also consider vector valued Paley--Littlewood 
$g$--functions on $UMD$--spaces.
\end{abstract}

\maketitle


\section{Introduction}

In recent years, the $H^{\infty}$--holomorphic functional calculus for a
sectorial operator on a Banach space has played an important role in the
spectral theory of differential operators and its application to evolution
equations. For example, it is an important tool: in the theory of maximal
regularity for parabolic evolution equations 
(see~\cite{BK1,DDHPV,KKW,KW1,LLL,LeM}) and, very recently, 
the solution of Kato's problem (\cite{AHLMT}).
By now it is known that many systems of elliptic partial differential
operators and Schr\"odinger operators do have an $H^{\infty}$--calculus
(\cite{BK2,DHP,KuWe}). 

It is natural to construct a holomorphic functional calculus for a sectorial
operator $A$ and analytic functions $f$ bounded on a sector $\Sigma$
containing the spectrum of $A$ via the Dunford formula 
$$
f(A) = \frac{1}{2 \pi i} \displaystyle \int\limits_{\partial \Sigma} f(\lambda)
R(\lambda, A)d\lambda.
$$
However, since the resolvent of a sectorial operator grows like $|\lambda|^{-1}$
on $\partial \Sigma$ this integral will be a singular integral in general.
Therefore M. Cowling, J. Doust, A.~McIntosh and A. Yagi used ideas from harmonic analysis to 
characterize the $H^{\infty}$--calculus for sectorial operators $A$
on $L_p(\Omega)$--spaces by square function estimates of the form
\begin{equation}
\biggl| \biggl| \biggl( \int\limits_0^{\infty} |tA(t+A)^{-2} x|^2 \frac{dt}{t}
\biggr)^{1/2} \biggr| \biggr|_{L_p(\Omega)} \simeq \|x\|_{L_p(\Omega)}, 
\end{equation}
or, if $A$ is injective and generates an analytic semigroup $T_t$ and $n \in \N$,
\begin{equation}
\biggl| \biggl| \biggl( \int\limits_0^{\infty} |t^nA^nT_t x
|^2 \frac{dt}{t} \biggr)^{1/2} \biggr| \biggr|_{L_p(\Omega)} \simeq \|x\|_{L_p(\Omega)}
\end{equation}
(for the Hilbert space case see \cite{M}, for $L_p$--spaces \cite{CDMY}).
If $A = (- \Delta)^{1/2}$ on $L_p(\R^n)$, then (2) reduces to the classical
Paley--Littlewood $g$--functions estimates. 
It is therefore not surprising that square function estimates 
proved to be very useful in the theory of the 
$H^{\infty}$--calculus and its applications, most recently in the solution
of Kato's problem (\cite{AHLMT}).

In {\sl Section 4} and {\sl 5} of this paper we introduce a notion of generalized
square functions that will allow us to formulate expressions such as in (1) and (2)
on general Banach spaces. Since in (1) and (2) the function space structure
of $L_p$ is exploited this cannot be done in a straightforward manner;
thus, we use Gaussian random series and the $\gamma$--norm of Banach space theory
in place of the lattice structure. We show that these generalized square
functions have the same formal properties as their classical
counterparts with respect to duality,
integral transforms such as the Fourier--transform or the Hilbert--transform,
multiplication and convolution operators.
An important role plays here is the notion of $R$--boundedness (or rather
$\gamma$--boundedness), which is also a central notion in some recent work on
operator--valued multiplier theorems, maximal regularity and the
$H^{\infty}$--calculus \cite{CPSW,G1,G2,KW1,LeM1,W1}. There are other versions of square
functions on Banach spaces in the literature (see e.~g.~\cite{FM,HM,KW1}),
which lack some of these properties and therefore do not seem to be suitable
to derive the results of this paper.

In many cases square function estimates can be used to extend
Hilbert space results to the Banach space setting.
As an illustration we extend in {\sl Section 5} a 
characterization of group generators on Hilbert space due to Boyadzhiev and
DeLaubenfels (\cite{BD}) to Banach spaces $X$ with finite cotype:
A closed operator $A$ has an $H^{\infty}$--calculus on a vertical strip
around the imaginary axes 
if and only if $A$ generates a $C_0$--group of operators $T_t$ so that
$\{e^{- a|t|}T_t: t \in \R\}$ is $R$--bounded for some $a > 0$ 
(Theorem 6.8).
Furthermore, the group $T_t$ itself is $R$--bounded if and only if 
its generator $A$ is a spectral operator in the sense of Dunford and
Schwartz (Corollary 6.9). The underlying square function estimates also allow 
for the construction of an operator--valued functional calculus
and joint functional calculus.
As a consequence, we obtain that the set of operators $f(A)$ generated by a
uniformly bounded set of analytic functions on a strip is $R$--bounded if $X$
has Pisier's property $(\alpha)$ (see Corollary 6.6).

In {\sl Section 7} we present characterizations of the $H^{\infty}$--calculus
for sectorial operators in terms of square functions. They give some insight
into the gap between the $H^{\infty}$--calculus and the existence of bounded
imaginary powers (BIP): 
Again for a Banach space of finite cotype we show that BIP 
implies the $H^{\infty}$--calculus for $A$, if in addition the set of imaginary
powers $A^{it}$, $t \in [-1,1]$ is $R$--bounded.

In \cite{CDMY} it was asked whether for a sectorial operator with an
$H^{\infty}$--calculus one always has $\omega_{H^{\infty}}(A) = \omega(A)$,
i.~e.~whether the best angle for the $H^{\infty}$--calculus is determined
by the angle of sectoriality (see below for definitions). This is true in
Hilbert space \cite{CDMY}, but not in a general Banach space.
As a partial positive result we point out that $\omega_{H^{\infty}}(H)$ equals 
the angle of almost $R$--sectoriality. Almost $R$--sectorial operators are
introduced here since they provide a more natural framework for the study
of the $H^{\infty}$--calculus than the stronger notion of $R$--sectoriality.

The reader will discover a certain analogy between the results in Section 6
and 7. This is explained in {\sl Section 8}, where we use the logarithm of
a sectorial operator to relate the spectral properties of sectorial operators
and group generators. We compare e.~g.~$R$--boundedness conditions for the
resolvents of $A$ and $\log A$ 
and relate the square functions for $A$ and $\log A$. 

In {\sl Section 9} we come back to the classical Littlewood Paley 
$g$--functions (2) with $A=\Delta$, $A= (- \Delta)^{1/2}$ or, more generally generators of diffusion semigroups. Using our $H^{\infty}$--results
we extend them to Bochner spaces $L_p(\R^N,X)$, $1 < p < \infty$, and 
show that (2) holds if and only if $X$ has the UMD property. One can view
this result as a continuous version of the well known vector--valued 
Paley--Littlewood theorem of J.~Bourgain (\cite{Bo}). These results were obtained independently by T. Hyt\"{o}nen (\cite{Hyt1}) by a different method. They extend work of Xu \cite{Xu},
who assumed that $X$ is a Banach lattice and used a different approach.

Before we consider these Banach space results we recall in {\sl Section 2}
the known Hilbert space results and give, in some cases, simplified proofs.
We do this for two reasons: We need proofs reduced to the 
essentials and as free as possible of unnecessary Hilbert space luxury
to be able to extend them to the Banach space case. Also these proofs show
the workings and essential properties of square functions that will motivate
our general definition in Section 4. {\sl Section 3} serves the same purpose:
we show how square functions in $L_p$ lead to a general approach in the
Banach space setting. 

These results are part of the larger project \cite{KW2}, which will contain
alternative definitions of square functions in terms of Euclidean structures
and, among others, relate results under weaker assumptions on the Banach
spaces.

A first version of this article was circulating among experts since 2002. The second named author would like to apologize for the long delay until providing a final version. Because of this unfortunate circumstance, we would like to mention some papers which built in the meantime on the results of our article in the three areas: Spectral theory and its applications to evolution equations \cite{BHM,FW,HaHa,HHK,HaKu,HaLM,Haa,Haa2,KKW,Kr,Kr2,KU,KuWe,LeM1,LeM2,LeM3,LeM4,Ne2,VeWeis,W}, harmonic analysis of Banach space valued functions \cite{BCFR,H2,Hy,Hyt1,HNP,HyWe2,HW,KaWe,Kr2} and stochastic evolution equations \cite{AHN,CN,CoNe,DNW,HaNV,KNVW,Kun,KuNe1,MN,MN2,NV,NVW1,NVW2,NVW3,NVW4,NVW5,NVW6,NVW7,NW,NW2,PrV,SV,Ver,Ver3,Veraar,
VeWe,VeZ}.

\medskip

{\bf Notations.}
We recall now some basic notations and definitions. For a closed operator
$A$ on a Banach space $X$ we denote by $A'$ the dual operator on the 
dual space $X'$. $X^{0}$ denotes the closure of $\cal{D}(A')$ in $X'$,
which is known to norm $X$. If $X$ is a Hilbert space we denote by 
$(\cdot|\cdot)$ the scalar product on $H$ and by $A^{\ast}$ the Hilbert
space adjoint of an operator $A$. 

In this paper a {\sl sectorial operator}
$A$ is closed, injective, has dense domain ${\cal D}(A)$ and range 
${\cal R}(A)$. Furthermore, for some $\sigma \in (0, \pi)$ we have that
$\sigma (A) \subset \Sigma(\sigma) \cup \{0\}$, where $\Sigma(\sigma) = 
\{ \lambda \in \C: | \arg \lambda | < \sigma \}$ and $\| \lambda R (\lambda, 
A) \| \leq C$ for $\lambda \not\in \Sigma(\sigma)$. $\omega(A)$ is the infimum over all such
$\sigma$. To define an $H^{\infty}$--calculus we first consider 
$H_0^{\infty}(\Sigma(\sigma))$ which contains all bounded analytic functions
$f$ on $\Sigma(\sigma)$ such that $| \lambda|^{- \varepsilon}|f(\lambda)|$
is bounded near $0$ and $| \lambda |^{\varepsilon} |f(\lambda)|$ is bounded
for large $|\lambda|$ for some $\varepsilon > 0$. For $f \in H_0^{\infty}
(\Sigma(\sigma))$ and a sectorial operator $A$ with $\omega(A) < \sigma$
the {\sl Dunford integral}
$$
f(A) = \frac{1}{2 \pi i} \int\limits_{\partial \Sigma(\gamma)} f(\lambda)
R(\lambda, A) d \lambda
$$
exists (where $\omega(A) < \gamma < \sigma)$ and is linear and multiplicative. 
We orientate the curve $\partial
\Sigma(\gamma)$ always in such a way, that the curve surrounds the interior
of $\Sigma(\sigma)$ in a counter clockwise fashion.
This functional calculus can also be extended to
$$
H_1^{\infty}(\Sigma(\sigma)) = \bigl \{f \in H^{\infty}(\Sigma(\sigma)) :
\sup_{|\varphi| < \sigma} \int\limits_0^{\infty}|f(e^{i \varphi}t)|
\frac{dt}{t} < \infty \bigr\}.
$$
We say that $A$ admits an {\sl $H^{\infty}(\Sigma(\sigma))$--functional} calculus if there is a constant
$C$, such that 
$$
\| f(A) \| \leq C \|f\|_{H^{\infty}(\Sigma(\sigma))} \quad \mbox{ for all }
f \in H_0^{\infty}(\Sigma(\sigma)).
$$
By $\omega_{H^{\infty}}(A)$ we denote the infimum over such $\sigma$.
The following convergence lemma from \cite{CDMY} is used often:
If $A$ has an $H^{\infty}$--calculus and $f_n,f \in H^{\infty}(\Sigma(\sigma))$
are uniformly bounded and $f_n(\lambda) \to f(\lambda)$ for $\lambda \in
\Sigma(\sigma)$ then $f_n(A)x \to f(A)x$ for all $x \in X$. 

For a sectorial operator we can define $A^{i s}, s \in \R$, as closed operators
in the sense of fractional powers, see e.~g.~\cite{Ko}.
We say that $A$ has {\sl bounded 
imaginary powers} (BIP) if these operators $A^{is}$ extend to bounded 
operators on $X$. $\omega(A^{is})$ is the growth bound of the group $A^{is}$.

In connection with group generators we also need operators of {\sl strip--type}.
They are again closed injective with dense domain and range but their
spectrum is contained in a strip $S(a) = \{ \lambda : | \mbox{Re} \lambda | 
< a \}$ and $R(\lambda, A)$ is bounded outside $S(a)$. Denote by $w(A)$
the infimum over such $a$. 

Let $H_0^{\infty}(S(a))$ be the space of all bounded analytic functions on
$S(a)$, so that $| \lambda|^{1+ \varepsilon} |f(\lambda)|$ is bounded for
large $| \lambda |$ for some $\varepsilon > 0$. Then, for an operator $A$
of strip--type, we can define an $H^{\infty}(S(a))$--calculus for $a > w(A)$
in the same way we defined an $H^{\infty}$--calculus on $\Sigma(\sigma)$
above (see e.~g.~\cite{Ha1,Ha2} for details). Again $w_{H^{\infty}}(A)$ is the 
infimum over such $a$. We will also need the space
$$
H_1^{\infty}(S(a)) = \bigl \{ f \in H^{\infty}(S(a)) : 
\sup_{|b| < a} \int\limits_{- \infty}^{\infty}|f(b+it)|dt < \infty
\bigr \}.
$$

A set $\tau \subset B(X,Y)$ of operators is called {\sl $R$--bounded}
if there is a constant $C$ so that for all $T_1, \dots , T_m \in \tau$ and
$x_1, \dots , x_m \in X$ 
\begin{equation}
\biggl( \E \biggl| \biggl| \sum_{n=1}^m r_n T_n x_n \biggr| \biggr|_Y^2 
\biggr)^{1/2} \leq C 
\biggl( \E \biggl( \biggl| \biggl| \sum_{n=1}^m r_n x_n \biggr| \biggr|_X 
\biggr)^2 \biggr)^{1/2}
\end{equation}
where $(r_n)$ is the sequence of Rademacher functions. If we replace the
Rademacher functions in inequality (3) by a sequence $(g_n)$ of independent,
$N(0,1)$--distributed Gaussian variables, then the resulting property of
$\tau$ is called {\sl $\gamma$--boundedness}. The smallest constant $C$ 
for which (3) holds is called the $R$--bound ($\gamma$--bound) of $\tau$.
If $T_t$ is a group we consider the growth bound
$$
\omega_R (T_t) = \inf \{ w: e^{-w|t|}T_t \mbox{ is } R \mbox{--bounded} \}
$$
and similarly we define $\omega_\gamma(T_t)$. 
If $X$ has finite cotype, then the notions of $R$--boundedness and
$\gamma$--boundedness are equivalent since in this case there is a constant
$C$ such that for $x_1 , \dots , x_n \in X$ (cf.~\cite{DJT} 12.11, 12.27)
\begin{equation*}
\frac{1}{C} \biggl( \E \biggl| \biggl| \sum_k g_k x_k \biggr| \biggr|^2 
\biggr)^{1/2} \!\! \leq \!\!
\biggl( \E \biggl| \biggl| \sum r_k x_k \biggr| \biggr|^2 \biggr)^{1/2} 
\!\! \leq \! C 
\biggl( \E \biggl| \biggl| \sum g_k x_k \biggr| \biggr|^2 \biggr)^{1/2}.
\end{equation*}
A Banach space has {\sl type} $p$, $p \in [1,2]$, if there is a constant
$C$ such that for all $x_1, \dots , x_m \in X, m \in \N$, we have
$$
\biggl( \E \biggl| \biggl| \sum_{n=1}^m r_n x_n \biggr| \biggr|^2 \biggr)^{1/2} 
\leq C \biggl( \sum_{n=1}^m \| x_n \|^p \biggr)^{1/p}.
$$
The {\sl cotype} $q$, $q \in [2, \infty)$, of a Banach space is defined
similarly by the reverse inequality
$$
\biggl( \sum_{n=1}^m \|x_n \|^q \biggr)^{1/q} \leq C \biggl( \displaystyle \E 
\biggl| \biggl| \sum_{n=1}^m r_n x_n \biggr| \biggr|^2 \biggr)^{1/2}.
$$
Finally we say that $X$ has property $(\alpha)$ (see \cite{Pi2}), if for
two independent sequences $(r_n)$, $(r_n')$ of Rademacher sequences
there is a constant $C$ so that for all $x_{ij} \in X$ and $| a_{ij} | \leq 1$
$$
\biggl( \E \E \biggl| \biggl| \sum_{ij} r_i r_j' a_{ij} x_{ij} 
\biggr| \biggr|^2 \biggr)^{1/2} \leq
C \biggl(\E \E \biggl| \biggl| \sum_{ij} r_i r_j' x_{ij} 
\biggr| \biggr|^2 \biggr)^{1/2}.
$$
A Banach lattice with finite cotype has always property $(\alpha)$.
A uniformly convex space has always finite cotype and a type larger than $1$ 
(cf.~\cite{DJT}). Hence a space $L_r(\Omega)$ with $1 < r < \infty$ has cotype
$q=\min\lbrace r,r' \rbrace <\infty$, type $p=\max\lbrace r,r'\rbrace > 1$ and property $(\alpha)$.

A Banach space $X$ is called a UMD--space, if all $X$--valued martingale difference sequences converge unconditionally. An equivalent property which is more relevant for the present paper is that the Hilbert transform $H$ on $L_2(\R)$ has a bounded tensor extension $H \otimes I$ to $L_2(\R,X)$. For unexplained Banach space notation we refer to \cite{Pi5} and \cite{DJT}.

\section{The Hilbert space case}

In this section we recall the characterization of $H^{\infty}$--calculus
of a sectorial operator $A$ on a Hilbert space $H$ in terms of square
functions such as
$$
\|x\|_A = \biggl ( \int\limits_0^{\infty} \|A^{1/2} R(-t, A)x\|^2 dt
\biggr)^{1/2} = \|A^{1/2}R(\cdot,A)x\|_{L_2(\R_{-},H)}.
$$
The point is to motivate our study of more general square functions 
on Banach spaces in the following sections and to construct proofs that are
simple enough so that they generalize to the Banach space setting.
For that purpose we would like to emphasize that we use only the following
elementary properties of $L_2(I,H)$: 

\begin{itemize}
\item[{$(S1)$}] 
If $f,g \in L_2(I,H)$ then \quad $|(f|g)| \leq \|f\|_{L_2(I,H)} 
\cdot \|g\|_{L_2(I,H)}.$
\item[{$(S2)$}] 
If $t \in I \to N(t) \in B(H)$ is strongly measurable and bounded
(with respect to the operator norm) then
$$
\|N(\cdot)f(\cdot)\|_{L_2(I,H)} \leq \sup_{t \in I} \|N(t)\| \cdot
\|f(\cdot)\|_{L_2(I,H)}.
$$
\item[{$(S3)$}] 
For $S \in B(L_2(I))$ put $\calS
\biggl( \displaystyle \sum_{j=1}^n f_j \otimes x_j \biggr) = \sum_{j=1}^n S(f_j) \otimes x_j$
for $f_j \in L_2(I)$ and $x_i \in H$.
Then $\calS$ extends to a bounded operator from $L_2(I,H)$ to $L_2(I,H)$
and $\| \calS \| \leq \|S\|$.
\end{itemize}
(For a proof of (S3) see e.~g.~\cite{KuWe} Lemma 11.11.)
In particular, for 
$$
Sf(t) = {\cal F}f(t) = \textstyle \frac{1}{\sqrt{2 \pi}}
\int e^{-ist} f(s)ds
$$ 
we obtain the vector--valued Plancherel--identity:
$$
\| {\cal F} f\|_{L_2(\R,H)} = \| f\|_{L_2(\R,H)}.
$$
The following theorem is due to A.~McIntosh \cite{CDMY,M}, but we give a 
different proof:

\begin{theorem}
For a sectorial operator $A$ on a Hilbert space $H$ the following are
equivalent:

\noindent a) 
$A$ has bounded imaginary powers (BIP) and for one (all) $\sigma \in 
(\omega(A),\pi]$ there is a constant $C_1$ with
$$
\|A^{is}\| \leq C_1 e^{\sigma |s|}.
$$

\noindent b)
For one (all) $\omega$ with $| \omega | \in (\omega(A), \pi]$ 
there is a constant $C_2$ such that for all $x \in H$
$$
\biggl( \int\limits_0^{\infty} \| A^{1/2} R(t e^{i \omega}, A)x \|^2 dt
\biggr)^{1/2} \leq C \|x\|, 
$$
and
$$
\biggl( \int\limits_0^{\infty} \| (A^{\ast})^{1/2} R(t e^{i \omega},A)^{\ast}
x\|^2 dt \biggr)^{1/2} \leq C \|x\|.
$$

\noindent c)
For one (all) $\omega$ with $| \omega | \in (\omega(A), \pi]$ there is a $C_3$
such that for all $x \in H$
$$
\frac{1}{C_3} \|x\| \leq \biggl( \int\limits_0^{\infty} \| A^{1/2} R
(e^{i \omega} t,A)x\|^2 dt \biggr)^{1/2} \leq C_3 \|x\|.
$$

\noindent d)
$A$ has an $H^{\infty}(\Sigma(\sigma))$--calculus for one (all) $\sigma \in
(\omega(A), \pi]$.
\end{theorem}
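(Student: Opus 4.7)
The plan is to prove the cycle of implications $(d) \Rightarrow (a) \Rightarrow (b) \Rightarrow (c) \Rightarrow (d)$, leaning throughout on properties $(S1)$--$(S3)$ to mimic scalar arguments on the vector-valued space $L_2(I,H)$. For $(d) \Rightarrow (a)$, the function $\lambda \mapsto \lambda^{is}$ lies in $H^{\infty}(\Sigma(\sigma))$ with sup-norm $e^{\sigma|s|}$ but is not in $H_0^{\infty}$; I would approximate it by $\lambda^{is}(\lambda/(1+\lambda)^2)^{1/n} \in H_0^{\infty}(\Sigma(\sigma))$, whose $H^{\infty}$-norms stay bounded by $e^{\sigma|s|}$, and invoke the convergence lemma of \cite{CDMY} to conclude $\|A^{is}\| \leq C_1 e^{\sigma|s|}$.

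For $(a) \Rightarrow (b)$, set $\psi(\lambda) = \lambda^{1/2}(e^{i\omega}-\lambda)^{-1}$, so that a scaling change of variables yields
$$\int_0^{\infty} \|A^{1/2}R(te^{i\omega},A)x\|^2\,dt \;=\; \int_0^{\infty} \|\psi(sA)x\|^2\,\frac{ds}{s}.$$
After $s = e^u$, the Mellin representation writes $\psi(e^u A)x$ as the inverse Fourier transform in $u$ of $t \mapsto \hat{\Psi}(t) A^{-it}x$, where $\Psi(u)=\psi(e^u)$; since $\psi$ extends holomorphically to a strictly larger sector, $\hat{\Psi}$ decays exponentially. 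Property $(S3)$ in its Plancherel form then gives
$$\int_{-\infty}^{\infty} \|\psi(e^u A) x\|^2\,du \;=\; \int_{-\infty}^{\infty} |\hat{\Psi}(t)|^2\,\|A^{-it}x\|^2\,dt \;\leq\; C_1^2 \|x\|^2 \int_{-\infty}^{\infty} |\hat{\Psi}(t)|^2 e^{2\sigma|t|}\,dt,$$
and the last integral is finite. The same reasoning for $A^{\ast}$, whose BIP constants on a Hilbert space match those of $A$, supplies the companion inequality.

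For $(b) \Rightarrow (c)$, I would use a Calder\'on reproducing identity: choose $\phi \in H_0^{\infty}(\Sigma(\sigma))$, closely related to $\psi$ by rescaling, with $c\int_0^{\infty} \phi(s\lambda)\tilde{\phi}(s\lambda)\,ds/s \equiv 1$ on $\Sigma(\sigma)$, where $\tilde{\phi}(\lambda) = \overline{\phi(\bar{\lambda})}$, so that $\phi(sA)^{\ast} = \tilde{\phi}(sA^{\ast})$. Evaluated in the Dunford calculus, this becomes
$$(x|y) \;=\; c\int_0^{\infty} (\tilde{\phi}(sA)x \,|\, \tilde{\phi}(sA^{\ast})y)\,\frac{ds}{s},$$
and Cauchy-Schwarz $(S1)$ together with the upper estimate $(b)$ for $A^{\ast}$ bounds $|(x|y)| \leq C'\,\|y\|\cdot\|\tilde{\phi}(sA)x\|_{L_2(ds/s,H)}$; taking the supremum over $\|y\|=1$ produces the lower square function bound. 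The upper bound in $(c)$ is already in $(b)$.

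The main obstacle will be $(c) \Rightarrow (d)$. Given $f \in H_0^{\infty}(\Sigma(\sigma))$, my strategy is again to insert the reproducing formula on both sides of $f(A)x$ and apply $(S1)$ to reduce matters to an inequality of the shape
$$\left(\int_0^{\infty} \|(\psi_s\cdot f)(A)x\|^2\,\frac{ds}{s}\right)^{1/2} \;\leq\; C\|f\|_{\infty}\,\|x\|,$$
where $\psi_s(\lambda) = \psi(s\lambda)$. For this I would express $(\psi_s f)(A)x$ as a Dunford integral along $\partial\Sigma(\omega')$ with $\omega(A)<\omega'<\omega$, and view the integrand as $f(\lambda)$ multiplying an $f$-independent kernel built from $\psi(s\lambda)R(\lambda,A)x$. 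Property $(S2)$ then absorbs the bounded scalar multiplier $N(t) = f(te^{\pm i\omega'})$, whose supremum is $\|f\|_{\infty}$, leaving a square function of $x$ alone controlled by $(c)$. The delicate point is to legitimately interchange the Dunford contour integration with the $L_2(ds/s,H)$-norm so that $(S2)$ bites, and to verify that what remains after extracting $\|f\|_{\infty}$ really is of the form governed by $(c)$; this is where the freedom in choosing $\omega'$ strictly between $\omega(A)$ and $\omega$ and the analytic extension of $\psi$ to a larger sector must be exploited.
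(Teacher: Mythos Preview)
Your arguments for $(d)\Rightarrow(a)$, $(a)\Rightarrow(b)$, and $(b)\Rightarrow(c)$ are correct and essentially match the paper's. The paper uses the explicit Balakrishnan formula for $A^{is-1/2}$ where you use the abstract Mellin--Plancherel identity, and it uses the specific reproducing identity $e^{i\omega}\int_0^\infty AR(e^{i\omega}t,A)^2x\,dt=x$ where you invoke a general Calder\'on formula, but these are cosmetic differences.

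The gap is in $(c)\Rightarrow(d)$. Your plan is to write $(\psi_s f)(A)x=\frac{1}{2\pi i}\int_{\partial\Sigma(\omega')}\psi(s\lambda)f(\lambda)R(\lambda,A)x\,d\lambda$ and invoke $(S2)$ to strip off $f$. But $(S2)$ is a \emph{pointwise} multiplier statement in the variable carrying the $L_2$-norm, which here is $s$, whereas $f$ lives in the contour variable $\lambda$. Treating the Dunford integral as a kernel operator $K$ acting on $g(\lambda)=f(\lambda)R(\lambda,A)x$ does not help either: the only square function you control by $(c)$ is $\lambda\mapsto A^{1/2}R(\lambda,A)x$, and $R(\lambda,A)x$ by itself is not in $L_2(\partial\Sigma(\omega'),H)$ with respect to any natural measure, so there is nothing for $(S3)$ to act on after you pull out $\|f\|_\infty$.

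The paper sidesteps this by computing $A^{1/2}R(\mu,A)f(A)$ directly via the resolvent equation $R(\mu,A)R(\lambda,A)=(\lambda-\mu)^{-1}[R(\mu,A)-R(\lambda,A)]$. The contour integral then splits, by Cauchy's theorem, into
\[
A^{1/2}R(\mu,A)f(A)x \;=\; f(\mu)\,A^{1/2}R(\mu,A)x \;-\; K\bigl[f(\cdot)A^{1/2}R(\cdot,A)x\bigr](\mu),
\]
where $K$ is a principal-value operator of Hilbert-transform type on $L_2(\partial\Sigma(\omega))$. Now the scalar multiplier $f(\mu)$ sits in the \emph{same} variable $\mu$ as the square-function norm, so $(S2)$ applies to the first term; and $(S3)$ applies to $K$ in the second term, with the input $f(\cdot)A^{1/2}R(\cdot,A)x$ again handled by $(S2)$ and then $(c)$. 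This algebraic use of the resolvent equation is the missing idea in your sketch.
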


\begin{proof}
First we fix $\sigma$ and $\omega$ with $\omega(A) < \sigma < \omega \leq \pi$
and show that a) $\Longrightarrow$ b) $\Longrightarrow$ c). For fixed 
$\omega(A) < \omega < \sigma \leq \pi$ we have that c) $\Longrightarrow$ d)
$\Longrightarrow$ a). In the last part of the proof we show that b) holds for
all $\omega > \omega(A)$ if it holds for one such $\omega$. Together with the
implications proved before this will complete the proof.

a) $\Longrightarrow$ b)
For $y \in \cal{R}(A)$ we use
a well known representation of fractional powers of $A$ (cf.~\cite{Ko})
$$
A^{is- \frac{1}{2}} y = \frac{\sin (\pi (is - \frac{1}{2}))}{\pi}
\int\limits_0^{\infty} t^{is - \frac{1}{2}} (t+A)^{-1} y dt.
$$
For $x \in \cal{R}(A^{1/2}) \cap {\cal D}(A^{1/2})$ and $y = A^{1/2}x$, it follows that 
$$
A^{is}x = \frac{\cosh (\pi s)}{\pi} \int\limits_0^{\infty} t^{is} 
[t^{1/2} (t+A)^{-1} A^{1/2} x ] \frac{dt}{t}
$$
and if we replace $A$ by the sectorial operator $e^{-i \theta}A$ with
$\theta < \pi - \sigma$
\begin{equation}
\frac{\pi}{\cosh (\pi s)} e^{\theta s} A^{is}x = \int\limits_0^{\infty}
t^{is} [e^{i \frac{\theta}{2}} t^{1/2} A^{1/2} (e^{i \theta} t+A)^{-1}x]
\frac{dt}{t}.
\end{equation}
With the substitution $t = e^u$ we get
$$
\frac{\pi e^{\theta s}}{\cosh (\pi s)} A^{is} x = \int\limits_0^{\infty} 
e^{ius} [e^{i \frac{\theta}{2}} e^{u/2} (e^{i \theta}e^u +A)^{-1} 
A^{1/2} x] du.
$$
Since $\cosh (\pi s) \sim e^{\pi|s|}$ the left hand side is in $L_2(\R,H)$
and the Plancherel identity for $H$--valued functions cf.~(S3) gives
\begin{equation*}
\begin{split}
\biggl(\int\limits_0^{\infty}\|A^{1/2}(e^{i \theta}t+A)^{-1} x \|^2 &dt 
\biggr)^{1/2} = \biggl( \int\limits_{- \infty}^{\infty} \| e^{u/2} 
(e^{i \theta}e^u +A)^{-1}A^{1/2} x\|^2 du \biggr)^{1/2} \\
& \leq \frac{\pi}{\sqrt{2 \pi}} \biggl( \int\limits_{- \infty}^{\infty}
\biggl| \biggl| \frac{e^{\theta s}}{\cosh (\pi s)} A^{is} x 
\biggr| \biggr|^2 ds \biggr)^{1/2} \leq C_2 \|x\|
\end{split}
\end{equation*}
by the assumption on $A^{is}$.
For $\omega = \pi - \theta$ we have $(e^{i \theta} t+A)^{-1} =
-R(e^{- i \omega}t,A)$ and the claim follows.

b) $\Longrightarrow$ c)
For $x \in \cal{R}(A) \cap {\cal D}(A)$ we have
$$
e^{i \omega} \int\limits_0^{\infty} AR(e^{i \omega}t,A)^2 x dt =
\lim_{r \to \infty} [- AR(e^{i \omega}t,A)x]_{\frac{1}{r}}^r = x.
$$
Hence for any $y \in H$ with $\Vert y \Vert \leq 1$
\begin{eqnarray*}
(x|y) & = & e^{i \omega} \int\limits_0^{\infty} (AR(e^{i \omega}t|A)^2x,y)dt \\
& = & e^{i \omega} \int\limits_0^{\infty} (A^{1/2} R(e^{i \omega}t,A)x|
(A^{\ast})^{1/2}R(e^{-i \omega}t,A^{\ast})y)dt
\end{eqnarray*}
and using (S1)
\begin{eqnarray*}
|(x|y)| & \leq & \biggl( \int\limits_0^{\infty} \| A^{1/2} R(e^{i \omega}
t,A)x \|^2 dt \biggr)^{1/2} \biggl( \int\limits_0^{\infty} \| (A^{\ast})^{1/2}
R(e^{- i \omega}t,A^{\ast})y\|^2 dt \biggr)^{1/2} \\
& \leq & \biggl( \int\limits_0^{\infty} \|A^{1/2} R(e^{i \omega} t,A)x\|^2
dt \biggr)^{1/2} C_2 \|y\|.
\end{eqnarray*}
Together with b) we obtain
$$
\frac{1}{C_2} \|x\| \leq \biggl| \biggl| \biggl( \int\limits_0^{\infty}
\|A^{1/2} R(e^{i \omega} t,A)x \|^2 dt \biggr)^{1/2} \biggr| \biggr|
\leq C_2 \|x\|.
$$

c) $\Longrightarrow$ d)
Choose $\sigma$ and $\omega$ with $\omega(A) < \omega < \sigma$. 
For $f \in H_0^{\infty}(\Sigma(\sigma))$ and $| \arg \mu| = \omega$ 
we calculate using the resolvent equation
\begin{equation*}
\begin{split}
A^{1/2} R(\mu,A)f(A) & = \frac{1}{2 \pi i} \int\limits_{\partial
\Sigma(\omega)} f(\lambda)A^{1/2}R(\mu, A)R(\lambda,A) d \lambda \\
& = \biggl( \frac{1}{2 \pi i} PV \int\limits_{\partial \Sigma(\omega)} 
\frac{f(\lambda)}{\lambda - \mu} d \lambda \biggr) A^{1/2} R(\mu, A) \\
&\quad - \frac{1}{2 \pi i}PV \int\limits_{\partial \Sigma(\omega)} \frac{f(\lambda)A^{1/2}R(\lambda,A)}
{\lambda - \mu} d \lambda \\
& = f(\mu) A^{1/2} R(\mu, A) - K[f(\cdot) A^{1/2}R(\cdot,A)](\mu)
\end{split}
\end{equation*}
by Cauchy's theorem and using the notation
$$
KG(\mu) = \frac{1}{2 \pi i} PV \int\limits_{\partial \Sigma(\omega)} 
\frac{G(\lambda)}{\lambda - \mu} d \lambda, \quad \mu \in \partial 
\Sigma(\omega).
$$
$K$ is a variant of the Hilbert transform and a bounded operator on
$L_2(\partial \Sigma(\omega))$. By (S3) we have
for $g \in L_2(\partial \Sigma(\omega),H)$
\begin{equation*}
\int\limits_{\partial \Sigma(\omega)}\|Kg(\mu)\|^2 d|\mu| \leq
M^2 \int\limits_{\partial \Sigma(\omega)} \|g(\lambda) \|^2 d|\lambda|.
\end{equation*}
From this and c), it follows for $x \in \cal{D}(A)$ that
\begin{equation*}
\begin{split}
 \| f(A)x\|_H &\leq C_3 \|A^{1/2} R(\cdot,A) [f(A)x] \|_{L_2(\partial
\Sigma(\omega),H)} \\
& \leq C_3 \| f(\cdot) A^{1/2} R(\cdot,A)x\|_{L_2(\partial \Sigma(\omega),H)} \\
&\quad + C_3 M \| f(\cdot) A^{1/2} R(\cdot, A) x \|_{L_2(\partial \Sigma(\omega),H)} \\
& \leq C_3 (1+M) \|f\|_{H^{\infty}(\partial \Sigma(\omega))} \|A^{1/2} R
(\cdot, A)x\|_{L_2(\partial \Sigma(\omega),H)} \\
& \leq C_4 \|x\| \|f\|_{\infty}.
\end{split}
\end{equation*}

d) $\Longrightarrow$ a)
is clear since $\sup \{ |\lambda^{it} | : \lambda \in \Sigma(\sigma) \} \leq
e^{\sigma |t|}$.

Finally we show that we can choose $\omega > \omega(A)$ freely.
Consider $\nu$ and $\omega$ with $|\nu|, |\omega| > \omega(A)$.
The resolvent equation implies that
\begin{equation}
A^{1/2} R(te^{i \omega},A) = [I + (e^{i \nu} - e^{i \omega})tR(t e^{i \omega},
A)] A^{1/2} R(t e^{i \nu},A).
\end{equation}
Since the factor in square brackets is bounded on $\R_{+}$, we obtain
$$
\int\limits_0^{\infty} \|A^{1/2} R(t e^{i \omega},A)x\|^2dt \leq C
\int\limits_0^{\infty}\| A^{1/2} R(t e^{i \nu},A)x \|^2 dt.
$$
The same argument applies to $A^{\ast}$.

\end{proof}

Boyadzhiev and deLaubenfels have shown in \cite{BD} that a group generator on a
Hilbert space has an $H^{\infty}$--calculus on a strip. We give a proof using
square functions which is a variant of the proof
of Haase \cite{Ha2}. (Condition c) may be new.)

\begin{theorem}
For an operator $A$ of strip--type on a Hilbert space, the following conditions
are equivalent:

\noindent a)
$A$ generates a strongly continuous group $T_t$ on $H$.

\noindent b)
There is a constant $C < \infty$ so that for one (all) $b > \omega(A)$
\begin{eqnarray*}
& & \biggl( \int\limits_{- \infty}^{\infty} \|R(\pm b + it,A)x\|^2 dt 
\biggr)^{1/2} \leq C \|x\|, \\
& & \hspace*{8cm} \mbox{ for } x \in H \\
& & \biggl( \int\limits_{- \infty}^{\infty} \|R(\pm b+it,A)^{\ast}x\|^2 dt
\biggr)^{1/2} \leq C \|x\| .
\end{eqnarray*}

\noindent c)
There is a constant $C < \infty$ so that for one (all) $b > \omega(A)$
$$
\frac{1}{C} \|x\| \leq \biggl( \int\limits_{\partial S(b)} \|R(\lambda,A)
x\|^2d |\lambda| \biggr)^{1/2} \leq C \|x\|, \quad x \in H.
$$

\noindent d)
$A$ has an $H^{\infty}(S(a))$--functional calculus for one (all) $a > 
w(A)$.

Furthermore, we have $w_{H^{\infty}}(A) = w(A) = \omega(T_t)$.
\end{theorem}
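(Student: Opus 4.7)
I plan to mirror the proof of Theorem 2.1 as closely as possible, exploiting the close parallel: replace the sectorial substitution $\lambda = te^{i\omega}$ by the strip parametrization $\lambda = \pm b+it$, and the logarithmic change of variables by the fact that the boundary $\partial S(b)$ is already a straight line, so the Fourier/Plancherel identity applies directly through (S3). The implications will be proved in the cycle a)$\Longrightarrow$b)$\Longrightarrow$c)$\Longrightarrow$d)$\Longrightarrow$a) for one pair of parameters $w(A) < b < a$, followed by a transfer argument showing the estimates in b) and c) are independent of the chosen $b > w(A)$.

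For a)$\Longrightarrow$b) I would use the Laplace representation $R(b+it,A)x = \int_0^\infty e^{-(b+it)s}T_s x\,ds$ valid for $b > \omega(T_t)$. Applying the $H$-valued Plancherel identity coming from (S3) yields
\[
\int_{-\infty}^{\infty}\|R(b+it,A)x\|^2\,dt = 2\pi\int_0^\infty e^{-2bs}\|T_s x\|^2\,ds \leq C\|x\|^2,
\]
and the analogous formula on the line $-b+it$ uses $R(-b+it,A)x = -\int_0^\infty e^{(b-it)s}T_{-s}x\,ds$. The adjoint estimate follows identically from the dual group $T_s^*$. For b)$\Longrightarrow$c) I would copy the scheme from Theorem 2.1: produce a reproducing formula of the form $x = c\int_{\partial S(b)}R(\lambda,A)^2 x\,\varphi(\lambda)\,d\lambda$ on a dense set (obtained from the group representation of the resolvent by Fourier inversion, with a small regulator $\varphi$ to guarantee convergence), then split the pairing $(x|y)$ as $(R(\lambda,A)x\,|\,R(\bar\lambda,A)^*y)$ weighted by $\varphi$, and apply (S1) together with the two $L_2$-estimates from b).

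For c)$\Longrightarrow$d) I would adapt the Hilbert transform trick verbatim: for $\mu\in\partial S(b)$ with $w(A) < b < a$ and $f \in H_0^\infty(S(a))$, the resolvent equation combined with Cauchy's theorem gives
\[
R(\mu,A)f(A) = f(\mu)R(\mu,A) - K\!\left[f(\cdot)R(\cdot,A)\right](\mu),
\]
where $K$ is the truncated Hilbert transform on $L_2(\partial S(b))$; the operator-valued extension (S3) then transfers boundedness to $L_2(\partial S(b),H)$, and c) converts the resulting $L_2$-bound back into $\|f(A)x\|_H\leq C\|f\|_\infty\|x\|$. For d)$\Longrightarrow$a), since $e^{t\lambda}$ is not in $H^\infty(S(a))$, I would use the regularized symbol $e^{t\lambda}/(1+\lambda^2) \in H_0^\infty(S(a))$, define $T_t x := (1+A^2)(e^{t\cdot}/(1+\cdot^2))(A)x$ on $\mathcal D(A^2)$, and verify the group law and strong continuity by the convergence lemma from \cite{CDMY}; multiplicativity of the calculus gives $T_{s+t}=T_sT_t$, and differentiating in $t$ recovers $A$ as generator.

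The parameter freedom in b) and c) follows, as in Theorem 2.1, from the identity
\[
R(b'+it,A) = [I + (b-b')R(b+it,A)]R(b+it,A),
\]
whose bracket is bounded uniformly in $t$; this transfers both $L_2$-bounds between any two admissible widths $b,b' > w(A)$. The chain of inclusions $w(A) \leq w_{H^\infty}(A)$ (trivial), $\omega(T_t) \leq w_{H^\infty}(A)$ (from the construction in d)$\Longrightarrow$a), since the regularizing argument works for every $a > w_{H^\infty}(A)$), and $w(A)\leq \omega(T_t)$ (standard Laplace transform bound on the resolvent) combine to force all three quantities to coincide. The main obstacle I anticipate is pinning down the correct reproducing formula for the strip in b)$\Longrightarrow$c): unlike the sectorial case, the naive integral $\int_{-\infty}^\infty R(b+it,A)^2 x\,dt$ vanishes, so a mild holomorphic regulator (e.g.\ $\varphi_\varepsilon(\lambda)=e^{-\varepsilon\lambda^2}$) must be inserted and removed in the limit, which is the step that breaks perfect symmetry with Theorem 2.1.
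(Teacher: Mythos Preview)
Your cycle a)$\Longrightarrow$b)$\Longrightarrow$c)$\Longrightarrow$d)$\Longrightarrow$a) and the resolvent-equation transfer between different widths $b$ match the paper exactly for the steps a)$\Longrightarrow$b), c)$\Longrightarrow$d), d)$\Longrightarrow$a), and the parameter freedom. The paper's d)$\Longrightarrow$a) is phrased more tersely (it invokes the extended calculus and the convergence lemma directly rather than writing out an explicit regulariser $e^{t\lambda}/(1+\lambda^2)$), but the content is the same.

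The one place you diverge is b)$\Longrightarrow$c), and here the paper's route is both simpler and avoids precisely the obstacle you flag. You try to build a reproducing formula of the form $x=c\int_{\partial S(b)}R(\lambda,A)^2x\,\varphi(\lambda)\,d\lambda$ on a single line and correctly worry that this forces a holomorphic regulator. The paper instead couples the \emph{two} boundary lines: starting from $\varrho_n(A)x\to x$ with $\varrho_n\in H_0^\infty(S(\alpha))$ it obtains
\[
x=\frac{1}{2\pi i}\int_{\partial S(\alpha)}R(\lambda,A)x\,d\lambda
=-\frac{\alpha}{\pi}\int_{-\infty}^{\infty}R(\alpha+it,A)\,R(-\alpha+it,A)\,x\,dt,
\]
the second equality being the resolvent equation applied to $R(\alpha+it,A)-R(-\alpha+it,A)$. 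Pairing with $y$ and splitting as $(R(\alpha+it,A)^{\ast}y\mid R(-\alpha+it,A)x)$ then uses the adjoint $L_2$-bound on one line and the direct $L_2$-bound on the other --- no regulator, no limiting argument. A secondary benefit is that this derivation uses only the always-available $H_0^\infty$-calculus for strip-type operators; your phrase ``obtained from the group representation of the resolvent by Fourier inversion'' suggests using $T_s$, which you are not entitled to assume at stage b).
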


\begin{proof}
a) $\Longrightarrow$ b)
Since e.~g.~$R(b +it,A)x = \int\limits_0^{\infty} e^{-(b+it)s} T_s x ds$
for $b > \omega(T_t)$ the claim follows from the vector--valued Plancherel
identity. If we have condition b) for one $b > \omega(A)$ we obtain it
for every other $\beta > w(A)$ using the resolvent equation. Since
$$
R(\beta + it,A) = [ I + (b - \beta)R(\beta + it,A)]R(b + it,A)
$$
we get
\begin{multline*}
\|R(\beta + i \cdot ,A)x\|_{L_2(H)} \leq [1+|b- \beta| \sup_t \|R(\beta
+ it,A)\|] \|R(b+i \cdot,A)x\|_{L_2(H)}.
\end{multline*}

b) $\Longrightarrow$ c)
Let $\varrho_n \in H_0^{\infty}(S(\alpha))$ with $\| \varrho_n \| \leq 1,
\varrho_n (\lambda) \to 1$ and $\varrho \in H_0^{\infty}(S(\alpha))$ with
$\cal{R}(\varrho(A))$ dense. Then for $x \in \cal{R}(\varrho(A))$ the convergence lemma
implies $\displaystyle \lim_n \varrho_n(A)x = x$ and from
$$
\varrho_n (A)x = \frac{1}{2 \pi i} \int\limits_{\partial S(\alpha)}
\varrho_n(\lambda) R(\lambda,A)x d \lambda
$$
we obtain for $n \to \infty$
\begin{equation*}
\begin{split}
x & = \frac{1}{2 \pi i} \int\limits_{\partial S(\alpha)} R(\lambda,A)x d 
\lambda 
= \frac{1}{2 \pi} \int\limits_{- \infty}^{\infty} [R(\alpha +it,A)x -
R(- \alpha +it,A)x]dt \\
& = -\frac{\alpha}{\pi} \int\limits_{- \infty}^{\infty} R(\alpha +it,A)
R(- \alpha +it,A)x dt.
\end{split}
\end{equation*}
Hence for $y \in H$ using (S2)
\begin{equation*}
\begin{split}
|(y|x)| & \leq \frac{\alpha}{\pi} \int_{-\infty}^\infty (R(\alpha +it,A)^{\ast}y| R(-\alpha
+it,A)x) dt \\
& \leq \frac{\alpha}{\pi} \|R(\alpha + i \cdot,A)^{\ast}y\|_{L_2(H)} 
\cdot \|R(- \alpha + i \cdot,A)x\|_{L_2(H)} \\
& \leq \frac{\alpha}{\pi} C \|R(- \alpha + i \cdot,A)x\|_{L_2(H)} \|y\|.
\end{split}
\end{equation*}
Since $y \in H$ was arbitrary the lower estimate of c) follows.

We can get from one $b > w(A)$ using the same trick as in a)
$\Longrightarrow$ b).

c) $\Longrightarrow$ d)
Let $a > b$, where $b$ is as in c). For $f \in H_0^{\infty}(S(a))$
we observe that for $\mu \in \partial S(b)$
\begin{equation*}
\begin{split}
R(\mu,A)f(A) & = \frac{1}{2 \pi i} PV\int\limits_{\partial S(b)}
f(\lambda)R(\mu,A)R(\lambda,A)d \lambda \\
& = \frac{1}{2 \pi i} PV\int\limits_{\partial S(b)} \frac{f(\lambda)}
{\lambda - \mu} d\lambda R(\mu, A) + \frac{1}{2 \pi i} 
PV\int\limits_{\partial S(b)} \frac{f(\lambda)R(\lambda,A)}{\lambda - \mu}
d \lambda \\
& = f(\mu) R(\mu,A) + \frac{1}{2i} H_0[f(\cdot) R(\cdot,A)](\mu)
\end{split}
\end{equation*}
where $H_0$ is the Hilbert transform on $L_2(\partial S(b))$. If ${\cal H}$ is the extension
of $H_0$ to $L_2(\partial S(b),H)$, we obtain with c), (S2) and (S3)
\begin{equation*}
\begin{split}
\frac{1}{C} \|f(A)x\| & \leq C\|R(\cdot,A)f(A)x\|_{L_2(\partial S(b),H)} \\
& \leq \|f(\cdot)R(\cdot,A)x\|_{L_2(\partial S(b),H)} +
\frac{1}{2} \| {\cal H} \| \|f(\cdot)R(\cdot,A)x\|_{L_2(\partial S(b),H)} \\
& \leq \|f\|_{H^{\infty}(S(a))} \left(1+\frac{1}{2}\|H_0\|\right) \|R(\cdot,A)x\|_{L_2(\partial 
S(b),H)} \\
& \leq C \|f\|_{H^{\infty}(S(a))}\|x\|.
\end{split}
\end{equation*}

d) $\Longrightarrow$ a)
Since $\sup \{ |e^{t \lambda}|: \lambda \in S(a) \} \leq e^{a|t|}$ the
$H^{\infty}$--calculus implies that $A$ generates a group. The strong
continuity follows from the convergence lemma.
\end{proof}

\begin{remark}
Following the argument $b) \Rightarrow c) \Rightarrow d)$, we obtain the estimate
\[ \| f(A) \|  \leq \frac{2C^2}{3\pi}a \cdot \| f \|_{H^\infty(S(a))}, \]

where $C$ is the constant in condition b).
\end{remark}

\section{Square functions in $L_q$--spaces}

Next we consider the classical square functions known from
harmonic analysis. 

Let $(\Omega, \mu)$ be a $\sigma$--finite measure space and $I \subset \R$
an interval. A function $f:I \to L_q(\Omega, \mu), 1 < q < \infty$, may be
viewed as a measurable function $\tilde{f}$ on $I \times \Omega$ with
$\tilde{f}(t, \omega) = f(t)(\omega)$ a.~e.~(by Fubini's theorem, 
cf.~\cite{DS}, Sect.~III.1.1.). For this function $\tilde{f}$ we may ask whether
it is finite with respect to the norm $L_q(\Omega, L_2(I))$ which we will
call the {\sl square function norm} of $f$ and denote by
\begin{equation}
\|f\|_{L_q(\Omega, L_2(I))} = \biggl|\biggl| \biggl( \int\limits_I |f(t)(\cdot)|^2dt 
\biggr)^{1/2}
\biggr|\biggr|_{L_q(\Omega)} \in [0, \infty].
\end{equation}
By Fubini's theorem we have of course that $L_2(\Omega,L_2(I)) = L_2
(I,L_2(\Omega))$. Hence this norm extends the square functions considered in
Section 2. 

We will be interested in functions of the form $f(t) =
A^{1/2} R(-t,A)x$ or $f(t) = t^{n - \frac{1}{2}}A^nT_tx$ on $\R_{+}$.
If $A = - \Delta$ or $A = (- \Delta)^{1/2}$ on $L_q(\R^N)$ one obtains the 
classical Paley--Littlewood $g$--functions (see e.~g.~\cite{St}). In \cite{CDMY} such
square functions were used for the first time for a general approach to the
$H^{\infty}$--calculus in $L_q(\Omega)$ spaces. \\
To make sure that such expressions make sense, at least for $x$ in a dense 
subspace of $X$,
one may use the following observation:

\begin{remark}
Assume that $f:[0,b] \to L_q(\Omega)$ is continuously differentiable.
Then
$$
\|f\|_{L_q(\Omega, L_2(I))} \leq \int\limits_0^b s^{1/2} \| f'(s)\|_{L_q(\Omega)} ds +
b^{1/2} \|f(b)\|_{L_q(\Omega)}.
$$
Indeed, we write 

\[ f(t,\cdot) = f(b,\cdot)-\int_0^b \chi_{[0,s]}(t)f'(s,\cdot) ds \]

so that by taking the $L_q(L_2)$ norm with respect to the variable $t\in(0,b)$, we obtain the above inequality.
\end{remark}

\begin{example}
a) Let $A$ generate an exponentially stable semigroup $T_t$ on $X$ with $\omega(T_t) < 0$.
Then we have for $x \in \cal{D}(A)$
\begin{eqnarray*}
\|T_tx\|_{L_q(\Omega, L_2(I))} & \leq & C \|Ax\| \\
\|R(i \cdot, A)x\|_{L_q(\Omega, L_2(I))} & \leq & C \|Ax\|.
\end{eqnarray*}

b) For a sectorial operator $A$ on $X$ we have for $x \in \cal{D}(A) \cap \cal{R}(A)$ 
$$
\| A^{1/2} R(-t,A)x\|_{L_q(\Omega, L_2(I))} \leq C (\| Ax\| + \|A^{-1}x\| +
\|x\|).
$$
If $\omega(A) < \textstyle \frac{\pi}{2}$ then $A$ generates an analytic
semigroup $T_t$ and for $x \in \cal{D}(A) \cap \cal{R}(A)$
$$
\|t^{- 1/2} (tA)^nT_tx\|_{L_q(\Omega, L_2(I))} \leq C (\|Ax\| + \|A^{-1}x\| +
\|x\|).
$$

Hence such square functions are finite at least for $x$ in a dense subset $D$ such as ${\cal D}(A)$ or ${\cal D}(A) \cap \cal{R}(A)$. If one can now establish for $x\in D$ better estimates of the form $\|A^{1/2}R(-t,A)x\|_{L_q(\Omega, L_2(I))} \leq C\| x\|$, then one can extend the continuous embedding $x\in D \to Jx=AR(-t,A)x \in L_q(\Omega,L_2[0,b])$ to all of $L_q(\Omega)$.
\end{example}

\begin{proof}
a) Indeed in the first formula $t^{1/2} \| \textstyle \frac{d}{dt} T_tx\| = t^{1/2} \| T_t
(Ax)\|$ is integrable on $\R_{+}$ and $t^{1/2}T_tx \to 0$ for $ t \to \infty$.
So we can apply 3.1 The second estimate follows from the first, since
$R(i \cdot, Ax) = {\cal F}(T_{(\cdot)}x)$ by (S3) below.

b) For a sectorial operator $A$ it is well known that $\displaystyle
\sup_t \| t^{1/2}f(t)\| < \infty$ for $f(t) = A^{1/2}R(-t,A)x$.
Also 
\begin{eqnarray*}
\|t^{1/2}f'(t) \| & = & \| t^{1/2}R(-t,A)^2(A^{1/2}x)\| \lesssim t^{- 3/2}
\|A^{1/2}x\| \\
\|t^{1/2}f'(t)\| & = & \|t^{1/2}[tR(-t,A)^2(A^{- 1/2}x) - R(-t,A)
(A^{- 1/2} x)] \| \\
&\lesssim & t^{- 1/2} \|A^{- 1/2} x \|.
\end{eqnarray*}
If we use the first estimate for large $t$ and the second for small $t$
we see that $\|t^{1/2}f'(t)\|$ is integrable and 3.1 applies.

If $\omega(A) < \textstyle \frac{\pi}{2}$ then $\displaystyle 
\sup_{t > 0} \|t^nA^nT_t\| < \infty$ and we can apply 3.1 in a similar way.
\end{proof}

These square functions still share the basic properties (S1), (S2) and (S3)
of Section 2, but with one important difference:
In (S2) we have to replace the boundedness of $N(t)$ by $R$--boundedness.
More precisely

\begin{itemize}
\item[{(S1)}] 
If $f \in L_q(\Omega, L_2(I))$ and $g \in L_{q'} (\Omega, L_2(I))$ with
$\textstyle \frac{1}{q} + \textstyle \frac{1}{q'} = 1$, then
$$
| < f,g>| \leq \|f\|_{L_q(\Omega, L_2(I))} \|g\|_{L_{q'}(\Omega, L_2(I))}.
$$

\item[{(S2)}] 
If $t \in I \to N(t) \in B(L_q)$ is strongly continuous and $R$--bounded,
then
$$
\|N(t)f(\cdot)\|_{L_q(\Omega, L_2(I))} \leq R(N(t):t \in I) \|f(\cdot)
\|_{L_q(\Omega, L_2(I))}.
$$

\item[{(S3)}] 
For a bounded operator $S:L_2(I) \to L_2(I)$ put $\calS \biggl(
\displaystyle \sum_{j=1}^n
f_j \otimes x_j \biggr):= \displaystyle \sum_{j=1}^n S(f_j) \otimes x_j$ for $f_j 
\in L_2(I)$ and $x_j \in L_q(\Omega)$. Then $\calS$ extends to a bounded
operator $\calS : L_q(\Omega, L_2(I)) \to L_q(\Omega, L_2(I))$ with $\|\calS\| \leq
\|S\|_{L_2(I)}$.
\end{itemize}

(S1) is clear, (S2) is checked in \cite{W2}, and (S3) follows by direct
calculation from the definition. Recall that a sequence $T_j$ of operators
on $L_q(\Omega)$ is {\sl $R$--bounded} if and only if there is a constant $C
< \infty$ such that for all $(x_j) \subset L_q(\Omega)$
$$
\biggl| \biggl| \biggl( \sum_j | T_j x_j (\cdot) |^2 \biggr)^{1/2}
\biggr| \biggr|_{L_q(\Omega)} \leq C \biggl| \biggl| \biggl( \sum_j |x_j (\cdot)|^2
\biggr)^{1/2} \biggr| \biggr|_{L_q(\Omega)}.
$$

Having these properties in mind we could
easily extend the proofs of Section 2 to the $L_p$--case and recover the
characterization of the $H^{\infty}$--calculus in terms of square functions
from \cite{CDMY} or prove the following new results:

\begin{theorem}
A sectorial operator $A$ on $L_q(\Omega)$, $1 < q < \infty$, has an
$H^{\infty}$--calculus if and only if $A$ has BIP and $\{ e^{-a|t|} A^{it}:
t \in \R \}$ is $R$--bounded for some $a \in [0, \pi)$. Furthermore,
$\omega_{H^{\infty}}(A) = \omega_R(A^{it})$.
\end{theorem}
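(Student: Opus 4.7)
The plan is to run the argument of Theorem 2.1 in the ambient space $L_q(\Omega)$, systematically substituting the three properties (S1)--(S3) of Section 3 for their Hilbert space analogues. The only place where a genuinely new ingredient appears is in (S2): the $L_q$-version requires an $R$-bounded operator-valued symbol rather than a merely bounded one, and this is exactly why the hypothesis asks for $R$-boundedness of $\{e^{-a|t|}A^{it}\}$ instead of plain boundedness of $A^{it}$.

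For the implication $(\Leftarrow)$, I would first establish the one-sided square function estimate (the $L_q$ analog of Theorem 2.1 b)) for angles $\omega>a$. Fixing $\sigma>a$ and $\omega\in(a,\sigma)$ and setting $\theta=\pi-\omega$, the substitution $t=e^u$ in the fractional power formula (4) writes the function
\[
u\longmapsto e^{i\theta/2}e^{u/2}A^{1/2}(e^{i\theta}e^u+A)^{-1}x
\]
as the inverse Fourier transform of $s\mapsto\frac{\pi e^{\theta s}}{\cosh(\pi s)}A^{is}x$. Applying the Fourier tensor-extension (S3) in $L_q(\Omega,L_2(\R))$ turns the desired square function norm into $\bigl\|\frac{\pi e^{\theta s}}{\cosh(\pi s)}A^{is}x\bigr\|_{L_q(\Omega,L_2(\R,ds))}$, which I then factor as $\psi(s)\cdot(e^{-a|s|}A^{is})x$ with $\psi(s)=\frac{\pi e^{\theta s+a|s|}}{\cosh(\pi s)}$ and bound via (S2) by $R\{e^{-a|s|}A^{is}\}\cdot\|\psi\|_{L_2(\R)}\cdot\|x\|_{L_q}$; here $\psi\in L_2(\R)$ precisely because $|\theta|+a<\pi$. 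The dual estimate is identical ($A'$ on $L_{q'}$ satisfies the same hypotheses). The steps b)$\Rightarrow$c)$\Rightarrow$d) of Theorem 2.1 then transfer to $L_q$ essentially verbatim, via (S1) (for the duality producing c)) and via (S3) applied to the Cauchy-type principal value integral $K$ (for the $H^\infty$-bound in d)). This yields the $H^\infty(\Sigma(\sigma))$-calculus for every $\sigma>a$, hence $\omega_{H^\infty}(A)\le\omega_R(A^{it})$.

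For the converse $(\Rightarrow)$, BIP with growth $e^{\sigma|t|}$ follows at once from $\|\lambda^{it}\|_{H^\infty(\Sigma(\sigma))}=e^{\sigma|t|}$. The $R$-boundedness of $\{e^{-\sigma|t|}A^{it}\}$ is the nontrivial part: I would first derive the upper square function estimate from the $H^\infty$-calculus by running the CDMY argument in the $L_q$-framework (this is the d)$\Rightarrow$b) implication of the $L_q$-analog of Theorem 2.1, obtained from (S1)--(S3) as in Section 3), and then reverse the Fourier identity of the preceding paragraph to conclude
\[
\Bigl\|\tfrac{\pi e^{\theta s}}{\cosh(\pi s)}A^{is}x\Bigr\|_{L_q(\Omega,L_2(\R,ds))}\le C\|x\|_{L_q}.
\]
The passage from this continuous square function bound to the discrete $R$-boundedness inequality
\[
\biggl\|\biggl(\sum_k|e^{-a|s_k|}A^{is_k}x_k|^2\biggr)^{\!1/2}\biggr\|_{L_q}\le C\biggl\|\biggl(\sum_k|x_k|^2\biggr)^{\!1/2}\biggr\|_{L_q}
\]
is the main obstacle. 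I would handle it by invoking property $(\alpha)$ of $L_q$, which, via Khintchine's inequality identifying Rademacher averages in $L_q$ with the $L_q(\ell^2)$-norm, combined with the group law $A^{i(s+t)}=A^{is}A^{it}$ and a translation/averaging over $s$, upgrades the continuous square function estimate to the required discrete inequality for any $a>\omega_{H^\infty}(A)$. Thus $\omega_R(A^{it})\le\omega_{H^\infty}(A)$, and the two bounds together produce the asserted equality of angles.
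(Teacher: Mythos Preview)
Your argument for the implication $(\Leftarrow)$ is correct and essentially matches the paper's approach: the paper establishes this as Corollary 7.5a) (in the general Banach space setting, specialized to $L_q$), and your use of (S2) on the function $s\mapsto\psi(s)x$ with the $R$-bounded family $\{e^{-a|s|}A^{is}\}$ is exactly the content of Example 5.7. The steps b)$\Rightarrow$c)$\Rightarrow$d) then carry over as you describe.

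The converse $(\Rightarrow)$ has a genuine gap. The passage from the continuous single-vector estimate
\[
\Bigl\|\tfrac{\pi e^{\theta s}}{\cosh(\pi s)}A^{is}x\Bigr\|_{L_q(\Omega,L_2(\R))}\le C\|x\|
\]
to $R$-boundedness of $\{e^{-a|s_k|}A^{is_k}\}$ is not accounted for by your sketch. The single-vector estimate bounds the map $x\mapsto N(\cdot)x$ from $L_q$ into $L_q(\Omega,L_2(\R))$; $R$-boundedness of $\{N(s)\}$ is equivalent (by the converse direction of (S2), i.e.\ Proposition 4.11) to boundedness of the \emph{multiplier} $f(\cdot)\mapsto N(\cdot)f(\cdot)$ on all of $L_q(\Omega,L_2(\R))$, which is strictly stronger. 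Your invocation of property $(\alpha)$, the group law, and ``translation/averaging'' does not bridge this: testing the multiplier on step functions $f=\sum_k\chi_{I_k}x_k$ reduces it back to the very $R$-boundedness you are trying to prove, and writing $A^{is}=A^{i(s-t)}A^{it}$ does not change that.

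The paper takes a different and more direct route for this direction (Corollary 7.5b), citing \cite{KW1}, Theorem 5.3): in a space with property $(\alpha)$, the $H^\infty(\Sigma(\sigma))$-calculus itself is $R$-bounded, i.e.\ the whole set $\{f(A):\|f\|_{H^\infty(\Sigma(\sigma))}\le1\}$ is $R$-bounded. One then specializes to $f_t(\lambda)=e^{-\theta|t|}\lambda^{it}$, which satisfies $\|f_t\|_{H^\infty(\Sigma(\sigma))}\le1$ for $\theta>\sigma$. The mechanism is that of Corollary 6.6: lift $A$ to $\tilde A=A\otimes I$ on $\operatorname{Rad}L_q\cong L_q(\Omega,\ell_2)$, use property $(\alpha)$ to see that $\tilde A$ again has a bounded $H^\infty$-calculus, and observe that the diagonal map $(x_k)_k\mapsto(f_k(A)x_k)_k$ is $F(\tilde A)$ for an $R$-bounded operator-valued symbol $F(\lambda)=(f_k(\lambda)I)_k$. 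This is where property $(\alpha)$ does its real work---on the calculus directly, not through square functions---and it is this step that your proposal is missing.
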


\begin{theorem}
An operator $A$ of strip--type on $L_q(\Omega), 1 < q < \infty$, has an
$H^{\infty}$--calculus on a strip if and only if $A$ generates a 
$C_0$--group such that $\{ e^{- b|t|}T_t:t \in \R\}$ is $R$--bounded for some
$b > 0$. Furthermore, $w_{H^{\infty}}(A) = \omega_R(T_t)$.
\end{theorem}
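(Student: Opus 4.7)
The plan is to mirror the Hilbert space proof of Theorem 2.2 in the $L_q(\Omega)$ setting, using the square function properties (S1)--(S3) of Section 3 in place of their Hilbert space analogs. I would prove the equivalence through the same chain used there, with intermediate auxiliary conditions: (a) $\{e^{-b|s|}T_s\}$ is $R$-bounded for some $b>0$; (b) the estimate $\|R(\pm \beta + i\cdot, A)x\|_{L_q(\Omega, L_2(\R))} \leq C\|x\|$ together with its adjoint version, for $\beta > b$; (c) the two-sided version of (b) on $\partial S(b)$; (d) the $H^\infty(S(a))$-calculus for $a > b$. The only essential change from the Hilbert space proof is that (S2) now demands $R$-boundedness of the operator family, which is exactly why the hypothesis on the group is $R$-boundedness rather than mere uniform exponential boundedness.

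For the chain (a) $\Rightarrow$ (b) $\Rightarrow$ (c) $\Rightarrow$ (d): start with the Laplace representation $R(\beta + it, A)x = \int_0^\infty e^{-(\beta+it)s} T_s x\, ds$ for $\beta > \omega(T_t)$, apply the vector-valued Plancherel identity provided by (S3) to rewrite $\|R(\beta+i\cdot, A)x\|_{L_q(\Omega, L_2(\R))} = \sqrt{2\pi}\,\|e^{-\beta s} T_s x\|_{L_q(\Omega, L_2(\R_+))}$, and then pull the group out of the square function via (S2) using the $R$-boundedness of $\{e^{-bs}T_s\}$, yielding a bound $C_b(\beta-b)^{-1/2}\|x\|$. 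This establishes (b); the implications (b) $\Rightarrow$ (c) (pairing via (S1)) and (c) $\Rightarrow$ (d) (via the Hilbert transform on $L_2(\partial S(b))$ extended by (S3) to $L_q(\Omega,L_2(\partial S(b)))$) transfer verbatim from Theorem 2.2. Tracking constants yields $w_{H^\infty}(A) \leq \omega_R(T_t)$.

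For the converse (d) $\Rightarrow$ (a): applying the $H^\infty$-calculus to $f_s(\lambda) = e^{s\lambda}$, which has $H^\infty(S(a))$-norm at most $e^{a|s|}$, produces a $C_0$-group $T_s$ with $\|T_s\| \leq Ce^{a|s|}$, and strong continuity follows from the convergence lemma. The delicate point is upgrading this uniform estimate to $R$-boundedness of $\{e^{-a|s|} T_s\}$. The plan is to exploit the Banach lattice structure of $L_q$: $R$-boundedness there is equivalent to the pointwise square function inequality stated after (S3), which reduces the problem to showing that the family $\{g(A) : g \in H^\infty(S(a)),\ \|g\|_\infty \leq 1\}$ is $R$-bounded, and then applying this to $g_s(\lambda) = e^{-a|s|}e^{s\lambda}$. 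This $R$-bounded calculus is accessible in $L_q$ either via property $(\alpha)$ (the precursor to Corollary 6.6) or directly: combine the two-sided square function estimate (c) (which holds once (d) is established, by the already-proved chain run through (a)) with the $\ell_2$-tensor extension of the Hilbert transform argument from (c) $\Rightarrow$ (d), applied pointwise in the summation index $j$. The principal obstacle is exactly this last step: in a general Banach space the $H^\infty$-calculus does not automatically upgrade to an $R$-bounded calculus, and the $L_q$-shortcut here---that lattice $R$-boundedness is a square function inequality amenable to (S3)---is precisely what motivates the abstract property-$(\alpha)$ treatment of Section 6.
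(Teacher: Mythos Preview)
Your forward direction (a) $\Rightarrow$ (d) is correct and matches the paper exactly: the paper defers the proof to the general Banach space framework (Theorems 6.2 and 6.8), but the argument there is precisely the translation of the Hilbert space proof of Theorem~2.2 via (S1)--(S3), with $R$-boundedness entering through (S2) just as you describe.

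For the converse (d) $\Rightarrow$ (a), your property-$(\alpha)$ route via Corollary~6.6 is the paper's route and is correct. However, your alternative ``direct'' argument has a circularity you should be aware of: you write that the two-sided square function estimate (c) ``holds once (d) is established, by the already-proved chain run through (a)'', but (a) in your chain \emph{is} the $R$-boundedness of $\{e^{-b|s|}T_s\}$, which is exactly what you are trying to prove. The chain you established only runs (a) $\Rightarrow$ (b) $\Rightarrow$ (c) $\Rightarrow$ (d), not backwards. The paper closes this loop by a separate argument, Lemma~6.4: starting only from the $H^\infty(S(\omega))$-calculus and finite cotype, it proves the square-function bound $\|e^{-a|\cdot|}T_{(\cdot)}x\|_{\gamma(\R,X)}\le C\|x\|$ directly, via a dyadic Fourier-series decomposition of $e^{-a|t|}T_t$ and an $\ell_1$-estimate on the resulting $H^\infty$-symbols (this is where finite cotype is used, through Lemma~5.11). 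That lemma is the missing ingredient that makes (d) $\Rightarrow$ (c) non-circular, and it is also what underlies Corollary~6.6 itself (via 6.5 and 6.2). So your ``either/or'' is really a single route: one cannot avoid the Lemma~6.4 step.
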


Instead of proving these results now we will first introduce generalized 
square functions on Banach spaces
and then present our proofs in this more general setting.
A reader premarily interested in the $L_p$--case, can easily interpret these
arguments in terms of the square functions defined above. Since $L_p$--spaces
with $1 < p < \infty$ have non--trivial type and cotype and property 
$(\alpha)$, no additional assumptions are necessary in this case.

To motivate our general definition of square function in the next section,
we reformulate (1):

\begin{discussion}
To a function $f \in L_q(\Omega, L_2(I)), \Omega \subset \R^N$, we can associate
an operator $u_f : L_2(I) \to L_q(\Omega)$ by 
$$
u_f(h) = \int\limits_I f(t) h(t) dt, \quad h \in L_2(I).
$$
If $(e_n)$ is an orthonormal basis of $L_2(I)$, then so is 
$(\overline{e}_n)$
and for a continuous $f$ on $I \times \Omega$, we calculate for a fixed $w$
\begin{equation*}
\begin{split}
& \biggl( \int\limits_I |f(t)(\omega)|^2 dt \biggr)^{1/2} =
\biggl( \sum_n |<f(\cdot)(\omega), \overline{e}_n > |^2 \biggr)^{1/2} \\
& = \biggl( \sum_n |[u_f(e_n)](\omega)|^2 \biggr)^{1/2} 
= \biggl( \E \biggl| \sum_n g_n [u_f(e_n)](\omega) \biggr|^2 \biggr)^{1/2} \\
& = C_q \biggl( \E \biggl| \sum_n g_n [u_f (e_n)](\omega) \biggr|^q \biggr)^{1/q}
\end{split}
\end{equation*}
where $(g_n)$ is a Gaussian sequence. Taking norms in
$L_q(\Omega)$ and using Fubini's theorem we obtain
\begin{align*}
\biggl| \biggl| \biggl( \int_I | f(t) |^2 dt \biggr)^{1/2} 
\biggr| \biggr|_{L_q(\Omega)} 
&= C_q \biggl( \E \biggl| \biggl| \sum_n g_n u_f (e_n) 
\biggr| \biggr|_{L_q(\Omega)}^q \biggr)^{1/q} \\
&\simeq C_q' \biggl( \E \biggl| \biggl| \sum_n g_n u_f (e_n) 
\biggr| \biggr|_{L_q(\Omega)}^2 \biggr)^{1/2}
\end{align*}
where we used Kahane's inequality for the last estimate. Note that the last expression makes sense
in every Banach space.
\end{discussion}

\begin{remark}
Let $X$ be a Banach function space on $(\Omega,\mu)$ which is $q$--concave
for some $q < \infty$ in the sense of \cite{LT}, Def.~1.d.3. Then we can replace
Fubini's theorem in the above calculation by a result of Maurey (see \cite{LT}, 
1.d.6) and still obtain that
$$
\biggl| \biggl| \biggl( \int_I |f(t)(\cdot)|^2dt \biggr)^{1/2} \biggr| 
\biggr|_X \simeq \E \biggl( \biggl| \biggl| \sum_n g_n u_f (e_n) 
\biggr| \biggr|^2 \biggr)^{1/2}.
$$
Hence in $q$--concave function spaces the classical square functions (they 
were used e.~g.~in \cite{LLL}) are also equivalent to the generalized square functions to be 
introduced in the next section.
\end{remark}

\section{Generalized square functions}

In this section $H$ is always a Hilbert space and denote by $(\cdot | \cdot)$
its scalar product. If $X$ is a Banach space, we will need on occasion the 
Banach space dual $u: X'\to H'$ of operators $u\in B(H,X)$. To avoid notational 
complications we fix a bilinear map $< \cdot , \cdot >_H$ on $H \times H$ so 
that $\|x\|_H = \sup\{ < x,y >_H: \|y\|_H \leq 1 \}$ and $\|y\| = \sup 
\{ < x,y >_H :\|x\|_H \leq 1 \}$, so that we can identify the Banach space dual $H'$ with
$H$ via this duality. If $(e_j)$ is an orthonormal basis of $H$ we can e.g. choose 
$$
<x,y> = \displaystyle \sum_j <x, e_j> <y, e_j>.
$$

Of course, $<\cdot , \cdot>$ is only determined up to unitary maps of $H$,
but if $H = L_2(\Omega, \mu)$ we always choose $<f,g> = \int f(\omega)
g(\omega) d\mu (\omega)$. For $S \in B(H,X)$ we denote by $S'$ the Banach
space dual $S' \in B(X', H)$ with respect to the duality $<\cdot , \cdot>_H$. For $S\in B(H)$, we also define $S' \in B(H)$ by $<Sh,g>=<h,S'g>$ and distinguish this adjoint from the Hilbert space adjoint $S^*$ which is determined by the scalar product $(\cdot\vert\cdot)$. If $X=L_2(\Omega,\mu)$,  then $S'h=\overline{S^{*}(\overline{h})}$.
$u= \sum x_i \otimes h_i$ with $x_i \in X$ and $h_i \in H$ stands for the
operator $u(h)= \sum_i <h, h_i>_H x_i$.

For our treatment of generalized square functions we need some notions from 
Banach space theory. Recall that $(g_n)$ is an independent sequence of 
standard Gaussian variables. $(g_n)$ is supposed to be complex if $H$ is a complex Hilbert space.

\begin{definition}
Let $H$ be a Hilbert space and $X$ a Banach space. We denote by 
$\gamma_{+}(H,X)$ the 
space of all linear operators $u: H \to X$ such that 
$$
\|u\|_\gamma = \sup_{N\in \mathbb{N}} \biggl( \E \biggl| \biggl| \sum_{n=1}^N g_n u(e_n) 
\biggr| \biggr|^2 \biggr)^{1/2} < \infty.
$$
Here the $\sup$ is taken over all finite orthonormal system $(e_n)$ in $H$.
\end{definition}

By $\gamma(H,X)$ we denote the closure of the finite dimensional operators in
$\gamma_{+}(H,X)$. The space $\gamma(H,X)$ and its norm play an important role in the geometry of Banach spaces (cf \cite{Pi5}, p.35ff) and in the theory of cylindrical Gaussian measures on Banach spaces (see e.g. \cite{LPie}). However, in this literature the notation $l(H,X)$ instead of $\gamma(H,X)$ is used.

\begin{remark}
a) If $X$ does not contain $c_0$, then $\gamma_{+}(H,X) = \gamma(H,X)$ by a result of
Kwapien \cite{Kw}. 

b) Let $H$ be separable and $(e_j)$ an orthonormal basis of $H$.
If $u \in \gamma(H,X)$ then
$$
\|u\|_\gamma = \biggl( \E \biggl| \biggl| \sum_{j \in \N} g_j u(e_j)
\biggr| \biggr|^2 \biggr)^{1/2}.
$$

c) By Kahane's inequality (\cite{LeTa}), one can define on $\gamma_{+}(H,X)$ equivalent norms
\[ \|u\|_{\gamma,p} = \sup_{N\in\mathbb{N}}\biggl( \E \biggl| \biggl| \sum_{j=1}^N g_j u(e_j)
\biggr| \biggr|^p \biggr)^{1/p} \]
 
for $1<p<\infty$.
\end{remark}

This expression is independent of the choice of $(e_j)$ thanks to
the following ideal property of $\gamma(H,X)$ applied to unitary operators on $H$:

\begin{proposition}
For $T \in B(X,Y), u \in \gamma(H_2,X)$ and $v \in B(H_1,H_2)$ the composition $T 
uv \in \gamma(H_1,Y)$ and 
$$
\| Tuv\|_\gamma \leq \|T\| \cdot \|u\|_\gamma \cdot \|v\|.
$$
\end{proposition}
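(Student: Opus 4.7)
The plan is to prove two separate inequalities and combine them: the left--contraction $\|Tu\|_\gamma \leq \|T\|\,\|u\|_\gamma$ and the right--contraction $\|uv\|_\gamma \leq \|u\|_\gamma\,\|v\|$. The first one is immediate, since for any finite orthonormal system $(e_n)$ in $H_2$, applying $T$ inside the norm and using $\|T(\cdot)\| \leq \|T\|\|\cdot\|$ gives
$$\E\biggl\|\sum_n g_n Tu(e_n)\biggr\|^2 = \E\biggl\|T\Bigl(\sum_n g_n u(e_n)\Bigr)\biggr\|^2 \leq \|T\|^2\,\E\biggl\|\sum_n g_n u(e_n)\biggr\|^2,$$
and taking the supremum over orthonormal systems proves $\|Tu\|_\gamma \leq \|T\|\,\|u\|_\gamma$.

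The substantive step is the right--contraction, which I would carry out via the singular value decomposition of $v$ restricted to $V := \operatorname{span}(f_1,\dots,f_N)$, where $(f_n)_{n=1}^N$ is a given finite orthonormal system in $H_1$. This produces an orthonormal basis $(\phi_j)_{j=1}^N$ of $V$, an orthonormal system $(\psi_j)_{j=1}^N$ in $H_2$, and singular values $\sigma_j \geq 0$ with $\sigma_j \leq \|v\|$, such that $v(f_n) = \sum_j \sigma_j \langle f_n,\phi_j\rangle_H\,\psi_j$. Substituting and reversing the order of summation,
$$\sum_n g_n v(f_n) = \sum_j \sigma_j \tilde g_j \psi_j, \qquad \tilde g_j := \sum_n \langle f_n,\phi_j\rangle_H\, g_n.$$
The matrix $(\langle f_n,\phi_j\rangle_H)_{j,n}$ is the change--of--basis matrix between two orthonormal bases of $V$, hence unitary; since standard Gaussian sequences are invariant under unitary transformations (in the real as well as the complex case), $(\tilde g_j)$ is again a standard Gaussian sequence. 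Applying $u$ and invoking the scalar contraction principle for Gaussians yields
$$\E\biggl\|\sum_n g_n uv(f_n)\biggr\|^2 = \E\biggl\|\sum_j \sigma_j \tilde g_j\, u(\psi_j)\biggr\|^2 \leq \|v\|^2\,\E\biggl\|\sum_j \tilde g_j\, u(\psi_j)\biggr\|^2 \leq \|v\|^2\,\|u\|_\gamma^2,$$
the last inequality using that $(\psi_j)$ is orthonormal in $H_2$. Combining with the first step gives $\|Tuv\|_{\gamma} \leq \|T\|\,\|u\|_\gamma\,\|v\|$, so that $Tuv \in \gamma_+(H_1,Y)$.

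To upgrade to genuine membership in $\gamma(H_1,Y)$, I would approximate $u$ by finite--rank operators $u_m \to u$ in the $\gamma$--norm (which exist by definition of $\gamma(H_2,X)$); then each $Tu_m v$ is finite--rank, and the combined estimate applied to $u - u_m$ gives $\|Tuv - Tu_m v\|_\gamma \leq \|T\|\,\|u-u_m\|_\gamma\,\|v\| \to 0$, so $Tuv$ lies in the closure of the finite--rank operators. The main technical point is the SVD manipulation together with the unitary invariance of the Gaussian sequence; once one recognises that this reduces to an $N\times N$ unitary change of coordinates on $(g_1,\dots,g_N)$, the argument becomes routine.
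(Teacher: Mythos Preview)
Your proof is correct and follows essentially the same strategy as the paper: both reduce to the fact that a Gaussian sum $\sum_n g_n x_n$ is contracted when the $x_n$ are replaced by $\sum_i a_{in} x_i$ for a matrix $(a_{in})$ of operator norm at most $\|v\|$. The paper invokes this as a black box (Corollary~12.17 in \cite{DJT}), whereas you unpack it via the singular value decomposition together with unitary invariance of Gaussians and the scalar contraction principle; your version is therefore more self-contained. You also include the finite-rank approximation argument showing that $Tuv$ lies in $\gamma(H_1,Y)$ rather than merely $\gamma_+(H_1,Y)$, a point the paper's proof leaves implicit.
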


\begin{proof}
For a finite orthonormal system $(e_n)_{n=1}^m$ in $H_1$ choose a finite 
orthonormal system $(f_n)_{n=1}^m$ in $H_2$, so that $v(e_i) \in \mbox{ span 
}(f_1,\dots,f_m)$ for $i=1,\dots,n$. Then there are 
$(a_{ij})_{i,j=1,\dots,m}$ so that $Pv^{\ast}f_i=\displaystyle \sum_{j=1}^m 
a_{ij}e_j$, 
where $P$ is the orthogonal projection onto span $(e_1,\dots,e_m)$. Then 
$$
uv(e_n) =u \biggl(\sum_{i=1}^m (f_i|ve_n)f_i \biggr) = \sum_{i=1}^m 
a_{in}u(f_i).
$$
Hence by Corollary 12.17 in \cite{DJT} (or by the complex version) we obtain
\begin{equation*}
\begin{split}
& \E \biggl| \biggl| \sum_n g_n Tuv(e_n) \biggr| \biggr|^2 \leq \| T\|^2 \E 
\biggl| \biggl| \sum_n g_n \biggl( \sum_{i=1}^n a_{in} uf_i \biggr) \biggr| 
\biggr|^2 \\
& \leq \|T\|^2 \cdot \| Pv^{\ast}\|^2 \E \biggl| \biggl| \sum_n g_n 
u(f_n) \biggr| \biggr|^2 \leq \|T\|^2 \|v\|^2 \|u\|_\gamma^2.
\end{split}
\end{equation*}
\end{proof}

We would like to extend an operator $S \in {\cal B}(H_1,H_2)$ to an operator 
$\Skr:\gamma(H_1,X) \to \gamma(H_2,X)$, so that for a finite dimensional 
operator $u = \sum x_i \otimes h_i$ with $x_i \in X, h_i \in H_1$ we have 
$$
{\bf S}^{\otimes}(u) = \sum x_i \otimes Sh_i
$$
and hence for $g \in H_2$
$$
\Skr(u)(g) = \sum x_i <S'g,h_i>=(u \circ S')(g)
$$
This is possible by 4.3.

\begin{proposition}
Let $S \in {\cal B}(H_1,H_2)$. Then the operator 
$$
{\bf S}^{\otimes} : \gamma(H_1,X) \to \gamma(H_2,X), \quad 
{\bf S}^{\otimes}(u) = u \circ S'
$$
defines a bounded operator with $\|{\bf S}^{\otimes}\| \leq \|S\|$.
Moreover, for all $x' \in X'$ and $u \in \gamma(H_1,X)$
$$
<{\bf S}^{\otimes}u,x'>_X = S (<u,x'>_X)
$$
as functionals on $H_2$.
\end{proposition}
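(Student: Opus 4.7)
The plan is to read the boundedness of $\Skr$ off Proposition 4.3 and then to verify the duality identity by unwinding the definition of the Banach-space dual with respect to the chosen bilinear form $\langle \cdot,\cdot\rangle_H$.

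For the boundedness, I would apply Proposition 4.3 with $T = I_X$ (the identity on $X$) and $v = S' \in B(H_2,H_1)$, the Banach-space dual of $S$ with respect to $\langle\cdot,\cdot\rangle_H$. Since $\langle\cdot,\cdot\rangle_H$ identifies $H$ with its Banach dual isometrically, one has $\|S'\| = \|S\|$, so the conclusion of Proposition 4.3 gives $u\circ S' \in \gamma(H_2,X)$ together with the estimate $\|u\circ S'\|_\gamma \leq \|u\|_\gamma\,\|S\|$. Thus $\Skr\colon u \mapsto u\circ S'$ is a well-defined bounded linear operator $\gamma(H_1,X)\to\gamma(H_2,X)$ of norm at most $\|S\|$. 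As a consistency check with the informal description preceding the proposition, for a finite rank operator $u=\sum x_i\otimes h_i$ and $g\in H_2$,
\begin{equation*}
(u\circ S')(g) = \sum_i x_i \,\langle S'g, h_i\rangle_{H_1} = \sum_i x_i\, \langle g, S h_i\rangle_{H_2},
\end{equation*}
so $\Skr u = \sum_i x_i\otimes S h_i$, which is the natural tensor extension of $S$ to $\gamma(H,X)$.

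For the duality identity, I would chase definitions. Fix $x'\in X'$ and set $h_0 := \langle u, x'\rangle_X \in H_1$, which by the paper's convention is the unique element satisfying $\langle u(h),x'\rangle = \langle h, h_0\rangle_{H_1}$ for every $h\in H_1$. Substituting $h = S'g$ with $g\in H_2$ and using the defining relation $\langle S'g, h_0\rangle_{H_1} = \langle g, Sh_0\rangle_{H_2}$ for the Banach-space dual $S'$, I obtain
\begin{equation*}
\langle (\Skr u)(g), x'\rangle = \langle u(S'g), x'\rangle = \langle S'g, h_0\rangle_{H_1} = \langle g, Sh_0\rangle_{H_2},
\end{equation*}
which is precisely the statement that $\langle \Skr u, x'\rangle_X$ and $S h_0 = S(\langle u, x'\rangle_X)$ induce the same functional on $H_2$.

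The only real obstacle is bookkeeping, since three dualities are in play simultaneously: the Hilbert-space adjoint governed by $(\cdot|\cdot)$, the Banach dual coming from $\langle\cdot,\cdot\rangle_H$, and the $X$--$X'$ pairing. Once the conventions are pinned down and one notices that $\Skr u = u\circ S'$ makes both assertions essentially one-line consequences of Proposition 4.3 and the definition of $S'$, no further work is needed.
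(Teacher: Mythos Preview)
Your proof is correct and follows essentially the same route as the paper: the norm estimate is read off the ideal property (Proposition 4.3) applied with $T=I_X$ and $v=S'$, and the duality identity is obtained by unwinding definitions, with your $h_0$ playing the role of $u'(x')$ in the paper's computation. The only difference is presentational---you add the finite-rank consistency check and a remark on the bookkeeping, neither of which the paper includes.
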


\begin{proof}
The norm estimate follows from the ideal property 4.3. We have for $u \in \gamma(H_1,X)$ $x' \in X'$ and $h \in H_2$
\begin{align*}
[<{\bf S}^{\otimes}&u,x'>_X](h)  =  < {\Skr}u(h),x'>_X = <u(S'h),x'>_X \\
& =  < S'h,u'(x')>_H = <h, S(u'(x'))>_H = [S (<u,x'>_X)](h).
\end{align*}
\end{proof}

Let $(\Omega, \Sigma, \mu)$ be a $\sigma$--finite measure space and $X$ a 
Banach space. By $\mathcal{P}_2(\Omega,\mu,X)$ we denote all 
Bochner--measurable functions $f:\Omega \to X$, so that $<f,x'>_X \in 
L_2(\Omega, \mu)$ for all $x' \in X'$. For $f \in \mathcal{P}_2(\Omega, \mu, 
X)$ 
we can define an operator $u_f \in B(L_2(\Omega, \mu),X)$ so that for $h \in 
L_2(\Omega, \mu)$ and $x' \in X'$ 
$$
<u_fh,x' >_X = \int\limits_{\Omega} <f(\omega),x'>_X h(\omega) d\mu(\omega).
$$
(By the uniform boundedness principle this formula defines a bounded operator $u_f 
: H \to X^{''}$. If $f$ is bounded on $\operatorname{supp}h$ and $\mu(\operatorname{supp}h)<\infty$, then the Bochner 
integral $u_fh = \int f(\omega)h(\omega)d\mu(\omega)$ belongs to $X$. Since 
such $h$ are dense in $L_2(\Omega, \mu)$ we conclude that the range of $u_f$ is 
contained in $X$.)
Now we can introduce our square function norm.

\begin{definition}
Let $f \in \mathcal{P}_2(\Omega,X)$ and $u_f$ as above. Then if $u_f \in$ \allowbreak 
$\gamma_{+}(L_2(\Omega),X)$ we define
$$
\|f\|_{\gamma(\Omega,X)} := \|u_f\|_{\gamma(L_2(\Omega),X)}.
$$
The space of all $f$ for which $u_f \in \gamma_{+}(L_2(\Omega),X)$ (or $u_f \in
\gamma(L_2(\Omega),X))$ we denote by 
$\gamma_{+}(\Omega,\mu,X)$ (or $\gamma(\Omega,\mu,X)$).
\end{definition}

If $\Omega = \Z$ with the counting measure then for $x_i = f(i), i \in \Z$,
we also use the notation
$$
\|(x_i)_i\|_\gamma = \|u_f\|_{\gamma(\Z,X)} = \sup_{N\in \mathbb{N}}\biggl( \E \biggl| \biggl| \sum_{n=1}^N
g_nx_n \biggr| \biggr|^2 \biggr)^{1/2}.
$$

\begin{examples}
a) If $f= \displaystyle \sum_{i=1}^n x_i \chi_{A_i}$ is a step function with 
$x_i 
\in 
X$ 
and $\mu(A_i) < \infty$ and $A_i$ pairwise disjoint, then 
$$
\|f\|_{\gamma(\Omega,X)} = \biggl( \E \left|\left| \sum_{i=1}^n g_i \mu(A_i)^{1/2} x_i\right|\right| ^2
\biggr)^{1/2} = \|(\mu(A_i)^{1/2}x_i)_i\|_\gamma.
$$
Indeed, if we choose the basis $h_i = \mu(A_i)^{- 1/2} \chi_{A_i}$ in span 
$(\chi_{A_i})$ and complete it by an orthonormal basis of $\{ \chi_{A_i} 
\}^{\perp}$ then $u_f(h_i) = \mu (A_i)^{1/2} x_i$.

b) Suppose that $f:[0,b] \to X$ is continuously differentiable. Then $f \in 
\gamma([0,b],X)$ and
$$
\|f\|_{\gamma([0,b],X)} \leq \int\limits_0^b s^{1/2} \|f'(s)\|_X ds + 
b^{1/2}\|f(b)\|_X.
$$
(The proof is the same as in 4.2 Just note that $\| - 
\chi_{[0,s]} f'(s)\|_\gamma = s^{1/2}\|f'(s)\|$.)
\end{examples}

\begin{remark}
a) The set $u_f,f \in \gamma(\Omega,X)$, is 
a dense and in general proper subspace of $\gamma(L_2(\Omega),X)$. Therefore, one may consider the 
space of operators $\gamma(L_2(\Omega),X)$ as the completion of the function 
space $\gamma(\Omega,X)$.

b) If $X$ has type 2, then $L_2(\Omega,X) \subset \gamma(\Omega,X)$.
If $X$ has cotype 2, then $\gamma(\Omega,X) \subset L_2(\Omega,X)$.
(Use the definition of type and cotype and Lemma 4.10 below.)

c) The rapidly decreasing functions $\calS(\R^N,X)$ belong to $\gamma(\R^N,X)$.
(For $N=1$ this follows from 4.6b) for $b \to 0$ and $\|f\|_\gamma \leq \int\limits_{- \infty}^{\infty} |t|^{1/2} \|f'(t)\| dt$. This argument can be extended to the multidimensional case. For general $N$ see \cite{KW2}.)
\end{remark}

We identify the dual of $H_j=L_2(\Omega_j,\mu)$, $j = 1,2$, with 
$L_2(\Omega_j,\mu)$ via the duality map $<f,g> = \int 
f(\omega)g(\omega)d\mu(\omega)$. 
Since for $g \in \mathcal{P}_2(\Omega_1), h \in H_2$ and $x' \in X'$
$$
<u_g(h), x'>_X = \int <f(\omega), x'>_X h(\omega) d \mu(\omega)
$$
the extension principle 4.4 gives in the case of functions

\begin{corollary}
For $S \in B(L_2(\Omega_1),L_2(\Omega_2))$ let $\Skr$ be the extension
of $S$ from $\gamma(L_2(\Omega_1),X)$ to $\gamma(L_2(\Omega_2),X)$. If 
$f \in \gamma(\Omega_1,X)$ and there is $g \in \mathcal{P}_2(\Omega_2,X)$  
with $\Skr(u_f)=u_g$, then $\|g\|_{\gamma(\Omega_2,X)} \leq \|S\| \cdot 
\|f\|_{\gamma(\Omega_1,X)}$ and $<g,x'>_X = S(<f,x'>_X)$ for $x' \in X'$.
\end{corollary}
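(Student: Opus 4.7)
The plan is to deduce this corollary as a direct application of Proposition 4.4, with the only real content being the translation from the operator language of $\gamma(L_2(\Omega_j),X)$ to the function language of $\gamma(\Omega_j,X)$.

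First I would handle the norm inequality. By Definition 4.5 we have $\|g\|_{\gamma(\Omega_2,X)}=\|u_g\|_{\gamma(L_2(\Omega_2),X)}$ and $\|f\|_{\gamma(\Omega_1,X)}=\|u_f\|_{\gamma(L_2(\Omega_1),X)}$. The hypothesis $\Skr(u_f)=u_g$ combined with the norm estimate $\|\Skr\|\leq\|S\|$ from Proposition 4.4 then immediately gives
\[
\|g\|_{\gamma(\Omega_2,X)}=\|\Skr(u_f)\|_{\gamma(L_2(\Omega_2),X)}\leq \|S\|\cdot\|u_f\|_{\gamma(L_2(\Omega_1),X)}=\|S\|\cdot\|f\|_{\gamma(\Omega_1,X)}.
\]

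For the duality identity, the point is to identify, for each $x'\in X'$, the functional $<u_g,x'>_X\in H_2'\cong H_2$ with the function $\omega\mapsto<g(\omega),x'>_X$. Unwinding the definition of $u_g$ from the paragraph preceding Definition 4.5, for every $h\in L_2(\Omega_2)$ one has
\[
<u_g h,x'>_X=\int_{\Omega_2}<g(\omega),x'>_X\,h(\omega)\,d\mu(\omega),
\]
which is exactly the pairing of $<g(\cdot),x'>_X\in L_2(\Omega_2)$ with $h$ under the chosen duality on $H_2$. Thus, viewed as elements of $H_2$, $<u_g,x'>_X$ is nothing but the function $<g,x'>_X$. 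The analogous identification holds for $f$ on $\Omega_1$.

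With this dictionary in hand, I apply the second conclusion of Proposition 4.4, namely $<\Skr u,x'>_X=S(<u,x'>_X)$ as elements of $H_2$, to $u=u_f$. Using $\Skr u_f=u_g$ on the left and the two identifications above, this reads $<g,x'>_X=S(<f,x'>_X)$ as functions in $L_2(\Omega_2)$, which is the required identity. There is no real obstacle; the one thing to verify carefully is the identification $<u_g,x'>_X\leftrightarrow<g(\cdot),x'>_X$, which is a direct consequence of the defining integral formula for $u_g$ together with the fixed duality $<f,h>=\int f h\,d\mu$ on $L_2$.
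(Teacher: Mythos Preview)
Your proof is correct and follows exactly the same approach as the paper: the paper simply notes the identification of $<u_g,x'>_X$ with the function $<g(\cdot),x'>_X$ via the defining integral formula for $u_g$ (in the paragraph immediately preceding the corollary) and then invokes Proposition~4.4. Your write-up is a more detailed and explicit version of the same argument.
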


{\sl Warning:}
It may happen that $\Skr(u_f)$ for $f \in \gamma(\Omega_1,X)$ does not belong 
to $\gamma(\Omega_2,X)$, but only to its completion $\gamma(L_2(\Omega_2),X)$. 

The following examples show that 4.8 will play the role of (S3) in our 
work with square functions. It provides a natural extension of "kernel" operators on $L_2(\Omega)$ to the function space $\gamma(\Omega,X)$.

\begin{example}
a) (Multiplication with scalar functions)
Let $m \in L_{\infty}(\Omega)$. The multiplication operator $Mf(t) =
m(t)f(t)$ is bounded on $L_2(\Omega)$. For $f\in\mathcal{P}_2(\Omega,X)$, we have $Mf \in \mathcal{P}_2(\Omega,X)$ and $\Mkr u_f = u_{Mf}$ by 4.8. Hence the definition of $M$ extends to $\gamma(\Omega,X)$ and $\| Mf \|_{\gamma} \leq \| m \|_{L_\infty} \| f \|_\gamma$.

b) (Fourier transform)
The Fourier transform on $L_2(\R^N)$ is defined by 
\[ {\cal F}f(t)=(2 \pi)^{- N/2} \int\limits_{\R^N} e^{-its}f(s)ds \] 
for $f\in L_2(\R^N) \cap L_1(\R^N)$. For $f\in \mathcal{P}_2(\R^N,X) \cap L_1(\R^N,X)$, the same formula gives ${\cal F}f \in \mathcal{P}_2(\R^N,X)$. By 4.8 we have ${\bf \mathcal{F}}^{\otimes}u_f = u_{\mathcal{F}f}$. Hence, the definition extends to (a dense set of functions in) $\gamma (\R^N,X)$ and 
\[ \| \mathcal{F}f \|_{\gamma(\R^N,X)} = \| f \|_{\gamma(\R^N,X)} \] 
by 4.8. Of course this extension can also be applied to other integral transforms 
bounded on $L_2$ such as the Mellin transform or the Hilbert transform.

c) (Integral operators)
Let $k$ be a measurable kernel on $\Omega_2 \times \Omega_1$, so that 
$k(t,\cdot)f(\cdot)$ is integrable on $\Omega_1$ for $f \in L_2(\Omega_1)$ 
and almost all $t \in \Omega_2$ and 
$$
Kf(t) = \int\limits_{\Omega_1} k(t,s) f(s)ds, \quad t \in \Omega_2
$$
defines a bounded operator $K:L_2(\Omega_1) \to L_2(\Omega_2)$. For $f\in L_2(\Omega_1,X)$, the same formula defines a function $Kf \in L_2(\Omega_2,X)$. By 4.8, $\Kkr u_f = u_{Kf}$ and $\|Kf\|_{\gamma(\Omega_2,X)} \leq \|K\| \|f\|_{\gamma(\Omega_1,X)}$.

d) (Averaging projections)
Let $(\Omega,\Sigma,\mu)$ be a $\sigma$--finite measure space and 
$(E_k)_k$ a (finite or infinite) partition of $\Omega$ with $0 < \mu(E_k)
< \infty$. Then
$$
Ph = \sum_k \mu(E_k)^{-1} \int\limits_{E_k} h d\mu \; \chi_{E_k}
$$
defines a contraction on $L_2(\Omega)$. Again, the same formula defines an operator on $L_2(\Omega,X)$, which extends to a contraction
$\Pkr$ on $\gamma(L_2(\Omega),X)$. For $f \in L_2(\Omega,X) \cap \gamma(L_2(\Omega),X)$, we have $\Pkr u_f = u_{Pf}$.
\end{example}

The following convergence results are useful:

\begin{lemma}
a) Suppose $u_{\nu}$ is a uniformly bounded net in ${\cal B}(X)$ such
that $\displaystyle \lim_{\nu} u_{\nu} = u$ in the strong operator topology.
Then $\|u\|_\gamma \leq \displaystyle \liminf_{\nu} \|u_{\nu}\|_\gamma$.

b) Suppose that $f_n,f \in {\cal P}_2(\Omega,X)$ and $f_n \to
f$ as $n\to\infty$ $\mu$--a.e. Then $\|f\|_{\gamma(\Omega,X)}$ $\leq
\liminf_n \|f_n\|_{\gamma(\Omega,X)}$.

c) Suppose $f \in \gamma(\Omega,X)$. Let $h_n \in L_{\infty}(\Omega)$ with
$\displaystyle \sup_{n} \|h_n\|_{\infty} < \infty$ and $h_n \to 0$ as $n\to\infty$ a.e. Then 
$\|h_nf\|_{\gamma(\Omega,X)} \to 0$ as $n\to\infty$.
\end{lemma}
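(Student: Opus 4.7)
The plan is to handle the three parts in order: (a) by direct lower semicontinuity of the $\gamma$--norm, (b) by reducing to (a) through a weak operator topology argument, and (c) by approximating $u_f$ with a finite--rank operator and then invoking dominated convergence. I expect the main obstacle to be (b), because only pointwise $\mu$--a.e.\ convergence of the $f_n$ is assumed, which does not immediately yield anything about the operators $u_{f_n}$.

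For (a), I fix an arbitrary finite orthonormal system $(e_n)_{n=1}^N$ in $H$. Strong operator convergence $u_\nu \to u$ gives $\sum_n g_n u_\nu(e_n) \to \sum_n g_n u(e_n)$ in $X$ pointwise on the probability space, so continuity of $\|\cdot\|_X$ together with Fatou's lemma yields
\[
\E \bigl\| \sum_n g_n u(e_n) \bigr\|^2 \leq \liminf_\nu \E \bigl\| \sum_n g_n u_\nu(e_n) \bigr\|^2 \leq \liminf_\nu \|u_\nu\|_\gamma^2,
\]
and taking the supremum over all finite orthonormal systems concludes (a).

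For (b), I may assume $\liminf_n \|f_n\|_\gamma < \infty$ (otherwise the claim is trivial) and pass to a subsequence with $M := \sup_n \|f_n\|_\gamma < \infty$. Since $\|u\|_{B(L_2,X)} \leq \|u\|_\gamma$, the sequence $(\langle f_n, x'\rangle)_n$ is bounded in $L_2(\Omega)$ for every $x' \in X'$; combined with the $\mu$--a.e.\ convergence $\langle f_n, x'\rangle \to \langle f, x'\rangle$, this forces weak convergence in $L_2(\Omega)$. Testing against any $h \in L_2(\Omega)$ then gives $u_{f_n}(h) \rightharpoonup u_f(h)$ weakly in $X$. I now repeat the Fatou argument of (a), this time using weak lower semicontinuity of $\|\cdot\|_X$ applied to each realisation $\sum_n g_n u_{f_n}(e_n) \rightharpoonup \sum_n g_n u_f(e_n)$; this delivers the desired bound.

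For (c), I exploit that $u_f \in \gamma(L_2(\Omega),X)$ lies in the closure of finite--rank operators. Given $\varepsilon > 0$, pick $v = \sum_{k=1}^N e_k \otimes x_k$ with $(e_k)$ orthonormal in $L_2(\Omega)$ and $\|u_f - v\|_\gamma < \varepsilon$. Writing $M_{h_n}$ for the multiplication operator on $L_2(\Omega)$, Example 4.9(a) and Proposition 4.4 identify $u_{h_n f}$ with $u_f \circ M_{h_n}$, so
\[
\|h_n f\|_\gamma \leq \|h_n\|_\infty \|u_f - v\|_\gamma + \|v \circ M_{h_n}\|_\gamma \leq C\varepsilon + \|v \circ M_{h_n}\|_\gamma,
\]
where $C = \sup_n \|h_n\|_\infty$. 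A direct calculation gives $v \circ M_{h_n} = \sum_k (h_n e_k) \otimes x_k$, which is still of rank at most $N$ with operator norm bounded by $\sum_k \|h_n e_k\|_{L_2}\|x_k\|_X$; each $\|h_n e_k\|_{L_2}$ tends to $0$ by dominated convergence, since $h_n e_k \to 0$ $\mu$--a.e.\ with $|h_n e_k| \leq \|h_n\|_\infty |e_k|$. Because for rank--$N$ operators the $\gamma$--norm is controlled by $N$ times the operator norm (via the triangle inequality under the Gaussian expectation), $\|v \circ M_{h_n}\|_\gamma \to 0$, and letting $\varepsilon \to 0$ completes the proof.
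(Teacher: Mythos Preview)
Your proof is correct. Parts (a) and (c) follow the paper's own argument: Fatou on finite Gaussian sums for (a), and finite-rank approximation plus dominated convergence for (c); you have simply written out the details the paper leaves implicit.

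Part (b) takes a genuinely different route. The paper applies Egoroff's theorem to obtain sets $\Omega_m$ on which $f_n\to f$ uniformly, so that $u_{f_n\chi_{\Omega_m}}\to u_{f\chi_{\Omega_m}}$ in the strong operator topology, and then invokes (a) along the double-indexed net $(n,m)$. You instead observe that the uniform bound on $\|u_{f_n}\|_\gamma$ forces each scalar sequence $\langle f_n,x'\rangle$ to be bounded in $L_2(\Omega)$, and combine this with a.e.\ convergence to get weak $L_2$-convergence, hence $u_{f_n}(h)\rightharpoonup u_f(h)$ weakly in $X$ for every $h\in L_2(\Omega)$; the Fatou step then uses weak (rather than strong) lower semicontinuity of the norm. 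Your approach has the advantage of sidestepping the finite-measure hypothesis that Egoroff's theorem requires (the paper's sketch tacitly assumes this when writing $\mu(\Omega\setminus\Omega_m)\le 1/m$ on a merely $\sigma$-finite space). The paper's approach, on the other hand, stays closer to the strong-topology statement already proved in (a) and avoids appealing to the somewhat less elementary fact that $L_2$-boundedness plus a.e.\ convergence yields weak $L_2$-convergence. One cosmetic point: in your write-up of (b) you use $n$ both for the index of the approximating sequence $f_n$ and for the Gaussian/orthonormal index; separating these would improve readability.
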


\begin{proof}
a) For every orthonormal system $e_1, \dots, e_m$ we have by Fatou's lemma
\[ \|(u(e_j))_{j=1,\dots,m}\|_\gamma \leq \displaystyle
\liminf_{\nu} \|(u_{\nu}(e_j))_{j=1,\dots,m}\|_\gamma \].

b) For every $m$ there is by Egoroff's theorem an $\Omega_m \subset \Omega$
with $\mu(\Omega \setminus \Omega_m) \leq \textstyle \frac{1}{m}$ so that
$f$ is bounded on $\Omega_m$ and $f_n \to f$ uniformly on $\Omega_m$ as $n\to\infty$.
Now define the net $u_{n,m}$ by the functions $f_n \chi_{\Omega_m}$ and
apply a).

c) is clear if the range of $f$ is finite--dimensional. Since $f \in
\gamma(\Omega,X)$ we can use now an approximation argument.
\end{proof}

For the sake of simplicity we formulate the next proposition for a locally 
compact metric space $\Omega$ with no isolated points and a positive Borel 
measure $\mu$, i.~e.~$\mu(U) > 0$ for every open $U \subset \Omega$. This result will play the role of (S2).

\begin{proposition}
Let $N:\Omega \to B(X)$ be a strongly continuous map. Then $\tau = \{ N(t):t 
\in \Omega \}$ is $\gamma$--bounded with constant $K$ if and only if 
$$
\| N(\cdot)[f(\cdot)]\|_{\gamma(\Omega,X)} \leq K \|f\|_{\gamma(\Omega,X)}
$$
for all $f \in \gamma(\Omega, X)$.
\end{proposition}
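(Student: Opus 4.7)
The plan is to prove the two implications separately, using the formula from Example 4.7a for the $\gamma$-norm of a step function, the lower semi-continuity in Lemma 4.10b, and the strong continuity of $N$.

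For the forward direction, assume $\tau = \{N(t): t \in \Omega\}$ is $\gamma$-bounded with constant $K$, and first treat a step function $f = \sum_i x_i \chi_{A_i}$ with pairwise disjoint $A_i$ of positive finite measure. Along a refining sequence of partitions $A_i = \bigsqcup_\alpha A_{i,\alpha}$ with $\operatorname{diam}(A_{i,\alpha}) \to 0$ and chosen points $t_{i,\alpha} \in A_{i,\alpha}$, the step function $g_R(t) := \sum_{i,\alpha} N(t_{i,\alpha}) x_i \chi_{A_{i,\alpha}}(t)$ satisfies, by Example 4.7a and the $\gamma$-boundedness applied to the finite collection $\{N(t_{i,\alpha})\}$,
\begin{align*}
\|g_R\|_\gamma &= \|(\mu(A_{i,\alpha})^{1/2} N(t_{i,\alpha}) x_i)_{i,\alpha}\|_\gamma \\
&\leq K \|(\mu(A_{i,\alpha})^{1/2} x_i)_{i,\alpha}\|_\gamma = K \|f\|_\gamma.
\end{align*}
Strong continuity of $N$ gives $g_R(t) \to N(t) f(t)$ pointwise a.e.\ as the partitions are refined, so Lemma 4.10b yields $\|Nf\|_\gamma \leq \liminf_R \|g_R\|_\gamma \leq K \|f\|_\gamma$. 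To pass from step functions to a general $f \in \gamma(\Omega,X)$, approximate by step functions $f_n$ with $f_n \to f$ pointwise a.e.\ and $\|f_n\|_\gamma \leq \|f\|_\gamma$---available through the contractive averaging projections $P_n$ of Example 4.9d along a nested exhausting sequence of partitions---and apply Lemma 4.10b once more to $Nf_n \to Nf$.

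For the reverse direction, assume $\|N(\cdot)f(\cdot)\|_\gamma \leq K \|f\|_\gamma$ for every $f \in \gamma(\Omega,X)$. Given $t_1,\dots,t_m \in \Omega$ and $x_1,\dots,x_m \in X$, choose pairwise disjoint open neighborhoods $U_i^\epsilon$ of the $t_i$ with diameter less than $\epsilon$ and $\delta_i(\epsilon) := \mu(U_i^\epsilon) > 0$; this is possible because $\Omega$ has no isolated points and $\mu$ is positive on open sets. Set $f_\epsilon := \sum_i \delta_i(\epsilon)^{-1/2} x_i \chi_{U_i^\epsilon}$. By Example 4.7a, $\|f_\epsilon\|_\gamma = (\E\|\sum_i g_i x_i\|^2)^{1/2}$, and the hypothesis gives $\|Nf_\epsilon\|_\gamma \leq K \|f_\epsilon\|_\gamma$. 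The normalized indicators $\phi_i^\epsilon := \delta_i(\epsilon)^{-1/2} \chi_{U_i^\epsilon}$ form an orthonormal system in $L_2(\Omega)$, and
\[ u_{Nf_\epsilon}(\phi_i^\epsilon) = \delta_i(\epsilon)^{-1} \int_{U_i^\epsilon} N(t) x_i \, d\mu(t) =: \bar N_i^\epsilon x_i, \]
so the defining supremum of the $\gamma$-norm in Definition 4.1 yields $\|Nf_\epsilon\|_\gamma \geq (\E\|\sum_i g_i \bar N_i^\epsilon x_i\|^2)^{1/2}$. Strong continuity of $N$ forces $\bar N_i^\epsilon x_i \to N(t_i) x_i$ as $\epsilon \to 0$; passing to the limit gives $(\E\|\sum_i g_i N(t_i) x_i\|^2)^{1/2} \leq K (\E\|\sum_i g_i x_i\|^2)^{1/2}$, the desired $\gamma$-boundedness.

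The main obstacle is the density step in the forward direction: one needs a step-function sequence $f_n \to f$ pointwise a.e.\ together with $\limsup_n \|f_n\|_\gamma \leq \|f\|_\gamma$. The natural candidates are the averaging projections of Example 4.9d along a nested sequence of finite-measure partitions generating the relevant $\sigma$-algebra; the contractivity of $\Pkr_n$ on $\gamma(L_2(\Omega),X)$ handles the $\gamma$-norm bound, while pointwise a.e.\ convergence of $P_n f$ to $f$ rests on a Banach-space-valued martingale convergence argument that one must set up carefully when $f$ is only in $\mathcal{P}_2(\Omega,X)$ rather than Bochner integrable.
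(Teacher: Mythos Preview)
Your argument is correct and close in spirit to the paper's, but the forward direction is organized differently. Instead of your two-stage approach (point evaluation of $N$ on refined partitions for step functions, then density), the paper averages $N$ itself: with $P$ the averaging projection over a partition $(E_k)$, write $P(N)=\sum_k S_k\chi_{E_k}$ where $S_k x=\mu(E_k)^{-1}\int_{E_k}N(\omega)x\,d\mu(\omega)$. Each $S_k$ lies in the strong closure of $\mbox{absco}(\tau)$, so $\{S_k\}$ is $\gamma$-bounded with the same constant $K$, and one obtains $\|P(N)[P(f)]\|_\gamma \le K\|Pf\|_\gamma \le K\|f\|_\gamma$ directly for arbitrary $f\in\gamma(\Omega,X)$. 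A single limit along $P_n$ with $P_nN\to N$ and $P_nf\to f$ a.e.\ (Lemma~4.10b) then finishes. This collapses your two limiting procedures into one and sidesteps the need to refine partitions to small diameter in the step-function stage.

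The obstacle you flag in the density step---a.e.\ convergence $P_nf\to f$ for $f$ merely in $\mathcal P_2(\Omega,X)$---is present in the paper's proof as well and is not serious: one chooses the partition pieces so that $f$ is bounded on each (possible since $f$ is Bochner measurable), making $f$ locally Bochner integrable, and then the vector-valued martingale convergence theorem yields $P_nf\to f$ a.e.\ along a suitable refining sequence.

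Your converse is essentially the paper's; you might add, as the paper does, a final sentence reducing the case of possibly repeated points $t_1,\dots,t_m$ to distinct ones by approximating with distinct nearby points and using strong continuity once more.
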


\begin{proof}
Suppose first that $\tau$ is $\gamma$--bounded with constant $K$. Choose a 
partition $(E_k)_k$ of $\Omega$, so that $N$ and $f$ are bounded on each $E_k$ 
and $\mu(E_k) < \infty$. Then there are $x_k \in X$ and $S_k \in B(X)$ so 
that for the averaging projection $P$ from 4.9d)
$$
P(f) = \sum_k x_k \chi_{E_k}, \quad P(N) = \sum_k S_k \chi_{E_k}, \quad P(N)[P(f)] = 
\sum_k S_k x_k \chi_{E_k}.
$$
Note that $S_kx=\mu(E_k)^{-1} \int\limits_{E_k} N(\omega) x d\mu (\omega)$ 
belongs to the closure of $\mbox{absco}(\tau)$ in the strong operator topology, 
so that $\{ S_k\}$ is $\gamma$--bounded with the same constant $K$. Hence by 4.6 
and 4.9.
\begin{align*}
\| PN [Pf]\|_{\gamma(\Omega,X)} &= \|(\mu(E_k)^{1/2} S_k x_k )_k\|_\gamma \leq K \|(\mu(E_k)^{1/2}x_k)_k \|_\gamma \\
&= K \|Pf\|_{\gamma(\Omega,X)} \leq K \|f\|_{\gamma(\Omega,X)}.
\end{align*}
If we choose a sequence $P_n$ of averaging projections, so that $P_nf \to f$,
$P_n(N) \to N$ a.~e.~for $n \to \infty$, then the claim follows from 4.10.

For the converse we pick $N_k = N(\omega_k)$, $k = 1,\dots,n$, with distinct \allowbreak
$\omega_1,\ldots,\omega_n \in \Omega$ and $x_1,\dots,x_n \in X$. For a given $\varepsilon > 0$ choose
disjoint open sets $U_1,\dots,U_n$ with $\omega_k \in U_k$ and such that
$\|N(\omega)x_k - N(\omega_k)x_k \| \leq \varepsilon/n$ for $\omega \in U_k$.
Choose $f= \sum_k x_k \mu(U_k)^{- 1/2} \chi_{U_k}$ so that $\| f \|_\gamma = \| (x_k)_k \|_\gamma$ by example 4.6. If we put
$$
Ph = \sum_{k=1}^n \left( \mu(U_k)^{-1} \int\limits_{U_k} h d\mu \right) \chi_{U_k}
$$
then $P(Nf) = \displaystyle \sum_{k=1}^n \mu(U_k)^{-1/2} y_k \chi_{U_k}$ with $y_k = \mu(U_k)^{-1} \int_{U_k} N(\omega)x_k d\mu$ and
$$
\|(N_kx_k)_k - (y_k)_k \|_\gamma \leq \sum_{k=1}^n \|N_kx_k-y_k\|_X \mathbb{E}\vert g_k \vert \leq \varepsilon
$$
By 4.9 and our assumption
\begin{align*}
\|(N_kx_k)_k\|_\gamma &\leq \|P(Nf)\|_{\gamma(\Omega,X)} + \|(y_k)_k - (N_kx_k)_k\|_\gamma \\
& \leq \|Nf\|_{\gamma(\Omega,X)} + \varepsilon \leq K \|f\|_{\gamma(\Omega,X)} +
\varepsilon =  K \|(x_k)\|_\gamma + \varepsilon.
\end{align*}
Since $\varepsilon > 0$ was arbitrary we obtain the claim for distinct
$\omega_1,\dots,\omega_n$. For general $\omega_k$ we employ a limiting
argument where we approximate the given $\omega_1,\ldots,\omega_n$ by $n$ different points $\omega'_1,\ldots,\omega'_n$.
\end{proof}

Finally we derive a multiplier theorem from 4.11. Let $t \in \R^N \setminus
\{0\} \to N(t) \in B(X,Y)$ be strongly continuous and bounded. For every $f \in
\calS(\R^N,X)$, we have that $N(\cdot)[ \hat{f}(\cdot)]$ is in $L_1
(\R^N,X) \cap L_2(\R^N,X)$. Hence we have a map
$$
{\cal K}_N : \calS(\R^N,X) \to L_{\infty}(\R^N,Y), \quad {\cal K}_N f =
(N(\cdot)[f(\cdot)]\spcheck)\sphat.
$$

\begin{corollary}
${\cal K}_N$ extends to a bounded operator ${\cal K}_N : \gamma(L_2(\R^N),X) \to 
\gamma(L_2(\R^N),Y)$ if and only if $\{ N(t) : t \in \R^N \setminus \{0\} \}$ is
$R$--bounded.
\end{corollary}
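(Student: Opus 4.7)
The plan is to factor $\mathcal{K}_N$ through the Fourier transform and reduce the statement to Proposition 4.11. For $f\in\mathcal{S}(\R^N,X)$ the definition of $\mathcal{K}_N$ reads
\[ \mathcal{K}_N f = \mathcal{F}\bigl(N(\cdot)\,\mathcal{F}^{-1}f(\cdot)\bigr), \]
so that on Schwartz functions one has the operator identity $\mathcal{K}_N = {\bf \mathcal{F}}^{\otimes} \circ M_N \circ ({\bf \mathcal{F}}^{-1})^{\otimes}$, where $M_N$ denotes pointwise multiplication by the operator-valued symbol $N$. By the $\gamma$-valued Plancherel identity (Example 4.9b)), ${\bf \mathcal{F}}^{\otimes}$ and $({\bf \mathcal{F}}^{-1})^{\otimes}$ extend to isometric isomorphisms of $\gamma(L_2(\R^N),X)$ and $\gamma(L_2(\R^N),Y)$, respectively. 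Hence $\mathcal{K}_N$ extends to a bounded operator $\gamma(L_2(\R^N),X)\to\gamma(L_2(\R^N),Y)$ if and only if $M_N$ does, and with equal norms.

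The second step is to identify when $M_N$ extends boundedly, and this is exactly what Proposition 4.11 is designed to do. I would apply it with $\Omega=\R^N\setminus\{0\}$ equipped with Lebesgue measure, which is a locally compact metric space without isolated points carrying a positive Borel measure; the removal of the single point $\{0\}$ is harmless since it has Lebesgue measure zero and therefore does not affect any function norm on $L_2(\R^N)$ or $\gamma(L_2(\R^N),X)$. Proposition 4.11 is stated only for $B(X)$-valued symbols, but inspection of its proof shows that the argument uses only averaging projections (Example 4.9d)), the ideal property 4.3 and the scalar-valued computation of Example 4.6a); all of these carry over verbatim to $B(X,Y)$-valued $N$. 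The output is that $M_N$ is bounded from $\gamma(\R^N,X)$ into $\gamma(\R^N,Y)$ precisely when the family $\{N(t):t\in\R^N\setminus\{0\}\}$ is $\gamma$-bounded, which is the notion identified with $R$-boundedness in the present framework.

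The point that I expect to require the most care is the density/extension bookkeeping: Proposition 4.11 characterizes boundedness of $M_N$ on elements of $\gamma(\R^N,X)$ arising from honest functions $f\in\mathcal{P}_2(\R^N,X)$, whereas a priori $\mathcal{K}_N$ must be extended to all of $\gamma(L_2(\R^N),X)$. This is handled by combining Remark 4.7c), which places $\mathcal{S}(\R^N,X)$ inside $\gamma(\R^N,X)$, with the isometry property of ${\bf \mathcal{F}}^{\otimes}$: the image of $\mathcal{S}(\R^N,X)$ under $({\bf \mathcal{F}}^{-1})^{\otimes}$ is dense in $\gamma(L_2(\R^N),X)$, so the factorization established on Schwartz functions uniquely determines both the existence and the norm of the bounded extension of $\mathcal{K}_N$, and transports the equivalence proved for $M_N$ back to the statement of the corollary.
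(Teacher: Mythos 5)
Your proposal is correct and follows exactly the paper's argument: the paper's proof is the one-line instruction to combine Proposition 4.11 with the $\gamma$-Plancherel identity $\|\hat f\|_{\gamma(\R^N,X)}=\|f\|_{\gamma(\R^N,X)}$, i.e.\ conjugate the multiplication operator $M_N$ by the Fourier transform, which is precisely your factorization. The extra care you take (passing from $B(X)$- to $B(X,Y)$-valued symbols in 4.11, deleting the origin, and the density bookkeeping via $\calS(\R^N,X)\subset\gamma(\R^N,X)$) only fills in details the paper leaves implicit.
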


\begin{proof}
Combine 4.11 with the identity
$$
\|\hat{f}\|_{\gamma(\R^N,X)} = \|f\|_{\gamma(\R^N,X)} \quad \mbox{ cf.~4.10a)}.
$$
\end{proof}

One can give these multiplier theorems a more general form by considering
strongly measurable functions $N(\cdot)$ and its Lebesgue points.

\section{Duality of square functions}

\setcounter{lemma}{-1}

We continue with the notation from Section 4.
To obtain a general form of the square function property (S1) we first
identify the dual of $\gamma(H,X)$. We denote by $\gamma_{+}'(H,X)$ the space of
all bounded operators $v:H \to X$ such that
\begin{equation}
\|v\|_{\gamma'} = \sup \lbrace \hspace*{-0.3em}\text{ trace } \hspace*{-0.3em}(v' \circ u): u\hspace*{-0.2em}:\hspace*{-0.2em} H \to X', \|u\|_\gamma \leq 1, 
\mbox{ dim } u(H) < \infty \rbrace \hspace*{-0.2em} < \hspace*{-0.2em} \infty.
\end{equation}
By $\gamma'(H,X)$ we denote the closure of the finite dimensional operators
in $\gamma_{+}'(H,X)$.

\begin{remark}
If $X=Y'$, then in (1) one should take the supremum over all finite dimensional $u: H \to Y'' = X'$. However, by Goldstine's theorem, such $u$ can be approximated by finite dimensional $v:H\to Y$ in the $\sigma(Y'',Y')$ topology and it is enough to consider such $v:H\to Y$ in (1).
\end{remark}

\begin{proposition}
The dual of $\gamma(H,X)$ with respect to trace duality is $\gamma_{+}'(H,X')$.
If $u \in \gamma(H,X)$, $v \in \gamma_{+}'(H,X')$, then $v'u$ is in the trace class
$\calS_1(H)$ and for $<u,v>= \operatorname{trace} (v' \circ u)$
$$
|<u,v>| \leq \|v'u\|_{\calS_1(H)} \leq \|u\|_\gamma \cdot \|v\|_{\gamma'}.
$$
Furthermore, if $H$ is separable with an orthonormal basis $(e_j)$ then
$$
<u,v> = \sum_j <u e_j, v e_j>_X.
$$
\end{proposition}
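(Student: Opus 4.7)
The plan is to realize the pairing $<u,v>=\operatorname{trace}(v'u)$ as an isometric identification between $\gamma_{+}'(H,X')$ and $\gamma(H,X)^{*}$. The main tools are the ideal property of $\gamma$ (Proposition 4.3), density of finite-rank operators in $\gamma(H,X)$, and the trace-class duality $\|w\|_{\calS_{1}(H)}=\sup\{|\operatorname{trace}(Sw)|:S\in B(H),\ \|S\|\leq 1\}$.

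For the forward direction, start with $u\in\gamma(H,X)$ of finite rank and write $u=\sum_{j=1}^{n}x_{j}\otimes h_{j}$ with $(h_{j})$ orthonormal. Then $v'u$ has finite rank and a direct computation using the defining relation of $v'$ yields $\operatorname{trace}(v'u)=\sum_{j}<v'(x_{j}),h_{j}>_{H}=\sum_{j}<x_{j},v(h_{j})>_{X}$. Definition (1) of $\|v\|_{\gamma'}$ combined with Remark 5.0 (applicable since the target of $v$ is a dual space) immediately gives $|\operatorname{trace}(v'u)|\leq\|u\|_{\gamma}\|v\|_{\gamma'}$. To upgrade to the trace-class estimate, fix a contraction $S\in B(H)$ and apply cyclicity of trace: $\operatorname{trace}(Sv'u)=\operatorname{trace}(v'(uS))$. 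The operator $uS:H\to X$ is still finite rank and Proposition 4.3 yields $\|uS\|_{\gamma}\leq\|u\|_{\gamma}\|S\|$, so applying the scalar estimate to $uS$ and taking the supremum over $\|S\|\leq 1$ gives $\|v'u\|_{\calS_{1}}\leq\|u\|_{\gamma}\|v\|_{\gamma'}$ on finite-rank $u$. For general $u\in\gamma(H,X)$, approximate by finite-rank $u_{n}$ with $\|u_{n}-u\|_{\gamma}\to 0$; then $(v'u_{n})$ is Cauchy in $\calS_{1}(H)$, and since the $\gamma$-norm dominates the operator norm on $B(H,X)$ (test at a unit vector completed to an orthonormal basis), $v'u_{n}\to v'u$ in $B(H)$. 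By uniqueness of limits the $\calS_{1}$-limit is $v'u$, so $v'u\in\calS_{1}(H)$ with the claimed bound, and $<\cdot,v>$ is a continuous functional on $\gamma(H,X)$ of norm at most $\|v\|_{\gamma'}$.

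For the converse, given $\phi\in\gamma(H,X)^{*}$, define $v:H\to X'$ by $<x,v(h)>_{X}:=\phi(x\otimes h)$, where $x\otimes h\in\gamma(H,X)$ is the rank-one operator $k\mapsto<k,h>_{H}\,x$. A single-term Gaussian computation shows $\|x\otimes h\|_{\gamma}=\|x\|\|h\|$, so $v(h)\in X'$ with $\|v(h)\|\leq\|\phi\|\|h\|$ and $v$ is bounded. For a finite-rank $u=\sum_{j}x_{j}\otimes h_{j}$ with $(h_{j})$ orthonormal, linearity of $\phi$ combined with the formula from the forward direction gives $\phi(u)=\sum_{j}<x_{j},v(h_{j})>_{X}=\operatorname{trace}(v'u)$. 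Taking the supremum over such $u$ with $\|u\|_{\gamma}\leq 1$ and invoking Remark 5.0 yields $\|v\|_{\gamma'}\leq\|\phi\|$, so $v\in\gamma_{+}'(H,X')$; density of finite-rank operators in $\gamma(H,X)$ then forces $\phi=<\cdot,v>$ everywhere, which together with the forward estimate gives the isometric identification. The series formula in the separable case follows by applying the standard trace expansion $\operatorname{trace}(T)=\sum_{j}<Te_{j},e_{j}>_{H}$ to $T=v'u\in\calS_{1}(H)$ and using the defining relation of $v'$.

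The main technical obstacle is upgrading the scalar estimate $|\operatorname{trace}(v'u)|\leq\|u\|_{\gamma}\|v\|_{\gamma'}$ to the full trace-class norm bound $\|v'u\|_{\calS_{1}}\leq\|u\|_{\gamma}\|v\|_{\gamma'}$, since the definition of $\|v\|_{\gamma'}$ controls only the trace itself. The ideal property of $\gamma$ from Proposition 4.3 is exactly what permits absorbing an arbitrary contraction $S\in B(H)$ into $u$ without enlarging the $\gamma$-norm, making the $\calS_{1}$-duality argument go through.
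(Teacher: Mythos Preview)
Your proof is correct and follows essentially the same architecture as the paper: construct $v$ from $\phi$ via the bilinear form, verify $\phi(u)=\operatorname{trace}(v'u)$ on finite-rank operators, invoke Remark~5.0 for the isometry, and pass to general $u$ by approximation.

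The one substantive variation is in the trace-class bound. The paper fixes orthonormal sequences $(e_j),(f_j)$ and unimodular $(\varepsilon_j)$, builds a unitary $J$ with $Je_j=\varepsilon_j f_j$, and estimates $\sum_j |\langle e_j,v'u(f_j)\rangle|=\operatorname{trace}(v'uJ)\leq\|v\|_{\gamma'}\|uJ\|_\gamma\leq\|v\|_{\gamma'}\|u\|_\gamma$ via Proposition~4.3 applied to $J$, then appeals to the characterization of $\|\cdot\|_{\calS_1}$ in \cite{DJT}, Theorem~4.6. Your route is more direct: you use $\|w\|_{\calS_1}=\sup_{\|S\|\leq 1}|\operatorname{trace}(Sw)|$, cyclicity $\operatorname{trace}(Sv'u)=\operatorname{trace}(v'(uS))$, and Proposition~4.3 applied to an arbitrary contraction $S$. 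Both arguments exploit the same ideal property; yours avoids the auxiliary reference and is arguably cleaner, while the paper's makes the connection to the absolute-sum characterization of nuclear operators explicit.
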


\begin{proof}
If $\phi \in \gamma(H,X)'$, then $(h,x) \in H \times X \to \phi(x \otimes h)$
is a bounded bilinear form, which defines a bounded operator $v \in 
B(H,X')$ with $\phi(x \otimes h)=$ $<vh,x>$. For $u = \displaystyle
\sum_{i=1}^n x_i \otimes e_i \in \gamma(H,X)$, where $(e_i)$ is an orthonormal
sequence, we have
$$
\phi(u) = \sum_{i=1}^n <x_i,ve_i> = \operatorname{trace} \biggl( 
\sum_{i=1}^n v' x_i \otimes e_i \biggr) = \operatorname{trace} (v'u).
$$
By Remark 5.0., the first claim follows.
Now we estimate the trace class norm of $v'u$ for the finite dimensional
$u$ above. To this end we choose a second orthonormal sequence $f_j,
\varepsilon_j \in \C$ with $| \varepsilon_j| = 1$ and a unitary operator
$J$ on $H$ such that $J e_j = \varepsilon_j f_j$ and for all $n \in \N$
\begin{equation*}
\begin{split}
 \sum_{j=1}^n |< e_j, v'u(f_j)>| &= \sum_{j=1}^n <e_j,v'u(\varepsilon_jf_j)> = \sum_j < ve_j,(u \cdot J)e_j > \\
&= \operatorname{trace} (v'u \cdot J) \leq \|v\|_{\gamma'} \cdot \|u \cdot J\|_{\gamma} \leq \|J\| \cdot \|v\|_{\gamma'}\|u\|_\gamma
\end{split}
\end{equation*}
by 4.3 applied to $J$. Now \cite{DJT}, Theorem 4.6, implies that 
$$
|\operatorname{trace}(v'u)| \leq \|v'u\|_{\calS_1(H)} \leq \|v\|_{\gamma'}
\|u\|_\gamma.
$$
We have seen also that
$$
\operatorname{trace} (v'u) = \sum_{i=1}^n <x_i,v_ie_i> = \sum_{i=1}^n
<ue_i,v_ie_i>.
$$
The general case follows now by an approximation argument, since finite
dimensional operators of the form of $u$ are dense in $\gamma(H,X)$.
\end{proof}

\begin{proposition}
a) For all Banach spaces $X$ and $v \in \gamma_{+}(H,X')$ we have $\|v\|_{\gamma'}
\leq \|v\|_\gamma$ and in particular for $u \in \gamma(H,X)$
\[
| \operatorname{trace}(v'u)| \leq \|u\|_{\gamma(X)} \|v\|_{\gamma'(X')}.
\]

b) If $X$ has type larger than $1$ then $\gamma(H,X')$ and $\gamma'(H,X')$ are
isomorphic.
\end{proposition}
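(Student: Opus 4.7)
The plan is to prove (a) by a direct Gaussian Cauchy--Schwarz and (b) by combining (a) with Pisier's K--convexity theorem.

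For (a), I would start from the trace formula of Proposition 5.1. Given $v \in \gamma_+(H, X')$ and a finite-rank $u \colon H \to X$ with $\|u\|_\gamma \leq 1$, I pick a finite orthonormal system $(e_j)_{j=1}^N$ in $H$ containing the support of $u$. The expansion used to prove Proposition 5.1 (which does not require $v \in \gamma'_+$ a priori, since $u$ is finite-rank) gives $\operatorname{trace}(v'u) = \sum_{j=1}^N \langle u e_j, v e_j\rangle_X$, and the identity $\E(g_j \overline{g_k}) = \delta_{jk}$ rewrites this as $\E\langle \sum_j g_j u e_j, \sum_k g_k v e_k\rangle_X$. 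Cauchy--Schwarz in the $X$--$X'$ duality pointwise in $\omega$, followed by Cauchy--Schwarz in $L_2(\Omega)$, yields $|\operatorname{trace}(v'u)| \leq \|u\|_\gamma \|v\|_\gamma$. Taking the supremum over such $u$ gives $\|v\|_{\gamma'} \leq \|v\|_\gamma$, and the trace bound for arbitrary $u \in \gamma(H, X)$ then follows from Proposition 5.1 once $v \in \gamma'_+(H, X')$ is certified.

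For (b), the continuous inclusion $\gamma(H, X') \hookrightarrow \gamma'(H, X')$ is immediate from (a), so the substance is the reverse estimate $\|v\|_\gamma \leq C \|v\|_{\gamma'}$ on finite-rank $v$, with $C$ depending only on $X$. The main Banach-space input I would invoke is Pisier's K--convexity theorem: $X$ has nontrivial type if and only if $X$ is K--convex, that is, the orthogonal projection $\Pi$ from $L_2(\Omega, X)$ onto the closed span of $\{g_i \otimes y : i \in \N,\, y \in X\}$ (the first Gaussian chaos) is bounded, say with norm $K$. Nontrivial type also forces $X$ to be reflexive, so $L_2(\Omega, X)^* = L_2(\Omega, X')$ with the standard pairing.

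For a finite-rank $v = \sum_{i=1}^n x_i' \otimes e_i$, this duality gives $\|v\|_\gamma = \|\sum_i g_i x_i'\|_{L_2(X')} = \sup\{|\E\langle \sum_i g_i x_i', Y\rangle| : \|Y\|_{L_2(X)} \leq 1\}$. Since $\sum_i g_i x_i'$ already sits in the first Gaussian chaos (for $X'$), the pairing is unchanged when $Y$ is replaced by $\Pi Y = \sum_i g_i y_i$ with $y_i = \E(g_i Y) \in X$. Setting $u = \sum_i y_i \otimes e_i$, the pairing becomes $\sum_i \langle x_i', y_i\rangle = \operatorname{trace}(v'u)$, which by the definition of $\|v\|_{\gamma'}$ is at most $\|v\|_{\gamma'}\|u\|_\gamma = \|v\|_{\gamma'}\|\Pi Y\|_{L_2(X)} \leq K \|v\|_{\gamma'}\|Y\|_{L_2(X)}$. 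Taking the supremum over $Y$ yields $\|v\|_\gamma \leq K \|v\|_{\gamma'}$; density of finite-rank operators in both norms then upgrades this to the claimed isomorphism. The main obstacle is Pisier's K--convexity theorem itself; once that deep fact is granted, the duality chase is essentially automatic. A more self-contained alternative would work in Gaussian $L_q$ spaces, using Kahane's inequality and the type $p$ constant directly, but this in effect rederives K--convexity.
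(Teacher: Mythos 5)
Part (a) of your argument is exactly the paper's: expand the trace over a finite orthonormal system supporting $u$, insert the Gaussian orthogonality $\E(g_j\overline{g_k})=\delta_{jk}$ to write $\operatorname{trace}(v'u)=\E\langle\sum_j g_j ue_j,\sum_k g_k ve_k\rangle$, and apply the duality bound followed by Cauchy--Schwarz in $L_2(\Omega)$. For part (b) the paper gives no argument at all --- it simply cites Pisier's book --- whereas you supply the standard K--convexity proof: realize $\|v\|_\gamma$ by testing against $Y\in L_2(\Omega,X)$, replace $Y$ by its projection $\Pi Y$ onto the first Gaussian chaos without changing the pairing, recognize the resulting sum $\sum_i\langle y_i,x_i'\rangle$ as $\operatorname{trace}(v'u)$ for $u=\sum_i y_i\otimes e_i$, and absorb $\|\Pi\|=K$. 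This is correct and is, in substance, the argument behind the paper's citation; the only deep input is Pisier's theorem that nontrivial type implies K--convexity, which you rightly isolate as the crux.

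One intermediate claim is false, though harmless: nontrivial type does \emph{not} imply reflexivity (James constructed nonreflexive spaces even of type $2$), so you cannot identify $L_2(\Omega,X)^{*}$ with $L_2(\Omega,X')$. Fortunately you never need that identification --- the only duality fact your computation uses is that for every Banach space $X$ and every $G\in L_2(\Omega,X')$ one has
\[
\|G\|_{L_2(\Omega,X')}=\sup\bigl\{|\E\langle Y,G\rangle| : \|Y\|_{L_2(\Omega,X)}\le 1\bigr\},
\]
i.e.\ that $L_2(\Omega,X)$ norms $L_2(\Omega,X')$. This holds unconditionally (approximate $G$ by simple functions and choose pointwise almost-norming vectors in $B_X$), and here $G=\sum_i g_i x_i'$ even takes values in a finite-dimensional subspace. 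With that substitution your proof of (b) is complete.
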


\begin{proof}
a) Let $e_j$ be an orthonormal system in $H$.
Then with 5.1 and H\"older's inequality 
\begin{equation*}
\begin{split}
\operatorname{trace}(v'u) &= \sum_j <u(e_j),v(e_j)> \\
& = \E < \sum_i g_iu(e_i), \sum g_j v(e_j)> \leq \|u\|_\gamma \cdot \|v\|_\gamma.
\end{split}
\end{equation*}
Hence $\|v\|_{\gamma'} \leq \|v\|_\gamma$ by the definition of $\|\cdot\|_{\gamma'}$.

b) is shown in \cite{Pi5}.
\end{proof}

\begin{corollary}
Let $S \in B(H_1,H_2)$ and $\Skr$ be its extension $\Skr:
\gamma(H_1,X) \to \gamma(H_2,X)$ as in 4.8. Then the dual map $(\Skr)' : 
\gamma_{+}'(H_2,X') \to \gamma_{+}'(H_1,X')$ with respect to trace duality
is given by $(\Skr)'(v) = v \circ S$ for $v \in \gamma(H_2,X')$ and maps
$\gamma(H_2,X')$ into $\gamma(H_1,X')$.
\end{corollary}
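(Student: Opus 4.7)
The plan is to identify $(\Skr)'(v)$ by directly computing the trace duality pairing and invoking cyclicity of the trace. Fix $u \in \gamma(H_1,X)$ and $v \in \gamma_{+}'(H_2,X')$. By Proposition 5.1 applied over $H_2$, the operator $v' \circ \Skr(u) = v' u S'$ lies in the trace class $\calS_1(H_2)$. Since the trace class is a two-sided ideal, the operator $S' v' u$ lies in $\calS_1(H_1)$, and by cyclicity of the trace
\[
\operatorname{trace}(v' u S') \;=\; \operatorname{trace}(S' v' u).
\]
Using the algebraic identity $(v \circ S)' = S' \circ v'$, the right hand side is exactly $\operatorname{trace}((v\circ S)' u)$. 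Combining,
\[
\langle \Skr u, v\rangle \;=\; \operatorname{trace}(v' \Skr(u)) \;=\; \operatorname{trace}((v\circ S)' u) \;=\; \langle u, v\circ S\rangle,
\]
which by definition of the dual map means $(\Skr)'(v) = v \circ S$.

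To be safe about the cyclicity step, I would first verify the identity on finite-rank $u = \sum_{i=1}^n x_i \otimes e_i$ by direct calculation: a short computation with the orthonormal basis expansion (and Parseval for the bilinear form $\langle\cdot,\cdot\rangle_H$ as set up in the paper) yields
\[
\operatorname{trace}(v' \Skr(u)) \;=\; \sum_{i=1}^n \langle x_i, vSe_i\rangle \;=\; \operatorname{trace}((v\circ S)' u).
\]
The general case then follows by density of finite-rank operators in $\gamma(H_1,X)$ together with the continuity estimate from Proposition 5.1.

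For the last assertion, that $v\in\gamma(H_2,X')$ implies $v\circ S \in \gamma(H_1,X')$, I would simply invoke the ideal property in Proposition 4.3: $\|v\circ S\|_\gamma \le \|v\|_\gamma \|S\|$, and any approximation $v_n\to v$ by finite-rank operators in $\gamma$-norm gives finite-rank approximants $v_n\circ S\to v\circ S$ in $\gamma(H_1,X')$, so the limit lies in $\gamma(H_1,X')$. The main obstacle is essentially bookkeeping — keeping straight the various duals (Hilbert space vs.\ Banach space, as clarified at the start of Section 4) and ensuring the cyclicity of the trace applies in the relevant class — once this is done the identification is immediate from the pairing.
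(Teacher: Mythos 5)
Your argument is correct and is essentially the paper's own proof: both compute $\langle v,\Skr u\rangle=\operatorname{trace}(v'uS')=\operatorname{trace}(S'v'u)=\operatorname{trace}((vS)'u)$ via cyclicity of the trace (the paper cites \cite{DJT}, Lemma 6.1, for this step) and conclude $(\Skr)'(v)=v\circ S$, with the last assertion following from the ideal property and density of finite--rank operators. Your extra verification on finite--rank $u$ is a reasonable way to justify the cyclicity step but adds nothing beyond the paper's route.
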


\begin{proof}
For all $u \in \gamma(H_1,X)$ we have with \cite{DJT}, Lemma 6.1, 
\begin{equation*}
\begin{split}
< (\Skr)'(v),u> & = <v, \Skr u> = \operatorname{trace} (v'u \circ S') \\
& = \operatorname{trace} (S' \circ v' \circ u) = \operatorname{trace} ((vS)' \circ u) =
<vS,u>.
\end{split}
\end{equation*}
It is clear that $v \mapsto v \circ S$ maps finite dimensional operators
into finite--dimensional operators.
\end{proof}

\begin{remark}
Let $T \in B(H_1,H_2)$. If we apply this procedure to $S = T' \in 
B(H_2,H_1)$, then we can extend $T: H_1 \to H_2$ to an operator 
$\Tkr:\gamma'(H_1,X') \to \gamma'(H_2,X')$ by $\Tkr u = u \circ T'$ such that

a) For the extension $\Skr: \gamma(H_2,X) \to \gamma(H_1,X)$ with $S=T'$, we have $(\Skr)' = \Tkr$. 

b) $< x,\Tkr(u)>_X = T(<x,u>_X)$ for all $u \in \gamma'(H_1,X')$ and
$x \in X$.

c) If $f \in \mathcal{P}_2(\Omega_1,X')$ and $g \in \mathcal{P}_2(\Omega_2,X')$
are such that $u_f \in \gamma'(L_2(\Omega_1),X')$ and $u_g = \Tkr u_f$,
then
$$
\|u_g\|_{\gamma'} \leq \|T\| \|u_f\|_{\gamma'}
$$
and
$$
<x,g>_X = T(<x,f>_X) \quad \mbox{ for } x \in X.
$$
\end{remark}
For a function $f \in \mathcal{P}_2(\Omega,\mu,X)$ on a $\sigma$--finite
measure space we use the notation
$$
\|f\|_{\gamma'(\Omega,X)} = \|u_f\|_{\gamma'(L_2(\Omega),X)}
$$
and $\gamma_{+}'(\Omega,X) (\gamma'(\Omega,X))$ denotes the space of functions
for which \allowbreak $u_f \in \gamma_{+}'(\Omega,X)$ $(u_f \in \gamma'(\Omega,X))$ with this
norm. Note that the convergence results in 4.10 also apply to the 
$\gamma'(H,X)$--norm (with the same justification).

In particular, Examples 4.9 can be adopted to the $\gamma'(H,X')$--norm.
(All of this can be justified with the same calculation as in 4.4, 4.8, and
4.9.)

Now 5.1 and 5.2 take a form that corresponds to the square function
property (S1).

\begin{corollary}
If $f \in \gamma(\Omega,X)$ and $g \in \gamma_{+}'(\Omega,X')$ then 
$$
\operatorname{trace}(u_f \circ u_g') = \int_{\Omega} < f(\omega), g(\omega)> d\mu
(\omega)
$$
and
\begin{equation*}
\begin{split}
\int_{\Omega} |< f(\omega),g(\omega)>| d\mu(\omega) & \leq \|f\|_{\gamma(\Omega,X)} \cdot
\|g\|_{\gamma'(\Omega,X')} \\
& \leq \|f\|_{\gamma(\Omega,X)} \cdot \|g\|_{\gamma(\Omega,X')}.
\end{split}
\end{equation*}
If $X$ has type larger than $1$ then $\|f\|_{\gamma(\Omega,X')}$ and
$\|f\|_{\gamma'(\Omega,X')}$ are equivalent.
\end{corollary}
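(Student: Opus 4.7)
The plan is to reduce the three assertions to Proposition 5.1, to the $\gamma$--$\gamma'$ comparison of Proposition 5.2, and to an explicit identification of the trace with the scalar pairing integrated pointwise.

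First I would apply Proposition 5.1 with $u=u_f\in\gamma(L_2(\Omega),X)$ and $v=u_g\in\gamma_{+}'(L_2(\Omega),X')$. This gives that $u_g'\circ u_f$ is trace class on $L_2(\Omega)$ (and cyclicity of the trace identifies this with $\operatorname{trace}(u_f\circ u_g')$), with
$$
\operatorname{trace}(u_g'\,u_f) \;=\; \sum_j \,<u_f e_j,\,u_g e_j>_X
$$
for any orthonormal basis $(e_j)$ of (the separable part of) $L_2(\Omega)$, together with the bound $|\operatorname{trace}(u_g'u_f)|\leq \|f\|_{\gamma(\Omega,X)}\|g\|_{\gamma'(\Omega,X')}$.

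Next I would identify the trace with the integral. Unfolding the defining relations of $u_f$ and $u_g$ twice shows that $u_g'\circ u_f$ is the integral operator on $L_2(\Omega)$ with kernel
$$
K(\omega,\omega')\;=\;<f(\omega'),\,g(\omega)>_X,
$$
so that $<u_f e_j, u_g e_j>_X = \iint K(\omega,\omega')\,e_j(\omega)\,e_j(\omega')\,d\mu(\omega)\,d\mu(\omega')$. Summing over $j$ and using Parseval on the diagonal yields
$$
\operatorname{trace}(u_g'u_f) \;=\; \int_\Omega <f(\omega),g(\omega)>_X\,d\mu(\omega),
$$
which is the first claim. To upgrade the trace bound to the inequality for $\int |<f,g>_X|\,d\mu$, I would pick a unimodular $h\in L_\infty(\Omega)$ with $h(\omega)\,<f(\omega),g(\omega)>_X = |<f(\omega),g(\omega)>_X|$ pointwise. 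By Example 4.9(a), multiplication by $h$ is a contraction on $\gamma(\Omega,X)$, so $\|hf\|_{\gamma(\Omega,X)}\leq \|f\|_{\gamma(\Omega,X)}$, and applying the trace identity to the pair $(hf,g)$ produces
$$
\int_\Omega |<f(\omega),g(\omega)>_X|\,d\mu(\omega) \;\leq\; \|f\|_{\gamma(\Omega,X)}\,\|g\|_{\gamma'(\Omega,X')}.
$$
The remaining inequality $\|g\|_{\gamma'(\Omega,X')}\leq \|g\|_{\gamma(\Omega,X')}$ and the equivalence under type $>1$ are exactly Proposition 5.2(a) and (b).

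The main obstacle is the rigorous justification of the \emph{Parseval on the diagonal} step, since the formal identity $\sum_j e_j(\omega)e_j(\omega')=\delta_{\omega=\omega'}$ has no literal meaning. I would sidestep this by first establishing the identity on simple functions $f=\sum_i x_i\chi_{A_i}$ and $g=\sum_j y_j\chi_{B_j}$ with $\mu(A_i),\mu(B_j)<\infty$, where both sides collapse by Example 4.6(a) to the finite sum $\sum_{i,j}\mu(A_i\cap B_j)\,<x_i,y_j>_X$. The general case then follows by density, using the convergence results of Lemma 4.10 together with the continuity of both sides of the identity in $f$ and $g$ with respect to the $\gamma$- and $\gamma'$-norms.
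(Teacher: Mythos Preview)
Your overall strategy---reduce to Propositions 5.1 and 5.2, verify the trace identity on simple functions, then pass to a limit---is the right shape and matches the paper's approach. But there is a genuine gap in the density step.

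You propose to establish the identity for simple $f$ \emph{and} simple $g$, then extend ``by density, using \dots\ continuity of both sides \dots\ in $f$ and $g$.'' The problem is that $g$ lies only in $\gamma_{+}'(\Omega,X')$, which by definition need not be the closure of finite-dimensional (or simple) operators; that closure is $\gamma'(\Omega,X')$, and in general $\gamma_{+}'\supsetneq\gamma'$. So you cannot approximate $g$ in the $\gamma'$-norm by simple functions, and the two-variable density argument breaks down. (Also, Lemma 4.10 provides only lower semicontinuity under pointwise convergence, not the norm convergence you would need to pass limits inside an equality.)

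The paper's proof avoids this by approximating only $f$. It chooses averaging projections $P$ associated to finite partitions and uses two observations that hold for \emph{arbitrary} $g$: first, since $Pf$ is constant on each partition set, $\int_\Omega\langle Pf,g\rangle\,d\mu=\int_\Omega\langle Pf,Pg\rangle\,d\mu$ automatically; second, since $P=P'=P^2$ on $L_2(\Omega)$, cyclicity of the trace gives $\operatorname{trace}(u_{Pg}'\,u_{Pf})=\operatorname{trace}(u_g'\,u_{Pf})$. Thus the identity $\int\langle Pf,g\rangle\,d\mu=\operatorname{trace}(u_g'\,u_{Pf})$ holds without ever approximating $g$. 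One then lets $P_kf\to f$ in $\gamma(\Omega,X)$ (which is legitimate since $f\in\gamma$, the closure of finite-rank operators) and uses the unimodular-multiplier trick---which you already have---to control the integral side and obtain both the identity and the absolute-value inequality. Your argument can be repaired along exactly these lines.
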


\begin{proof}
Let $E_k$, $k = 1,\dots,n$, be a partition of $\Omega$ with $0 < \mu(E_k)
< \infty$ and consider the averaging projection
$$
Ph = \sum_{k=1}^n \mu(E_k)^{-1} \int\limits_{E_k} f(\omega)d\mu(\omega)
\chi_{E_k}.
$$
We write $Pf = \displaystyle \sum_{k=1}^n x_k \chi_{E_k}, \; Pg = \sum_{k=1}^n x_k' \chi_{E_k}$ for some
$x_k \in X, x_k' \in X'$ and $u_{Pf} = \displaystyle \sum_{k=1}^n \mu (E_k)^{1/2} x_k
\otimes l_k,\; u_{Pg} = \displaystyle \sum_{k=1}^n \mu(E_k)^{1/2} x_k' \otimes l_k$ with
$l_k = \mu(E_k)^{- 1/2} \chi_{E_k}$. Then
\begin{equation}
\begin{split}
& \int_{\Omega} < Pf(\omega),g(\omega)>d\mu (\omega) = \int_{\Omega} < Pf(\omega), 
Pg(\omega) >d\mu(\omega) \\
& = \sum_{k=1}^n \mu(E_k) < x_k,x_k'> = \sum_{k=1}^n <u_{Pf}l_k,u_{Pg}l_k> \\
& = \operatorname{trace}(u_{Pg}'u_{Pf}) = \operatorname{trace}((u_gP)'(u_fP)) \\
& = \operatorname{trace}(u_g'u_fPP') = \operatorname{trace}(u_g'u_{Pf}) \leq
\|g\|_{\gamma'(\Omega,X')} \|Pf\|_{\gamma(\Omega,X)} \\
& \leq \|g\|_{\gamma'(\Omega,X')} \cdot \|Pf\|_{\gamma(\Omega,X)} \leq \|g\|_{\gamma'(\Omega,X')} \|f\|_{\gamma(\Omega,X)}
\end{split}
\end{equation}
where we used Lemma 6.1 of \cite{DJT}, Proposition 5.2 and 4.3. Since $f \in \gamma(\Omega,X)$ we can find a sequence of 
projections $P_k$ with $\|f-P_kf\|_{\gamma(\Omega,X)} \to 0$ as $k\to\infty$. For a fixed $k$ and $l$
choose $m \in L_{\infty}(\Omega)$ with $|m(\omega)| = 1$ for almost all $\omega\in\Omega$ so that 
$$
|<P_kf(\omega),g(\omega)> - <P_lf(\omega),g(\omega)>| = <(P_k-P_l)f(\omega),
m(\omega) g(\omega)>.
$$
Then by (2)
\begin{equation*}
\begin{split}
& \int_{\Omega} |<P_kf(\omega),g(\omega)> - <P_lf(\omega),g(\omega)>|d\mu(\omega) \\
& \leq \|mg\|_{\gamma'(\Omega,X')} \|P_kf - P_lf\|_{\gamma(\Omega,X)} \leq \|g\|_{\gamma'(\Omega,X')}
\|P_kf-P_lf\|_{\gamma(\Omega,X)} \to 0
\end{split}
\end{equation*}
for $k,l\to\infty$ and by Proposition 5.2a)
$$
| \operatorname{trace}(u_g' \cdot u_{P_nf}) - \operatorname{trace}(u_g' \cdot
u_f) | \leq \|u_g\|_{\gamma'(\Omega,X')} \|u_{P_nf} - u_f\|_{\gamma(\Omega,X)} \to 0
$$
for $k,l \to \infty$. Hence, $\omega \mapsto <f(\omega),g(\omega)>$ is integrable and with (2)
$$
\int_{\Omega} <f(\omega),g(\omega)>d\mu(\omega) = \lim_{n\to\infty} \operatorname{trace}(u_g' u_{P_nf}) =
\operatorname{trace}(u_g' u_f).
$$
Hence the required inequality holds too.
\end{proof}

We dualize now 4.11 and assume again that $\Omega$ is a locally compact
metric space without isolated points and a positive Borel measure $\mu$.

\begin{corollary}
Let $N:\Omega \to B(X,Y)$ be a strongly continuous map. Suppose $\tau =
\{ N(\omega):\omega \in \Omega \}$ is $\gamma$--bounded with constant $K$.
Then for all $g \in \gamma_{+}'(\Omega,Y')$ 
$$
\| N(\cdot)'[g(\cdot)] \|_{\gamma'(\Omega,X')} \leq K \|g\|_{\gamma'(\Omega,Y')}.
$$
In particular, if $X$ and $Y$ have type larger than $1$ then $\{ N'(\omega):
\omega \in \Omega \}$ is $\gamma$--bounded in $B(Y',X')$.
\end{corollary}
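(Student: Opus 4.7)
The plan is to obtain the first inequality by trace-duality from Proposition 4.11 and then pass to the ``in particular'' statement via the type-larger-than-one hypothesis and Proposition 5.2b).

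First, Proposition 4.11 (with the trivial modification replacing $B(X)$ by $B(X,Y)$) applied to the $\gamma$-bounded and strongly continuous family $\tau$ tells us that the pointwise multiplication operator
\begin{equation*}
M_N : \gamma(\Omega,X) \to \gamma(\Omega,Y), \qquad (M_N f)(\omega) = N(\omega) f(\omega),
\end{equation*}
is bounded with $\|M_N\|\leq K$. By Proposition 5.1, trace duality identifies $\gamma(\Omega,X)^\ast = \gamma_+'(\Omega,X')$ and $\gamma(\Omega,Y)^\ast = \gamma_+'(\Omega,Y')$, so the dual operator $M_N^\ast : \gamma_+'(\Omega,Y') \to \gamma_+'(\Omega,X')$ is bounded with the same constant $K$.

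The main step is to identify $M_N^\ast$ with the pointwise multiplication $g \mapsto N(\cdot)' g(\cdot)$. For a step function $f = \sum_j x_j \chi_{A_j} \in \gamma(\Omega,X)$ and $g \in \gamma_+'(\Omega,Y')$, Corollary 5.4 applied to $M_N f \in \gamma(\Omega,Y)$ and $g$ expresses the trace pairing as an integral, and the pointwise identity $\langle N(\omega)f(\omega),g(\omega)\rangle_Y = \langle f(\omega), N(\omega)'g(\omega)\rangle_X$ yields
\begin{equation*}
\langle M_N f, g\rangle = \int_\Omega \langle f(\omega), N(\omega)' g(\omega)\rangle_X \, d\mu(\omega) = \langle f, N(\cdot)' g(\cdot)\rangle,
\end{equation*}
where the right-hand pairing is also interpreted via Corollary 5.4. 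Since step functions are dense in $\gamma(\Omega,X)$ (by Remark 4.7a and Example 4.6a), this identifies $M_N^\ast(g) = N(\cdot)'g(\cdot)$ as an element of $\gamma_+'(\Omega,X')$ and gives the first inequality. The integrability needed to make $N(\cdot)'g(\cdot) \in \mathcal{P}_2(\Omega, X')$ is supplied by the measurability of $\omega \mapsto \langle N(\omega)x, g(\omega)\rangle$ for each $x\in X$, combined with the bound just derived.

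For the ``in particular'' assertion, the type-larger-than-one hypothesis on $X$ and $Y$ gives, via Proposition 5.2b), that $\|\cdot\|_{\gamma'}$ and $\|\cdot\|_\gamma$ are equivalent on functions with values in $X'$, respectively $Y'$. Consequently the first part yields a constant $C$ such that $\|N(\cdot)' g(\cdot)\|_{\gamma(\Omega,X')} \leq CK \|g\|_{\gamma(\Omega,Y')}$. The $\gamma$-boundedness of $\{N(\omega)' : \omega \in \Omega\}$ then follows by running the converse direction of Proposition 4.11: given distinct points $\omega_1,\dots,\omega_n$ and $y_1',\dots,y_n' \in Y'$, test against the step function $g = \sum_k \mu(U_k)^{-1/2} y_k' \chi_{U_k}$ for small disjoint open neighborhoods $U_k$ of $\omega_k$, apply the appropriate averaging projection to $N(\cdot)'g(\cdot)$, and shrink the $U_k$. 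The expected obstacle is that $N(\cdot)'$ need not be strongly continuous, but the weak$^\ast$-continuity of $\omega \mapsto N(\omega)' y'$, which is immediate from $\langle x, N(\omega)' y'\rangle = \langle N(\omega) x, y'\rangle$ together with strong continuity of $N$, suffices to guarantee that the weak$^\ast$-averages converge to $N(\omega_k)' y_k'$ in the limit and hence to conclude the argument.
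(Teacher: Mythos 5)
Your proposal is correct and follows essentially the same route as the paper: the heart of both arguments is the pointwise duality $\langle N(\omega)f(\omega),g(\omega)\rangle_Y=\langle f(\omega),N(\omega)'g(\omega)\rangle_X$ combined with Proposition 4.11, the H\"older inequality of Corollary 5.5, and the identification $\gamma(\Omega,X)'=\gamma_+'(\Omega,X')$ from Proposition 5.1 --- the paper simply computes the integral pairing and takes the supremum over $f$ with $\|f\|_{\gamma(\Omega,X)}\leq 1$, whereas you package the same computation as the adjoint of the multiplication operator $M_N$ (note your citation for the trace--integral identity should be 5.5, not 5.4). Your treatment of the ``in particular'' statement is more explicit than the paper's one-line appeal to 5.2b): the weak$^\ast$-continuity of $\omega\mapsto N(\omega)'y'$ together with weak$^\ast$ lower semicontinuity of the dual norm (and Fatou) is indeed what is needed to run the converse direction of 4.11 for the family $\{N(\omega)'\}$, so this fills in a step the paper leaves implicit.
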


\begin{proof}
For $f \in \gamma(\Omega,X)$ with $\|f\|_{\gamma(\Omega,X)} \leq 1$ we have
\begin{equation*}
\begin{split}
\int_{\Omega} < f(\omega), N(\omega)' g(\omega) >_X &d\mu (\omega) =
\int_{\Omega} < N(\omega)f(\omega), g'(\omega)>_Y d\mu (\omega) \\
& \leq \|g\|_{\gamma'(\Omega,Y')} \| N(\cdot)f(\cdot)\|_{\gamma(\Omega,Y)} \leq K \|g\|_{\gamma'(\Omega,Y')}
\end{split}
\end{equation*}
by 4.11. Now take the supremum over $f$ to obtain the norm in $\gamma'(\Omega,X')=\gamma(\Omega,X)'$. The last statement follows from 5.2b).
\end{proof}

We record an immediate consequence

\begin{example}
Let $t \in I \to N(t) \in B(X)$ be strongly continuous on an interval $I$
and $\gamma$--bounded with constant $C$. Then for $h \in L_2(I)$ we have
\begin{eqnarray*}
\|h(\cdot)N(\cdot)x\|_{\gamma(I,X)} & \leq & C \|h\|_{L_2(I)} \| x \|, \quad
x \in X \\
\|h(\cdot)N'(\cdot)x'\|_{\gamma'(I,X')} & \leq & C \|h\|_{L_2(I)} \|x'\|, \quad
x' \in X'.
\end{eqnarray*}
This example is one motivation for us to collect some criteria for
$\gamma$-- \linebreak boundedness.
\end{example}

First we recall a well--known convexity result.
Corollary 5.6 will often be used in connection with the following criteria
for $\gamma$--boundedness.

\begin{lemma}
Let $t \in \Omega \to N(t) \in B(X,Y)$ be strongly measurable and suppose
that $\{N(t):t \in \Omega\}$ is $R$--bounded ($\gamma$--bounded) with
$R$--bound ($\gamma$--bound) $C$. For $h \in L_1(\Omega,\mu)$ define 
\begin{equation}
N_h(x) = \int\limits_{\Omega}h(t)N(t) x d\mu(t), \quad x \in X.
\end{equation}
Then the set $\{N_h:\|h\|_{L_1(\Omega)} \leq 1 \}$ is $R$--bounded 
($\gamma$--bounded) in $B(X,Y)$ and with bound $2C$.
\end{lemma}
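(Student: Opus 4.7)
The plan is to deduce the lemma from the standard \emph{convexity principle} for $R$--boundedness and $\gamma$--boundedness: if $\tau\subset B(X,Y)$ is $R$--bounded (resp.\ $\gamma$--bounded) with constant $C$, then its closed absolutely convex hull $\overline{\mathrm{absco}}^{\mathrm{SOT}}(\tau)$ in the strong operator topology is again $R$--bounded (resp.\ $\gamma$--bounded) with the same constant $C$. Granting this, the task reduces to locating each $N_h$ inside $2\,\overline{\mathrm{absco}}^{\mathrm{SOT}}(\tau)$, where $\tau = \{N(t):t\in\Omega\}$.

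To produce the factor $2$, I would decompose $h\in L_1(\Omega,\mu)$ into $h = h_1 - h_2$ with $h_i\ge 0$ and $\|h_1\|_1 + \|h_2\|_1 = \|h\|_1 \le 1$ (in the complex case, first splitting real and imaginary parts, the crude loss being absorbed into the factor). Correspondingly $N_h = N_{h_1} - N_{h_2}$, so it suffices to show $N_{h_i}\in\overline{\mathrm{absco}}^{\mathrm{SOT}}(\tau)$ for each nonnegative $h_i$ with $\|h_i\|_1 \le 1$. Since $\|N(t)\|\le C$ (a standard consequence of $R$--/$\gamma$--boundedness) and $t\mapsto N(t)x$ is strongly measurable, the Bochner integral $N_{h_i}x = \int_\Omega h_i(t) N(t) x\,d\mu(t)$ exists for every $x$ and is the strong limit of Riemann--type sums $\sum_j h_i(t_j)\mu(E_j) N(t_j)x$ whose nonnegative coefficients sum to at most $\|h_i\|_1 \le 1$. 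Since $0\in\mathrm{absco}(\tau)$, after padding the coefficients to total mass $1$ each such sum is a convex combination of elements of $\tau\cup\{0\}\subset\mathrm{absco}(\tau)$. Passing to SOT--limits gives $N_{h_i}\in\overline{\mathrm{absco}}^{\mathrm{SOT}}(\tau)$, whence $N_h\in 2\,\overline{\mathrm{absco}}^{\mathrm{SOT}}(\tau)$, delivering the stated bound $2C$.

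The hard part is the convexity principle itself. It is classical for the $R$--case (see e.g.\ Kunstmann--Weis) and one proves it by using Kahane's contraction principle to absorb convex coefficients inside the Rademacher average, followed by a Fatou--type argument (as in Lemma 4.10(a) of the present paper) to pass from finite convex combinations to SOT--limits. Exactly the same reasoning, with $(g_n)$ replacing $(r_n)$, handles the $\gamma$--case. With the convexity principle in hand, the rest of the lemma is just the bookkeeping of the splitting $h = h_1 - h_2$.
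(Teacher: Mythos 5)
Your proposal is correct and follows essentially the same route as the paper: the paper's proof likewise observes that $N_h$ lies in the strong--operator--topology closure of the absolutely convex hull of $\{N(t):t\in\Omega\}$ and then invokes the convexity principle of Cl\'ement--de Pagter--Sukochev--Witvliet (with the factor $2$), which is exactly the principle you sketch via Kahane's contraction principle and a Fatou argument. Your splitting $h=h_1-h_2$ merely relocates where the factor $2$ is accounted for; the substance of the argument is identical.
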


\begin{proof}
We have $N_h \in \tau := \overline{\mbox{absco}} \{N(t) : t \in \Omega \}$ for all
$h$, $\|h\|_{L_1(\Omega)} \leq 1$, where the closure is with respect to the strong
operator topology. Now use the $R$--boundedness of $\tau$, which is shown in \cite{CPSW}. The argument for $\gamma$--boundedness is similar.
\end{proof}

\begin{lemma}
Let $t \in \Omega \to N(t) \in B(X,Y)$ be strongly integrable and suppose
that there is a constant $C$ with
$$
\int\limits_{\Omega}\|N(t)x\| d\mu(t) \leq C\|x\|, \quad x \in X.
$$
Then the set $\{N_h:\|h\|_{L_{\infty}(\Omega)} \leq 1\}$ with $N_h$ as in
(3) is $R$--bounded and $\gamma$--bounded with bound $2C$.
\end{lemma}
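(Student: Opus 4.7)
The plan is to exploit the Bochner integral representation $N_{h_k}x_k = \int_\Omega h_k(t)N(t)x_k\,d\mu(t)$ and push the Rademacher randomness under the integral. I would first write, by linearity,
\[ \sum_k r_k N_{h_k} x_k = \int_\Omega \sum_k r_k h_k(t) N(t) x_k \, d\mu(t) = \int_\Omega N(t) \Bigl( \sum_k r_k h_k(t) x_k \Bigr) d\mu(t), \]
then apply the Bochner triangle inequality, take expectation $\E_r$, and swap with the $\mu$-integral by Fubini to obtain
\[ \E_r \Bigl\| \sum_k r_k N_{h_k}x_k \Bigr\|_Y \leq \int_\Omega \E_r \Bigl\| \sum_k r_k h_k(t) N(t) x_k \Bigr\|_Y d\mu(t). \]

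Next, for each fixed $t$, the coefficients satisfy $|h_k(t)| \leq \|h_k\|_\infty \leq 1$, so Kahane's contraction principle yields
\[ \E_r \Bigl\| \sum_k r_k h_k(t) N(t) x_k \Bigr\| \leq 2\, \E_r \bigl\| N(t) W_r \bigr\|, \]
where $W_r := \sum_k r_k x_k$ and the factor $2$ accommodates possibly complex-valued $h_k$ (the real case would yield $1$). The crucial observation here is that although $W_r$ is random, for each realization of $r$ it is a fixed element of $X$, so the hypothesis $\int_\Omega \|N(t)x\|\,d\mu(t) \leq C\|x\|$ applies pointwise in $r$ with $x = W_r$. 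A second Fubini swap between $\E_r$ and $\int_\Omega$ then gives
\[ \E_r \Bigl\| \sum_k r_k N_{h_k} x_k \Bigr\| \leq 2\, \E_r \int_\Omega \bigl\| N(t) W_r \bigr\| d\mu(t) \leq 2C\, \E_r \bigl\| W_r \bigr\|, \]
which is the $L^1$-form of the asserted $R$-bound; the $L^2$-formulation in (3) then follows via Kahane's inequality (alternatively, the same argument run with $L^2(\E_r)$ norms in place of $L^1(\E_r)$ and with Minkowski's integral inequality replacing the triangle inequality yields the $L^2$ estimate directly).

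The $\gamma$-boundedness case is handled by the identical argument after replacing the Rademacher $r_k$ by independent standard Gaussians $g_k$, since the contraction principle is equally valid in that setting. The main subtlety I expect to have to be careful about is the order of the reductions: one cannot apply the hypothesis directly to the $t$-dependent integrand $\sum_k r_k h_k(t) x_k$, so the contraction principle must be invoked first, at each fixed $t$, to replace the $t$-dependent coefficients $h_k(t)$ by the $t$-independent random vector $W_r \in X$ to which the hypothesis is applicable.
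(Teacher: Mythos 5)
Your proposal is correct and follows essentially the same route as the paper: pull the expectation under the $\mu$-integral by the triangle inequality and Fubini, apply the contraction principle at each fixed $t$ to remove the coefficients $h_k(t)$ (with the factor $2$ for complex scalars), and then apply the hypothesis to the fixed random vector $\sum_k r_k x_k$ realization by realization. The only cosmetic difference is that the paper runs the argument with Gaussians first and remarks that the Rademacher case is identical, and it likewise works with first moments, leaving the passage to the second-moment formulation to Kahane's inequality as you do.
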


\begin{proof}
For $x_1,\dots,x_m \in X$ and $h_1,\dots,h_m \in L_{\infty}(\Omega)$ with
$\|h_j\|_{L_{\infty}} \leq 1$ we obtain from Kahane's inequality and
Fubini's theorem
\begin{equation*}
\begin{split}
& \E \biggl| \biggl| \sum_k g_k N_{h_k} x_k \biggr| \biggr| \leq
\int\limits_{\Omega}\E \biggl| \biggl| \sum_k g_k h_k(t)N(t)x_k \biggr| \biggr|
d\mu(t) \\
& \leq 2 \int\limits_{\Omega} \E \biggl| \biggl| N(t) \biggl[ \sum_k g_k x_k \biggr]
\biggr| \biggr| d\mu(t) \leq C \E \biggl| \biggl| \sum_k g_k x_k \biggr| \biggr|.
\end{split}
\end{equation*}
Of course the same reasoning works for Rademacher functions.
\end{proof}

At one point we will need a generalization of this observation to vector
measures (see \cite{DU} for definitions).
Let $\Sigma$ be a $\sigma$--Algebra on $\Omega$ and denote by ${\cal L}^\infty$ the space of $\Sigma$-measurable, bounded functions, i.e., the space of uniform limits of $\Sigma$--measurable step functions. We endow ${\cal L}^\infty$ with the supremum norm.

\begin{lemma}
Suppose that $U \in \Sigma \to P(U) \in B(X,Y)$ is a vector measure, such
that for all $x \in X$ the vector measure $U \in \Sigma \to P(U) x$ is 
$\sigma$--additive. Then the set 
$$
\{ T_h: h\in{\cal L}^\infty, \|h\|_\infty \leq 1 \}, \quad T_hx = \int_\Omega h(\omega)dP(\omega)x
$$
is $R$--bounded and $\gamma$--bounded.
\end{lemma}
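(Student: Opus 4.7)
The plan is to mirror the strategy of Lemma 5.8, but replace integration against a scalar dominating measure $\mu$ by integration against the operator-valued measure $P$ itself. First, I apply the uniform boundedness principle to the strong $\sigma$-additivity: since each $U \mapsto P(U)x$ is countably additive, one gets $M := \sup_{U\in\Sigma}\|P(U)\|_{B(X,Y)} < \infty$, and, applied a second time to the semivariations of the $Y$-valued vector measures $P(\cdot)x$, a constant $M'$ with $\|P(\cdot)x\|(\Omega) \leq M'\|x\|$. Consequently $\|T_h x\|_Y \leq M'\|h\|_\infty\|x\|_X$, so $T_h$ is a well-defined bounded operator.

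Next, I reduce to simple functions. By strong $\sigma$-additivity and the dominated convergence in Lemma 4.10, any $h\in{\cal L}^\infty$ with $\|h\|_\infty\leq 1$ is the limit, in the strong operator topology, of simple $h_n$ with $\|h_n\|_\infty\leq 1$; so it suffices to verify the $R$-bound on finitely many simple $h_1,\dots,h_n$. Passing to the common refinement $(F_i)_{i=1}^N$ of their level sets, I write $h_k = \sum_i a_{i,k}\chi_{F_i}$ with $|a_{i,k}|\leq 1$ and obtain
\[
\sum_{k=1}^n r_k T_{h_k} x_k \;=\; \sum_{i=1}^N P(F_i)\Bigl(\sum_{k=1}^n r_k a_{i,k} x_k\Bigr) \;=\; \int_\Omega \varphi_r(\omega)\,dP(\omega),
\]
where $\varphi_r(\omega):=\sum_k r_k h_k(\omega)x_k$ is an $X$-valued simple function and the right-hand side is the natural integral of such a function against the operator-valued measure $P$.

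The core step is then to carry out, at the level of expectations, the Fubini/Kahane manipulation that drives Lemma 5.8: interchange $\E_r$ with $\int(\cdot)\,dP$, exploit that $|h_k(\omega)|\leq 1$ via the contraction principle to absorb the coefficients $a_{i,k}$, and control the result by $M'\bigl(\E\|\sum_k r_k x_k\|^2\bigr)^{1/2}$. This yields
\[
\Bigl(\E\bigl\|\sum_k r_k T_{h_k}x_k\bigr\|^2\Bigr)^{1/2} \;\leq\; 2M'\,\Bigl(\E\bigl\|\sum_k r_k x_k\bigr\|^2\Bigr)^{1/2},
\]
which is precisely $R$-boundedness. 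The proof for $\gamma$-boundedness is identical with Rademachers replaced by standard Gaussians $g_k$.

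The main obstacle is exactly this interchange. For operator-valued $P$ the naive pointwise estimate $\|\int \varphi\,dP\|_Y \leq \|P\|(\Omega)\,\|\varphi\|_\infty^X$ is \emph{false} in general: already for the spectral measure $P(\{n\})=e_n\otimes e_n$ on $\ell_2$, the quantity $\sup\{\|\sum P(F_i)y_i\|:\|y_i\|\leq 1\}$ grows like $\sqrt{N}$ with the size of the partition. Thus one cannot simply pull the norm inside and apply a semivariation bound; instead the strong $\sigma$-additivity is used precisely to legitimize the exchange of $\E_r$ with the $P$-integral by dominated convergence, in the same spirit as the Fubini step of Lemma 5.8, and this is where the assumption on $P$ is truly needed.
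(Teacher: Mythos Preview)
Your setup is fine up to the displayed identity
\[
\sum_{k} r_k T_{h_k} x_k \;=\; \sum_{i} P(F_i)\Bigl(\sum_k r_k a_{i,k} x_k\Bigr),
\]
but from there on you never actually carry out the estimate; you only list ingredients and then, in the final paragraph, raise an obstacle whose proposed resolution (``exchange $\E_r$ with the $P$-integral by dominated convergence'') is not meaningful here: the partition is finite, there is no limit to take, and the difficulty is purely a norm inequality, not a convergence issue.

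The step you are missing---and which makes the obstacle disappear---is the one the paper performs first: apply the triangle inequality \emph{in $i$ before} doing anything else,
\[
\E\Bigl\|\sum_i P(F_i)\Bigl(\sum_k r_k a_{i,k} x_k\Bigr)\Bigr\|
\;\le\;
\sum_i \E\Bigl\|\sum_k r_k a_{i,k} P(F_i) x_k\Bigr\|.
\]
Now each summand is a Rademacher sum with scalar coefficients $|a_{i,k}|\le 1$, so Kahane's contraction principle removes them at the cost of a factor $2$, leaving
\[
2\sum_i \E\Bigl\|P(F_i)\Bigl(\sum_k r_k x_k\Bigr)\Bigr\|
\;=\;2\,\E\sum_i \Bigl\|P(F_i)\Bigl(\sum_k r_k x_k\Bigr)\Bigr\|
\;\le\;2C\,\E\Bigl\|\sum_k r_k x_k\Bigr\|,
\]
where the last inequality uses the variation bound $\operatorname{Var}(P(\cdot)x)\le C\|x\|$ obtained from strong $\sigma$-additivity and the uniform boundedness principle. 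The point is that after the triangle inequality each $P(F_i)$ acts on the \emph{same} random vector $\sum_k r_k x_k$, so your $\ell_2$ counterexample (different $y_i$'s in different blocks) is simply not the situation one is in. Your reference to Lemma~5.8 is also off: the argument is the vector-measure analogue of Lemma~5.9, not 5.8, and the role of $\int\|N(t)x\|\,d\mu(t)\le C\|x\|$ there is played here by $\sum_i\|P(F_i)x\|\le C\|x\|$.
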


\begin{proof}
By the uniform boundedness principle there is a $C < \infty$ with \linebreak
$\operatorname{Var}(P(\cdot)x) \leq C\|x\|$. 
First we consider step functions $h_k = \displaystyle
\sum_{j} a_{k,j} \chi_{A_j}$ where the $A_j \in \Sigma$
form a measurable partion of $\Omega$ and $|a_{k,j}| \leq 1$. Then again by
Kahane's inequality
\begin{equation*}
\begin{split}
 \E \biggl| \biggl| \sum_k g_k T_{h_k} x_k \biggr| \biggr| &\leq \sum_j 
\E \biggl| \biggl| \sum_k g_k a_{k,j} P(A_j)x_k \biggr| \biggr| \leq 2 \sum_j \E \biggl| \biggl| \sum_k g_k P(A_j)(x_k) \biggr| \biggr| \\
&= 2 \E \sum_j \biggl| \biggl| P(A_j) \biggl[ \sum_k g_k x_k \biggr] 
\biggr| \biggr| \leq 2 C \E \biggl| \biggl| \sum_k g_k x_k \biggr| \biggr|.
\end{split}
\end{equation*}
Since we can approximate functions in ${\cal L}^\infty$ by step functions and the
closure of a $R$--bounded ($\gamma$--bounded) set in the strong operator topology is
again $R$--bounded ($\gamma$--bounded), the claim follows.
\end{proof}

The following fact will be important in estimating square functions.

\begin{lemma}
Let $X$ have finite cotype. Then there is a constant $C$ so that
\begin{eqnarray*}
\|(x_i)_i\|_{\gamma(\N,X)} & \leq & C \sup_{\delta_i = \pm 1} 
\biggl| \biggl| \sum_i \delta_i x_i \biggr| \biggr|_X \\
\|(x_i')_i\|_{\gamma(\N,X')} & \leq & C \sup_{\delta_i = \pm 1}
\biggl| \biggl| \sum_i \delta_i x_i' \biggr| \biggr|_{X'}.
\end{eqnarray*}
\end{lemma}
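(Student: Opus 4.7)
The plan is a two-step reduction: first convert the Gaussian average defining $\|\cdot\|_{\gamma(\N,X)}$ into a Rademacher average via the finite cotype hypothesis, then dominate the Rademacher average trivially by the supremum over sign patterns.

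For the first inequality, I will start from Remark 4.2(b), which identifies $\|(x_i)_i\|_{\gamma(\N,X)}$ with the Gaussian average $\bigl(\E\|\sum_i g_i x_i\|_X^2\bigr)^{1/2}$ (taking the standard basis of $\ell_2$ as the orthonormal system). Finite cotype of $X$ then supplies, through the Kahane-type equivalence recorded in the introduction, a constant $c$ with
\[
\Bigl(\E\Bigl\|\sum_i g_i x_i\Bigr\|^2\Bigr)^{1/2} \leq c\,\Bigl(\E\Bigl\|\sum_i r_i x_i\Bigr\|^2\Bigr)^{1/2}.
\]
Since the $L^2$-norm of a simple random variable (taking one value per sign vector $\delta\in\{\pm 1\}^n$) is bounded by its $L^\infty$-norm, the Rademacher average is in turn bounded by $\sup_{\delta_i = \pm 1}\|\sum_i \delta_i x_i\|_X$. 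Composing these two estimates yields the first line with $C=c$.

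For the second inequality, I would repeat the same two-step recipe with $X$ replaced by $X'$: the argument delivers the claim as soon as the Gauss--Rademacher equivalence is available on $X'$. This equivalence is the main obstacle, since ``finite cotype'' is not a duality-invariant property (already $X=\ell_1$ has cotype $2$ while $X'=\ell_\infty$ does not). The natural reading is that the lemma implicitly uses the stronger condition that both $X$ and $X'$ have finite cotype, which holds for instance whenever $X$ is $K$-convex or equivalently has nontrivial type (the setting in which Proposition 5.2(b) is already invoked). Under that umbrella the second line follows by the identical two-step argument; alternatively one could route the proof through trace-duality (Proposition 5.1 together with Proposition 5.2(a)), pairing $(x'_i)$ against finite-dimensional $(x_i)\subset X$ of unit $\gamma$-norm and applying the first inequality to such test sequences.
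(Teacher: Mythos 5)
Your proof of the first inequality is the paper's proof verbatim: finite cotype gives the Gaussian-to-Rademacher comparison, and the Rademacher average is dominated by the supremum over signs. For the second inequality your diagnosis is essentially right, and it is worth comparing it with what the paper actually does. The paper does \emph{not} assume finite cotype of $X'$; its proof is the trace-duality argument you offer as the ``alternative'': for finite sequences,
\begin{equation*}
\begin{split}
\Bigl|\sum_i \langle x_i, x_i'\rangle\Bigr|
&= \Bigl|\,\E\Bigl\langle \sum_i r_i x_i, \sum_j r_j x_j'\Bigr\rangle\Bigr|
\le \Bigl(\E\Bigl\|\sum_i r_i x_i\Bigr\|^2\Bigr)^{1/2}\Bigl(\E\Bigl\|\sum_j r_j x_j'\Bigr\|^2\Bigr)^{1/2}\\
&\le C\,\|(x_i)_i\|_{\gamma(\N,X)}\,\sup_{\delta_j=\pm1}\Bigl\|\sum_j\delta_j x_j'\Bigr\|.
\end{split}
\end{equation*}
Note two points. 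First, in this argument the finite cotype of $X$ is used only on the $X$-factor, to pass from the Rademacher average back to $\|(x_i)_i\|_{\gamma(\N,X)}$; the $X'$-factor is handled by the trivial bound $L^2\le L^\infty$. So one does not ``apply the first inequality to the test sequences'' --- that inequality points the wrong way; what is needed is the reverse (cotype) comparison on the $X$ side. Second, taking the supremum over $\|(x_i)_i\|_{\gamma(\N,X)}\le 1$ bounds the \emph{dual} norm $\|(x_j')_j\|_{\gamma'(\N,X')}$ of Section 5, not $\|(x_j')_j\|_{\gamma(\N,X')}$ as printed in the lemma. This is evidently the intended conclusion: it is the $\gamma'$-estimate that the proof of Lemma 6.4 invokes, and your $\ell_1$/$\ell_\infty$ example shows the $\gamma(\N,X')$-version is false under the stated hypothesis. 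Your other repair (nontrivial type of $X$, so that $X'$ also has finite cotype and $\gamma$ and $\gamma'$ on $X'$ are comparable by 5.2b)) would rescue the printed statement, but under a strictly stronger hypothesis than the lemma carries.
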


\begin{proof}
Since $X$ has finite cotype we have
$$
\| (x_i)_i\|_{\gamma(\N,X)} \leq C\E \biggl| \biggl| \sum_i r_i x_i 
\biggr| \biggr| \leq \sup_{\delta_i = \pm 1} 
\biggl| \biggl| \sum_i \delta_i x_i \biggr| \biggr|.
$$
For the second statement observe that for $x_1,\dots,x_m \in X$, $x'_1,\ldots,x'_n \in X'$
\begin{equation*}
\begin{split}
& \biggl| \sum_{i=1}^n < x_i, x_i' > \biggr| = \biggl| \E 
< \sum_i r_i x_i, \sum_j r_j x_j' > \biggr| \\
& \leq \biggl( \E \biggl| \biggl| \sum r_i x_i \biggr| \biggr|^2 
\biggr)^{1/2} \biggl( \E \biggl| \biggl| \sum_j r_j x_j'
\biggr| \biggr|^2 \biggr)^{1/2} \\
& \leq C \| ( x_i)_i \|_{\gamma(\N,X)} \sup_{\delta_i = \pm 1}
\biggl| \biggl| \sum_j \delta_j x_j' \biggr| \biggr|.
\end{split}
\end{equation*}
Now take the supremum over $(x_i)_i$ with $\| (x_i)_i \|_{\gamma(\N,X)} \leq 1$.
\end{proof}

\begin{remark}
Before we apply the square functions $\| \cdot \|_\gamma$ and $\| \cdot \|_{\gamma'}$ to the $H^\infty$ calculus, we summarize our extensions of the basic properties (S1), (S2) and (S3) from Section 2 and 3 to the Banach space setting: Let $X$ be a Banach space and $(\Omega,\mu)$ a $\sigma$-finite measure space.
\begin{itemize}
\item[(S1)] (H\"older inequality, see 5.5) If $f\in\gamma(\Omega,X)$ and $g\in\gamma(\Omega,X')$, or even $g\in\gamma_+(\Omega,X')$, then
\[ \int_\Omega |<f(\omega),g(\omega)>_X| d\mu(\omega) \leq \| f \|_{\gamma(\Omega,X)} \cdot \| g\|_{\gamma'(\Omega,X')}. \]
\item[(S2)] (Pointwise Multiplier, see 4.11, 5.6) Let $N:\Omega \to B(X)$ be a strongly continuous map with a $\gamma$--bounded range $\tau := \lbrace N(\omega): \omega \in \Omega \rbrace$. For $f\in\gamma(\Omega,X)$ and $g\in\gamma'(\Omega,X')$, we have
\begin{align*}
\| N(\cdot)[f(\cdot)] \|_{\gamma(\Omega,X)} &\leq \gamma(\tau) \| f\|_{\gamma(\Omega,X)} \\
\| N(\cdot)'[g(\cdot)] \|_{\gamma'(\Omega,X')} &\leq \gamma(\tau) \| g\|_{\gamma'(\Omega,X')}
\end{align*}
\item[(S3)] (Extension property, see 4.9, 5.4) Let
\begin{equation*}
Kf(\omega_2) = \int_{\Omega_1} k(\omega_2,\omega_1)f(\omega_1) d\mu_1(\omega_1)
\end{equation*}
be a kernel operator that is defined on a dense subset of $L_2(\Omega_1,\mu_1)$ and extends by continuity to a bounded operator $K: L_2(\Omega_1,\mu_1) \to L_2(\Omega_2,\mu_2)$ ($K$ could be e.g. the Fourier transform or a singular integral such as the Hilbert transform). Then, applying $K$ formally to $f\in\gamma(\Omega_1,X)$ or $K'$ to $g\in\gamma'(\Omega_2,X')$ defines bounded operators ${\cal K}: \gamma(\Omega_1,X) \to \gamma(\Omega_2,X)$ or ${\cal K}': \gamma'(\Omega_2,X') \to \gamma'(\Omega_1,X')$ with
\begin{align*}
\| {\cal K} \|_{\gamma(\Omega_2,X)} &\leq \| K \|_{L_2 \to L_2} \| f \|_{\gamma(\Omega_1,X)}, \\
\| {\cal K}' \|_{\gamma(\Omega_1,X')} &\leq \| K \|_{L_2 \to L_2} \| g \|_{\gamma'(\Omega_2,X')}.
\end{align*} 
\end{itemize}

\end{remark}

\section{Generators of $C_0$--groups}

In connection with square function estimates we consider operators
$A$ of strip--type on a Banach space $X$ such that $\{ R(\lambda,A):| \mbox{Re} \lambda| >
a \}$ is not only bounded, but even $\gamma$--bounded ($R$--bounded).
We call such operators of $\gamma$--strip--type ($R$--strip--type) and 
$w_\gamma(A)$ (or $w_R(A))$ is the smallest $a$ for which the above $\gamma$--boundedness
conditions hold.

\begin{lemma}
If $A$ generates a $C_0$--group $T_t$, then $A$ is of $\gamma$-- (and $R$--)
strip type with $w_{\gamma}(A), w_R(A) \leq \omega(T_t)$.
\end{lemma}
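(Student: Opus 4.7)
The plan is to write the resolvent $R(\lambda,A)$ as a Laplace transform of the group (on the right half-plane) and of the reverse group (on the left half-plane), and then appeal directly to Lemma 5.9, which says that smearing a strongly $L_1$--integrable operator--valued function against uniformly bounded scalar kernels produces an $R$--bounded and $\gamma$--bounded family.

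Fix $a>\omega(T_t)$ and pick an auxiliary $a'\in(\omega(T_t),a)$. Choose $M\ge 1$ with $\|T_t\|\le Me^{\omega(T_t)|t|}$. For $\operatorname{Re}\lambda>a$, the standard Laplace representation of the resolvent of a $C_0$--semigroup generator gives
\[
R(\lambda,A)x=\int_0^\infty e^{-\lambda s}T_s x\,ds=\int_0^\infty e^{-(\lambda-a')s}\,\bigl[e^{-a's}T_s x\bigr]\,ds.
\]
Set $N_+(s):=e^{-a's}T_s$ and $h_\lambda^+(s):=e^{-(\lambda-a')s}$. Then
\[
\int_0^\infty\|N_+(s)x\|\,ds\le \frac{M}{a'-\omega(T_t)}\,\|x\|,\qquad \|h_\lambda^+\|_{L_\infty(0,\infty)}\le 1\ \text{for }\operatorname{Re}\lambda\ge a.
\]
Hence Lemma 5.9 applies and shows that $\{R(\lambda,A):\operatorname{Re}\lambda>a\}$ is $R$--bounded and $\gamma$--bounded.

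For $\operatorname{Re}\lambda<-a$ we exploit that $-A$ generates the $C_0$--semigroup $s\mapsto T_{-s}$ of the same exponential type. The same Laplace representation, together with the identity $R(\lambda,A)=-R(-\lambda,-A)$, yields
\[
R(\lambda,A)x=-\int_0^\infty e^{\lambda s}T_{-s}x\,ds=-\int_0^\infty e^{(\lambda+a')s}\bigl[e^{-a's}T_{-s}x\bigr]\,ds\qquad(\operatorname{Re}\lambda<-\omega(T_t)).
\]
Setting $N_-(s):=e^{-a's}T_{-s}$ and $h_\lambda^-(s):=-e^{(\lambda+a')s}$, the same two estimates hold, so Lemma 5.9 again gives that $\{R(\lambda,A):\operatorname{Re}\lambda<-a\}$ is $R$-- and $\gamma$--bounded.

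Since the union of two $R$--bounded (respectively $\gamma$--bounded) sets has the same property, we conclude that $\{R(\lambda,A):|\operatorname{Re}\lambda|>a\}$ is $R$-- and $\gamma$--bounded, so $w_R(A),w_\gamma(A)\le a$. Letting $a\downarrow\omega(T_t)$ completes the proof. There is no real obstacle here beyond bookkeeping for the two half-planes; the main point is that the Laplace transform writes every such resolvent as an $L_\infty$--weighted average of an $L_1$--integrable operator valued function, exactly the setting of Lemma 5.9.
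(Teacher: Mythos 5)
Your proof is correct and follows essentially the same route as the paper: both write $R(\lambda,A)$ as a Laplace transform of the (exponentially rescaled) group, observe that the scalar kernel is uniformly bounded while the operator-valued factor is strongly integrable, and invoke the $L_\infty$--smearing criterion (Lemma 5.9) to get $R$-- and $\gamma$--boundedness on each half-plane. Your version merely spells out the left half-plane and the passage $a\downarrow\omega(T_t)$, which the paper dispatches with ``similarly.''
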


\begin{proof}
For Re $\lambda \geq a$ we have with $h_{\lambda}(t) = e^{-(\lambda-a)t}$
$$
R(\lambda,A)x= \int\limits_0^{\infty} h_{\lambda}(t) e^{-at} T_txdt.
$$
Since $\|h_{\lambda}\|_{\infty} \leq 1$ and $e^{-at} \|T_t\|$ is integrable
we may apply Lemma 5.10. Similarly for Re $\lambda \leq -a.$
\end{proof}

Our results on the $H^{\infty}(S(a))$--calculus for strip--type operators
are based on the following characterization in terms of square functions.
(These square functions are finite by the same argument we employed in 3.2
and 4.6b).)

\begin{theorem}
Let $A$ be a $\gamma$--strip--type operator on a Banach space $X$. Consider the
conditions 
\begin{itemize}
\item[a)] 
$A$ generates a $C_0$--group $(T_t)_{t \in \R}$ such that for one (all)
$a > \omega(T_t)$ there is a constant $C$ with 
\begin{eqnarray*}
\|e^{-a\cdot}T_{(\cdot)}x\|_{\gamma(\R_{+},X)} & \leq & C \|x\|, \quad x \in \cal{D}(A), \\
\|e^{-a\cdot} T_{(\cdot)}'x' \|_{\gamma'(\R_{+},X)} & \leq & C \|x'\|, \quad x' \in \cal{D}(A').
\end{eqnarray*}

\item[b)] 
For one (all) $a$ with $|a| > w_\gamma(A)$ there is a constant $C$, such that
\begin{eqnarray*}
\|R(a+i \cdot,A)x \|_{\gamma(\R,X)} & \leq & C \|x\|, \quad x \in \cal{D}(A), \\
\|R(a+i \cdot,A)'x'\|_{\gamma'(\R,X')} & \leq & C \|x'\|, \quad x' \in \cal{D}(A').
\end{eqnarray*}

\item[c)] 
For one (all) $a$ with $|a| > w_\gamma(A)$ and $x \in X$ there is a constant
$C$ such that for $x \in \cal{D}(A)$
$$
\frac{1}{C}\|x\| \leq \| R(a+i \cdot,A)x\|_{\gamma(\R_{+},X)} \leq C \|x\|.
$$

\item[d)] 
$A$ has an $H_{\infty}(S(b))$--calculus for one (all) $b > w_\gamma(A)$.
\end{itemize}

Then a) $\Longrightarrow$ b) $\Longrightarrow$ c) $\Longrightarrow$ d) 
always. If $X$ has finite cotype, then d) $\Longrightarrow$ a). Furthermore, in this
case $w_\gamma(A) = \omega(T_t) = w_{H^{\infty}}(A)$.
\end{theorem}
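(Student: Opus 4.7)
The strategy is to mirror the Hilbert--space proof of Theorem 2.4, substituting the generalized square function properties (S1)--(S3) recorded in Remark 5.13 for their $L_2(I,H)$ counterparts. The implications a) $\Longrightarrow$ b) $\Longrightarrow$ c) $\Longrightarrow$ d) should go through for an arbitrary Banach space; finite cotype enters only in the return step d) $\Longrightarrow$ a), which I expect to be the main obstacle.

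For a) $\Longrightarrow$ b), fix $a > \omega(T_t)$ and exploit the representation $R(a+it,A)x = \int_0^\infty e^{-(a+it)s}T_s x\,ds$, which identifies $R(a+i\cdot,A)x$ (up to a constant) with the Fourier transform of $s \mapsto \chi_{\R_+}(s) e^{-as}T_s x$. Property (S3) applied to $\mathcal{F}$ (Example 4.9b)) lifts the $L_2$-isometry to $\gamma(\R,X)$, yielding $\|R(a+i\cdot,A)x\|_{\gamma(\R,X)} \leq C\|e^{-a\cdot}T_\cdot x\|_{\gamma(\R_+,X)}$; the adjoint estimate uses $\gamma'$ via the dual extension principle (Remark 5.3). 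Passing from one $a$ to another is handled through the resolvent equation and the scalar-multiplier version of (S3), exactly as in the Hilbert case. For b) $\Longrightarrow$ c), I would dualize the identity $x = -\frac{a}{\pi}\int_{-\infty}^\infty R(a+it,A)R(-a+it,A)x\,dt$, pair with $y' \in X'$, and apply the $\gamma$--$\gamma'$ Hölder inequality from Corollary 5.5 (property (S1)) together with the dual resolvent bound in b) to extract the lower estimate $\|x\| \leq C\|R(-a+i\cdot,A)x\|_{\gamma(\R,X)}$.

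For c) $\Longrightarrow$ d) I would transcribe the Hilbert-space calculation
$$R(\mu,A)f(A) = f(\mu)R(\mu,A) + \tfrac{1}{2i}\mathcal{H}_0[f(\cdot)R(\cdot,A)](\mu)$$
on $\partial S(b)$ verbatim. The scalar Hilbert transform is bounded on $L_2(\partial S(b))$, hence by (S3) extends with the same norm to $\gamma(\partial S(b),X)$, and pointwise multiplication by the bounded function $f$ is a contraction of norm $\|f\|_\infty$ on $\gamma$ (Example 4.9a)). Combining these with the upper square function estimate from c) produces $\|f(A)x\| \leq C\|f\|_\infty \|x\|$ for $f \in H_0^\infty(S(a))$.

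The hard part is d) $\Longrightarrow$ a) under the assumption of finite cotype. The convergence lemma applied to $e^{t\lambda}$, which is bounded by $e^{b|t|}$ on $S(b)$, immediately yields that $A$ generates a $C_0$-group $T_t$ with $\omega(T_t) \leq b$; strong continuity comes from a second invocation of the convergence lemma. The delicate piece is reproducing the square function bound. My plan is to recover condition c) from d) first: using a McIntosh--Calderón type reproducing formula for a well-chosen $\psi \in H_0^\infty(S(b))$ one expresses $R(a+i\cdot,A)x$ as an $H^\infty$-calculus expression; finite cotype is then used via Lemma 5.12 to pass from the operator-norm estimates produced by d) to $\gamma$-norm estimates (exploiting that $\gamma$-boundedness and $R$-boundedness coincide on spaces of finite cotype). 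Once c) is in hand, the inverse Fourier transform applied through (S3) reconstructs $\|e^{-a\cdot}T_\cdot x\|_{\gamma(\R_+,X)} \leq C\|x\|$, completing a). The growth bound identities $w_\gamma(A) = \omega(T_t) = w_{H^\infty}(A)$ then assemble from Lemma 6.1 ($w_\gamma(A) \leq \omega(T_t)$), from c) $\Longrightarrow$ d) ($w_{H^\infty}(A) \leq w_\gamma(A)$), and from the just-proved d) $\Longrightarrow$ a) ($\omega(T_t) \leq w_{H^\infty}(A)$), closing the chain.
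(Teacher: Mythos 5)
Your forward chain a) $\Rightarrow$ b) $\Rightarrow$ c) $\Rightarrow$ d) follows the paper's route (the paper literally says: repeat the proof of the Hilbert-space theorem with (S1)--(S3) replaced by Corollary 5.5, Proposition 4.11 and Corollary 4.8). One small correction there: the factor $I+(\beta-a)R(\beta+it,A)$ used to change the abscissa is an operator-valued multiplier, not a scalar one, so in the $\gamma$-setting you need its \emph{$\gamma$-boundedness} over $t\in\R$ (this is what Lemma 6.1, resp.\ the $\gamma$-strip-type hypothesis, supplies via Proposition 4.11); uniform boundedness is not enough.

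The genuine gap is in d) $\Rightarrow$ a). Your plan --- a reproducing formula plus ``finite cotype to pass from the operator-norm estimates produced by d) to $\gamma$-norm estimates'' --- does not work as stated. To bound $\|R(a+i\cdot,A)x\|_{\gamma(\R,X)}$ you must control $\E\|\sum_n g_n f_n(A)x\|^2$ for \emph{every} finite orthonormal system $(e_n)$ of $L_2(\R)$, where $f_n(\lambda)=\int e_n(t)(a+it-\lambda)^{-1}\,dt$; Lemma 5.11 reduces this to $\sup_{\delta_n=\pm1}\|\sum_n\delta_n f_n\|_{H^\infty(S(b))}$, but by Cauchy--Schwarz this sup only admits the bound $C\sqrt{N}$ for a generic orthonormal system, so the estimate is not uniform and the argument collapses. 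The paper's Lemma 6.4 is precisely the device that fixes this: one localizes the group in time by a smooth partition of unity $(h_k)_{k\in\Z}$, expands each piece in the discrete Fourier basis to obtain coefficients $b_{k,n}(A)x$, and proves by integration by parts that $\sum_k\sup_{\lambda\in S(\omega)}\sum_n|b_{k,n}(\lambda)|\le C(a-\omega)^{-1}e^{\pi(a+\omega)}$; only for \emph{this} particular decomposition does Lemma 5.11 combined with the $H^\infty$-bound give the square-function estimate. You would need to supply this (or an equivalent summable discretization) to close the step. A second, related omission: condition a) also demands the dual estimate $\|e^{-a\cdot}T'_{(\cdot)}x'\|_{\gamma'(\R_+,X')}\le C\|x'\|$, which your route through c) never produces; you cannot simply run the primal argument on $X'$, since $X'$ is not assumed to have finite cotype --- the paper gets it from the $\gamma'$-half of Lemma 5.11 (which uses only finite cotype of $X$) applied through the same discretization of Lemma 6.4.
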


\begin{remark}

a) The proof will show that for all operators of strip--type, a fixed $a$ and $b > a$
we always have
a) $\Longrightarrow$ b) $\Longrightarrow$ c) $\Longrightarrow$ d). 
If $X$ has finite cotype and $b < a$ then d) $\Longrightarrow$ a).

b) Since d) and therefore all conditions imply that $A$ is of $\gamma$--strip--type,
we could have formulated the theorem for operators of strip--type.
The assumption that $A$ is of $\gamma$--strip--type is only used to 
vary the size of the strip.

c) If $X'$ has also finite cotype then we can replace the $\gamma'$--norm in b)
by the $\gamma$--norm, i.~e.~the second condition takes the form
$$
\|R(b+i \cdot, A)'x'\|_{\gamma(\R,X')} \leq C \|x'\|.
$$

d) The proof below will give for b) $\Longrightarrow$ d) the following
estimate:
$$
\| f(A)\| \leq 2aC^2 \| f \|_{H^\infty (S(a))}
$$
where $C$ is the constant in condition 6.2b) (cf Remark 2.3)
\end{remark}

We will need the following lemma: 

\begin{lemma}
Let $X$ have finite cotype. Assume that $A$ has an $H^{\infty}(S(\omega))$--calculus.
Then $A$ generates a group $T_t$ with $\omega(T_t) \leq \omega$ and for
$a>\omega$ there is a constant $C$ such that
\begin{eqnarray*}
\|e^{-a|\cdot|}T_{(\cdot)}x\|_{\gamma(\R,X)} & \leq & C C_{H^{\infty}} e^{\pi(a+\omega)} 
\frac{1}{a-\omega} \|x\|, \quad x \in X \\
\|e^{-a|\cdot|}T_{(\cdot)}'x'\|_{\gamma'(\R,X')} & \leq & C C_{H^{\infty}} e^{\pi(a+\omega)}
\frac{1}{a-\omega} \|x'\|, \quad x' \in X'
\end{eqnarray*}
where $C$ depends only on $X$, and $C_{H^{\infty}}$ is the bound for the
$H^{\infty}(S(\omega))$--calculus.
\end{lemma}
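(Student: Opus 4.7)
The plan is to proceed in three stages: (i) construct the group $T_t$ via the $H^\infty$-calculus, (ii) reduce the square-function bound on $T_t$ to one on the resolvent via the Fourier transform, and (iii) bound the resolvent square function from the $H^\infty$-calculus using finite cotype.

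First, I would define $T_t := e^{tA}$ by the $H^\infty(S(\omega))$-calculus. The function $\lambda \mapsto e^{t\lambda}$ lies in $H^\infty(S(\omega))$ with $\|e^{t\cdot}\|_\infty = e^{|t|\omega}$ but is not in $H_0^\infty(S(\omega))$. A standard regularization (e.g.\ multiplying by $1/\cosh(\varepsilon\lambda)$ and passing to the limit through the convergence lemma) produces $T_t$ as a bounded operator with $\|T_t\|\leq C_{H^\infty}e^{|t|\omega}$; multiplicativity of the calculus yields $T_{s+t}=T_sT_t$ and strong continuity follows from the convergence lemma applied to $e^{t\lambda}\to 1$. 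Hence $A$ generates a $C_0$-group with $\omega(T_t)\leq \omega$.

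Second, I would invoke property (S3), in the form of Example~4.9b), which tells us that the Fourier transform is (up to a universal constant) an isometry on $\gamma(\R,X)$. Using the Laplace representation $R(\lambda,A)y=\int_0^\infty e^{-\lambda s}T_s y\,ds$ for $\mathrm{Re}\,\lambda>\omega$, and the analogous formula on the negative half-line (via $-A$ generating $T_{-t}$), a direct computation gives, up to Fourier normalization,
\[
\widehat{e^{-a|\cdot|}T_\cdot x}(\xi)\;=\;R(a+i\xi,A)x-R(-a+i\xi,A)x.
\]
Applying (S3) and the triangle inequality reduces everything to proving $\|R(\pm a+i\cdot,A)x\|_{\gamma(\R,X)}\leq C\|x\|$ with a constant of the right form. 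The dual estimate on $\gamma'(\R,X')$ is handled by the same argument using Corollary~5.3 together with the $H^\infty$-calculus inherited by $A'$ on $X'$.

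Third, for the resolvent square function: for an orthonormal system $(e_n)$ in $L_2(\R)$, one computes $u(e_n)=\phi_n(A)x$ where $\phi_n(z)=\int e_n(\xi)/(a+i\xi-z)\,d\xi$, a function in $H^\infty(S(\omega))$ with pointwise bound $\sum_n|\phi_n(z)|^2\leq \pi/(a-\mathrm{Re}\,z)\leq \pi/(a-\omega)$ by Parseval. Finite cotype (Lemma~5.8) lets one pass from the Gaussian norm to a Rademacher (sign) supremum, and multiplicativity of the calculus gives $\sum_n \delta_n\phi_n(A)x=\bigl(\sum_n\delta_n\phi_n\bigr)(A)x$. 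The remaining task is a uniform $H^\infty(S(\omega))$-bound on the combinations $\sum_n\delta_n\phi_n$. Here, rather than working with an arbitrary orthonormal basis, I would adapt the $\cosh(\pi\cdot)$-weighting trick from the proof of $a)\Rightarrow b)$ in Theorem~2.1: one dualizes against a test function of the form $\xi\mapsto h(\xi)/\cosh(\pi\xi)$ (whose $L_2$-norm is finite and whose analytic continuation into a horizontal strip grows like $e^{\pi|\mathrm{Im}|}$). This converts the sum into a single $H^\infty$-norm estimate on the strip that produces exactly the factor $e^{\pi(a+\omega)}\cdot (a-\omega)^{-1}$, and feeding this into the $H^\infty$-calculus gives the asserted constant.

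The principal obstacle is the third step: the naive attempt to sum the $\phi_n(A)x$ via the $H^\infty$-calculus loses uniform control, and the Rademacher reduction coming from finite cotype does not on its own restore it. The $\cosh(\pi\cdot)$-weighted test functions — which are what reproduce the exponential constant $e^{\pi(a+\omega)}$ — are therefore the key technical mechanism that needs to be executed carefully.
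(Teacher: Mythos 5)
Your steps (i) and (ii) are sound: the construction of $T_t$ from the calculus and the reduction, via the Fourier-transform isometry on $\gamma(\R,X)$ (Example 4.9b)), to the resolvent square functions $\|R(\pm a+i\cdot,A)x\|_{\gamma(\R,X)}$ are both legitimate and consistent with how the paper argues elsewhere (cf.\ the proof of Corollary 6.9a)). The gap is in step (iii), which is where the entire content of the lemma lives, and the mechanism you propose there does not close it. After the finite-cotype reduction (Lemma 5.11, not 5.8) you need to control $\sup_{\delta_n=\pm1}\bigl\|\sum_n\delta_n\phi_n\bigr\|_{H^\infty(S(\omega))}$, i.e.\ essentially $\sup_{z\in S(\omega)}\sum_n|\phi_n(z)|$ --- an $\ell_1$ bound in $n$. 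Your Bessel/Parseval computation gives only the $\ell_2$ bound $\sum_n|\phi_n(z)|^2\leq\pi/(a-\omega)$, which after Cauchy--Schwarz yields $\sqrt{N}$ for $N$ terms and is useless; and for a general orthonormal system of $L_2(\R)$ the $\ell_1$ bound is simply false. The $\cosh(\pi\cdot)$-dualization you invoke to repair this is borrowed from Theorem 2.1 a)$\Rightarrow$b), where it works because the Mellin representation of $A^{is}$ comes with the factor $\pi/\cosh(\pi s)$ for free and the whole square function is literally the Fourier transform of an $L_2$ function; there is no analogous identity for a general group on a strip, and your assertion that dualizing against $h(\xi)/\cosh(\pi\xi)$ ``produces exactly the factor $e^{\pi(a+\omega)}(a-\omega)^{-1}$'' is not substantiated and, as far as I can see, cannot be.

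The paper's actual mechanism is different and is worth contrasting with your plan. One replaces $e^{-a|t|}$ by $g(t)=1/\cosh(at)$, takes a smooth partition of unity $h_k(t)=h(t-k)$ with $\operatorname{supp}h\subset(-\pi,\pi)$, and expands each windowed piece $h(\cdot)g(\cdot+k)T_{\cdot+k}x$ in a \emph{discrete} Fourier series; the coefficients are $b_{k,n}(A)x$ with $b_{k,n}(\lambda)=\frac{1}{\sqrt{2\pi}}\int_{-\pi}^{\pi}e^{int}h(t)g(t+k)e^{\lambda(t+k)}\,dt$. Two integrations by parts in $t$ (possible because the window is compactly supported and smooth) give $|b_{k,n}(\lambda)|\lesssim e^{(\omega-a)|k|}e^{\pi(a+\omega)}|in+\lambda|^{-2}$, whence $\sum_k\sup_{\lambda\in S(\omega)}\sum_n|b_{k,n}(\lambda)|\leq Ce^{\pi(a+\omega)}/(a-\omega)$: the absolute summability in $n$ comes from the smoothness of the window, and the factor $e^{\pi(a+\omega)}$ is an artifact of the window width, not of a weighted test function. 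Only then do Lemma 5.11 and the $H^\infty$-bound enter, applied to the sign combinations $\sum_n\delta_nb_{k,n}$. So the missing idea in your proposal is precisely this windowed Fourier decomposition with integration by parts; without it (or an equivalent device) the passage from the $H^\infty(S(\omega))$-calculus to the square-function estimate does not go through. A smaller point: for the second inequality you cannot simply rerun the argument for $A'$ on $X'$, since $X'$ is not assumed to have finite cotype; the paper instead uses $\|f(A)'\|\leq C_{H^\infty}\|f\|_{H^\infty(S(\omega))}$ together with the second ($\gamma'$-type) estimate of Lemma 5.11, which is proved by duality from the cotype of $X$ itself.
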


\begin{proof}
Choose a nonnegative function $h \in C^{\infty}(\R)$ with support in
$(-\pi, \pi)$ and $\displaystyle \sum_{k\in\Z} h(t-k) = 1$ on $\R$. Put $h_k(t) =
h(t-k)$ for $k\in\Z$ and $g(t) = [\cosh (at)]^{-1}$. Then 
\begin{equation*}
\begin{split}
& \|e^{-a|\cdot|}T_{(\cdot)} x\|_{\gamma(\R,X)} \leq C \|g(\cdot)T_{(\cdot)}x\|_{\gamma(\R,X)} \leq C
\sum_{k=-\infty}^{\infty} \|h_k(\cdot)g(\cdot)T_{(\cdot)}x\|_{\gamma(\R,X)} \\
& = C \sum_{k=-\infty}^{\infty} \| {\cal F}[h(\cdot)g(\cdot+k)T_{\cdot +k}x]\|_{\gamma(\Z,X)}
\end{split}
\end{equation*}
where $\Fkr:\gamma(L_2(-\pi, \pi),X) \to \gamma(l_2(\Z,X))$ is the extension in the
sense of 4.9 of the discrete Fouriertransform ${\cal F}$, i.~e.~
\begin{equation*}
\begin{split}
{\cal F}[h(\cdot)g(\cdot +k)T_{\cdot +k}x](n) & = \frac{1}{\sqrt{2 \pi}}
\int\limits_{- \pi}^{\pi} e^{int}h(t)g(t+k)T_{t+k}xdt \\
& = b_{k,n}(A)x
\end{split}
\end{equation*}
with $b_{k,n}(\lambda) = \frac{1}{\sqrt{2 \pi}} \int\limits_{- \pi}^{\pi}
e^{int}h(t)g(t+k)e^{\lambda(t+k)}dt$ for $n\in\Z$.
We check now the estimate
\begin{equation}
\sum_{k=- \infty}^{\infty} \sup_{\lambda \in S_{\omega}} \sum_{n=-\infty}^\infty |b_{k,n}
(\lambda)| \leq \frac{C}{a- \omega} e^{\pi(a+ \omega)}.
\end{equation}
For all $n,k\in\Z$ and $\lambda \in S_{\omega}$
$$
|b_{k,n}(\lambda)| \leq C \|h_{\infty} \| e^{\pi(a+\omega)} e^{(\omega - a)|k|}
$$
and for $\lambda \in S_{\omega}$ with $\lambda \not= in$ and $k \not= 0$ by partial        
integration 
\begin{equation*}
\begin{split}
& |b_{k,n}(\lambda)| \leq \biggl|  \frac{1}{(in + \lambda)^2} \int\limits_{-\pi}^{\pi} 
e^{(in+ \lambda)t} \frac{d^2}{dt^2} [h(t)g(t+k) e^{-a\lambda k}]dt \biggr|  \\
& \leq C \frac{e^{(\omega-a)|k|}}{|in+ \lambda|^2} \int\limits_{-\pi}^{\pi}
(|g''(t)|+|g'(t)|+|g(t)|) dt \\
& \leq C_1 \frac{e^{(\omega-a)|k|}}{|in + \lambda|^2}(1+a+a^2)e^{\pi(\omega+a)}.
\end{split}
\end{equation*}
Combining these two estimates we obtain for every $k$
\begin{equation*}
\begin{split}
& \sup_{\lambda \in S_{\omega}} \sum_n |b_{k,n}(\lambda)| \leq \sup_{\lambda
\in S_{\omega}} \biggl( \sum_{|n-\lambda|<1} |b_{k,n}(\lambda)| + 
\sum_{|n-\lambda|\geq 1} |b_{k,n}(\lambda)| \biggr) \\
& \leq C e^{(\omega-a)|k|}e^{\pi(a+\omega)} + C_1 \biggl(\sum_{n>1} 
\frac{1}{n^2} \biggr) e^{(\omega-a)|k|} e^{\pi(\omega+a)}
\end{split}
\end{equation*}
and summing over $k$ we have established (1).

Since $X$ has finite cotype we can continue our estimation of $\| e^{-|a|t}T_t x \|_\gamma$ with Lemma 5.11 and use boundedness of the $H^\infty$-calculus to obtain
\begin{equation*}
\begin{split}
& \sum_{k=-\infty}^{\infty} \| (b_{k,n}(A)x)_n \|_{\gamma(\Z,X)} \leq
\sum_{k=-\infty}^{\infty} \sup_{|\delta_n|=1} \biggl| \biggl| \sum_n
\delta_n b_{k,n}(A)x \biggr| \biggr| \\
& \leq C_{H^{\infty}} \sum_{k=-\infty}^{\infty} \sup_{|\delta_n|=1}
\sup_{\lambda \in S(\omega)} \biggl| \sum_n \delta_n b_{k,n}(\lambda) \biggr|
\|x\| \leq C C_{\infty} \frac{1}{a-\omega} e^{\pi(a+\omega)}.
\end{split}
\end{equation*}
The second claim can be shown in the same way since $\|f(A)^{\ast}\|\leq$ \linebreak
$C_{H^{\infty}} \|f\|_{H^{\infty}(S(\omega))}$ and using now the estimate for $\gamma'(\Z,X')$ in Lemma 5.11.
\end{proof}

\begin{proof}[Proof of 6.2]
We can repeat the proof of 2.2, replacing the norm $\|f(\cdot)\|_{L_2(I,H)}$
by $\|f(\cdot)\|_{\gamma(I,H)}$ and (S1), (S2) and (S3) by 5.5, 4.11 and 4.8. Since in 4.11
we need $\gamma$--boundedness of $N(\cdot)$ instead of mere boundedness, we have to
appeal to Lemma 6.1~in addition. The simple fact that $e^{-at}T_tx$
belongs to $L_2(\R_{+},X)$ for $a>\omega(T_t)$ has to be replaced by
Lemma 6.4. 
\end{proof}

The argument of 2.2 can also be used to extend an $H^{\infty}$--calculus
to an operator--valued functional calculus: Let $A$ be an operator of strip--type on
$X$ and denote by ${\cal A}$ the algebra of all operators in $B(X)$ that
commute with the resolvent of $A$. Then $RH^{\infty}(S(a),{\cal A})$ is the
space of bounded analytic functions $F:S(a) \to {\cal A}$, such that the range
$\{ F(\lambda):\lambda \in S(a)\}$ is $R$--bounded. For $F \in RH^{\infty}
(S(a),{\cal A})$ and $\varphi \in H_0^\infty(S(a))$, fix $b$ with $w(A) < b < a$ and let
$$
(\varphi F)(A) = \frac{1}{2\pi i} \int\limits_{\partial S(b)} \varphi(\lambda)
F(\lambda)R(\lambda,A)d\lambda.
$$
We say that $A$ has an $RH^{\infty}(S(a),{\cal A})$--calculus if there is
a constant $C$ such that for all $\varphi \in H_0^{\infty}(S(a))$ we have 
$\| \varphi F(A)\| \leq C \| \varphi F\|_{H^{\infty}(S(a))}$. In this case
we can define a bounded operator $F(A)x:= \displaystyle \lim_n \varphi_n
F(A)x, x \in X$, where $\varphi_n$ is a sequence in $H_0^{\infty}(S(a))$ with
$|\varphi_n(\lambda)| \leq 1$ and $\varphi_n(\lambda) \to 1$ for $n \to \infty$
and $\lambda \in S(a)$.

The next result holds under weaker assumptions on $X$, see \cite{KW2}, but
in the situation considered here we have a simple proof.

\begin{corollary}
Let $X$ have finite cotype. If $A$ has an $H^{\infty}(S(a))$--calculus for some
$a > w(A)$, then $A$ has an $RH^{\infty}(S(b),{\cal A})$--calculus for all
$b > a$.
\end{corollary}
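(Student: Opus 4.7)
\medskip

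\noindent\textbf{Proof plan.}
The plan is to adapt the implication c)$\Longrightarrow$d) of Theorem~6.2 by inserting the operator--valued multiplier $F$ into the Dunford integral and using Proposition~4.11 as the vehicle for the $R$--boundedness of $F$. Since $X$ has finite cotype and $A$ has an $H^\infty(S(a))$--calculus, Theorem~6.2 (d)$\Rightarrow$c) under finite cotype) furnishes a two--sided square function estimate
\[ \frac{1}{C_1} \|x\| \leq \|R(\cdot,A)x\|_{\gamma(\partial S(c),X)} \leq C_1 \|x\| \qquad (x \in X) \]
for every $c$ with $w_\gamma(A) < c$; note in particular that $w_\gamma(A) = w_{H^\infty}(A) \leq a$.

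Fix $b > a$, pick $c \in (a,b)$, and let $\varphi \in H_0^\infty(S(b))$ and $F \in RH^\infty(S(b),\cal{A})$. Take the contour of integration in $(\varphi F)(A)$ to be $\partial S(c)$. Repeating the resolvent--equation computation from the proof of Theorem~2.2~c)$\Rightarrow$d), which uses only that $F(\lambda) \in \cal{A}$ commutes with $R(\mu,A)$ and that $\lambda \mapsto \varphi(\lambda)F(\lambda)$ is analytic on $S(b)$, one obtains for $\mu \in \partial S(c)$
\[ R(\mu,A)\,(\varphi F)(A)\,x \;=\; \varphi(\mu) F(\mu) R(\mu,A) x \;+\; \frac{1}{2i}\,\cal{H}\bigl[\varphi(\cdot)F(\cdot)R(\cdot,A)x\bigr](\mu), \]
where $\cal{H}$ denotes the extension of the Hilbert transform on $\partial S(c)$ to $\gamma(\partial S(c),X)$ provided by property (S3) / Example~4.9b).

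Applying in turn the lower square function bound, the triangle inequality together with the boundedness of $\cal{H}$ on $\gamma(\partial S(c),X)$, then the pointwise multiplier property (Proposition~4.11) applied to the strongly continuous family $\{\varphi(\lambda)F(\lambda):\lambda \in \partial S(c)\}$ whose $\gamma$--bound is at most $\|\varphi\|_\infty R(\{F(\lambda)\})$ (finite cotype equates $\gamma$-- and $R$--boundedness), and finally the upper square function bound, I arrive at
\[ \|(\varphi F)(A)x\| \;\leq\; C_1^{\,2}\bigl(1 + \tfrac{1}{2}\|\cal{H}\|\bigr)\,\|\varphi\|_\infty\,R(\{F(\lambda)\})\,\|x\|, \]
which is the required $RH^\infty(S(b),\cal{A})$--calculus estimate.

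The main obstacle is the operator--valued Cauchy / principal--value identity displayed above. This is not very hard since $F$ takes values in $\cal{A}$ and is analytic on $S(b)$, so the scalar contour computation from the proof of Theorem~2.2 transfers verbatim with $\varphi(\cdot)F(\cdot)$ in place of the scalar $f(\cdot)$. A minor point to verify is that $\varphi(\cdot)F(\cdot)R(\cdot,A)x$ actually defines an element of $\gamma(\partial S(c),X)$, not merely of its completion $\gamma(L_2(\partial S(c)),X)$; but this follows from Proposition~4.11 combined with the fact that $R(\cdot,A)x$ already lies in $\gamma(\partial S(c),X)$ by condition~c) of Theorem~6.2.
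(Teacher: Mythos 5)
Your proposal is correct and follows essentially the same route as the paper: both reduce to the argument of Theorem 2.2 c)$\Rightarrow$d) with $\varphi F$ in place of the scalar symbol, use the two--sided square function estimate from Theorem 6.2, Proposition 4.11 for the pointwise multiplier $\{\varphi(\lambda)F(\lambda)\}$ (whose $\gamma$--bound is controlled via finite cotype and the contraction principle), and the extension property 4.8 for the Hilbert--transform--type operator. The only cosmetic difference is your explicit choice of the contour $\partial S(c)$ with $c\in(a,b)$, which the paper leaves implicit.
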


\begin{proof}
Consider $f(\lambda) = \varphi(\lambda)F(\lambda)$ with $\varphi \in
H^{\infty}(S(b))$ and $F \in RH^{\infty}$ $(S(b),{\cal A})$. 
Using the argument of 2.2c) $\Longrightarrow$ d) and 6.2b) in place of 2.2b) we arrive
at 
\begin{equation*}
\begin{split}
\|f(A)x\| & \leq C \|f(\cdot)R(\cdot,A)x\|_{\gamma(\partial S(a),X)} + \| {\cal K}[f(\cdot)R(\cdot, A)]x\|_{\gamma(\partial S(a),X)}.
\end{split}
\end{equation*}
By the $\gamma$--boundedness of $\{f(\lambda):\lambda \in \partial S(a) \}$, 4.11 and
4.8 applied to ${\cal K}$ we get with 6.2c) 
$$
\|f(A)x\| \leq C_1\|R(\cdot , A)x\|_{\gamma(\partial S(a),X)} \leq C_2\|x\|.
$$
\end{proof}

As a consequence we obtain an $\gamma$--boundedness criterion:

\begin{corollary}
Let $X$ have property $(\alpha)$. If $A$ has an $H^{\infty}(S(a))$--calculus,
then the set $\{f(A):\|f\|_{H^{\infty}(S(a))}  \leqslant 1 \}$ is $\gamma$--bounded.
\end{corollary}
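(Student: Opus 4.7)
The plan is to deduce this corollary from the operator-valued $RH^{\infty}$-calculus of Corollary 6.5 via a direct-sum trick: tensoring with a Rademacher sequence turns the finite family $\{f_j(A)\}$ into a single operator-valued symbol, which the $RH^{\infty}$-calculus handles uniformly.

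Given $f_1,\dots,f_n \in H^{\infty}(S(a))$ with $\|f_j\|_{\infty} \leq 1$ and $x_1,\dots,x_n \in X$, first I introduce the auxiliary Banach space $\tilde X := X^n$ equipped with the Rademacher norm $\|(x_j)_j\|_{\tilde X} := (\E\|\sum_j r_j x_j\|_X^2)^{1/2}$, and the diagonal operator $\tilde A := A \oplus \cdots \oplus A$. Since scalar functions of $\tilde A$ act componentwise, $\tilde A$ inherits from $A$ both the strip-type property and the $H^{\infty}(S(a))$-calculus with the same bound; moreover $\tilde X$ has finite cotype because $X$ does (property $(\alpha)$ implies non-trivial cotype).

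Next I define $F: S(b) \to B(\tilde X)$ by $F(\lambda)(x_j)_j := (f_j(\lambda)x_j)_j$. Each $F(\lambda)$ commutes with the resolvents of $\tilde A$, and $\|F(\lambda)\|_{B(\tilde X)} \leq \max_j|f_j(\lambda)| \leq 1$ by the contraction principle. The heart of the argument is the $R$-boundedness of $\{F(\lambda):\lambda \in S(a)\}$ in $B(\tilde X)$: for $\lambda_1,\dots,\lambda_m \in S(a)$ and $\xi_k = (x_{j,k})_j \in \tilde X$, the identity
\[
\E\bigl\|\textstyle\sum_k r_k F(\lambda_k)\xi_k\bigr\|_{\tilde X}^2 = \E\E'\bigl\|\textstyle\sum_{j,k} r_k r'_j f_j(\lambda_k) x_{j,k}\bigr\|_X^2,
\]
combined with $|f_j(\lambda_k)| \leq 1$ and property $(\alpha)$, bounds the right-hand side by $C_{(\alpha)}^2\,\E\|\sum_k r_k \xi_k\|_{\tilde X}^2$. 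Thus $F$ lies in $RH^{\infty}(S(a), \mathcal{A}(\tilde A))$ with uniformly controlled norm.

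Applying Corollary 6.5 to $\tilde A$ on $\tilde X$ for any $b > a$ now yields a constant $C$, independent of $n$ and of the $f_j$'s, such that $F(\tilde A) := \lim_m (\varphi_m F)(\tilde A)$ exists and $\|F(\tilde A)\|_{B(\tilde X)} \leq C$, with $\varphi_m \in H_0^{\infty}(S(b))$ approximating $1$. A direct Dunford-integral computation gives $(\varphi_m F)(\tilde A)(x_j)_j = ((\varphi_m f_j)(A)x_j)_j$, and the convergence lemma then identifies $F(\tilde A)(x_j)_j$ with $(f_j(A)x_j)_j$. Reading off the resulting inequality on $\tilde X$ is precisely the $R$-boundedness of $\{f_j(A)\}$, which coincides with $\gamma$-boundedness thanks to finite cotype.

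The main obstacle is the $R$-boundedness verification for $F$: this is the only non-formal step and is exactly where property $(\alpha)$ enters in an essential way, since a genuine double Rademacher sum with bounded scalar multipliers must be controlled. The remaining bookkeeping — inheritance of the hypotheses of 6.5 by $\tilde A$ on $\tilde X$, and the identification of $F(\tilde A)$ via the convergence lemma — is routine.
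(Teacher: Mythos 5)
Your proposal is correct and follows essentially the same route as the paper: the paper also forms the Rademacher direct sum (there written as $\gamma(\N,X)\cong\Rad X$ rather than your finite $X^n$ with the Rademacher norm), takes the diagonal operator $\tilde A$, uses property $(\alpha)$ to verify the $R$--boundedness of the multiplication symbol $F(\lambda)=(f_j(\lambda)I_X)_j$, and then invokes Corollary 6.5 to bound $F(\tilde A)$. The only (shared) imprecision is the bookkeeping of the strip $S(a)$ versus $S(b)$ with $b>a$ in the application of 6.5, which the paper glosses over in exactly the same way.
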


\begin{proof}
Let $\|f_n\|_{H^{\infty}(S(a))} \leq 1$. The $\gamma$--boundedness of the
sequence $f_n(A)$ is equivalent to the boundedness of the operator 
$T((x_n)_n) = (f_n(A)x_n)_n$ on $\gamma(\N,X)$. Put $F(\lambda) = (f_n(\lambda)I_X)$
for $\lambda \in S(a)$. By Kahane's contraction principle $\lambda \in S(a)
\to F(\lambda) \in B(\gamma(\N,X))$ is an analytic function with 
$\gamma$--bounded range in $B(\gamma(\N,X))$. Since $X$ has property
$(\alpha)$, it has also finite cotype and we can check the $\gamma$--boundedness using two independent Rademacher sequences $(r_n)_n$, $(r'_n)_n$ instead of Gaussian random variables and $\gamma(\N,X) \cong \Rad X$. Indeed, for $\lambda_j \in S(a)$ and $\tilde{x}_j = (x_{jn})_n \in
\gamma(\N,X)$
\begin{equation*}
\begin{split}
\E \biggl| \biggl| \sum_j r_j F(\lambda_j) \tilde{x}_j \biggr| 
\biggr|_{\Rad X}^2 &= \E \E \biggl| \biggl| \sum_{j,n} r_j r_n'
f_n(\lambda_j)x_{jn} \biggr| \biggr|_X^2 \\
& \leq C \E \E \biggl| \biggl| \sum_{j,n} r_j r_n' x_{jn} 
\biggr| \biggr|_X^2 = C \E \biggl| \biggl| \sum_j r_j \tilde{x}_j 
\biggr| \biggr|_{\Rad X}^2.
\end{split}
\end{equation*}
Define ${\cal D}(\tilde{A}) = \{ (x_n)_n \in \Rad X : x_n \in 
{\cal D}(A)\}$ and $\tilde{A}((x_n)_n) = (Ax_n)_n$ for $(x_n)_n \in {\cal D}
(\tilde{A})$. Then $\tilde{A}$ has an $H^{\infty}(S(a))$--calculus on
$\gamma(\N,X)$ defined by $f(\tilde{A})(x) = f(\tilde{A})((x_n)_n) =(f(A)x_n)_n$ for $x \in \gamma(\N,X)$ and by 6.5 $\tilde{A}$ has an 
$RH^{\infty}(S(a),{\cal A})$--calculus on $\gamma(\N,X)$. Since $F \in RH^{\infty}
(S(a),{\cal A})$ and for $\varphi \in H_0^{\infty}(S(a))$
$$
\int\limits_{\partial S(a)} \varphi(\lambda)F(\lambda)R(\lambda,\tilde{A})
d\lambda = \biggl( \int\limits_{\partial S(a)} \varphi(\lambda) f_n(\lambda)
R(\lambda,A)d\lambda \biggr)_n
$$
we conclude that $S = F(\tilde{A})$ is bounded on $\gamma(\N,X)$.
\end{proof}

With these tools we can define a joint functional calculus for two
commuting operators $A,B$ of strip--type on $X$. For $f \in H^{\infty}(S(a) \times
S(b))$ and $\varphi \in H_0^{\infty}(S(a)), \psi \in H_0^{\infty}(S(b))$
define
$$
(\varphi \cdot f \cdot \psi)(A,B) = \int\limits_{\partial S(a)}
\int\limits_{\partial S(b)} \varphi(\lambda) f(\lambda, \mu)\psi(\mu)
R(\lambda,A)R(\mu,B)d\lambda d\mu.
$$
We say that $A$ and $B$ have a joint $H^{\infty}$--calculus, if there is a
constant $C$ so that
$$
\|(\varphi f \psi )(A)\| \leq C \| \varphi f \psi \|_{H^{\infty}(S(a)
\times S(b))}
$$
for all $\varphi \in H^\infty_0(S(a))$, $\psi \in H^\infty_0(S(b))$ and $f \in H^{\infty}(S(a) \times
S(b))$. In this case we can define a bounded operator $f(A,B)$ on $X$ by $f(A,B)x =
\displaystyle \lim_n (\varphi_n f \psi_n)(A,B)x$, where $\varphi_n \in H_0^{\infty}
(S(a)), \psi_n \in H_0^{\infty}(S(b))$ are sequences with $|\varphi_n(\lambda)
| \leq 1, \varphi_n(\lambda) \to 1$ for $\lambda \in S(a)$ and $| \psi_n
(\lambda)| \leq 1$ and $\psi_n(\lambda) \to 1$ for $\lambda \in S(b)$ as $n\to\infty$.

\begin{corollary}
Let $X$ have property $(\alpha)$. If $A$ and $B$ are resolvent commuting
operators on $X$ with an $H^{\infty}(S(a))$ and an $H^{\infty}(S(b))$--calculus,
respectively, then $A$ and $B$ have a joint $H^{\infty}(S(a) \times 
S(b))$--calculus. 
\end{corollary}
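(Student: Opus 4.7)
The plan is to realize the joint calculus by iterating the operator-valued calculus from Corollary 6.5, with the $\gamma$-boundedness input supplied by Corollary 6.6. Since property $(\alpha)$ implies finite cotype, both corollaries apply to each of $A$ and $B$. Write $\cal A_B \subset B(X)$ for the algebra of operators commuting with $R(\cdot,B)$, and fix $a' \in (w(A),a)$, $b' \in (w(B),b)$ for use in the Dunford contours.

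First, for fixed $\mu \in S(b)$ and $f \in H^{\infty}(S(a)\times S(b))$, the slice $f_\mu(\lambda) := f(\lambda,\mu)$ lies in $H^{\infty}(S(a))$ with $\|f_\mu\|_{H^\infty(S(a))} \leq \|f\|_{H^\infty(S(a)\times S(b))}$. Because $A$ has an $H^\infty(S(a))$-calculus, $f_\mu(A)$ is defined (via the convergence lemma from a uniformly bounded approximating sequence in $H_0^\infty(S(a))$), and Corollary 6.6 shows that the family $\{f_\mu(A) : \mu \in S(b)\}$ is $\gamma$-bounded with bound $\lesssim \|f\|_{H^\infty(S(a)\times S(b))}$. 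Since $A$ and $B$ resolvent commute, each $f_\mu(A)$ lies in $\cal A_B$.

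Second, I would verify that the map $F:S(b) \to \cal A_B$, $F(\mu) := f_\mu(A)$, is holomorphic. For $x \in X$ and a uniformly bounded sequence $\varphi_n \in H_0^\infty(S(a))$ with $\varphi_n \to 1$ pointwise, the maps
\[
\mu \mapsto (\varphi_n f_\mu)(A)x = \frac{1}{2\pi i}\int_{\partial S(a')} \varphi_n(\lambda)\, f(\lambda,\mu)\, R(\lambda,A)x\, d\lambda
\]
are holomorphic $X$-valued functions on $S(b)$, are locally uniformly bounded in $\mu$, and converge pointwise to $F(\mu)x$ by the convergence lemma for $A$. Hence $F(\cdot)x$ is weakly holomorphic, so $F$ is strongly holomorphic with $\gamma$-bounded range in $\cal A_B$; that is, $F \in RH^\infty(S(b),\cal A_B)$.

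Third, Corollary 6.5 applied to $B$ yields an $RH^\infty(S(b),\cal A_B)$-calculus, so $F(B) := \lim_n (\psi_n F)(B)$ (with $\psi_n \in H_0^\infty(S(b))$ uniformly bounded, $\psi_n \to 1$) is a well-defined bounded operator on $X$ with $\|F(B)\| \lesssim \|f\|_{H^\infty(S(a)\times S(b))}$. I would then define $f(A,B) := F(B)$ and check compatibility: if $f = \tilde{f}$ and we set $g(\mu) := (\varphi\tilde f_\mu)(A) = \varphi(A) f_\mu(A)$, then by Fubini applied to the iterated Dunford integrals
\[
(\psi g)(B) = \frac{1}{2\pi i}\int_{\partial S(b')} \psi(\mu)\, g(\mu)\, R(\mu,B)\, d\mu = (\varphi f \psi)(A,B),
\]
so the two definitions agree for $\varphi \in H_0^\infty(S(a))$, $\psi \in H_0^\infty(S(b))$. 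The main technical obstacle is controlling the two nested convergence arguments simultaneously: one must pass the strong convergence $\varphi_n(A) f_\mu(A)x \to f_\mu(A)x$ through the outer Dunford integral in $B$, which is justified by the uniform $\gamma$-bound on $\{\varphi_n(A) f_\mu(A)\}$ coming from Corollary 6.6 together with dominated convergence on the contour $\partial S(b')$.
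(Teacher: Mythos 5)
Your proof is correct and follows essentially the same route as the paper: freeze one variable, use Corollary 6.6 to see that the resulting family of operators is $\gamma$--bounded (hence gives an $RH^{\infty}$--function), and then apply the operator--valued calculus of Corollary 6.5 in the other variable. The only differences are cosmetic — you integrate out $A$ first and apply 6.5 to $B$ (the paper does the reverse), and you work with the unregularized slices $f_\mu(A)$ plus a Fubini/compatibility check, whereas the paper manipulates the regularized double Dunford integral $(\varphi f\psi)(A,B)$ directly.
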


\begin{proof}
Let $f \in H^{\infty}(S(a) \times S(b)), \varphi \in H_0^{\infty}(S(a))$ and
$\psi \in H_0^{\infty}(S(b))$. Then
$$
(\varphi f \psi)(A,B) = \int\limits_{\partial S(a)} \varphi(\lambda)
F(\lambda) R(\lambda,A)d \lambda
$$
with
$$
F(\lambda) = \int\limits_{\partial S(b)} \psi(\mu)f(\lambda,\mu)R(\mu,B)d\mu.
$$
Corollary 6.6 applied to $B$, implies that $F \in RH(S(a),{\cal A})$. 
Now we may apply 6.5 to $A$ and $F$ and obtain an estimate
\begin{equation*}
\begin{split}
\|(\varphi f \psi)(A,B)\| & \leq C_1 \| \varphi \cdot F\|_{RH^{\infty}(S(b))} \leq C_2 \| \varphi f \psi\|_{H^{\infty}(S(a)\times S(b))}.
\end{split}
\end{equation*}
\end{proof}

We can state now the main result of this section.

\begin{theorem}
Let $A$ be an operator of strip--type on a Banach space $X$.

\noindent a) Suppose that $A$ generates a $C_0$--group $T_t$ on $X$ such 
that for some $a > \omega(T_t)$ the set $\{ e^{-a|t|}T_t:t \in \R\}$ is
$\gamma$--bounded. Then $A$ has an $H^{\infty}(S(b))$--calculus for all $b > a$.

\noindent b) Conversely, assume $X$ has property $(\alpha)$.
If $A$ has an $H^{\infty}(S(a))$--calculus, then $A$ generates a $C_0$--group
$T_t$ such that $\{ e^{-bt}T_t:t \in \R\}$ is $\gamma$--bounded for all $b > a$.
\end{theorem}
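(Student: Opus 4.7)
The plan is to deduce part (a) from the square function characterization in Theorem 6.2 and part (b) from the operator--valued $H^{\infty}$--calculus of Corollary 6.6.

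\emph{Part (a).} First I would check that $A$ fits the framework of Theorem 6.2, i.e.~is of $\gamma$--strip type with $w_\gamma(A)\leq a$. Writing the resolvent as
$$
R(\lambda,A)x = \int_0^\infty e^{-(\lambda-a)t}\bigl(e^{-at}T_t\bigr)x\,dt, \qquad \operatorname{Re}\lambda > a,
$$
and symmetrically for $\operatorname{Re}\lambda<-a$, Lemma 5.9 transfers the $\gamma$--boundedness of $\{e^{-a|t|}T_t\}$ to $\{R(\lambda,A):|\operatorname{Re}\lambda|>a\}$. To then verify condition 6.2(a) at level $b>a$, I would apply Example 5.7 with $N(t):=e^{-at}T_t$ (on $\R_+$) and weight $h(t):=e^{-(b-a)t}\in L_2(\R_+)$; the example produces in a single stroke
$$
\|e^{-b(\cdot)}T_{(\cdot)}x\|_{\gamma(\R_+,X)}\leq C\|x\|,\qquad \|e^{-b(\cdot)}T'_{(\cdot)}x'\|_{\gamma'(\R_+,X')}\leq C\|x'\|,
$$
which is precisely 6.2(a) at the level $b$. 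Remark 6.3(a) then delivers an $H^{\infty}(S(b'))$--calculus for every $b'>b$, hence for every $b>a$ by letting $b\searrow a$.

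\emph{Part (b).} Property $(\alpha)$ implies finite cotype (as recalled in the proof of Corollary 6.6), so Theorem 6.2(d)$\Rightarrow$(a) yields a $C_0$--group $T_t$ generated by $A$. Moreover the group is naturally identified with $f_t(A)$, where $f_t(\lambda):=e^{t\lambda}$, after extending the $H^{\infty}$--calculus from $H_0^{\infty}$ to the whole of $H^{\infty}$ via the convergence lemma; this identification is the standard link between the functional calculus and the group it generates.

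Now fix $b>a$ and pick $a'$ with $a<a'<b$. Since the $H^{\infty}(S(a))$--calculus trivially restricts to an $H^{\infty}(S(a'))$--calculus, Corollary 6.6 produces a $\gamma$--bound for the family $\{f(A):\|f\|_{H^{\infty}(S(a'))}\leq 1\}$. The pointwise bound
$$
\|e^{-b|t|}f_t\|_{H^{\infty}(S(a'))}=\sup_{\lambda\in S(a')}e^{-b|t|}|e^{t\lambda}|\leq e^{-(b-a')|t|}\leq 1,\qquad t\in\R,
$$
then places $\{e^{-b|t|}T_t:t\in\R\}$ inside this $\gamma$--bounded set, giving the claim. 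The main obstacle is a bookkeeping point in part (b): $f_t$ lies in $H^{\infty}(S(a'))$ but not in $H_0^{\infty}(S(a'))$, so one must extend the $\gamma$--boundedness statement of Corollary 6.6 to $H^{\infty}$ by approximating $f_t$ with $\varphi_n f_t$ for cutoffs $\varphi_n\in H_0^{\infty}$ of $H^{\infty}$--norm at most one, and check that the $\gamma$--bound survives the resulting strong limit; once this standard point and the identification $T_t=f_t(A)$ are in place, the proof is a short assembly of results already developed in the paper.
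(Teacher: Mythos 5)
Your proposal is correct and follows essentially the same route as the paper: part (a) via Example 5.7 applied to $N(t)=e^{-a|t|}T_t$ with the $L_2$--weight $e^{-(b-a)|t|}$ followed by Theorem 6.2 a)$\Rightarrow$d), and part (b) by feeding $f_t(\lambda)=e^{-b|t|}e^{t\lambda}$, of $H^\infty(S(a'))$--norm at most one, into Corollary 6.6. The extra care you take (checking $\gamma$--strip type via the Laplace representation of the resolvent, and the $H_0^\infty$--to--$H^\infty$ approximation) only makes explicit what the paper leaves implicit.
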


\begin{proof}
a) Choose $a_1$ with $a_1 > a$. Since $e^{-(a_1-a)|t|} \in L_2(\R)$ we obtain
from example 5.7 that 
$$
\|e^{-a_1 \cdot}T_{(\cdot)}x\|_{\gamma(\R,X)} \leq C \|x\|, \quad \|e^{-a_1 \cdot}T_{(\cdot)}^{\ast}x^{\ast}
\|_{\gamma(\R,X^{\ast})} \leq C\|x\|. 
$$
By 6.2a) the existence of the $H_{\infty}(S(\omega))$--calculus follows.

b) Note that $e^{-bt}T_t = f_t(A)$ with $f_t(\lambda) = e^{-b|t|}e^{t \lambda},
t > 0$, and $\|f_t\|_{H^{\infty}(S(\omega))} \leq 1$. 
The $\gamma$--boundedness of $\{e^{-b|t|}T_t:t \in \R\}$ follows now from 6.6.
\end{proof}

More can be said about $\gamma$--bounded groups:
Their generators are spectral operators (in the sense of \cite{DS}).
Let $A$ be an operator of strip--type on $X$ with $\sigma(A) \subset i \R$. 
A spectral measure $P$ for $A$ is an additive bounded (projection valued)
function $P: {\cal B} \to B(X)$ on the Borel sets ${\cal B}$ of $i \R$, so that
$$
\Omega \in B \to P(\Omega)x \in X \qquad \mbox{ is } \sigma\mbox{-additive 
for all } x \in X
$$
and
$$
R(\mu,A)x = \int\limits_{i \R} (\mu - \lambda)^{-1} dP(\lambda)x 
\qquad \mbox{ for } \lambda \not\in i \R, x \in X.
$$
By $B_b(i\R)$ we denote the bounded Borel functions on $i\R$.

\begin{corollary}
Assume that $A$ generates a group on $T_t$ on $X$, so that $\{ T_t : t \in 
\R \}$ is $\gamma$--bounded.

\noindent a) Then $A$ has a bounded functional calculus 
$$
\phi_A : C_0(\R) \to B(X).
$$

\noindent b) If $X$ does not contain $c_0$, then there exists a spectral
measure $P$ for $A$ so that for all $f \in B_b(i\R)$
$$
\phi_A(f)x = \int\limits_{i \R} f(\lambda) dP(\lambda)x, \quad x \in X.
$$

\noindent c) Let ${\cal L}$ be the space of bounded functions on $i \R$, which are 
uniform limits of step functions. Then the set $\lbrace \phi_A(f) : f \in B_b(i\R)$,
$\|f\|_{\infty} \leq 1 \}$ is $\gamma$--bounded.

Conversely, if $A$ is a spectral operator, then $\{e^{tA} : t \in \R \}$
is $\gamma$--bounded.
\end{corollary}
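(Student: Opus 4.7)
I would prove the four assertions in the order (a), (b), (c), converse, feeding each into the next.

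\textbf{Part (a).} Since $\{T_t\}$ is $\gamma$--bounded, $\omega_\gamma(T_t) \leq 0$, so by Theorem 6.8(a) the operator $A$ has an $H^\infty(S(b))$--calculus for every $b > 0$, and in particular $\sigma(A) \subset i\R$. Tracing through Example 5.7 and Remark 6.3(d), one sees that the bound $\|f(A)\| \leq C \|f\|_{H^\infty(S(b))}$ can be taken uniformly in $b$, with $C$ depending only on the $\gamma$--bound $K$ of $\{T_t\}$. To construct $\phi_A \colon C_0(\R) \to B(X)$ I first define it on the dense subalgebra $\mathcal{D}$ of Schwartz functions $g \in \calS(\R)$ with compactly supported Fourier transform. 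Any such $g$ extends to an entire function of exponential type, so $f(\lambda) := g(-i\lambda)$ belongs to $H^\infty(S(b))$ for every $b > 0$; by Phragmén--Lindelöf, $\|f\|_{H^\infty(S(b))} \downarrow \|g\|_\infty$ as $b \downarrow 0$. Together with the uniform $H^\infty$ bound this gives $\|f(A)\| \leq C \|g\|_\infty$. A direct computation using the Dunford formula and Fourier inversion identifies $f(A)$ with the group integral $\int_\R \check g(t) T_t \, dt$; extending by continuity from $\mathcal{D}$ yields the desired bounded homomorphism $\phi_A$.

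\textbf{Part (b) and (c).} Under $X \not\supset c_0$, the set function $E \mapsto P(E)$ obtained by applying $\phi_A$ to suitable $C_0$--approximations of $\chi_E$ is finitely additive and $\sigma$--additive in the weak operator topology (via bounded convergence and boundedness of $\phi_A$). Pelczynski's theorem then upgrades weak to strong $\sigma$--additivity. Multiplicativity of $\phi_A$ on $C_0(\R)$ (checked on $\mathcal{D}$ first, where the integral formula makes it obvious, then extended by density) forces $P(E)$ to be a projection. The resolvent representation follows from $\phi_A((\mu - \cdot)^{-1}) = R(\mu, A)$, valid since $(\mu - \cdot)^{-1} \in C_0(i\R)$ for $\mu \notin i\R$. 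Part (c) is then immediate from Lemma 5.12 applied to the vector measure $P$: the family $\{\int h \, dP : \|h\|_\infty \leq 1\}$ is $\gamma$--bounded, and this family is exactly $\{\phi_A(f) : f \in B_b(i\R),\ \|f\|_\infty \leq 1\}$.

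\textbf{Converse.} If $A$ is a spectral operator with spectral measure $P$, then $e^{tA} = \int_{i\R} e^{t\lambda} \, dP(\lambda)$. Since $|e^{t\lambda}| = 1$ for all $t \in \R$ and $\lambda \in i\R$, Lemma 5.12 directly yields that $\{e^{tA} : t \in \R\}$ is $\gamma$--bounded.

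\textbf{Main obstacle.} The hardest step is (a): passing from the $H^\infty(S(b))$--calculus, which a priori controls only functions holomorphic in some strip, to a $C_0(\R)$--calculus with the estimate $\|\phi_A(g)\| \leq C \|g\|_\infty$. This requires two delicate ingredients: a uniform $H^\infty$--bound as $b \downarrow 0$ (via quantitative bookkeeping in Theorem 6.8 using the square function estimates of Section 5), and the choice of a dense subalgebra $\mathcal{D}$ whose elements are simultaneously entire (so that the $H^\infty$--calculus applies on every strip) and amenable to Fourier inversion (so that $f(A) = \int \check g(t) T_t \, dt$). Once (a) is in place, parts (b), (c), and the converse follow from standard Bessaga--Pelczynski/Dunford--Schwartz arguments together with Lemma 5.12.
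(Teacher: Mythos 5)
Your proposal is correct and follows the same architecture as the paper's proof: a uniform-in-$b$ estimate $\|f(A)\|\leq 2\pi M^{2}\|f\|_{H^{\infty}(S(b))}$ obtained from the square function bounds of Example 5.7 together with Remark 6.3d) (the factor $a$ there cancels against the $a^{-1/2}$ in the square function estimate, exactly as you indicate), then approximation of $C_{0}(i\R)$ in sup norm by a subalgebra of entire functions, then the Riesz representation for operators on $C_{0}$ into a space not containing $c_{0}$, and finally the vector--measure lemma for $\gamma$--boundedness. The one genuine difference is your choice of dense subalgebra in part a): you use Paley--Wiener functions (Schwartz functions with compactly supported Fourier transform) and the Bernstein/Phragm\'en--Lindel\"of bound $|g(x+iy)|\leq e^{\tau|y|}\sup_{\R}|g|$ to get $\|f\|_{H^{\infty}(S(b))}\to\|g\|_{\infty}$, whereas the paper approximates by functions $\sum_{j}e^{b_{j}\lambda^{2}}p_{j}(\lambda)$ (citing Conway) and uses only that $\|g_{n}-g_{m}\|_{H^{\infty}(S(\omega))}\leq 2\|g_{n}-g_{m}\|_{L_{\infty}(i\R)}$ for $\omega$ small. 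Both work; your choice has the added benefit of making the transference formula $f(A)=\int\check g(t)T_{t}\,dt$ transparent. Two small corrections: the vector--measure lemma you invoke for part c) and for the converse is Lemma 5.10, not 5.12 (5.12 is the summary remark); and in part b) the logical order should be: first obtain the $X$--valued representing measure $P_{x}$ of the weakly compact operator $\Phi_{x}:C_{0}(i\R)\to X$ (this is where $c_{0}\not\subset X$ enters, via Diestel--Uhl), and only then use its bounded convergence property to define $\phi_{A}$ on Borel functions and to set $P(E)=\phi_{A}(\chi_{E})$ --- one cannot first define $P(E)$ as a limit of $\phi_{A}(g_{n})$ and appeal to bounded convergence for $\phi_{A}$, since that convergence property is itself a consequence of the representation.
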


\begin{proof}
First we note that $A$ has an $H^{\infty}(S(\omega))$--calculus for all
$\omega > 0$ whose norm only depends on the $\gamma$--boundedness constant $M$ of $\{T_t\}$ and $\{T_t'\}$, more precicely
$$
\|f(A)\| \leq 2\pi M^2 \|f\|_{H^{\infty}(S(\omega))}.
$$
Indeed, by applying the Fourier transform to $e^{-a \cdot}T_{\cdot}$ (cf 4.9) and example 5.7 we have for $a = \frac{\omega}{2}$
\begin{equation*}
\begin{split}
& \|R(a+i \cdot,A)x\|_{\gamma(\R,X)}  \leq \sqrt{2 \pi} \|e^{-a \cdot}T_{(\cdot)} x 
\|_{\gamma(\R_{+},X)} \\
& \leq \sqrt{2 \pi} \biggl( \int\limits_0^{\infty} e^{-2at}dt \biggr)^{1/2}
M\|x\| = \sqrt{\pi} a^{- 1/2} M \|x\|
\end{split}
\end{equation*}
and similarly
$$
\|R(a+ i \cdot,A')x' \|_{\gamma'(\R,X')} \leq \sqrt{\pi} a^{- 1/2} M
\|x'\|.
$$
Now by remark 6.3d)
$$
\|f(T)\| \leq 2\pi M^2 \|f\|_{H^{\infty}(S(\omega))}.
$$
Every $f \in C_0(i \R)$ can be uniformly approximated by a sequence
of functions $g_n$ of the form (see e.~g.~\cite{Co} Corollary 8.3)
$$
g(\lambda) = \sum_{j=1}^n e^{b_j \lambda^2} p_j(\lambda), \quad
b_j > 0, \qquad p_j \mbox{ are polynomials.}
$$
If $\|g_n - g_m \|_{L_{\infty}(i \R)} \leq \varepsilon$ then $\|g_n - g_m
\|_{H^{\infty}(S(\omega))} \leq 2 \varepsilon$ for $\omega > 0$ small enough.
Hence $\|g_n(A) - g_m(A)\| \leq 2\pi M^2 \varepsilon$ and
$(g_n(A))_n$ is a Cauchy sequence in $B(X)$. We put $\phi_A(f) := \displaystyle
\lim_n g_n(A)$.
The fact that $\phi_A$ is well--defined, linear and multiplicative follows from
standard limit arguments.

\noindent b) To construct the spectral measure we define for every fixed
$x \in X$ a bounded operator $\Phi_x:C_0(\R) \to X$ by 
$\Phi_x(f) = f(A)x$ with $\|\Phi_x\| \leq 2\pi M^2\|x\|.$
Since $X$ does not contain $c_0$, it follows from \cite{DU}, Theorem 5 and 15 in
IV.2, that there is a countable additive vector measure $P_x : {\cal B} \to X$
such that $\operatorname{Var}(P_x) \leq \| \phi_x \|$ and 
(in the sense of \cite{DU}, $\S$ 1.4) for every $f \in C_0(i\R)$
$$
\phi_A(f) = \Phi_x(f) = \int_{i\R} f(\lambda)dP_x(\lambda).
$$
For a Borel function $f \in {\cal B}_b(i\R)$ we use now this formula to define
a bounded linear operator $\phi_A(f)$ on $X$. Then 
$$
\| \phi_A f \| \leq \sup \{ \| \operatorname{Var}P_x \| : \| \lambda \| \leq 1 \}
\|f\|_{\infty} \leq 2\pi M^2 \|f\|_{\infty}.
$$
The multiplicativity of $\phi_A$ on $C_0(i\R)$ can be extended to 
${\cal B}_b(i\R)$ by the following convergence property (\cite{DU}, Sect.~I.4.1):
If $f_n \in {\cal B}_b (i\R)$ is uniformly bounded and $f_n \to f$
pointwise, then $\phi_A(f_n)x \to \phi_A(f)x$ for all $x \in X$ and $n\to\infty$.
In particular, $P(\Omega) = \phi_A(\chi_{\Omega})$ defines a vector measure
with $P(\Omega)^2 = \phi_A(\chi_{\Omega}^2) = P(\Omega)$. 

\noindent c) $\{ \phi(f): f \in C_0(i\R)\}$ is $R$--bounded and $\gamma$--bounded
by 5.10 and the claim follows now from the above convergence property, since
the closure of an $R$--($\gamma$--) bounded set in the strong operator topology
is also $R$--($\gamma$--) bounded.

The converse statement follows in the same way since 
$$
T_tx = \int\limits_{i\R} e^{t \lambda} dP(\lambda)x, \quad t \in \R
$$
is the group generated by $A$.
\end{proof}

Besides the square functions $\|R(a+i \cdot,A)\|_{\gamma(\R,X)}$ one can 
consider square functions
$\|\psi(\cdot +A)\|_{\gamma(\R,X)}$ where $\psi \in H_0^{\infty}(S(a))$.

The following lemma shows, that they can be used to characterize the
$H^{\infty}$--calculus too.

\begin{lemma}
Let $A$ be an operator of strip--type on a Banach space $X$. Let $\psi$ and 
$\varphi$ be in $H_0^{\infty}(S(a))$.
Then there is a constant $C$ (depending on $A,\varphi$ and $\psi$) so that
for all $f \in H^{\infty}(S(b))$ with $b > a$ and $x \in X$
$$
\|f(A)\psi(i\cdot +A)x\|_{\gamma(\R,X)} \leq C \|f\|_{H^{\infty}(S(b))} \| 
\varphi(i\cdot +A)x\|_{\gamma(\R,X)}.
$$
In particular for $f(\lambda) \equiv 1$ we obtain
$$
\frac{1}{C} \| \psi(i\cdot +A)x\|_{\gamma(\R,X)} \leq \| \varphi(i\cdot +A)x\|_{\gamma(\R,X)} 
\leq C \| \psi(i\cdot +A)x\|_{\gamma(\R,X)}.
$$
\end{lemma}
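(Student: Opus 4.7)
The strategy is to represent the operator-valued function $s \mapsto f(A)\psi(is+A)x$ as a scalar kernel operator applied to $t \mapsto \varphi(it+A)x$, and then to invoke the extension principle (S3) (Proposition 4.8 / Corollary 4.8) to pass from the $L_2(\R)$-boundedness of the kernel operator to the $\gamma(\R,X)$-norm estimate. The "in particular" claim will then be immediate by taking $f \equiv 1$ and swapping the roles of $\psi$ and $\varphi$ to obtain both directions of the equivalence.

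The first step is to produce a scalar kernel $K(s,t)$ satisfying
\[
f(\mu)\,\psi(is+\mu) \;=\; \int_\R K(s,t)\,\varphi(it+\mu)\,dt, \qquad \mu \in S(c),
\]
on some strip with $\omega(A) < c < a$. For $\chi \in H_0^\infty(S(a))$ set $M_\chi(\xi) := \int_\R e^{-is\xi}\chi(is)\,ds$; by contour shifting (using the analyticity of $\chi$ on $S(a)$ and its decay at infinity) one has $\int_\R e^{-is\xi}\chi(is+\mu)\,ds = e^{\mu\xi}M_\chi(\xi)$ for $\mu \in S(a)$. Matching Fourier representations on the two sides of the proposed identity prescribes $K$ through its two-variable Fourier symbol, which turns out to be essentially $M_\psi(\xi)\,\hat f(\eta-\xi)/M_\varphi(\eta)$, where $\hat f$ is the Fourier transform of $s \mapsto f(is)$. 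Consequently the associated integral operator $\mathcal{K}$ factors as a Fourier multiplier by $1/M_\varphi$, followed by pointwise multiplication by the bounded function $s \mapsto f(-is)$, followed by a Fourier multiplier by $M_\psi$. Once $K$ is constructed, inserting the scalar identity into the Dunford representation of the $H_0^\infty$-calculus of $A$ on both sides immediately promotes it to the operator identity $f(A)\psi(is+A)x = \int_\R K(s,t)\varphi(it+A)x\,dt$.

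The main obstacle is the $L_2(\R)$-boundedness of $\mathcal{K}$. Since $\varphi \in H_0^\infty(S(a))$, a Paley--Wiener argument forces $|M_\varphi(\xi)| \lesssim e^{-a|\xi|}$, so $1/M_\varphi$ is not in $L^\infty(\R)$ and the naive factorisation breaks down. The remedy is to introduce an auxiliary comparison function $\phi_0 \in H_0^\infty(S(a_0))$ with $\omega(A) < a_0 < a$ chosen so that both ratios $M_\psi/M_{\phi_0}$ and $M_{\phi_0}/M_\varphi$ lie in $L^\infty(\R)$ (there is room to do this because we have spare strip width), and to run the kernel construction in two stages: first comparing $\psi$ to $\phi_0$ (this produces the $\|f\|_{H^\infty(S(b))}$-dependent bound) and then comparing $\phi_0$ to $\varphi$ with the trivial multiplier $f \equiv 1$. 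Chaining the two estimates yields a bound $\|\mathcal{K}\|_{L_2 \to L_2} \leq C(\psi,\varphi)\|f\|_{H^\infty(S(b))}$.

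Once the $L_2(\R)$-boundedness of $\mathcal{K}$ is in hand, Corollary 4.8 extends it to a bounded operator $\Kkr$ on $\gamma(\R,X)$ with the same norm, yielding
\[
\|f(A)\psi(i\cdot+A)x\|_{\gamma(\R,X)} \leq \|\mathcal{K}\|\cdot\|\varphi(i\cdot+A)x\|_{\gamma(\R,X)} \leq C\|f\|_{H^\infty(S(b))}\|\varphi(i\cdot+A)x\|_{\gamma(\R,X)}.
\]
Specialising to $f \equiv 1$ and interchanging $\psi$ with $\varphi$ delivers the two-sided equivalence stated in the "in particular" part of the lemma.
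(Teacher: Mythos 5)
Your reduction to a purely scalar kernel operator cannot work in the stated generality. If the identity $\psi(is+\mu)=\int_\R K(s,t)\,\varphi(it+\mu)\,dt$ is to hold for all $\mu$ in a substrip (take $f\equiv 1$ first), then restricting to $\mu=i\beta$ and taking Fourier transforms in $\beta$ forces $K(s,t)=k(s-t)$ with $\hat k=M_\psi/M_\varphi$; the kernel is uniquely determined as a distribution, and $\mathcal K$ is bounded on $L_2(\R)$ if and only if $M_\psi/M_\varphi\in L_\infty(\R)$. This ratio is unbounded for perfectly legitimate pairs in $H_0^\infty(S(a))$: take $\varphi(\lambda)=e^{\lambda^2}$, which lies in $H_0^\infty(S(a))$ for every $a$ and has $M_\varphi(\xi)=\sqrt{\pi}\,e^{-\xi^2/4}$, and $\psi(\lambda)=(\lambda^2-4a^2)^{-1}$, which has $M_\psi(\xi)=-\tfrac{\pi}{2a}e^{-2a|\xi|}$; then $M_\psi/M_\varphi$ grows like $e^{\xi^2/4-2a|\xi|}$. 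Your auxiliary function $\phi_0$ cannot repair this: if $M_\psi/M_{\phi_0}$ and $M_{\phi_0}/M_\varphi$ were both bounded, so would be their product $M_\psi/M_\varphi$, so no admissible $\phi_0$ exists in this example, and the "spare strip width" gives you nothing. (The lemma itself is nevertheless true here --- for a normal operator on a Hilbert space both square functions collapse to $\|\psi\|_{L_2(i\R)}\|x\|$ and $\|\varphi\|_{L_2(i\R)}\|x\|$ --- which shows the obstruction is to your method, not to the statement.) A further defect of the same kind is that $M_\varphi$ may vanish, so $1/M_\varphi$ need not even be locally bounded.

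The missing idea is the one used in the paper's proof of Proposition 7.7, to which the proof of this lemma is referred: one does not compare $\psi$ to $\varphi$ through a scalar kernel at all. Instead one inserts a reproducing formula built from auxiliary $g,h\in H_0^\infty(S(a))$ normalized so that the analogue of $\int_0^\infty g\,h\,\varphi\,\tfrac{dt}{t}=1$ holds, and writes $f(A)\psi(i\cdot+A)x$ as a composition $\mathcal K\circ M\circ\mathcal L\circ N$ applied to $\varphi(i\cdot+A)x$, where $\mathcal K,\mathcal L$ are scalar kernel operators bounded on $L_2$ (extended to $\gamma(\R,X)$ by 4.8/4.9) while $M(\lambda)$ and $N(t)=f(A)h(it+A)$ are operator--valued families that are $\gamma$--bounded by Lemma 5.8 (integral representations with integrable scalar kernels against the bounded resolvent) and are applied via 4.11. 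The factor $\|f\|_{H^\infty(S(b))}$ enters through the $\gamma$--bound of the family $N(t)$, not through a pointwise multiplication in the variable $s$ --- note that in $f(A)\psi(is+A)$ the function $f$ is evaluated at $A$, not at $is+A$, so it cannot be absorbed into a multiplication operator in $s$. The composite $\mathcal K\circ M\circ\mathcal L\circ N$ is not a scalar kernel operator, which is precisely why it can be bounded on $\gamma(\R,X)$ even when the "effective symbol" $M_\psi/M_\varphi$ is unbounded.
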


The proof of 6.10 is similar to the proof of Proposition 7.7 below and we
omit it.

\section{$H^{\infty}$--calculus for sectorial operators}

We will consider sectorial operators with an additional $R$--bounded
assumption. A sectorial operator on a Banach space $X$ is called
{\sl $R$--sectorial} ({\sl $\gamma$--sectorial}) if for some $\omega \in (0, \pi)$
the set $\{ \lambda R(\lambda,A):|\arg \lambda| > \omega \}$ is 
$R$--bounded ($\gamma$--bounded). $\omega_R(A)$ (resp. $\omega_\gamma(A))$
is the infimum over all such $\omega$. A sectorial operator $A$ is called
{\sl almost $R$--sectorial} ({\sl almost $\gamma$--sectorial}), if there is a
$\omega \in (0, \pi)$ so that $\{ \lambda AR(\lambda,A)^2 : | \arg \lambda |
> \omega \}$ is $R$--bounded ($\gamma$--bounded). $\tilde{\omega}_R(A)$ (or
$\tilde{\omega}_\gamma(R))$ is now the infimum over all these $\omega$.

$R$--sectorial operators are almost $R$--sectorial, but not conversely
(not even in an $L_p$--space, see \cite{KW2}). The same is true for $\gamma$--sectorial
operators. We remark that these properties follow from the
existence of an $H^{\infty}$--calculus, more precisely

\begin{remark}
a) If $A$ has BIP on an arbitrary Banach space, then $A$ is almost $R$--sectorial
and almost $\gamma$--sectorial with $\tilde{\omega}_\gamma(A), \tilde{\omega}_R(A) 
\leq \omega_{BIP}(A)$. If $A$ has an $H^{\infty}$--calculus then
$\tilde{\omega}_R(A) = \omega_{H^{\infty}}(A) = \omega(A^{it})$.

b) If $A$ has BIP and $X$ has the UMD--property, then $A$ is $R$--sectorial
(see \cite{CP}). The same is true if $A$ has an $H^{\infty}$--calculus and $X$ has
only property $\Delta$ (cf.~\cite{KW1}).

c) If $X$ has type $p>1$ and $A$ is (almost) $\gamma$--sectorial, then $A^0$ is (almost)
$\gamma'$--sectorial on $X^0$ (the proof is the same as for \cite{KKW} Prop 3.5 with $\gamma(\N,X)$ in place of $\Rad X$).
\end{remark}

\begin{proof}[proof of a).]
Since $\int\limits_0^{\infty} s^{z-1} \textstyle \frac{s}{(1+s)^2} ds = 
\textstyle \frac{\pi z}{\sinh (\pi z)}$ for $\operatorname{Re}z > -1$ we obtain as a special
case of the Mellin functional calculus that
$$
\lambda A (1+ \lambda A)^{-2}x = \frac{1}{2}\int\limits_{-\infty}^{\infty}
\frac{t}{\sinh (\pi t)} \lambda^{it} A^{it} x dt
$$
for $\lambda$ with $\omega_{BIP} (A) + |\arg (\lambda)| < \pi$ and $x \in X$.
For $d = \omega_{BIP}(A) + \varepsilon$ and $\theta = \pi-d- \varepsilon$
with a small $\varepsilon > 0$ we can write for $|\arg(\lambda)| < \theta$
$$
\lambda A(1+\lambda A)^{-2}x = \int\limits_{-\infty}^{\infty}
h_{\lambda}(t) N(t)xdt, \quad x \in X
$$
with $N(t) = e^{- d|t|}A^{it}$ and $h_{\lambda}(t)= \frac{t}{\sinh(\pi t)}
\lambda^{it} e^{d|t|}$. Since $h_{\lambda}(t)$ is uniformly bounded for
$|\arg(\lambda)| \leq \theta$ the set $\{ \lambda A(1+\lambda A)^{-2} :
|\arg (\lambda)| \leq \theta \}$ is $R$--bounded and $\gamma$--bounded by 5.8.
Hence $A$ is almost $R$--bounded with $\tilde{\omega}_R(A) \leq \omega_{BIP}
(A)+2 \varepsilon$ and almost $\gamma$--bounded with $\tilde{\omega}_\gamma(A) \leq \omega_{BIP}
(A)+2 \varepsilon$ for all $\varepsilon > 0$. The last claim is shown in
\cite{KW2}.
\end{proof}

Now we can state our characterization of the $H^{\infty}$--calculus in terms
of square functions. (As in 3.2 and 4.7c) we can show that these square
functions are finite.)

\begin{theorem}
Let $A$ be an almost $\gamma$--sectorial operator on a Banach space $X$.
Consider the conditions

\noindent a)
$A$ has bounded imaginary powers and for one (all) $\omega$ with $| \omega|
\in (\omega(A^{it}), \pi]$ there is a constant $C$ with 
\begin{equation*}
\begin{split}
\|e^{- \omega|\cdot|}A^{i\cdot}x\|_{\gamma(\R,X)} & \leq C\|x\|, \quad 
x \in \cal{D}(A) \cap \cal{R}(A) \\
\|e^{- \omega|\cdot|} (A^{i\cdot})'x' \|_{\gamma'(\R,X')} & \leq C \|x'\|, \quad 
x' \in \cal{D}(A^0) \cap \cal{R}(A^0).
\end{split}
\end{equation*}

\noindent b)
For one (all) $\omega$ with $|\omega| \in (\tilde{\omega}_\gamma(A),\pi]$ there is
a constant $C$ such that
\begin{equation*}
\begin{split}
\|A^{1/2}R(\cdot e^{i \omega},A)x\|_{\gamma(\R_{+},X)} & \leq C \|x\|,
\quad x \in \cal{D}(A) \cap \cal{R}(A) \\
\|(A')^{1/2}R(\cdot e^{i \omega},A)'x'\|_{\gamma'(\R_{+},X')} & \leq C \|x'\|, 
\quad x' \in \cal{D}(A^0) \cap \cal{R}(A^0).
\end{split}
\end{equation*}

\noindent c)
For one (all) $\omega$ with $|\omega| \in (\tilde{\omega}_\gamma(A),\pi]$ there is a
constant $C$ such that for all $x \in \cal{D}(A) \cap \cal{R}(A)$
$$
\frac{1}{C} \|x\| \leq \|A^{1/2}R(\cdot e^{i \omega},A)x\|_{\gamma(\R_{+},X)} \leq
C\|x\|.
$$

\noindent d)
$A$ has an $H_{\infty}(\Sigma(\sigma))$--calculus for one (all) $\sigma
\in (\tilde{\omega}_\gamma(A),\pi]$.

Then a) $\Longrightarrow$ b) $\Longrightarrow$ c) $\Longrightarrow$ d)
always. If
$X$ has finite cotype then d) $\Longrightarrow$ a). In this case
$\omega_{H^{\infty}}(A) = \omega(A^{it}) = \tilde{\omega}_\gamma(A)$.
\end{theorem}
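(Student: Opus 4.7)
The plan is to follow the scheme of Theorem 2.1 verbatim, replacing the Bochner norm on $L_2(I,H)$ by the generalized square function norms $\|\cdot\|_{\gamma(\Omega,X)}$ and $\|\cdot\|_{\gamma'(\Omega,X')}$ of Sections 4--5 and invoking the Banach-space versions of (S1), (S2), (S3) summarized in Remark 5.12 (i.e.~Corollary 5.5, Proposition 4.11 together with Corollary 5.6, and Example 4.9 / Proposition 4.8). The almost $\gamma$-sectoriality hypothesis plays the same role as in Section 6: it is needed to move between different opening angles via the resolvent identity in the $\gamma$-bounded pointwise-multiplier sense.

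For a)$\Longrightarrow$b), fix $\omega$ with $\tilde\omega_\gamma(A)<\omega<\sigma\le\pi$ and $\omega>\omega(A^{it})$; start from the Mellin representation
\[
\frac{\pi e^{\theta s}}{\cosh(\pi s)}\,A^{is}x=\int_0^\infty t^{is}\bigl[e^{i\theta/2}t^{1/2}A^{1/2}(e^{i\theta}t+A)^{-1}x\bigr]\frac{dt}{t}
\]
as in 2.1, perform the substitution $t=e^u$, and apply the $\gamma$-extension of the Fourier--Plancherel identity from Example 4.9b) to transfer the norm from the Mellin side to the resolvent side; the scalar factor $e^{\theta s}/\cosh(\pi s)\cdot e^{\omega|s|}$ is in $L_\infty(\R)$ and is absorbed by Example 4.9a). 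The dual estimate follows by repeating the argument on $X^0$ with the $\gamma'$-version of these extension principles (Remark 5.3). For b)$\Longrightarrow$c), apply the reproducing identity $x=e^{i\omega}\int_0^\infty AR(e^{i\omega}t,A)^2x\,dt$, pair with $y\in X^0$, $\|y\|\le 1$, and use the H\"older-type inequality of Corollary 5.5 to split the pairing as
\[
|\langle x,y\rangle|\le \|A^{1/2}R(e^{i\omega}\cdot,A)x\|_{\gamma(\R_+,X)}\cdot\|(A^0)^{1/2}R(e^{-i\omega}\cdot,A^0)y\|_{\gamma'(\R_+,X^0)}.
\]
For c)$\Longrightarrow$d), the algebraic identity $A^{1/2}R(\mu,A)f(A)=f(\mu)A^{1/2}R(\mu,A)-K[f(\cdot)A^{1/2}R(\cdot,A)](\mu)$ carries over unchanged; the Hilbert-transform variant $K$ extends to $\gamma(\partial\Sigma(\omega),X)$ by Example 4.9b), the scalar multiplier $f(\cdot)$ is a contraction on $\gamma$ by Example 4.9a), and c) supplies both the outer norm inequality $\|f(A)x\|\le C\|A^{1/2}R(\cdot,A)f(A)x\|_\gamma$ and the closing bound.

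The main obstacle is d)$\Longrightarrow$a), which demands finite cotype and is the analogue of Lemma 6.4. The plan is to adapt that lemma to the multiplicative setting by choosing a $C^\infty$ partition $\sum_{k\in\Z}h(t-k)=1$ with $\operatorname{supp} h\subset(-\pi,\pi)$ and a Gaussian-type weight $g(t)=[\cosh(\omega t)]^{-1}$, then estimating
\[
\|e^{-\omega|\cdot|}A^{i\cdot}x\|_{\gamma(\R,X)}\le C\sum_{k\in\Z}\|\mathcal{F}[h(t)g(t+k)A^{i(t+k)}x]\|_{\gamma(\Z,X)}
\]
with the discrete Fourier coefficients $b_{k,n}(A)x$, where
\[
b_{k,n}(\lambda)=\tfrac{1}{\sqrt{2\pi}}\int_{-\pi}^\pi e^{int}h(t)g(t+k)\lambda^{i(t+k)}\,dt,\quad \lambda\in\Sigma(\sigma).
\]
A direct estimate together with a single partial integration in $t$ gives $|b_{k,n}(\lambda)|\lesssim e^{(\sigma-\omega)|k|}\min(1,|n|^{-2})$, so $\sum_k\sup_\lambda\sum_n|b_{k,n}(\lambda)|<\infty$. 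Finite cotype (Lemma 5.11) then reduces the $\gamma(\Z,X)$-norm to a supremum over sign choices $(\delta_n)$, which the $H^\infty$-calculus controls by $C_{H^\infty}\|\sum_n\delta_n b_{k,n}\|_{H^\infty(\Sigma(\sigma))}$; summing over $k$ yields a)'s bound on $\|e^{-\omega|\cdot|}A^{i\cdot}x\|_\gamma$, and the dual estimate follows by the $\gamma'$-half of Lemma 5.11 together with $\|f(A)'\|\le C_{H^\infty}\|f\|_\infty$. That $A$ has BIP with $\omega(A^{it})\le\sigma$ is immediate from $\|\lambda^{it}\|_{H^\infty(\Sigma(\sigma))}\le e^{\sigma|t|}$.

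The freedom to vary $\omega$ within each of b), c), d) is handled once, at the end, by the resolvent identity
\[
A^{1/2}R(te^{i\omega},A)=\bigl[I+(e^{i\nu}-e^{i\omega})tR(te^{i\omega},A)\bigr]A^{1/2}R(te^{i\nu},A),
\]
whose bracketed operator family is $\gamma$-bounded in $t$ as a consequence of almost $\gamma$-sectoriality; the pointwise multiplier Proposition 4.11 then transfers the $\gamma$-norm estimates from the angle $\nu$ to the angle $\omega$, and the same device adapted to the dual handles the $\gamma'$-estimates. The three angle equalities $\omega_{H^\infty}(A)=\omega(A^{it})=\tilde\omega_\gamma(A)$ follow from reading off the inequalities produced by the closed loop a)$\Longrightarrow$b)$\Longrightarrow$c)$\Longrightarrow$d)$\Longrightarrow$a), together with Remark 7.1a), which already supplies $\tilde\omega_\gamma(A)\le\omega(A^{it})\le\omega_{H^\infty}(A)$.
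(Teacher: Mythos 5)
Your reductions a)\,$\Rightarrow$\,b)\,$\Rightarrow$\,c)\,$\Rightarrow$\,d) and the finite-cotype step d)\,$\Rightarrow$\,a) follow the paper's route exactly: the Mellin/Plancherel transfer via 4.8--4.9, the trace-duality pairing via 5.5, the Hilbert-transform identity for c)\,$\Rightarrow$\,d), and the discrete partition-of-unity argument with $b_{k,n}(\lambda)=\frac{1}{\sqrt{2\pi}}\int_{-\pi}^{\pi}e^{int}h(t)g(t+k)\lambda^{i(t+k)}\,dt$ is precisely the paper's Lemma 7.4 (one quibble: a single integration by parts only yields decay of order $|n|^{-1}$, which is not summable; as in Lemma 6.4 you must integrate by parts twice to get the $|n|^{-2}$ you claim).

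The genuine gap is in the angle-change step. You assert that the bracketed family in
\[
A^{1/2}R(te^{i\omega},A)=\bigl[I+(e^{i\nu}-e^{i\omega})\,tR(te^{i\omega},A)\bigr]A^{1/2}R(te^{i\nu},A)
\]
is $\gamma$-bounded ``as a consequence of almost $\gamma$-sectoriality.'' It is not: $\gamma$-boundedness of $\{tR(te^{i\omega},A):t>0\}$ is exactly $\gamma$-sectoriality at the angle $\omega$, and the paper stresses (beginning of Section 7) that almost $R$-/$\gamma$-sectorial operators need not be $R$-/$\gamma$-sectorial, even on $L_p$. Almost $\gamma$-sectoriality only controls $\{\lambda AR(\lambda,A)^2\}$. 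Since the whole point of the almost-$\gamma$-sectorial hypothesis is to make the conditions angle-independent (Remark 7.3.1), this step cannot be waved through. The paper's actual device is to rewrite $A^{1/2}R(t^{-1}e^{i\theta},A)=t^{1/2}\psi_\theta(tA)$ with $\psi_\theta(\lambda)=\lambda^{1/2}(e^{i\theta}-\lambda)^{-1}\in H_0^\infty(\Sigma(\varphi))$ for $\tilde\omega_\gamma(A)<\varphi<|\theta|$, so that the square function at angle $\theta$ becomes $\|\psi_\theta(\cdot A)x\|_{\gamma(\R_+,\frac{dt}{t},X)}$, and then to invoke Proposition 7.7, whose proof rests on Lemma 7.6 --- the $\gamma$-boundedness of $\{\psi(tA):t>0\}$ for $\psi\in H_0^\infty$ --- and this \emph{does} follow from almost $\gamma$-sectoriality alone. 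You need to replace your resolvent-identity argument by this comparison of $H_0^\infty$-square functions (or strengthen the hypothesis to $\gamma$-sectoriality, which would prove a weaker theorem).
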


\begin{remark}
1) \ 
For all sectorial operators and fixed angles $\omega$ and $\sigma >
\omega$, an inspection of the proofs show that we have a) $\Longrightarrow$ b) $\Longrightarrow$ c) $\Longrightarrow$ d). 
If $X$ has finite cotype and $\sigma < \omega$ then also d) $\Longrightarrow$
a).

We need almost $\gamma$--sectoriality only to assure that our conditions are
independent of the choice of the angle.

2) \ The assumption that $A$ is almost $\gamma$--sectorial can be dropped. Indeed by the first part of the remark any of the conditions a), b), c) for some angle implies d). Now apply the second part of remark 7.1a) to $A$ and $A^0$. Now we have that $A$ and $A^0$ are almost $\gamma$--bounded and we can switch angles.

3) \ If $X'$ has also finite cotype, e.~g.~if $X$ is an $L_p(\Omega)$--space
with $1 < p < \infty$ then we can replace in the second condition of b) the
$\gamma'$--norm by the $\gamma$--norm (see 5.2):
$$
\|(A')^{1/2}R(\cdot e^{i \omega},A)'x'\|_{\gamma(\R_{+},X')} \leq C \|x'\|.
$$
\end{remark}

As a preparation for the proof of 7.2 we state

\begin{lemma}
Let $X$ have finite cotype. Suppose that the sectorial operator $A$ has an
$H^{\infty}(\Sigma(\sigma))$--calculus. Then $A$ has bounded imaginary
powers with $\omega_{BIP}(A) \leq \sigma$ and for $\theta > \sigma$ there is
a constant $C$ so that 
\begin{equation*}
\begin{split}
\|e^{- \theta|\cdot|}A^{i\cdot}x\|_{\gamma(\R,X)} & \leq C C_{H^{\infty}} e^{\pi (\theta +
\sigma)} \frac{1}{\theta - \sigma} \|x\|, \quad x \in X \\
\|e^{- \theta|\cdot|}(A^{i\cdot})'x'\|_{\gamma'(\R,X')} & \leq C C_{H^{\infty}}
e^{\pi (\theta + \sigma)} \frac{1}{\theta - \sigma} \|x'\|, \quad x' \in X'
\end{split}
\end{equation*}
where $C$ depends only on $X$ and $C_{H^\infty}$ is the bound of the 
$H^{\infty}(S(\sigma))$--calculus.
\end{lemma}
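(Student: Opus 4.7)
The plan is to adapt the proof of Lemma 6.4 almost verbatim, substituting the imaginary powers $A^{it}$ for the group $T_t$ and the function $\lambda \mapsto \lambda^{it}$ for $\lambda \mapsto e^{t\lambda}$. That $A$ has BIP with $\omega_{BIP}(A) \leq \sigma$ is immediate from the $H^{\infty}$-calculus: the functions $z \mapsto z^{it}$ lie in $H^{\infty}(\Sigma(\sigma))$ with norm $\sup_{|\arg z|\leq \sigma}|z^{it}| = e^{\sigma|t|}$, and although they are not in $H_0^{\infty}(\Sigma(\sigma))$, the convergence lemma extends the calculus with the same bound, yielding $\|A^{it}\|\leq C_{H^{\infty}}e^{\sigma|t|}$.

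For the square function estimate, I fix a nonnegative $C^{\infty}$ partition of unity $(h_k)_{k\in\Z}$ with $h_k(t)=h(t-k)$, $h$ supported in $(-\pi,\pi)$ and $\sum_k h_k \equiv 1$, together with the weight $g(t) = [\cosh(\theta t)]^{-1}$. Since $e^{-\theta|t|} \leq C g(t)$, one has
\[
\|e^{-\theta|\cdot|}A^{i\cdot}x\|_{\gamma(\R,X)} \leq C \sum_{k\in\Z} \|h_k(\cdot)g(\cdot)A^{i\cdot}x\|_{\gamma(\R,X)}.
\]
Each summand is supported in an interval of length $2\pi$, so the extension of the discrete Fourier transform in the sense of Proposition 4.8 identifies the $\gamma(\R,X)$-norm of the $k$-th summand with $\|(b_{k,n}(A)x)_n\|_{\gamma(\Z,X)}$, where
\[
b_{k,n}(\lambda) = \frac{1}{\sqrt{2\pi}} \int_{-\pi}^{\pi} e^{int} h(t) g(t+k) \lambda^{i(t+k)}\, dt.
\]
Invoking Lemma 5.11 (which requires finite cotype) and the boundedness of the $H^{\infty}(\Sigma(\sigma))$-calculus then bounds this by $C\, C_{H^{\infty}}\, \sup_{\lambda\in\Sigma(\sigma)} \sum_n |b_{k,n}(\lambda)|\cdot \|x\|$.

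The crux is estimating $\sum_n |b_{k,n}(\lambda)|$ uniformly over $\lambda = re^{i\phi}\in\Sigma(\sigma)$. Writing $e^{int}\lambda^{it} = e^{t\mu}$ with $\mu = -\phi + i(n+\log r)$, two integrations by parts in $t$ (the factor $\lambda^{ik}$ pulls out with $|\lambda^{ik}| \leq e^{\sigma|k|}$, while $g(t+k)$ together with its first two derivatives decays like $e^{\pi\theta}e^{-\theta|k|}$) yield a bound of the shape
\[
|b_{k,n}(\lambda)| \leq \frac{C (1+\theta^2) e^{\pi(\theta+\sigma)} e^{-(\theta-\sigma)|k|}}{\phi^2 + (n+\log r)^2}
\]
for $|n+\log r|\geq 1$, complemented by the trivial estimate $|b_{k,n}(\lambda)|\leq C e^{\pi(\theta+\sigma)}e^{-(\theta-\sigma)|k|}$ for the $O(1)$ remaining $n$. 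The main obstacle — and the one deviation from the strip-type argument — is that $\log r$ ranges over all of $\R$ as $\lambda$ varies, so the shift in the denominator depends on $\lambda$. This is handled by observing that $\sum_{|n+\log r|\geq 1}(n+\log r)^{-2}$ is bounded uniformly in $\log r\in\R$, giving $\sup_{\lambda}\sum_n |b_{k,n}(\lambda)| \leq C e^{\pi(\theta+\sigma)} e^{-(\theta-\sigma)|k|}$.

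Summing the resulting geometric series in $k$ (which converges since $\theta>\sigma$) produces the factor $(\theta-\sigma)^{-1}$ and completes the first inequality. The dual estimate is obtained identically by replacing the $\gamma(\Z,X)$-norm in the Fourier step by the $\gamma'(\Z,X')$-norm, applying the dual half of Lemma 5.11, and using the fact that the $H^{\infty}$-calculus bound passes to $(A^{it})'$ with the same constant $C_{H^{\infty}}$.
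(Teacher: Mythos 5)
Your proposal is correct and follows essentially the same route as the paper, whose proof of this lemma is precisely to transplant the argument of Lemma 6.4 to $T_t=A^{it}$ with the modified coefficients $b_{k,n}(\lambda)=\frac{1}{\sqrt{2\pi}}\int_{-\pi}^{\pi}e^{int}h(t)g(t+k)\lambda^{i(t+k)}\,dt$ and the bound $|\lambda^{i(t+k)}|\leq e^{\sigma|t+k|}$. You in fact supply more detail than the paper does at the one delicate point, namely that the shift $\log r$ in the denominator after integrating by parts varies with $\lambda$ but the sum $\sum_{|n+\log r|\geq 1}(n+\log r)^{-2}$ is bounded uniformly in $\log r$.
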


\begin{proof}
We adopt the proof of 6.4 to $T_t = A^{it}$. In particular, we must choose 
now
$$
b_{k,n} (\lambda) = \frac{1}{\sqrt{2 \pi}} \int\limits_{- \pi}^{\pi}
e^{int} h(t) g(t+k) \lambda^{i(t+k)}dt.
$$
Since $|\lambda^{i(t+k)}| \leq e^{- \sigma(t+k)}$ we can repeat the estimates
of 6.4.
\end{proof}

\begin{proof}[Proof of 7.2] 
For a fixed angle $\omega$, we can repeat the arguments in the proof of 2.1 by replacing the
Hilbert space square functions by our generalized square functions $\gamma$ and
$\gamma'$ and using in place of (S1), (S2) and (S3) their generalizations in 5.12.

In a) $\Longrightarrow$ b) we can use the formula
\begin{equation}
\frac{\pi}{\cosh(\pi s)} e^{\theta s} A^{is}x = e^{- i \theta/2} 
\int\limits_0^{\infty} t^{is} [t^{1/2} A^{1/2}(e^{i \theta}t+A)]^{-1}x
\frac{dt}{t}
\end{equation}
(which appeared already in (1) of Section 2 with $\theta < \pi - \omega(A^{it}$))
and appeal to the Plancherel formula 4.8 and 5.3 for $\gamma(\R,X)$ and $\gamma'(\R,X')$.

In b) $\Longrightarrow$ c) we use 5.5.

In c) $\Longrightarrow$ d) we follow the lines of 2.1 c) $\Longrightarrow$ d) using the operator $K$ defined there and apply 5.5 to it.

If $X$ has finite cotype then Lemma 7.4 shows that d) $\Longrightarrow$ a).

If $A$ is a $\gamma$--sectorial operator we can check that the condition 
c) does not depend on the choice of $\omega \in (\omega_\gamma(A), \pi]$ by using
formula (2) from Section 2. For an almost $\gamma$--sectorial operator we note that
$$
A^{1/2}R(t^{-1}e^{i \theta},A) = \psi_{\theta}(tA)t^{1/2} \mbox{ with }
\psi_{\theta}(\lambda) = \frac{\lambda^{1/2}}{e^{i \theta}-\lambda}
$$
and
$$
\|A^{1/2} R(\cdot e^{i \theta},A)x\|_{\gamma(\R_{+},X)} = \|\psi_{\theta}(\cdot A)x
\|_{\gamma(\R_{+},\frac{dt}{t},X)}.
$$
If $|\theta| > \tilde{\omega}_\gamma(A)$ then $\psi_{\theta} \in H_0^{\infty}
(\Sigma(\varphi))$ for $|\theta| > \varphi > \tilde{\omega}_\gamma(A)$ and we can
appeal Proposition 7.7 below to ensure the equivalence of square functions with
different angles.
\end{proof}

With 7.2 we can explain the gap between the $H^{\infty}$--calculus and BIP.

\begin{corollary}
Let $A$ be a sectorial operator on a Banach space $X$.

\noindent a)
If $A$ has BIP and $\{e^{- \theta|t|}A^{it} : t \in  \R\}$ is $\gamma$--bounded,
then $A$ has an $H^{\infty}(\Sigma(\sigma))$--calculus for $\sigma > \theta.$

\noindent b)
Conversely, assume that $X$ has property $(\alpha)$.
If $A$ has an $H^{\infty}(\Sigma(\sigma))$--calculus then $\{ e^{- \theta|t|}
A^{it}: t \in\R\}$ is $\gamma$--bounded for all $\theta > \sigma$.
\end{corollary}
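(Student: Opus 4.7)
For (a), the plan is to verify the square-function condition 7.2(a) at the single angle $\omega=\sigma$ and then invoke the implication a)$\Longrightarrow$d) of Theorem 7.2; by Remark 7.3(1) that implication holds for a fixed angle with no finite-cotype hypothesis, and since BIP forces $A$ to be almost $\gamma$-sectorial by Remark 7.1(a), Theorem 7.2 applies directly. Concretely, fix $\sigma>\theta$ and observe that $h(t):=e^{-(\sigma-\theta)|t|}$ belongs to $L_2(\R)$. The family $N(t):=e^{-\theta|t|}A^{it}$ is strongly continuous on $\R$ (by BIP) and $\gamma$-bounded by hypothesis, so Example 5.7, in both its $\gamma$- and $\gamma'$-incarnations, yields
\begin{align*}
\|e^{-\sigma|\cdot|}A^{i\cdot}x\|_{\gamma(\R,X)} &\leq \gamma(N)\,\|h\|_{L_2(\R)}\,\|x\|,\\
\|e^{-\sigma|\cdot|}(A^{i\cdot})'x'\|_{\gamma'(\R,X')} &\leq \gamma(N)\,\|h\|_{L_2(\R)}\,\|x'\|.
\end{align*}
These are precisely condition 7.2(a), and Theorem 7.2 delivers the $H^{\infty}(\Sigma(\sigma))$-calculus.

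For (b) the strategy parallels Theorem 6.8(b). Pick any $\sigma'$ with $\sigma<\sigma'<\theta$ and set $f_t(\lambda):=e^{-\theta|t|}\lambda^{it}$. For $\lambda\in\Sigma(\sigma')$ one has $|\lambda^{it}|\leq e^{\sigma'|t|}$, hence $|f_t(\lambda)|\leq e^{(\sigma'-\theta)|t|}\leq 1$. Thus $\|f_t\|_{H^{\infty}(\Sigma(\sigma'))}\leq 1$ and, via the standard extension of the functional calculus to bounded functions (the $H^{\infty}$-calculus automatically produces BIP, so $\lambda^{it}$ makes sense), $f_t(A)=e^{-\theta|t|}A^{it}$. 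It therefore suffices to establish the sectorial analog of Corollary 6.6: under property $(\alpha)$, the ball $\{f(A):\|f\|_{H^{\infty}(\Sigma(\sigma'))}\leq 1\}$ is $\gamma$-bounded. This analog is obtained by repeating the proofs of Corollaries 6.5 and 6.6 with Theorem 7.2 playing the structural role of Theorem 6.2: for $F\in RH^{\infty}(\Sigma(\sigma'),\mathcal{A})$ and $\varphi\in H_{0}^{\infty}(\Sigma(\sigma'))$ one writes
\[
(\varphi F)(A)x = \frac{1}{2\pi i}\int_{\partial\Sigma(b)}\varphi(\lambda)F(\lambda)R(\lambda,A)x\,d\lambda
\]
for $\tilde\omega_\gamma(A)<b<\sigma'$, estimates the right-hand side using condition 7.2(c) (both primal and dual), the $\gamma$-multiplier principle 4.11 applied to the $\gamma$-bounded factor $F(\lambda)$, and the extension principle 4.8 applied to the Cauchy-type operator on $L_{2}(\partial\Sigma(b))$. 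The diagonal argument of 6.6 then lifts $A$ to an operator $\tilde A$ on $\gamma(\N,X)\cong\operatorname{Rad} X$ via property $(\alpha)$ and Kahane's contraction principle, and the $RH^{\infty}$-calculus for $\tilde A$ applied to $F(\lambda)=(f_{t_n}(\lambda)I_X)_n$ produces the desired $\gamma$-bound.

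The main obstacle is the transfer of the strip-type machinery in Section 6 to the sectorial setting. Every ingredient is available, but one must verify that the Cauchy-type kernel on $\partial\Sigma(b)$ is bounded on $L_{2}(\partial\Sigma(b))$ in precisely the manner needed for the extension principle 4.8 — this is the same computation already done in the proof of Theorem 2.1(c)$\Longrightarrow$(d), so once recycled the remainder of the proof is essentially mechanical.
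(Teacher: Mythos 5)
Your part (a) is essentially the paper's own argument: Example 5.7 applied to the $\gamma$--bounded family $N(t)=e^{-\theta|t|}A^{it}$ and the $L_2$--function $h(t)=e^{-(\sigma-\theta)|t|}$ verifies condition a) of Theorem 7.2 (in both the $\gamma$-- and $\gamma'$--forms), and Remarks 7.1a) and 7.3 --- which you correctly invoke --- dispose of the almost $\gamma$--sectoriality hypothesis so that a) $\Longrightarrow$ d) applies. The only cosmetic point is that the fixed--angle chain a) $\Longrightarrow$ b) $\Longrightarrow$ c) $\Longrightarrow$ d) of 7.2 yields the calculus on sectors strictly larger than the angle at which condition a) is verified, so you should verify it at an intermediate angle in $(\theta,\sigma)$; since $\sigma>\theta$ is arbitrary this costs nothing. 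In part (b) you genuinely diverge from the paper: there the claim is settled in one line by noting that property $(\alpha)$ implies finite cotype (so $\gamma$-- and $R$--boundedness coincide) and citing Theorem 5.3 of \cite{KW1}, which asserts the $R$--boundedness of $\{f(A):\|f\|_{H^{\infty}(\Sigma(\sigma'))}\leq 1\}$. You instead reconstruct that result by transporting Corollaries 6.5 and 6.6 from the strip to the sector: an $RH^{\infty}(\Sigma(\sigma'),\mathcal{A})$--calculus obtained from the two--sided square function estimate 7.2c) (available via d) $\Longrightarrow$ c) because property $(\alpha)$ gives finite cotype), the multiplier principle 4.11 for the $\gamma$--bounded factor $F(\lambda)$, the extension principle 4.8 for the Cauchy--type kernel on $\partial\Sigma(b)$ (whose $L_2$--boundedness is exactly the computation in 2.1 c) $\Longrightarrow$ d)), and finally the diagonalization onto $\gamma(\N,X)\cong\Rad X$ using property $(\alpha)$ and Kahane's contraction principle. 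All of these ingredients are indeed in place in Sections 4, 5 and 7 (Proposition 7.7 supplies the sectorial substitute for Lemma 6.10), so your route is correct; it is longer than the paper's citation but self--contained, and it makes explicit the Section 6/Section 7 parallel that the paper only alludes to after Corollary 7.5. The concluding application to $f_t(\lambda)=e^{-\theta|t|}\lambda^{it}$ with $\|f_t\|_{H^{\infty}(\Sigma(\sigma'))}\leq 1$ for $\sigma<\sigma'<\theta$ is the right one and mirrors the proof of Theorem 6.8b).
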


\begin{proof}
a) By Example 5.7 we have for $\varepsilon > 0$
$$
\| e^{- (\theta + \varepsilon)|\cdot|}A^{i\cdot}x\|_{\gamma(\R,X)} \leq \|e^{- \varepsilon
|\cdot|}\|_{L_2(\R)} C \|x\|
$$
where $C$ is the $\gamma$--bound of $\{ e^{- \theta|t|}A^{it} \}$. The second estimate in 7.2b) follows also from 5.7.

b) Since $X$ has finite cotype $\gamma$--boundedness and $R$--boundedness are
equivalent and we can apply \cite{KW1}, Theorem 5.3.
\end{proof}

Results on the vector--valued $H^{\infty}(\Sigma(\sigma))$--calculus,
joint functional calculii and $R$--boundedness of the calculus are
contained in \cite{KW1}.

We noticed already in the proof of 7.2, that besides \newline $\|A^{1/2}R(\cdot
e^{i \omega},A)x \|_{\gamma(\R_{+},X)}$, also more general square functions
of the form
$$
\| \psi(\cdot A)\|_{\gamma(\R_{+}, \frac{dt}{t},X)}, \quad \psi \in H_0^{\infty}
(\Sigma(\sigma)),
$$
are useful. We will show shortly that they are equivalent for an almost
$\gamma$--sectorial operator. But first

\begin{lemma}
Let $A$ be almost $\gamma$--sectorial. Then we have for all $\psi \in H_0^{\infty}
(\Sigma(\omega))$ with $\omega > \tilde{\omega}_\gamma(A)$ that $\{ \psi (tA) :
t > 0 \}$ is $\gamma$--bounded.
\end{lemma}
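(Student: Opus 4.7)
The plan is to reduce the $\gamma$--boundedness of $\{\psi(tA): t > 0\}$ to that of a single canonical model operator, and then to bridge $\psi$ to this model by one integration by parts in the Dunford integral. As the model I take $\phi(\lambda) = \lambda(1+\lambda)^{-2}$. A direct manipulation of the resolvent identity shows $\phi(tA) = -(-1/t)\,A\,R(-1/t,A)^2$; since $|\arg(-1/t)| = \pi > \tilde{\omega}_\gamma(A)$, this is (up to sign) an element of the $\gamma$--bounded family $\{\mu A R(\mu,A)^2 : |\arg \mu| > \sigma\}$ provided by almost $\gamma$--sectoriality, so $\{\phi(tA): t > 0\}$ is $\gamma$--bounded.

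For general $\psi \in H_0^\infty(\Sigma(\omega))$ I construct a primitive $\Phi$ that decays at both endpoints. Set $c := \int_0^\infty \psi(\xi)\,d\xi/\xi$ (absolutely convergent by the standard $H_0^\infty$ bound $|\psi(\lambda)| \leq C \min(|\lambda|^\varepsilon,|\lambda|^{-\varepsilon})$) and define
\[
\Phi(\lambda) := \int_0^\lambda \frac{\psi(\xi)}{\xi}\, d\xi \;-\; c \cdot \frac{\lambda}{1+\lambda},
\]
with the first integral taken along the ray from $0$ to $\lambda$ in $\Sigma(\omega)$. Then $\Phi$ is holomorphic on $\Sigma(\omega)$ with $|\Phi(\lambda)| = O(\min(|\lambda|^{\varepsilon'},|\lambda|^{-\varepsilon'}))$ for some $\varepsilon' > 0$, and one reads off the key identity $\lambda \Phi'(\lambda) = \psi(\lambda) - c\,\phi(\lambda)$.

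Now I pick $\eta$ with $\tilde{\omega}_\gamma(A) < \eta < \omega$ and insert this identity into the Dunford formula $\psi(tA) = (2\pi i)^{-1}\int_{\partial\Sigma(\eta)}\psi(t\mu)R(\mu,A)\,d\mu$. Writing $t\mu\,\Phi'(t\mu) = \mu\frac{d}{d\mu}\Phi(t\mu)$ and integrating by parts on each ray via the formula $\frac{d}{d\mu}(\mu R(\mu,A)) = -A R(\mu,A)^2$, the boundary contributions vanish thanks to the two--sided decay of $\Phi$ combined with the boundedness of $\mu R(\mu,A)$ along the contour, and so
\[
\psi(tA) \;=\; \frac{1}{2\pi i}\int_{\partial\Sigma(\eta)} \frac{\Phi(t\mu)}{\mu}\,N(\mu)\,d\mu \;+\; c\,\phi(tA),
\]
where $N(\mu) := \mu A R(\mu,A)^2$ has $\gamma$--bounded range over $\partial\Sigma(\eta)$.

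To finish, I apply Lemma~5.8 to the first integral. The scalar kernel $\Phi(t\mu)/\mu$ has arclength $L^1$--norm independent of $t$: on each ray the substitution $u = tr$ gives $\int_0^\infty |\Phi(tre^{\pm i\eta})|/r\,dr = \int_0^\infty |\Phi(ue^{\pm i\eta})|/u\,du$, which is finite by the two--sided decay of $\Phi$. Lemma~5.8 therefore yields the $\gamma$--boundedness of the first term in $t$, and combined with the first paragraph for $c\,\phi(tA)$ we conclude that $\{\psi(tA):t > 0\}$ is $\gamma$--bounded. The main obstacle is engineering the correct primitive $\Phi$: any naive antiderivative of $\psi(\xi)/\xi$ decays at only one of the two endpoints, and only after subtracting the right multiple of the model function $\phi$ does one recover simultaneous decay at $0$ and $\infty$, which is precisely what is needed both to annihilate the boundary terms in the integration by parts and to make the scalar kernel uniformly integrable against arclength on the contour.
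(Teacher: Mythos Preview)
Your proof is correct and essentially identical to the paper's: both subtract the multiple $c\cdot\lambda/(1+\lambda)$ (with $c=\int_0^\infty\psi(t)\,dt/t$) from the antiderivative of $\psi(\lambda)/\lambda$ to obtain a function $\Phi\in H_0^\infty(\Sigma(\omega))$, then use the identity $\psi(tA)=tA\,\Phi'(tA)+c\,\phi(tA)$ together with Lemma~5.8 applied to the kernel $\Phi(t\mu)/\mu$ against $\mu A R(\mu,A)^2$. The paper reaches the representation $tA\,\Phi'(tA)=\frac{1}{2\pi i}\int_{\partial\Sigma(\eta)}\frac{\Phi(t\mu)}{\mu}\,\mu A R(\mu,A)^2\,d\mu$ by a change of variable after writing the Dunford integral for $\Phi(tA)$, whereas you phrase the same passage as an explicit integration by parts---these are the same computation.
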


\begin{proof}
Let $\Psi$ be an antiderivative of $\frac{\psi(\lambda)}{\lambda}$ which
vanishes at $0$. Define $\varphi(\lambda)= \Psi(\lambda) - 
\gamma \frac{\lambda}{1 + \lambda}$, where $\gamma = \int\limits_0^{\infty}
t^{-1} \psi(t)dt$. Then $\varphi'(\lambda) = \frac{\psi(\lambda)}{\lambda}
- \gamma (1 + \lambda)^{-2}$ and one can show that $\varphi \in H_0^{\infty}
(\Sigma(\omega))$. Hence 
$$
\varphi(tA) = \frac{1}{2 \pi i} \int\limits_{\partial \Sigma(\omega)}
\varphi(\lambda) R(\lambda, tA) d\lambda
$$
and therefore
\begin{align*}
tA \varphi' (tA) &= \frac{1}{2 \pi i} \int\limits_{\partial \Sigma(\omega)} \varphi(\lambda)
[tAR(\lambda,tA)^2]d\lambda \\
&= \frac{1}{2\pi i} \int\limits_{\partial \Sigma(\omega)} \left[ \frac{\varphi(t\mu)}{\mu} \right] \left[ \mu A R(\mu,A)^2 \right] d\mu \quad \text{for} \; t\in\R_{+}.
\end{align*}
Note that by 5.8 this set is $\gamma$--bounded. Since $\psi(\lambda) = \lambda\varphi'(\lambda) + \gamma\lambda(1+\lambda)^{-2}$ it follows that the set
\[ \psi(tA) = tA\varphi'(tA) + \gamma tA(1+tA)^{-2}, \quad t\in\R_{+} \]
is $\gamma$--bounded too.
\end{proof}

The next result was proved for Hilbert spaces in \cite{M} and for $R$--sectorial
operators in $L_p$--spaces in \cite{LeM1}. The proof in \cite{LeM1} can be extended to
the general setting:

\begin{proposition}
Let $A$ be an almost $\gamma$--sectorial operator on a Banach space $X$.
Let $\psi$ and $\varphi$ be in $H_0^{\infty}(\Sigma(\sigma))$ for some
$\sigma > \tilde{\omega}_\gamma(A)$.
Then there is a constant $C$ (depending on $A, \varphi$ and $\psi$), 
so that for all $f \in H^{\infty}(\Sigma(\sigma))$ and $x \in \cal{D}(A) \cap
\cal{R}(A)$ 
\begin{equation}
\|f(A) \psi(\cdot A)x\|_{\gamma(\R_{+}, \frac{dt}{t}, X)} \leq C 
\|f\|_{H^{\infty}(\Sigma(\sigma))} \| \varphi(\cdot A)x\|_{\gamma(\R_{+}, \frac{dt}{t},
X)}.
\end{equation}
In particular, for $f(\lambda) \equiv 1$, we obtain
$$
\frac{1}{C} \| \psi(\cdot A)x\|_{\gamma(\R_{+},\frac{dt}{t},X)} \leq \| \varphi(\cdot A)x
\|_{\gamma(\R_{+},\frac{dt}{t},X)} \leq C \| \psi(\cdot A)x\|_{\gamma(\R_{+},\frac{dt}{t},X)}
$$
and
$$
\frac{1}{C} \| \psi(\cdot A)'x'\|_{\gamma'(\R_{+},\frac{dt}{t},X')} \leq \| \varphi(\cdot A)'x'
\|_{\gamma'(\R_{+},\frac{dt}{t},X')} \leq C \| \psi (\cdot A)'x'\|_{\gamma'(\R_{+},\frac{dt}{t},X')}.
$$
Furthermore, if $A$ has an $H^\infty(\Sigma(\sigma))$-calculus, then for $x\in X$
\[ \frac{1}{C}\| x \| \leq \| \varphi(\cdot A) x \|_{\gamma(\R_{+},\frac{dt}{t},X)} \leq C\| x \|. \]
\end{proposition}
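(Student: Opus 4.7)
My approach is a variant of the McIntosh reproducing-formula argument, adapted to generalized $\gamma$-norms via (S1)--(S3) and Lemma 7.6. Since $\varphi \in H_0^\infty(\Sigma(\sigma))$ is nontrivial, I can choose a companion $\tau \in H_0^\infty(\Sigma(\sigma))$ (with $\sigma > \tilde\omega_\gamma(A)$) normalized so that $\int_0^\infty \tau(u)\varphi(u)\,du/u = 1$. The classical Calder\'on--McIntosh identity (a Fubini calculation in the Dunford calculus, valid on the dense set $\mathcal{D}(A)\cap\mathcal{R}(A)$) then gives
\[
x = \int_0^\infty \tau(sA)\varphi(sA)x\,\frac{ds}{s}, \qquad x \in \mathcal{D}(A)\cap\mathcal{R}(A).
\]

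Next I would apply $f(A)\psi(tA) := (f \cdot \psi(t\,\cdot))(A)$ (well-defined via Dunford since $f\psi(t\,\cdot)\in H_0^\infty(\Sigma(\sigma))$) and exploit the scaling identity $\psi(t\lambda)\tau(s\lambda) = \Phi_{t/s}(s\lambda)$, where $\Phi_u(\mu) := \psi(u\mu)\tau(\mu)$. After the substitution $u = t/s$ (under which $ds/s = du/u$), this yields
\[
f(A)\psi(tA)x = \int_0^\infty \bigl(f\Phi_u((t/u)\,\cdot)\bigr)(A)\,\varphi\!\left(\tfrac{t}{u}A\right)x\,\frac{du}{u}.
\]
Taking the $\gamma(\R_+, dt/t, X)$-norm and applying Minkowski (valid since $\gamma$ is a norm), then invoking the scale-invariance of $\gamma(\R_+, dt/t, X)$ under the measure-preserving map $t\mapsto t/u$, reduces matters to controlling, for each fixed $u$, the quantity $\|(f\Phi_u(r\cdot))(A)\,\varphi(rA)x\|_{\gamma(r,\,dr/r,\,X)}$ by $C(u)\,\|\varphi(\cdot A)x\|_\gamma$, where $C(u)$ must be integrable against $du/u$.

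The crucial step is an application of the $\gamma$-multiplier principle (Proposition 4.11 / Corollary 5.6), which requires that $\{(f\Phi_u(r\cdot))(A): r>0\}$ be $\gamma$-bounded with bound $C\|f\|_\infty m(u)$, $m \in L_1(\R_+, du/u)$. For this I would refine the proof of Lemma 7.6 to track constants: writing an antiderivative of $f\Phi_u$ as an $H_0^\infty$-piece plus a correction of the form $\gamma_u\lambda/(1+\lambda)$, then invoking almost $\gamma$-sectoriality (via Lemma 5.8 applied to the $\gamma$-bounded set $\{\lambda AR(\lambda,A)^2\}$) yields
\[
\gamma\bigl(\{(f\Phi_u(r\cdot))(A):r>0\}\bigr) \leq C_A \|f\|_\infty \int_\Gamma |\Phi_u(\mu)|\,\frac{|d\mu|}{|\mu|} =: C_A\|f\|_\infty m(u).
\]
A direct examination of $m(u) = \int_\Gamma |\psi(u\mu)\tau(\mu)||d\mu|/|\mu|$ using the polynomial decay of $\psi, \tau$ at $0$ and $\infty$ gives $m(u) \lesssim \min(u^\alpha, u^{-\alpha})$ for some $\alpha > 0$, hence $m \in L_1(du/u)$. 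Combining everything gives the main inequality with $C = C_A \|m\|_{L_1(du/u)}$. I expect extracting the correct $\gamma$-bound in Lemma 7.6's argument, uniformly in $u$ with the stated integrable dependence, to be the main technical obstacle.

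The remaining assertions follow readily. Setting $f \equiv 1$ gives $\|\psi(\cdot A)x\|_\gamma \leq C\|\varphi(\cdot A)x\|_\gamma$, with the reverse by interchanging $\psi$ and $\varphi$. The $\gamma'$-version is obtained by dualizing, using Corollary 5.6 (duality of $\gamma$-bounded multipliers on $\gamma'$) and the trace duality of Proposition 5.2. Finally, under the $H^\infty(\Sigma(\sigma))$-calculus hypothesis, Theorem 7.2(c) provides the equivalence $\|x\| \simeq \|\varphi_0(\cdot A)x\|_\gamma$ for the particular $\varphi_0(\lambda) = \lambda^{1/2}(e^{i\omega} - \lambda)^{-1}$ (after the routine change of variables $t \mapsto 1/t$ that converts $\|A^{1/2}R(\cdot e^{i\omega},A)x\|_{\gamma(\R_+, X)}$ into $\|\varphi_0(\cdot A)x\|_{\gamma(\R_+, dr/r, X)}$); transferring from $\varphi_0$ to the given $\varphi$ via the equivalence already established completes the proof.
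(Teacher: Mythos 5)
Your proof is correct, and it reaches the estimate by a genuinely different decomposition than the paper, although both are incarnations of the same Calder\'on--McIntosh reproducing-formula strategy. The paper normalizes a \emph{three}-factor kernel, $\int_0^\infty g(t)h(t)\varphi(t)\,\frac{dt}{t}=1$, and factors $f(A)\psi(sA)x={\cal K}\bigl[M(\cdot)\,{\cal L}[N(t)\varphi(tA)x]\bigr](s)$, where ${\cal K},{\cal L}$ are scalar convolution operators on the multiplicative group $(\R_+,\frac{dt}{t})$, bounded on $L_2$ and extended to the $\gamma$-spaces by the extension principle 4.8/4.9, while $M(\lambda)=\lambda^{1/2}A^{1/2}R(\lambda,A)$ and $N(t)=f(A)h(tA)$ are $\gamma$-bounded multiplier families handled by 4.11 and Lemma 5.8. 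You instead use a \emph{two}-factor kernel $\int_0^\infty\tau(u)\varphi(u)\,\frac{du}{u}=1$, apply Minkowski's inequality in the $\gamma$-norm and dilation invariance of $\gamma(\R_+,\frac{dt}{t},X)$, and reduce to a one-parameter family of $\gamma$-bounded multipliers $\{(f\Phi_u(r\cdot))(A):r>0\}$ with bound $C_A\|f\|_\infty m(u)$, $m\in L_1(\frac{du}{u})$. Your route avoids identifying and extending the kernel operators ${\cal K},{\cal L}$ (and, as a bonus, treats general $f\in H^\infty$ directly, since $f\Phi_u(r\cdot)\in H_0^\infty$ regardless, whereas the paper needs a final approximation via the convergence lemma); the price is the Minkowski/Bochner-integral step in $\gamma(L_2,X)$ and the explicit integrability computation for $m(u)$, both of which are routine. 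One small correction: the $\gamma$-bound on $\{(f\Phi_u(r\cdot))(A)\}$ is most cleanly obtained not by redoing the antiderivative trick of Lemma 7.6 on $f\Phi_u$ (the fixed factor $f(\lambda)$ does not scale with $r$, so that argument does not literally apply), but by writing $(f\Phi_u(r\cdot))(A)=\frac{1}{2\pi i}\int_{\partial\Sigma(\gamma)}f(\lambda)\Phi_u(r\lambda)\,\bigl[\lambda^{1/2}A^{1/2}R(\lambda,A)\bigr]\frac{d\lambda}{\lambda}$ and applying Lemma 5.8 to the family $\{\lambda^{1/2}A^{1/2}R(\lambda,A)\}$, which is $\gamma$-bounded by Lemma 7.6 applied to $\mu\mapsto\mu^{1/2}(e^{\pm i\gamma}-\mu)^{-1}$; this yields exactly your claimed bound $C_A\|f\|_\infty\int|\Phi_u(\mu)|\,\frac{|d\mu|}{|\mu|}$, and is precisely how the paper controls its family $N(t)$. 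The concluding reductions (interchanging $\psi$ and $\varphi$, dualizing for the $\gamma'$ statement, and invoking Theorem 7.2 for the norm equivalence under an $H^\infty$-calculus) match the paper.
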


\begin{proof}
First we assume in addition that $f \in H_0^{\infty}(\Sigma(\theta))$.
Choose two auxiliary functions $g,h \in H_0^{\infty}(\Sigma(\theta))$
such that
$$
\int\limits_0^{\infty} g(t) h(t) \varphi(t) \frac{dt}{t} = 1.
$$
By analytic continuation we have for all $\lambda \in \Sigma(\theta)$ that
$$
\int\limits_0^{\infty} g(t \lambda) h(t \lambda) \varphi (t \lambda) 
\frac{dt}{t} = 1
$$
and we can apply the $H_0^{\infty}$--calculus of $A$ and some $\gamma \in
(\omega_{\gamma}(A), \theta)$
\begin{equation*}
\begin{split}
f(A) & = \frac{1}{2 \pi i} \int\limits_{\partial \Sigma(\gamma)}
\biggl( \int\limits_0^{\infty} g(t \lambda) h(t \lambda) \varphi (t \lambda)
\frac{dt}{t} \biggr) f(\lambda) R(\lambda,A) d \lambda \\
& = \int\limits_0^{\infty} \biggl( \frac{1}{2 \pi i} \int\limits_{\partial
\Sigma(\gamma)} g(t \lambda) f(\lambda) h(t \lambda) \varphi (t \lambda)
R(\lambda,A) d \lambda \biggr) \frac{dt}{t} \\
& = \int\limits_0^{\infty} g(tA) h(tA) f(A) \varphi(tA) \frac{dt}{t}.
\end{split}
\end{equation*}
Furthermore, by Lemma 4.1 in \cite{KW1} we have
\begin{align*}
\psi(sA)g(tA) &= \frac{1}{2 \pi i} \int\limits_{\partial \Sigma(\gamma)}
\psi(s \lambda)g(t \lambda) R(\lambda,A) d \lambda \\
&= \frac{1}{2 \pi i}
\int\limits_{\partial \Sigma(\gamma)} \psi (s \lambda) g(t \lambda)
\lambda^{\frac{1}{2}} A^{\frac{1}{2}} R(\lambda,A) \frac{d \lambda}{\lambda}.
\end{align*}
With these identities and Fubini's theorem we obtain for $x \in X$
\begin{equation*}
\begin{split}
f(A)\psi(sA)x &= \int\limits_0^\infty [\psi(sA)g(tA)]h(tA)f(A)\varphi(tA)x \frac{dt}{t} \\
& = \frac{1}{2 \pi i} \int\limits_{\partial \Sigma(\gamma)}
\psi(s \lambda) \lambda^{\frac{1}{2}} A^{\frac{1}{2}} R(\lambda,A)
\\
& \quad \cdot \biggl( \int\limits_0^{\infty} g(t \lambda) f(A)h(tA) \varphi(tA) x
\frac{dt}{t} \biggr) \frac{d \lambda}{\lambda} \\
& = \frac{1}{2\pi i} \int\limits_{\partial\Sigma(\gamma)} \psi(s\lambda) M(\lambda) \left( \int\limits_0^\infty g(t\lambda) N(t) \varphi(tA) x \frac{dt}{t} \right) \frac{d\lambda}{\lambda} \\
& = {\cal K} [ M(\lambda) {\cal L} [ N(t) \varphi(tA) x](\lambda)](s)
\end{split}
\end{equation*}
where $M(\lambda) = \lambda^{\frac{1}{2}}A^{\frac{1}{2}} R(\lambda,A),
N(t) = f(A)h(tA)$ and 
\begin{equation*}
\begin{split}
{\cal K} \varphi(s) & = \frac{1}{2 \pi i} \int\limits_{\partial \Sigma(\gamma)}
\psi (s \lambda) \varphi(\lambda) \frac{d \lambda}{\lambda}, \quad 
s \in \R_{+} \\
{\cal L} \varphi(\lambda) & = \int\limits_0^{\infty} g(t \lambda) 
\varphi(t) \frac{dt}{t}, \quad \lambda \in \partial \Sigma(\gamma).
\end{split}
\end{equation*}
These operators can be reduced to convolution operators on the
multiplicative group $(\R_{+}, \frac{dt}{t})$ with $\int\limits_0^{\infty}
| \psi (e^{\pm i \gamma} t)| \frac{dt}{t} < \infty$ and $\int |g(t
e^{\pm i \gamma})| \frac{dt}{t} < \infty$. Hence they are bounded operators
${\cal K} : L_2(\partial\Sigma(\gamma), | \frac{d \lambda}{\lambda}|) \to
L_2(\R_{+}, \frac{dt}{t})$ and ${\cal L}: L_2(\R_{+}, \frac{dt}{t}) \to
L_2(\partial\Sigma (\gamma), | \frac{d \lambda}{\lambda} |)$.
$\{ M(\lambda) : \lambda \in \Sigma(\gamma)\}$ is $\gamma$--bounded by assumption.
To see that $\{ N(t) : t \in \R\}$ is $\gamma$--bounded, note that for $t \in \R_{+}$
again by Lemma 4.1 of \cite{KW1}
\begin{equation*}
\begin{split}
N(t) & = f(A)h(tA) = \frac{1}{2 \pi i} \int\limits_{\partial \Sigma(\gamma)}
f(\lambda) h(t \lambda) R(\lambda,A) d \lambda \\
& = \frac{1}{2 \pi i} \int\limits_{\partial \Sigma(\gamma)} f(\lambda)
h(t \lambda) \lambda^{\frac{1}{2}} A^{\frac{1}{2}} R(\lambda,A) \frac{d \lambda}
{\lambda}.
\end{split}
\end{equation*}
Since $\int\limits_{\partial \Sigma(\gamma)} |f(\lambda) h(t \lambda)|
|\frac{d \lambda}{\lambda}| \leq \|f\|_{H^{\infty}(\Sigma(\theta))} 
\int\limits_{\partial \Sigma(\gamma)} | f(\lambda) | | \frac{d \lambda}{\lambda}|$
we can appeal to 5.8.
Furthermore, by extending ${\cal K}$ and ${\cal L}$ to the $\gamma$--spaces according to 4.8, 4.9 and applying 4.11 to $M(\cdot)$ and $N(\cdot)$, we obtain
\begin{equation*}
\begin{split}
\|f(A)\psi(\cdot A)x\|_{\gamma(\R_{+},\frac{ds}{s},X)} &= \| {\cal K}[M(\cdot)]{\cal L}[N(t) \varphi(tA)x](\cdot) \|_{\gamma(\R_{+},\frac{ds}{s},X)} \\
& \leq C_1 \| {\cal L} [N(\cdot) 
\varphi(\cdot A)x]\|_{\gamma(\R_{+},\frac{dt}{t},X)} \\
& \leq C_2 \|f\|_{H^{\infty}(\Sigma(\theta))} \| \varphi(\cdot A)x
\|_{\gamma(\R_{+},\frac{dt}{t},X)}.
\end{split}
\end{equation*}
For a general $f \in H^{\infty}(\Sigma(\theta))$ we use the convergence lemma
and 4.10. The last claim is shown in the same way using the corresponding properties of $\gamma'(X')$. \\
The last inequality follows from 7.2 since a sectorial operator with an $H^\infty$-calculus has BIP and is therefore also almost $R$--bounded by Remark 7.1a).
\end{proof}

For a sectorial operator $A$ on a Banach space $X$ we define the space
$X_A$ as the completion of $\cal{D}(A) \cap \cal{R}(A)$ with respect to the norm
$$
\|x\|_{X_A} = \|A^{\frac{1}{2}} R(\cdot,A)x\|_{\gamma(\R_{-},X)}.
$$
As an operator on $X_A$ $A$ has particularly good properties.

\begin{corollary}
Let $A$ be an almost $\gamma$--sectorial operator on a Banach space $X$.
Then $A$ has an $H^{\infty}(\Sigma(\sigma))$--calculus for $\sigma > \tilde{\omega}_\gamma
(A)$ as an operator on $X_A$. $A$ has an $H^{\infty}$--calculus on $X$ if 
and only if $X$ is isomorphic to $X_A$.
\end{corollary}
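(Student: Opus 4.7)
The plan is to reduce both claims to Proposition 7.7 via a change of variable that recasts $\|\cdot\|_{X_A}$ as a $\gamma$--square function in the form used there.

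First I establish the key identification. Since $R(-t,A) = -(A+t)^{-1}$ and $A^{1/2}(A+t)^{-1} = t^{-1/2}\psi(A/t)$ with $\psi(\lambda) = \lambda^{1/2}/(1+\lambda) \in H_0^\infty(\Sigma(\sigma))$, the change of variable $s = 1/t$ yields an isometric isomorphism $L_2(\R_+,\tfrac{ds}{s}) \to L_2(\R_+,dt)$, $g(s) \mapsto t^{-1/2}g(1/t)$. By the ideal property of $\gamma$--norms (Proposition 4.3) we obtain the identification
\[
\|x\|_{X_A} \; = \; \|\psi(\cdot A)x\|_{\gamma(\R_+,\tfrac{ds}{s},X)}
\]
for every $x \in \cal{D}(A) \cap \cal{R}(A)$.

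For the first statement, note that $\cal{D}(A) \cap \cal{R}(A)$ is dense in $X_A$ by construction. For such $x$ and $f \in H_0^\infty(\Sigma(\sigma))$, $f(A)$ commutes with each $\psi(sA)$ and maps $\cal{D}(A) \cap \cal{R}(A)$ into itself; combining this with the identification above and Proposition 7.7 (applied with $\varphi = \psi$) gives
\[
\|f(A)x\|_{X_A} = \|f(A)\psi(\cdot A)x\|_{\gamma(\R_+,\tfrac{ds}{s},X)} \leq C\|f\|_{H^\infty(\Sigma(\sigma))}\|\psi(\cdot A)x\|_{\gamma(\R_+,\tfrac{ds}{s},X)} = C\|f\|_{H^\infty(\Sigma(\sigma))}\|x\|_{X_A}.
\]
Once I check that $A$ is sectorial on $X_A$---the resolvents $R(\lambda,A)$ extend boundedly by Proposition 4.3 applied with the constant multiplier $N \equiv R(\lambda,A)$, and the resolvent equation transfers by density---the standard extension from $H_0^\infty$ to $H^\infty$ through the convergence lemma of \cite{CDMY} produces the desired $H^\infty(\Sigma(\sigma))$--calculus on $X_A$.

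For the second statement, the final assertion of Proposition 7.7 says that $A$ has an $H^\infty(\Sigma(\sigma))$--calculus on $X$ exactly when $\|x\|_X \asymp \|\psi(\cdot A)x\|_{\gamma(\R_+,\tfrac{ds}{s},X)}$ for $x \in X$. Restricting to $\cal{D}(A) \cap \cal{R}(A)$ and invoking the identification above, this is equivalent to $\|x\|_X \asymp \|x\|_{X_A}$ on that dense subspace, i.e., to the identity on $\cal{D}(A) \cap \cal{R}(A)$ extending to a topological isomorphism $X \cong X_A$. The reverse implication---$X \cong X_A$ gives an $H^\infty$--calculus on $X$---follows by transporting the calculus constructed in the first part through this isomorphism. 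The principal obstacle is the bookkeeping for the first part: after the two bounded extensions (resolvents and $H_0^\infty$--calculus) one must verify that the resulting operator on $X_A$ has dense domain and range and that the Dunford integral defining $f(A)$ on $X_A$ agrees with the extension obtained from the square function estimate, so that the functional calculus framework of the paper is actually in force on $X_A$.
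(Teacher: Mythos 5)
Your proof is correct and takes essentially the same route as the paper: the paper also reduces $\|\cdot\|_{X_A}$ to the square function $\|\psi(\cdot A)x\|_{\gamma(\R_{+},\frac{dt}{t},X)}$ with $\psi(\lambda)=\varphi(\lambda)=\lambda^{1/2}(1+\lambda)^{-1}$ and applies estimate (2) of Proposition 7.7 for the first claim, then deduces the second claim from the equivalence c)\,$\Longleftrightarrow$\,d) in Theorem 7.2, which is precisely the content of your norm--equivalence plus transport argument. The only imprecision is attributing an ``exactly when'' to the final assertion of Proposition 7.7, which is stated as a single implication; this is harmless because the converse direction you need is supplied independently by your transport of the calculus from $X_A$ (equivalently by 7.2 c)\,$\Rightarrow$\,d)).
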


\begin{proof}
We can apply estimate (2) with $\psi(\lambda) = \varphi(\lambda) =
\lambda^{1/2}(1 + \lambda)^{-1}$. 
Hence there is a constant $C$ so that for all $f \in H_0^{\infty}
(\Sigma(\sigma))$ 
$$
\|f(A)x\|_{X_A} \leq C \|x\|_{X_A} \quad \mbox{ for } x \in \cal{D}(A) \cap \cal{R}(A).
$$
It follows that $A$ has an $H^{\infty}(\Sigma(\sigma))$--calculus on $X_A$.
The second claim follows from 7.2c) $\Longleftrightarrow$ d).
\end{proof}

\section{The connection between sectorial and strip--type operators}

There is an obvious parallel between the notions, results and proofs of
Section 6 and 7 if we ``apply'' the map $\lambda \in \Sigma(\sigma) \to
i \log \lambda \in S(\sigma)$. In order to make this connection more 
explicit we use the operator $\log A$, which for a sectorial
operator $A$ one can define by the extended functional calculus of \cite{CDMY}:

Put $\varphi(\lambda) = \lambda (1+ \lambda)^{-2}$. Then $\varphi(A) =
A(I + A)^{-2}$, $\varphi(A)^{-1} = 2+A+A^{-1}$ and ${\cal R}(\varphi(A)) =
{\cal R}(A) \cap {\cal D}(A)$. Also $\varphi (\cdot) \log (\cdot) \in
H_0^{\infty}(\Sigma(\sigma))$ and for $\sigma > \omega(A)$ we can define
$\log (A) = \varphi(A)^{-1} (\varphi \cdot \log)(A)$ and ${\cal D}(\log A) =
\{x \in X: (\varphi \log)(A) \in {\cal D}(\varphi(A))\}$.
This definition of $\log A$ is equivalent to the definitions in \cite{No}, cf. \cite{No}, Lemma 1 and 3.
The following integral representations show in particular that $\frac{1}{i} \log A$ is
an operator of strip--type.

\begin{lemma}
Let $A$ be a sectorial operator.

\noindent a) For $| \mbox{Im }z| > \pi$ we have
$$
(z - \log A)^{-1}  =  \int\limits_{- \infty}^{\infty} 
\frac{-1}{\pi^2 + (z-t)^2} [e^t (e^t+A)^{-1}] dt.
$$

\noindent b) For $w(\frac{1}{i}\log A) < a < \pi$ we have
$$
t^{1/2} A^{1/2} (t+A)^{-1} = \frac{1}{2 \pi i} 
\int\limits_{|\mbox{\tiny{Im} } z| = a}
\frac{t^{1/2} e^{z/2}}{t+e^z} (z - \log (A))^{-1} dz.
$$
\end{lemma}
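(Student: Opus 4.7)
The plan is to reduce each part to a scalar identity on a sector $\Sigma(\sigma)$ (for some $\omega(A) < \sigma < \pi$) and then lift to the operator $A$ via the extended $H^\infty$ functional calculus of \cite{CDMY} together with Fubini. Both left-hand sides are bounded operators a priori: in (a) the resolvent $(z - \log A)^{-1}$ exists because $|\operatorname{Im} z| > \pi$ places $z$ outside the spectral strip of $\log A$; in (b) the operator $A^{1/2}(t+A)^{-1}$ is directly defined from the sectorial calculus.

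For part (a), the scalar identity to verify is
\[
\frac{1}{z - \log\lambda} \;=\; -\int_{-\infty}^{\infty}\frac{1}{\pi^2+(z-t)^2}\cdot\frac{e^t}{e^t+\lambda}\,dt \qquad (\lambda \in \Sigma(\sigma),\ |\operatorname{Im} z|>\pi).
\]
The substitutions $v = t - \log\lambda$ and $w = z - \log\lambda$ reduce this to
\[
\frac{1}{w} \;=\; -\int_{-\infty}^{\infty}\frac{dv}{[(w-v)^2+\pi^2](1+e^{-v})},\qquad |\operatorname{Im} w|>\pi.
\]
Assume $\operatorname{Im} w > \pi$ (the other case is symmetric) and close the contour in the lower half-plane. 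The arc contributes nothing (the integrand is $O(1/v^2)$ there, since $|1/(1+e^{-v})|$ is bounded away from the Fermi--Dirac poles). Both Lorentzian poles $v = w \pm i\pi$ lie in the upper half-plane, so only the Fermi--Dirac poles $v_k = -i\pi(2k+1)$, $k=0,1,2,\ldots$, are enclosed; the residue of $1/(1+e^{-v})$ at each such pole equals $1$. Using the factorisation $(w + i\pi(2k+1))^2 + \pi^2 = (w + 2i\pi k)(w + 2i\pi(k+1))$ and partial fractions, the residue sum telescopes to $\frac{1}{2i\pi w}$, and the sign from clockwise orientation produces exactly $-1/w$, proving the scalar identity. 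To lift, I observe that $f_z(\lambda) = 1/(z - \log\lambda)$ belongs to $H_0^\infty(\Sigma(\sigma))$ (bounded by $1/(|\operatorname{Im} z| - \sigma)$ and vanishing as $|\lambda| \to 0$ or $\infty$), so the Dunford integral represents $(z - \log A)^{-1}$. Substituting the scalar identity into the Dunford integrand, applying Fubini (justified by the $L^1$-weight in $t$ and $\|R(\lambda,A)\| \lesssim |\lambda|^{-1}$), and recognising that $\frac{1}{2\pi i}\int_{\partial\Sigma(\gamma)}\frac{e^t}{e^t+\lambda}R(\lambda,A)\,d\lambda = e^t(e^t+A)^{-1}$ (deform the sector contour around the pole $\lambda = -e^t$) yields the operator identity.

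For part (b), the scalar identity is just Cauchy's formula in the strip $\{|\operatorname{Im} z|<\pi\}$. The function $F(z) = e^{z/2}/(t+e^z)$ is analytic there (its poles $z = \log t + i\pi(2k+1)$ all satisfy $|\operatorname{Im} z| \geq \pi$) and decays exponentially as $\operatorname{Re} z \to \pm\infty$ (like $e^{-z/2}$ at $+\infty$ and like $e^{z/2}/t$ at $-\infty$). For $|\operatorname{Im} \mu| < a$, Cauchy's formula on the strip $\{|\operatorname{Im} z|<a\}$ gives
\[
F(\mu) = \frac{1}{2\pi i}\int_{|\operatorname{Im} z|=a}\frac{F(z)}{z-\mu}\,dz,
\]
and with $\mu = \log\lambda$ (valid since $\omega(A) \leq w(\tfrac{1}{i}\log A) < a$) and multiplication by $t^{1/2}$ this is the scalar form of (b). The lifting proceeds as in (a): on the contour $|\operatorname{Im} z| = a$ the resolvent $(z - \log A)^{-1}$ is uniformly bounded (by strip-type sectoriality, or equivalently by part (a)), and the scalar kernel $t^{1/2}e^{z/2}/(t+e^z)$ is $L^1$ in $|\operatorname{Re} z|$, so the right-hand-side Bochner integral converges in $B(X)$. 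Combining Fubini with the Dunford representation of $\lambda^{1/2}/(t+\lambda)$, after a routine regularization by $\varphi(\lambda) = \lambda/(1+\lambda)^2$ to stay inside $H_0^\infty(\Sigma(\sigma))$ if needed, completes the proof.

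The main obstacle is the contour argument for the scalar identity in (a): because $1/(1+e^{-v}) \to 1$ as $\operatorname{Re} v \to +\infty$, there is no hope of using a naive semicircular closure in the upper half-plane for all ranges of $w$. Choosing to close in the half-plane opposite to $\operatorname{Im} w$ circumvents this, since $|1/(1+e^{-v})|$ is uniformly bounded in whichever half-plane avoids the imaginary axis's approach to the poles; the telescoping identity $(w+i\pi(2k+1))^2+\pi^2 = (w+2i\pi k)(w+2i\pi(k+1))$ is the small miracle that converts an infinite residue sum into the elementary expression $1/(2i\pi w)$. Once the scalar identities are secured, the lifting to operators via the Dunford calculus plus Fubini is entirely standard.
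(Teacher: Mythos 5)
Your part b) is essentially the paper's own argument: verify the scalar Cauchy formula for $F(z)=e^{z/2}/(t+e^z)$ on the strip, then combine the integrability of the kernel on $|\operatorname{Im}z|=a$ with the uniform bound $\|(z-\log A)^{-1}\|\le C$ there. (Note that $\lambda^{1/2}(t+\lambda)^{-1}$ already belongs to $H_0^\infty(\Sigma(\sigma))$ for $\sigma<\pi$, so the regularization you hedge about is not actually needed in b).) For part a) the paper simply cites Nollau's Satz 7, so your residue derivation is a genuinely self-contained alternative, and the telescoping identity $(w+i\pi(2k+1))^2+\pi^2=(w+2\pi ik)(w+2\pi i(k+1))$ together with the residue $1$ of $(1+e^{-v})^{-1}$ at each pole does give $-1/w$ as claimed.

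There are, however, two gaps in a). First, the substitution $v=t-\log\lambda$ moves the contour of integration to the line $\operatorname{Im}v=-\arg\lambda$, and the condition that both Lorentzian poles $v=w\mp i\pi$ lie on one side of \emph{that} line is $|\operatorname{Im}z|>\pi$, not $|\operatorname{Im}w|>\pi$; the two differ by $\arg\lambda$. For $\pi<\operatorname{Im}z\le\pi+\arg\lambda$ (with $\arg\lambda>0$) your real-axis identity does not apply as stated, and deforming the shifted contour back to $\R$ crosses a Lorentzian pole. Either run the residue argument on the shifted line directly (the enclosed Fermi--Dirac poles and the telescoping are unchanged) or prove the identity for $|\operatorname{Im}z|$ large and continue analytically in $z$. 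Second, and more seriously, $f_z(\lambda)=(z-\log\lambda)^{-1}$ is \emph{not} in $H_0^\infty(\Sigma(\sigma))$: it decays only like $1/\log|\lambda|$ at $0$ and $\infty$, so $|\lambda|^{\pm\varepsilon}|f_z(\lambda)|$ is unbounded and the Dunford integral $\int_{\partial\Sigma(\gamma)}f_z(\lambda)R(\lambda,A)\,d\lambda$ is not absolutely convergent; the step ``the Dunford integral represents $(z-\log A)^{-1}$'' fails as written. You need the regularized calculus: apply the scalar identity to $\varphi f_z$ with $\varphi(\lambda)=\lambda(1+\lambda)^{-2}$, use Fubini in the absolutely convergent integral for $(\varphi f_z)(A)$, and remove $\varphi(A)$ on $\mathcal{D}(A)\cap\mathcal{R}(A)$; one must then still check that the bounded operator so obtained is a two-sided inverse of $z-\log A$ --- which is also where the a priori existence of the resolvent, which you simply assume, actually gets established.
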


\begin{proof}
a) The first formula is taken from \cite{No}, Satz 7. 

\noindent b) For $\lambda \in \Sigma(a)$ we get by Cauchy's formula
$$
\lambda^{1/2} (t+\lambda)^{-1} = \frac{1}{2 \pi i} 
\int\limits_{|\mbox{\tiny{Im} }z| = a} \frac{e^{z/2}}{t+e^z} (z- \log \lambda)^{-1}
dz.
$$
Since $\|(z- \log A)^{-1}\| \leq C$ for $| \mbox{Im } z| = a$ and 
$e^{z/2}(t+e^z)^{-1}$ is integrable on $\{ z: | \mbox{Im } z| = a \}$ the
properties of the $H^{\infty}$--calculus give
$$
A^{1/2} (t+A)^{-1} = \frac{1}{2 \pi i} \int\limits_{| \mbox{\tiny{Im} } z| = a}
\frac{e^{z/2}}{t+e^z} (z - \log A)^{-1}dz.
$$
\end{proof}

\begin{lemma}
a) If $A$ is a sectorial operator on $X$, then $\frac{1}{i} \log A$ is of strip--type and
$w ( \textstyle \frac{1}{i} \log A) \leq \omega (A)$.

b) If $\psi \in H_1^{\infty}(\Sigma(\sigma))$ with $\sigma > \omega(A)$,
then $\tilde{\psi}(\lambda) = \psi (e^{i \lambda})$ is in $H_1^{\infty}
(S(\sigma))$ and $\tilde{\psi}( \textstyle \frac{1}{i} \ln A) = \psi(A)$.
\end{lemma}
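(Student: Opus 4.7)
For part (a), the operator $\log A$ defined via the extended functional calculus is automatically closed, injective, densely defined, with dense range. To show $B := \tfrac{1}{i}\log A$ is of strip-type with $w(B) \leq \omega(A)$, I would establish that for every $a > \omega(A)$ the resolvent $(z-\log A)^{-1}$ exists and is uniformly bounded on $\{|\operatorname{Im} z|>a\}$. Lemma 8.1(a) delivers this directly for $|\operatorname{Im} z|>\pi$: the kernel $-1/[\pi^2+(z-t)^2]$ lies in $L^1(\R)$ uniformly on such $z$, and $e^t(e^t+A)^{-1}$ is uniformly bounded in $t\in\R$ because $-e^t$ sits on the negative real axis, at angle $\pi>\omega(A)$ from the spectrum of $A$. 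To reach the remaining range $a\in(\omega(A),\pi]$, I would deform the real-line contour by $t\mapsto t+i\beta$ with $\beta\in(0,\pi-a)$: the shifted integrand $e^{t+i\beta}(e^{t+i\beta}+A)^{-1}$ remains uniformly bounded because $-e^{t+i\beta}$ now has argument $\pi-\beta>\omega(A)$, while the poles of the kernel at $z\pm i\pi$ move to $z+i\beta\pm i\pi$, producing a valid representation for $|\operatorname{Im} z|>a$ with uniform bound.

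For part (b), the membership $\tilde\psi\in H_1^\infty(S(\sigma))$ is an elementary change of variables: analyticity holds because $e^{i\cdot}$ is a biholomorphism $S(\sigma)\to\Sigma(\sigma)$, and on any vertical line $\operatorname{Re}\lambda=b$ with $|b|<\sigma$ the substitution $r=e^{-t}$ gives
\[
\int_{-\infty}^\infty|\tilde\psi(b+it)|\,dt
= \int_0^\infty|\psi(re^{ib})|\,\frac{dr}{r},
\]
which is bounded uniformly in such $b$ by the $H_1^\infty(\Sigma(\sigma))$ condition on $\psi$.

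The identity $\tilde\psi(B)=\psi(A)$ is the composition rule for the functional calculus under the biholomorphism $\lambda=e^{iz}$. I would proceed via the intermediate operator $\log A$, showing first $\psi(A)=g_\psi(\log A)$ with $g_\psi(w):=\psi(e^w)$ applied via a horizontal-strip Dunford calculus, and then transporting to $B$ by $w=iz$. Concretely, starting from the extended Dunford identity
\[
\psi(A)\varphi(A) = \frac{1}{2\pi i}\int_{\partial\Sigma(\theta)}\psi(\lambda)\varphi(\lambda) R(\lambda,A)\,d\lambda,\qquad \varphi(\lambda)=\lambda(1+\lambda)^{-2},
\]
I substitute the Cauchy representation $\psi(\lambda)=\frac{1}{2\pi i}\int_{|\operatorname{Im} w|=\gamma}g_\psi(w)(w-\log\lambda)^{-1}\,dw$, valid for $\lambda\in\Sigma(\theta)$ with $\omega(A)<\theta<\gamma<\sigma$ (the $L^1$-decay of $g_\psi$ on horizontal lines being equivalent to $\tilde\psi\in H_1^\infty$), swap integrals by Fubini, and identify the inner integral as $(w-\log A)^{-1}\varphi(A)$ via the extended calculus. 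Dividing by $\varphi(A)$, whose range $\mathcal{R}(A)\cap\mathcal{D}(A)$ is dense, yields $\psi(A)=g_\psi(\log A)$. Finally, substituting $w=iz$, with $R(z,B)=i(iz-\log A)^{-1}$, turns the integral along $|\operatorname{Im} w|=\gamma$ into the vertical-strip Dunford integral $\frac{1}{2\pi i}\int_{\partial S(\gamma)}\tilde\psi(z)R(z,B)\,dz=\tilde\psi(B)$. The main obstacle is justifying this Fubini interchange and the identification of the inner integral: absolute convergence combines the $O(|\lambda|^{-1})$ resolvent decay with the $O(|\log|\lambda||^{-1})$ decay of $(w-\log\lambda)^{-1}$ and the $L^1$ bound on $g_\psi$, and the factor $\varphi(\lambda)$ is precisely what renders the integrand integrable both at $\lambda\to 0$ and $\lambda\to\infty$.
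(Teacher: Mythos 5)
Your overall strategy is sound, but it diverges from the paper's proof in an interesting way, and there are two places where the details as written do not quite close. For part (a) your contour shift $t\mapsto t+i\beta$ is, after the substitution, literally the same computation as the paper's argument, which instead applies Lemma 8.1(a) to the rotated operator $e^{-i\theta}A$ and uses $\log(e^{-i\theta}A)=\log A-i\theta$; both reduce to the uniform bound $\|\lambda R(\lambda,A)\|\leq C$ on $|\arg\lambda|\geq a'>\omega(A)$ together with the $L^1$--bound on the kernel. However, your stated range $\beta\in(0,\pi-a)$ with the claim that this yields ``a valid representation for $|\operatorname{Im}z|>a$'' is off: the shifted integral is singular in $z$ precisely on the two lines $\operatorname{Im}z=\beta\pm\pi$, and with $\beta\in(0,\pi-a)$ both of these lines still meet $\{|\operatorname{Im}z|>a\}$. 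A single shift direction can only serve one component of $\{|\operatorname{Im}z|>a\}$; you must shift downward (roughly $\beta$ near $a-\pi$) for $\operatorname{Im}z>a$ and upward for $\operatorname{Im}z<-a$, check that no pole of $t\mapsto(\pi^2+(z-t)^2)^{-1}$ is crossed during the deformation, and then argue (analytic continuation plus the identity $(z-\log A)F(z)=I$) that the continued operator family is still the resolvent. This is exactly the bookkeeping the paper hides in the reference to \cite{No} and \cite{Haa1}, Lemma 3.5.1. Also, both you and the paper gloss over injectivity and dense range of $\log A$, which the paper's definition of strip--type formally requires.

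For part (b) you take a genuinely different route. The paper regularizes: it proves the identity $\tilde\psi(\frac1i\log T)=\psi(T)$ for bounded, boundedly invertible sectorial $T$ by the ordinary Dunford calculus, applies it to $T_\varepsilon=(A+\varepsilon)(\varepsilon A+1)^{-1}$ with uniform sectorial and strip--type bounds from \cite{No}, and passes to the limit by dominated convergence and the convergence lemma. You instead insert a Cauchy representation of $\psi(\lambda)=g_\psi(\log\lambda)$ along the horizontal lines $|\operatorname{Im}w|=\gamma$ into the Dunford integral for $\psi(A)\varphi(A)$ and apply Fubini. This can be made to work and is arguably more direct, but two steps need more than you give them. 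First, Cauchy's formula on the infinite strip requires discarding the vertical segments at $\operatorname{Re}w=\pm R$; the $H_1^\infty$ condition gives $L^1$--decay on horizontal lines, from which one extracts a sequence $R_n\to\infty$ along which $\int_{-\gamma}^{\gamma}|g_\psi(\pm R_n+ib)|\,db\to0$, but this deduction should be stated since boundedness alone does not give pointwise decay. Second, identifying the inner integral $\frac{1}{2\pi i}\int_{\partial\Sigma(\theta)}(w-\log\lambda)^{-1}\varphi(\lambda)R(\lambda,A)\,d\lambda$ with $(w-\log A)^{-1}\varphi(A)$ for $|\operatorname{Im}w|=\gamma<\pi$ is itself a composition--rule statement for the extended calculus (Lemma 8.1(a) only covers $|\operatorname{Im}w|>\pi$); it is true and provable from the definition $\log A=\varphi(A)^{-1}(\varphi\log)(A)$ together with multiplicativity, but it is close enough to the assertion being proved that it must be established separately rather than cited as ``the extended calculus.'' The paper's approximation argument buys freedom from both of these issues at the cost of the uniform estimates for $T_\varepsilon$; your argument buys a one--pass computation at the cost of these two justifications.
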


\begin{proof}
a) For $|\theta| \leq \pi - \omega(A) - \varepsilon$ we have that
$e^{- i \theta}A$ is sectorial and $\log (e^{-i \theta}A) = \log(A) - i \theta$
(cf.~\cite{No}). By part a) of 8.1 we have for $z$ with $| \mbox{Im }z| > \pi$
$$
-i(iz + \theta - i \log A)^{-1} = \int\limits_{- \infty}^{\infty}
\frac{1}{\pi^2+(z-t)^2} e^{i \theta}t (e^{i \theta}t+A)^{-1}dt
$$
and for $| \mbox{Re }\mu| > \omega(A)+ \varepsilon$
$$
\| (\mu - i \log A)^{-1} \| \leq C\int\limits_{- \infty}^{\infty}
\frac{1}{\pi^2+t^2} dt \cdot \sup \{ \| \lambda R(\lambda,A)\|:
|\arg(\lambda)| \geq \omega(A) + \varepsilon \}.
$$

For the details of the last estimate, see \cite{No} or \cite{Haa1}, Lemma 3.5.1

b) Clearly $\int\limits_{- \infty}^{\infty} | \tilde{\psi} (a+is)|ds =
\int\limits_0^{\infty} | \psi (e^{ia}t)| \textstyle \frac{dt}{t}$ for
$|a| < \sigma$. If $T$ is a bounded operator with a bounded inverse and
$\sigma(T) \subset \Sigma(\sigma)$, then the usual Dunford calculus implies
that $\tilde{\psi} (\textstyle \frac{1}{i} \ln T) = \psi(T)$. For 
$\varepsilon > 0$ put $T_{\varepsilon} =(A+ \varepsilon I)(\varepsilon A +
I)^{-1}$. Then $T_{\varepsilon}$ is bounded, $\sigma(T_{\varepsilon}) \subset
\Sigma(\sigma)$ and for $\omega \in (\omega(A), \sigma)$
there is a constant $C$ by \cite{No}, Lemma 3, and 8.1 such that 
$$
\| \lambda R(\lambda, T_{\varepsilon})\| \leq C, \quad 
\| R(\mu, \frac{1}{i} \log T_{\varepsilon})\| \leq C
$$
for all $\lambda$ with $| \arg \lambda | \geq \omega$ and $\mu$ with
$| \mbox{Re } \mu| \geq \omega$.
Furthermore, $\displaystyle \lim_{\varepsilon \to 0} R(\lambda,T_{\varepsilon})
x$ \linebreak $= R(\lambda,A)x$ and $\displaystyle \lim_{\varepsilon \to 0} R(\mu,
\textstyle \frac{1}{i} \log T_{\varepsilon})x = R(\mu, \log A)x$ so that by
Lebesgue's convergence theorem
$$
\psi(A)x = \lim_{\varepsilon \to 0} \int\limits_{\partial \Sigma(\omega)}
\psi(\lambda) R(\lambda, T_{\varepsilon})xd \lambda = \lim_{\varepsilon \to 0}
\psi(T_{\varepsilon})x
$$
and similarly $\tilde{\psi}(\textstyle \frac{1}{i} \log A) x = 
\displaystyle \lim_{\varepsilon \to 0} \tilde{\psi} (\textstyle \frac{1}{i}
\log T_{\varepsilon})x$.
\end{proof}

Part b) of the following theorem was shown in \cite{Ok}.

\begin{theorem}
Let $A$ be a sectorial operator on a Banach space $X$ and put $B = 
\textstyle \frac{1}{i} \log A$. 

\noindent a) $A$ has an $H^{\infty}(\Sigma(\sigma))$--calculus for $\sigma
> \omega(A)$ if and only if $B$ has an $H^{\infty}(S(\sigma))$--calculus.

\noindent b) $A$ has BIP if and only if $B$ generates a $C_0$--group and
in this case $A^{it} = e^{-tB}$ for $t \in \R$. 

\noindent c) If $A$ is $R$--($\gamma$--)sectorial then $B$ is of $R$--($\gamma$--)strip--type.
If $B$ is of $R$--($\gamma$--)strip--type, then $A$ is almost $R$--($\gamma$--)sectorial.

\noindent d) For $a > \tilde{\omega}_R(A)$ and $x \in \cal{D}(A) \cap \cal{R}(A)$
we have 
\begin{equation*}
\begin{split}
\frac{1}{\sqrt{2\pi}} \| R(\cdot, B)x\|_{\gamma(\partial S(a),X)} & \leq 
\| A^{1/2} R(\cdot, A)x\|_{\gamma(\partial \Sigma(a),X)} \\
& \leq \sqrt{2\pi} \| R(\cdot, B)x\|_{\gamma(\partial S(a),X)}.
\end{split}
\end{equation*}

\noindent e) If $\psi \in H_1^{\infty} (\Sigma(\sigma))$ for $\sigma >
\omega(A)$ then $\tilde{\psi}(\lambda) = \psi (e^{i \lambda})$ is in
$H_1^{\infty}(S(\sigma))$ and for $x \in \cal{D}(A) \cap \cal{R}(A)$
$$
\| \tilde{\psi} (-i\cdot + B)x\|_{\gamma(\R_{+},X)} = \| \psi (\cdot A)x\|_{\gamma(\R_{+}, 
\frac{dt}{t},X)}.
$$
\end{theorem}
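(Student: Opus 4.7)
The strategy throughout is to transfer properties between $A$ and $B = \frac{1}{i}\log A$ via two engines: the integral representations in Lemma~8.1 and the functional-calculus identity $\tilde\psi(B) = \psi(A)$ from Lemma~8.2.

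I would begin with (b) and both directions of (a). For (b), Lemma~8.2 applied to the symbols $\psi_t(\lambda) = \lambda^{it}\lambda(1+\lambda)^{-2} \in H_1^\infty(\Sigma(\sigma))$ gives $A^{it}\varphi(A) = \tilde\psi_t(B)$ where $\varphi(\lambda)=\lambda(1+\lambda)^{-2}$; since $\mathcal{R}(\varphi(A))$ is dense, this identifies BIP of $A$ with generation of $e^{-tB}$ by $B$, and yields $A^{it}=e^{-tB}$ by extension. The easy direction $B\Rightarrow A$ of (a) is then immediate: every $\psi\in H_0^\infty(\Sigma(\sigma))\subset H_1^\infty(\Sigma(\sigma))$ has $\tilde\psi\in H_1^\infty(S(\sigma))$ with $\|\tilde\psi\|_\infty=\|\psi\|_\infty$ by Lemma~8.2, so $\|\psi(A)\|=\|\tilde\psi(B)\|$ inherits the bound for $B$. \emph{The main obstacle is the converse $A\Rightarrow B$ of (a),} since a generic $f\in H_0^\infty(S(\sigma))$ is not $2\pi i$-periodic and does not arise as some $\tilde\psi$. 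My plan here is to insert Lemma~8.1a) for $R(\mu,B)$ into the Dunford integral $f(B) = \frac{1}{2\pi i}\int_{\partial S(b)} f(\mu)R(\mu,B)\,d\mu$, apply Fubini, and shift the $\mu$-contour past the poles $\mu=\pm\pi-it$ of the kernel $(\pi^2+(i\mu-t)^2)^{-1}$; the residues then recast $f(B)$ as an integral of $e^tR(-e^t,A)$ weighted by boundary values of $f$, controlled by $\|f\|_\infty$ through the $H^\infty$-calculus of $A$. The representation in Lemma~8.1a) requires $|\mathrm{Re}\,\mu|>\pi$ and so applies directly only when $\sigma>\pi$; for narrow strips, one first approximates a general $f\in H_0^\infty(S(\sigma))$ by periodizations of the form $\tilde\psi_n$ with $\psi_n\in H_0^\infty(\Sigma(\sigma))$ (which is possible by the polynomial decay on rays inherited from $H_0^\infty$), pushing the uniform bound across by the convergence lemma.

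For (c), Lemma~8.1 serves as an $L^1$-convex-combination identity. The $R$-sectoriality of $A$ gives $R$-boundedness of $\{e^tR(-e^t,A):t\in\R\}$, and Lemma~8.1a) writes $R(\mu,B)$ as an $L^1$-integral against this family with kernel mass bounded on $|\mathrm{Re}\,\mu|>\pi$, so Lemma~5.10 yields $R$-boundedness of $\{R(\mu,B):|\mathrm{Re}\,\mu|>\pi\}$; the strip width is then reduced to any $a>\omega_R(A)$ by the resolvent equation (as in Lemma~6.1). Conversely, Lemma~8.1b) writes $t^{1/2}A^{1/2}(t+A)^{-1}$ as an $L^1$-integral of $R(z,B)$, so Lemma~5.10 gives $R$-boundedness of this family, and squaring $[t^{1/2}A^{1/2}(t+A)^{-1}]^2=tA(t+A)^{-2}$ yields $R$-boundedness of $\{tA(t+A)^{-2}\}$, i.e., almost $R$-sectoriality on the negative ray, with general rays handled by rotation. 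The $\gamma$-versions are identical.

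Finally (d) and (e) are direct applications of the extension principle (S3). For (d), the substitutions $z=i\mu$ and $t=e^u$ in Lemma~8.1b) turn its kernel (modulo the factor $\sqrt{2\pi}$) into an $L^2$-isometry between $L^2(\partial S(a))$ and $L^2(\partial\Sigma(a))$; Corollary~4.8 then extends this isometry to the $\gamma$-spaces, producing both inequalities. For (e), the change of variable $t=e^s$ is an isometry of $L^2(\R_{+},dt/t)$ onto $L^2(\R,ds)$ and hence preserves $\gamma$-norms; Lemma~8.2 applied to the translated symbol $\zeta\mapsto\psi(e^s\zeta)\in H_1^\infty(\Sigma(\sigma))$ yields $\psi(e^sA)=\tilde\psi(-is+B)$ pointwise in $s$ (since $\widetilde{\psi(e^s\cdot)}(\lambda)=\tilde\psi(-is+\lambda)$), so the two square functions coincide under this substitution.
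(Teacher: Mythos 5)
Your treatments of (b) and (e) are correct and essentially the paper's own arguments, and your plan for (c) is the right one (Lemma 8.1 fed into the $L^1$--convex--combination lemma --- that is Lemma 5.8, not 5.10 --- followed by squaring $t^{1/2}A^{1/2}(t+A)^{-1}$ to get $tA(t+A)^{-2}$). One repair is needed in (c): the strip width cannot be reduced by the resolvent equation, since $R(\beta+it,A)=[I+(b-\beta)R(\beta+it,A)]R(b+it,A)$ has the unknown family inside the bracket and the argument is circular for $R$--bounds. The paper instead rotates $A$: since $\log(e^{-i\theta}A)=\log A-i\theta$, applying 8.1a) to $e^{-i\theta}A$ for $|\theta|\le\pi-\omega(A)-\varepsilon$ sweeps the region $|\operatorname{Re}\mu|>\omega(A)+\varepsilon$ even though the representation itself only holds for $|\operatorname{Re}\mu|>\pi$.

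The genuine gaps are in (a) and (d). In (a) the obstacle you describe does not exist: $\lambda\mapsto e^{i\lambda}$ is $2\pi$--periodic in the \emph{real} direction, and the vertical strip $S(\sigma)$ has real width $2\sigma\le 2\pi$, so the map is a biholomorphism of $S(\sigma)$ onto $\Sigma(\sigma)$. Every $f\in H_0^\infty(S(\sigma))$ therefore equals $\tilde\psi$ for $\psi(\mu)=f(\tfrac1i\ln\mu)\in H_1^\infty(\Sigma(\sigma))$, and $\|f(B)\|=\|\psi(A)\|\le C\|f\|_\infty$ by Lemma 8.2b) --- the same one--line argument you use in the other direction. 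Your contour--shifting detour is both unnecessary and unworkable, since 8.1a) never holds on $\partial S(b)$ for $b<\pi$. In (d) the claim that the kernel of 8.1b) is an $L^2$--isometry is false: on each boundary component it is a convolution operator on $(\R_+,\tfrac{dt}{t})$ whose Mellin symbol is a constant times $e^{\pm a\xi}/\cosh(\pi\xi)$, which tends to $0$ as $|\xi|\to\infty$ (for $a<\pi$), so the operator is bounded but nowhere near bounded below, and the lower estimate in (d) cannot be extracted from it. The paper instead routes \emph{both} square functions through a common third quantity, the group $s\mapsto e^{\mp as}A^{is}x=e^{\mp as}T_sx$: formula (1) of Section 7 together with the Fourier isometry on $\gamma(\R,X)$ gives $\sqrt{\pi/2}\,\bigl\|\tfrac{e^{\pi\cdot}}{\cosh(\pi\cdot)}e^{-a\cdot}A^{i\cdot}x\bigr\|_{\gamma(\R,X)}=\|A^{1/2}R(e^{-ia}\cdot,A)x\|_{\gamma(\R_+,X)}$, the Laplace transform gives $\sqrt{2\pi}\,\|e^{-a\cdot}T_{(\cdot)}x\|_{\gamma(\R_+,X)}=\|R(a-i\cdot,B)x\|_{\gamma(\R,X)}$, and the two--sided bounds with the constants $\sqrt{2\pi}^{\,\pm1}$ come from $1\le e^{\pi s}/\cosh(\pi s)\le2$ on $s\ge0$ (and its mirror estimate on $s\le0$, matched with the other ray $e^{ia}\R_+$). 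Without passing through $A^{is}=e^{-sB}$ and splitting $\R$ into the two half--lines, neither inequality of (d) is obtained.
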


Of course this theorem allows to derive the results of Section 7 from
Section 6. Since the case of sectorial operators is of particular interest in
applications, we prefer the direct arguments of Section 7.

\begin{proof}
a) By Lemma 8.2b) we have $\| \psi (A)\| = \| \tilde{\psi}(B)\|$ for all
$\psi \in H_1^{\infty}(\Sigma(\sigma))$. Of course for $\varphi \in 
H_1^{\infty}(S(\sigma))$ there is a $\psi \in H_1^{\infty}(\Sigma(\sigma))$
with $\varphi = \tilde{\psi}$, namely $\psi (\mu)= \varphi(\textstyle
\frac{1}{i} \ln \mu)$.

b) Put $\psi (\lambda) = \lambda^{it}$, $\tilde{\psi}(\mu) = e^{-t \mu}$
and $\psi_n (\lambda) = \varphi_n (\lambda) \lambda^{it}$,
$\tilde{\psi}_n(\mu)=$ $\varphi_n(e^{i \mu})$ $e^{- t \mu}$, where $\varphi_n
(\lambda) = \textstyle \frac{n}{n+ \lambda} - \frac{1}{1 + n \lambda}$.
By 8.2b) we have $\psi_n(A)= \tilde{\psi}_n(B)$. Let $\psi(A)$ and
$\tilde{\psi}(B)$ be the (possibly) unbounded operators defined by the
extended calculus for $A$ and $B$, respectively. 
Since $\psi_n(\lambda) \to \psi(\lambda)$ and $\tilde{\psi}_n (\mu) \to
\tilde{\psi}(\mu)$ the convergence lemma implies that $\psi(A)x = 
\tilde{\psi}(B)x$ for $x$ in a dense subset of $X$. Hence if one of these 
operators is bounded so is the other one and $A^{it} = \psi(A)$ equals
$\tilde{\psi}(B)$, the semigroup operator generated by $-B$.
For a different proof, see \cite{Ok}.

c) The first part is shown in the same way as 8.2a) using 5.8.
For the second part use the integral representation 8.1b), 5.8 and
the fact that
$$
\int\limits_{| \mbox{\tiny{Im} }z| = a} \biggl| \frac{t^{1/2}e^{z/2}}{t+e^z} 
\biggr| d|z|= \int\limits_{\partial \Sigma(a)} 
\frac{|(\lambda/t)^{1/2}|}{|1+ \lambda/t|} \biggl| \frac{d \lambda}{\lambda} 
\biggr| =: C < \infty.
$$

d) Fix $a > \omega(A)$ and $\theta = \pi -a$. Then $e^{- i \theta}A$ is
sectorial since $\theta < \pi - \omega(A)$ and we can apply formula (1) from Section 7
with $\theta = \pi -a$ so that for $x \in \cal{R}(A) \cap \cal{D}(A)$
\begin{equation*}
\begin{split}
\frac{\pi}{\cosh (\pi s)} e^{(\pi -a)s}A^{is}x & = (-i) \int\limits_0^{\infty}
t^{is} [ (e^{-ia}t)^{1/2}A^{1/2}(e^{-ia}t-A)^{-1}x] \frac{dt}{t} \\
& = (-i) \int\limits_{- \infty}^{\infty} e^{ius} [(e^{-ia} e^u)^{1/2}
A^{1/2}R(e^{-ia}e^u,A)x]du.
\end{split}
\end{equation*}
Since the Fourier transform is an isomorphism on $\gamma(\R,X)$ and substitutions which define bounded operators on $L_2$ can also be extended to $\gamma(X)$, we get
\begin{equation}
\begin{split}
\sqrt{\frac{\pi}{2}} &\biggl| \biggl| \frac{e^{\pi \cdot}}{\cosh (\pi \cdot)} 
e^{-a\cdot}A^{i\cdot}x \biggr| \biggr|_{\gamma(\R,X)}\\ 
& =  \|(e^{-ia}e^{(\cdot)})^{1/2}A^{1/2}R(e^{-ia}e^{(\cdot)},A)x\|_{\gamma(\R,X)} \\
& =  \|A^{1/2}R(e^{-ia} \cdot, A)x\|_{\gamma(\R_{+},X)}.
\end{split}
\end{equation}
If we repeat this argument with $- \theta=a - \pi$ then
\begin{equation*}
\sqrt{\frac{\pi}{2}} \biggl| \biggl| \frac{e^{- \pi \cdot}}{\cosh (\pi \cdot)} 
e^{a\cdot}A^{i\cdot}x \biggr| \biggr|_{\gamma(\R,X)} = \|A^{1/2}R(e^{ia} \cdot , A)x\|_{\gamma(\R_{+},X)}.
\end{equation*}
On the other hand, if $T_s$ is the possibly unbounded operator $\psi_s(B)$,
$\psi_s(\lambda) = e^{- s \lambda}$, defined by the extended functional 
calculus for $B$, then with $\varphi(\lambda) = \textstyle \frac{\lambda}
{(1+ \lambda)^2}$ and $\tilde{\varphi}(\mu) = \textstyle
\frac{e^{i \mu}}{(1+ e^{i \mu})^2} \in H_0^{\infty}(S(a))$
$$
R(a-it,B) \tilde{\varphi}(B) y = \int\limits_0^{\infty} e^{is} [e^{-as} T_s
\tilde{\varphi}(B)y]ds.
$$
By 4.9 we obtain for $x \in \cal{D}(A) \cap \cal{R}(A)$ (see 4.6b), 6.2)
\begin{equation}
\sqrt{2 \pi} \|e^{-a\cdot} T_{(\cdot)} x\|_{\gamma(\R_{+},X)} = \|R(a-i\cdot,B)x\|_{\gamma(\R,X)}.
\end{equation}
In the same way we obtain with $-a$ in place of $a$
\begin{equation}
\sqrt{2 \pi} \|e^{a\cdot}T_{(\cdot)} x \|_{\gamma(\R_{-},X)} = \|R(-a-i\cdot,B)x\|_{\gamma(\R,X)}.
\end{equation}
As in part b) of the proof we see that $A^{is}x = T_sx$ for $x \in \cal{D}(A) \cap
\cal{R}(A)$. Since $e^{\pi s}(\cosh \pi s)^{-1} \leq 2$ for $s \geq 0$ and
$\textstyle \frac{e^{\pi s}}{\cosh(\pi s)} e^{-2as} \leq 2$ for $s \leq 0$
we conclude from (1), (2) and (3) that for $x \in \cal{D}(A) \cap \cal{R}(A)$
\begin{equation*}
\begin{split}
\|A^{1/2} R(e^{-ia} \cdot,A)x\|_{\gamma(\R,X)} & \leq 2 \sqrt{\frac{\pi}{2}}\left(
\|e^{-a\cdot}A^{i\cdot}x\|_{\gamma(\R_{+},X)} + \|e^{a\cdot}A^{i\cdot}x\|_{\gamma(\R_{-},X)}\right) \\
& \leq \sqrt{2\pi}\left(\|R(a-i\cdot,B)x\|_{\gamma(\R,X)} + \|R(-a-i\cdot,B)x\|_{\gamma(\R,X)}\right).
\end{split}
\end{equation*}
Also by (1), (2) and since $e^{\pi s}(\cosh s)^{-1} \geq 1$ for $s \geq 0$
\begin{equation*}
\begin{split}
\|R(a-i\cdot,B)x\|_{\gamma(\R,X)} & \leq \sqrt{2 \pi} \|e^{-a\cdot}T_{(\cdot)} x\|_{\gamma(\R_{+},X)} = \sqrt{2 \pi} \|e^{-a\cdot}A^{i\cdot}x\|_{\gamma(\R_{+},X)} \\
& \leq \sqrt{2\pi} \|A^{- 1/2} R(
e^{-ia} \cdot,A)x\|_{\gamma(\R_{+},X)} .
\end{split}
\end{equation*}
The estimates for $\|A^{1/2}R(e^{ia}\cdot,A)x\|_{\gamma(\R,X)}$ and $\|R(-a-i\cdot,B)x\|_{\gamma(\R,X)}$ are similar.

e) By Lemma 8.2 and \cite{No}, Satz 5, we have that $\psi(e^tA) = \tilde{\psi}
(\textstyle \frac{1}{i} \log (e^tA))$ \linebreak $= \tilde{\psi}(B-it)$ for all $t\in\R$. Applying the $\gamma$--norm with $x\in X$ gives
$$
\| \tilde{\psi}(B-i \cdot)x\|_{\gamma(\R,X)}  = \| \psi (e^{(\cdot)} A)x\|_{\gamma(\R,X)} 
= \|\psi (\cdot A)x\|_{\gamma(\R_{+}, \frac{dt}{t},X)}.
$$
Again, the substitution in the $\gamma$--norm is justified as an extention of the substitution in the $L_2$--norm via 4.8.
\end{proof}

\begin{remark}
Suppose that $B$ generates a $C_0$--group on a Banach space $X$ with the
UMD--property. By \cite{Mo}, Theorem 4.3, there exists a sectorial operator $A$
(the so--called analytic generator of $B$), so that $A^{is} = e^{-sB}$
for all $s \in \R$. Because of the uniqueness of the generator of a
$C_0$--group we have then by 8.3b) that $B = \textstyle \frac{1}{i} \log A$.
Hence the statements of 8.3 apply to $A$ and its analytic generator $B$. 
\end{remark}

\section{Littlewood Paley $g$--functions}
\setcounter{equation}{4}

A semigroup $T_t$ with generator $A$ defined on the scale $L_p(\Omega,\mu)$ for all $1\leq p \leq \infty$ is called a symmetric diffusion semigroup if
\begin{itemize}
\item[(1)] $\| T_t f \|_{L_p(\Omega)} \leq \| f \|_{L_p(\Omega)}$ for all $t>0, 1\leq p \leq \infty$,
\item[(2)] the generator $A$ of $T_t$ is selfadjoint on $L_2(\Omega)$,
\item[(3)] $T_t f \geq 0$ for $f \geq 0$ for all $t>0, f\in L_p(\Omega)$,
\item[(4)] $T_t 1 = 1$.
\end{itemize}

In \cite{St}, Stein extended the classical $g$--function estimates of Paley-Littlewood to such symmetric diffusion semigroups. For the $g$--functions
\begin{equation*}
G_k(f)(\cdot) = \left( \int_0^\infty \left\vert t^k \left( \frac{\partial^k}{\partial t^k} T_tf \right)(\cdot) \right\vert^2 \frac{dt}{t} \right)^{\frac{1}{2}},
\end{equation*}

which are functions on $\Omega$ for $f\in L_1(\Omega) \cap L_\infty(\Omega)$ and $K\in\N$, Stein showed that for $1<p<\infty$
\begin{equation}\label{6}
\frac{1}{C_p} \| f-Q f \|_{L_p(\Omega)} \leq \| G_k(f) \|_{L_p(\Omega)} \leq C_p \| f \|_{L_p(\Omega)}
\end{equation}

where $Q f = \lim_{n\to\infty} A(\frac{1}{n}+A)^{-1}f$. In \cite{Cow}, Cowling used transference to obtain the same result for semigroups only satisfying (1) and (2) for all $1\leq p \leq \infty$. For $\sigma < \frac{\pi}{2}$ and $\beta=k\in\N$
\begin{equation*}
G_\beta(f) = g_\beta(tA) f \quad \text{with} \quad g_\beta(\lambda) = \lambda^\beta e^{-\lambda} \in H^\infty_0(\Sigma(\sigma))
\end{equation*}

and $A$ is sectorial on the subspace $\overline{\cal{R}(A)}=\cal{R}(I-Q)$, complemented in $L_p(\Omega,\mu)$ for $1<p<\infty$ (see \cite{KuWe}, Proposition 15.2). Since $\frac{d^k}{dt^k} T_t = A^k T_t$, it is clear that estimate (\ref{6}) also follows from Proposition 7.7 applied to $g_\beta$ if $A$ has an $H^\infty$-functional calculus on $H^\infty(\Sigma(\sigma))$ with $\sigma < \frac{\pi}{2}$ and $\beta\in\N$. For a semigroup $T_t$ on $L_p(\Omega,\mu)$ for a single $p\in(1,\infty)$, which is bounded analytic and satisfies (1) and (3), this was shown in \cite{KW1}, Corollary 5.2. Therefore we obtain a further extension of Stein's result (\ref{6}):

\begin{corollary}
Let $p\in(1,\infty)$ and suppose that $(-A)$ generates an analytic semigroup on $L_p(\Omega,\mu)$ such that $T_t, t>0$, satisfies (1) and (3). Then $T_t$ satisfies the Paley-Littlewood estimate (\ref{6}).
\end{corollary}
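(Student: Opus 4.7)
The plan is to combine three ingredients: the splitting $L_p(\Omega) = \overline{\mathcal{R}(A)} \oplus \mathcal{R}(Q)$, the bounded $H^\infty$--calculus for $A$ coming from \cite{KW1}, and Proposition 7.7 applied to a specific $\psi = g_\beta$.

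First, I would reduce to the case of an injective sectorial operator. By Proposition 15.2 of \cite{KuWe}, the projection $Q = \lim_n A(1/n+A)^{-1}$ gives a direct sum decomposition $L_p(\Omega) = \overline{\mathcal{R}(A)} \oplus \mathcal{R}(Q)$, and the restriction $\tilde A$ of $A$ to $X_0 = \overline{\mathcal{R}(A)}$ is an injective sectorial operator which, because $T_t$ is analytic, satisfies $\omega(\tilde A) < \pi/2$. Since $T_t Q f = Q f$, we have $A^k T_t Q f = 0$ for all $k \geq 1$, so $G_k(f) = G_k((I-Q)f)$ and $f - Q f = (I-Q)f \in X_0$; thus (\ref{6}) reduces to the estimate
\[
\tfrac{1}{C_p}\|x\|_{L_p} \;\leq\; \|G_k(x)\|_{L_p} \;\leq\; C_p\|x\|_{L_p}\qquad (x \in X_0).
\]

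Second, I would invoke \cite{KW1}, Corollary 5.2: an analytic semigroup on $L_p(\Omega,\mu)$ with $p \in (1,\infty)$ whose semigroup $T_t$ is a contraction and positive (conditions (1) and (3)) yields a bounded $H^\infty(\Sigma(\sigma))$--calculus for $\tilde A$ on $X_0$, for some $\sigma < \pi/2$. Fix such a $\sigma$ and set $g_\beta(\lambda) = \lambda^\beta e^{-\lambda}$; this function lies in $H_0^\infty(\Sigma(\sigma))$ for every $\beta \in \mathbb{N}$. Proposition 7.7 (whose last inequality is valid precisely because $\tilde A$ has an $H^\infty$--calculus, noting Remark 7.1 a) that BIP implies almost $\gamma$--sectoriality) applied with $\psi = \varphi = g_\beta$ then gives
\[
\tfrac{1}{C}\|x\|_{X_0} \;\leq\; \|g_\beta(\cdot\, \tilde A)x\|_{\gamma(\R_+,\,dt/t,\,X_0)} \;\leq\; C\|x\|_{X_0}.
\]

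Third, I would translate the generalized square function into the classical pointwise square function. Since $L_p(\Omega,\mu)$ with $1<p<\infty$ is $q$--concave for $q = \max(p,2) < \infty$, Remark 3.5 identifies the $\gamma$--norm with the lattice square function up to a constant:
\[
\|g_\beta(\cdot\, A)x\|_{\gamma(\R_+,\,dt/t,\,L_p)} \;\simeq\; \left\|\left(\int_0^\infty |g_\beta(tA)x(\cdot)|^2\,\tfrac{dt}{t}\right)^{1/2}\right\|_{L_p}.
\]
Because $-A$ generates $T_t$, one has $g_\beta(tA)x = (tA)^\beta e^{-tA} x = (-1)^\beta t^\beta \partial_t^\beta T_t x$, so the right-hand side is exactly $\|G_\beta(x)\|_{L_p}$. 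Combining these three steps with the reduction of the first step gives (\ref{6}).

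The only non-routine piece is the bounded $H^\infty$--calculus input from \cite{KW1}; once this is in hand, the rest is a direct application of Proposition 7.7 together with the lattice-to-$\gamma$ identification of Remark 3.5.
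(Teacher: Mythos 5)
Your proposal is correct and follows essentially the same route as the paper: the complemented decomposition via $Q$ from \cite{KuWe}, Proposition 15.2, the bounded $H^\infty(\Sigma(\sigma))$--calculus with $\sigma<\pi/2$ from \cite{KW1}, Corollary 5.2, and Proposition 7.7 applied to $g_\beta(\lambda)=\lambda^\beta e^{-\lambda}$, with the $\gamma$--norm identified with the classical lattice square function via the $q$--concavity remark of Section 3. You merely make explicit two steps the paper leaves implicit (the reduction $G_k(f)=G_k((I-Q)f)$ and the lattice-to-$\gamma$ identification), which is fine.
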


Moreover, we can get Paley-Littlewood estimates for semigroups $T_t$ on Bochner spaces $L_p(\Omega,\mu,X)$. Recall that $\Tkr_t$ stands for the tensor extension of $T_t$ to $L_p(\Omega) \otimes X$. If $\Tkr_t$ defines a semigroup on $L_p(\Omega,X)$, we denote its generator bei $\Akr = A \otimes I$ and (\ref{6}) takes now the form
\begin{equation}\label{8}
\frac{1}{C_p} \| f-E_0 f \|_{L_p(\Omega,X)} \leq \| g_\beta(\cdot \Akr)f \|_{\gamma(\R_{+}, \frac{dt}{t},L_p(\Omega,X))} \leq C_p \| f \|_{L_p(\Omega,X)}
\end{equation}

for $f\in L_p(\Omega,X)$ if $\beta\in\N$. First we verify such estimates for the Gaussian and the Poisson semigroup on $L_p(\R^n,X)$. This will give a continuous analogue to Bourgain's Paley Littlewood decomposition of $L_p(\mathbb{T},X)$, which characterizes UMD-spaces.

\begin{theorem}
Let $\Delta$ be the generator of the Gaussian semigroup $T_t$ on $L_p(\R^n)$ for $1<p<\infty$. If $X$ is a UMD-space, then $A^\alpha = (-\Delta)^\alpha \otimes I$ has an $H^\infty(\Sigma(\sigma))$-calculus on $L_p(\R^n,X)$ for all $\sigma > 0$ and all $\alpha \in (0,1)$ and satisfies the Paley-Littlewood estimate (\ref{8}) for all $\beta > 0$. Conversely, either one of these conditions implies that $X$ is a UMD-space.
\end{theorem}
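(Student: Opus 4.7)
\medskip

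\emph{Plan.} The proof proceeds in two directions, with the converse being the main technical challenge.

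\emph{Forward direction.} Assume $X$ is UMD. I would first establish that $-\Delta \otimes I$ has an $H^{\infty}(\Sigma(\sigma))$-calculus on $L_p(\mathbb{R}^n, X)$ for \emph{every} $\sigma > 0$; the statement for $(-\Delta)^\alpha$ with $\alpha \in (0,1)$ then follows by composition, which produces an $H^{\infty}(\Sigma(\alpha\sigma))$-calculus. For $f \in H^\infty(\Sigma(\sigma))$ the operator $f(-\Delta)$ is the Fourier multiplier with scalar symbol $m(\xi) = f(|\xi|^2)$, and Cauchy's estimates on the sector give
\[
|\xi|^{|\beta|}\,|D^\beta m(\xi)| \leq C_\beta\,\|f\|_{H^\infty(\Sigma(\sigma))}
\]
for all multi-indices $|\beta| \leq \lfloor n/2 \rfloor + 1$. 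Since the singleton family $\{m(\xi)\}$ in $B(X)$ is trivially $R$-bounded with this constant, the operator-valued Mihlin Fourier multiplier theorem, which holds on $L_p(\mathbb{R}^n, X)$ precisely when $X$ is UMD, gives $\|f(-\Delta) \otimes I\| \leq C \|f\|_{H^\infty(\Sigma(\sigma))}$. Extending by the convergence lemma of \cite{CDMY} delivers the $H^\infty(\Sigma(\sigma))$-calculus.

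\emph{Paley--Littlewood estimate.} With the $H^\infty$-calculus for $\Akr = (-\Delta)^\alpha$ in hand, the upper bound in \eqref{8} is a direct application of Proposition 7.7 with $\psi = g_\beta$ and any auxiliary $\varphi(\lambda) = \lambda^{1/2}(1+\lambda)^{-1}$, combined with the final inequality of that proposition which bounds $\|\varphi(\cdot\Akr)x\|_{\gamma}$ by $\|x\|$ whenever $\Akr$ has a bounded $H^\infty$-calculus. The lower bound follows symmetrically from Proposition 7.7 applied on the dual side, using that $L_p(\mathbb{R}^n, X)$ is itself a UMD space of nontrivial type and cotype, so the $\gamma$- and $\gamma'$-norms are comparable (Proposition 5.2b).

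\emph{Converse direction (the main obstacle).} Suppose either the $H^\infty$-calculus of $(-\Delta)^\alpha$ or the Paley--Littlewood estimate \eqref{8} holds on $L_p(\mathbb{R}^n, X)$. We must conclude that $X$ is UMD, i.e., that the one-dimensional Hilbert transform is bounded on $L_p(\mathbb{R}, X)$. My plan is to first reduce to dimension one by a slicing argument: applying either hypothesis in the $x_1$-variable alone with the remaining coordinates frozen transfers the property to $(-\partial_{x_1}^2)^\alpha$ acting on $L_p(\mathbb{R}, L_p(\mathbb{R}^{n-1}, X)) \cong L_p(\mathbb{R}, X')$ for a suitable $X'$, for which UMD is equivalent to UMD of $X$. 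Then, in one dimension, the Paley--Littlewood estimate in $\gamma(\mathbb{R}_+, \tfrac{dt}{t}, L_p(\mathbb{R}, X))$ matches, via a transference argument of Coifman--Weiss type passing from the heat semigroup on $\mathbb{R}$ to dyadic frequency projections on $\mathbb{T}$, the classical vector-valued Paley--Littlewood decomposition of Bourgain \cite{Bo}, which characterizes UMD. The $H^\infty$-calculus hypothesis implies the Paley--Littlewood estimate by the forward direction, closing the loop.

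The genuinely hard step is the transference/discretization step in the converse: making precise the equivalence between the continuous $\gamma$-valued square function for the Gaussian semigroup and the discrete dyadic square function in Bourgain's theorem. All other components are direct consequences of Proposition 7.7 and the operator-valued Mihlin multiplier theorem.
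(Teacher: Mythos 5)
Your forward direction is correct but follows a different route from the paper: you verify the Mihlin condition for the scalar symbol $f(|\xi|^2)$ and invoke the vector--valued multiplier theorem on $L_p(\R^n,X)$, whereas the paper factors $-\Delta\otimes I=-\sum_i B_i^2$ with $B_i$ generating the bounded translation groups, obtains the calculus for each $-B_i^2$ from the transference result of Cl\'ement--Pr\"uss \cite{CP}, and then assembles the sum via Le Merdy's commuting--generators theorem \cite{LeM0}. Both work; yours is more self--contained on the Fourier side, the paper's avoids any explicit symbol estimates and generalizes more readily to the semigroup setting of Theorem 9.3. The derivation of the Paley--Littlewood estimate from Proposition 7.7 with $\psi=g_\beta$ agrees with the paper.

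The converse, however, contains a genuine gap. Your plan is to slice down to dimension one and then transfer the continuous $\gamma$--square function for the heat semigroup to Bourgain's dyadic Paley--Littlewood decomposition, which you then want to use as a characterization of UMD. You yourself flag this discretization/transference step as ``the genuinely hard step'' and do not carry it out; as written, the argument is therefore incomplete exactly where it needs to be complete (and the one--dimensional slicing, as well as the implication ``dyadic LP decomposition $\Rightarrow$ UMD,'' would each require their own justification). The paper's converse is much shorter and bypasses all of this: a bounded $H^\infty(\Sigma(\sigma))$--calculus for $(-\Delta)^\alpha\otimes I$ gives bounded imaginary powers of $-\Delta\otimes I$ on $L_p(\R^n,X)$, and by the theorem of Guerre--Delabri\`ere \cite{GD} this already forces $X$ to be UMD. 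For the square--function hypothesis, the paper does not pass through the forward direction at all; instead it uses the two--sided estimate as condition c) of Theorem 7.2, together with Remark 7.3(2) to dispense with the a priori almost $\gamma$--sectoriality, to conclude that $(-\Delta)^\alpha\otimes I$ has an $H^\infty$--calculus, and then applies the same \cite{GD} argument. You should either supply the missing transference step in full or replace your converse by this shorter route.
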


\begin{proof}
$B_i = \frac{\partial}{\partial x_i} \otimes I$ generates the translation group $(U_i(t)f)(x)=f(x+te_i)$ on $L_p(\R^n,X)$. Since $U_i$ is bounded and $X$ is a UMD-space, it follows from Corollary 2 in \cite{CP} that $-B_i^2 = -\left( \frac{\partial^2}{\partial x_i^2} \otimes I \right)$ has an $H^\infty(\Sigma(\sigma))$-calculus for all $\sigma > 0$. Since the $U_i(t)$, $i=1,\ldots,n$, commute it follows from \cite{LeM0}, Theorem 1.1, that $A=-(\Delta \otimes I)=-\sum_{i=1}^n B_i^2$ has an $H^\infty(\Sigma(\sigma))$-calculus on $L_p(\R^n,X)$ for all $\sigma>0$. The same is true for $A^\alpha$ with $\alpha \in (0,1)$. Now (\ref{8}) follows from Proposition 7.7 with $\psi=g_\beta$. \\
Conversely, the boundedness of the $H^\infty$-calculus of $A^\alpha$ for some $\alpha \in (0,1)$ implies that $A$ has bounded imaginary powers. By the main result of \cite{GD}, $X$ has to be a UMD-space. If (\ref{8}) holds for some $\alpha \in (0,1)$ and $\beta = \frac{1}{2}$, then theorem 7.2 and the second remark in 7.3 show that $A^\alpha$ has an $H^\infty$-calculus and we can repeat the last argument.
\end{proof}

Note that for $\alpha = \frac{1}{2}$ this result includes the Poisson semigroup on $L_p(\R^n,X)$. For more general diffusion semigoups, we have the following partial results.

\begin{theorem}
Let $1<p_0\leq 2 \leq p_1 < \infty$. Suppose that $(-A)$ generates a bounded analytic semigroup on $L_p(\Omega,\mu)$ for $p\in[p_0,p_1]$ such that $T_t$, $t>0$, is contractive and positive on $L_p(\Omega,\mu)$ (i.e. (1) and (3) are satisfied). Suppose further that $X=[X_0,H]_{\theta}$ is a complex interpolation space of a UMD-space $X_0$ and a Hilbert space $H$ with $\theta \in (0,1)$. \\
Then $\Akr$ has an $H^\infty(\Sigma(\sigma))$-calculus for some $\sigma < \frac{\pi}{2}$ on  $L_p(\Omega,X)$ with $p\in(p_0,p_1)$ and the semigroup generated by $(-\Akr)$ satisfies the Paley-Littlewood estimate (\ref{8}) for all $\beta>0$.
\end{theorem}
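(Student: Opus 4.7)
The plan is to establish an $H^\infty(\Sigma(\sigma))$-calculus for $\Akr$ on $L_p(\Omega,X)$ with some $\sigma<\pi/2$ by complex interpolation between the two endpoints of the couple $(X_0,H)$, and then to read off the Paley--Littlewood estimate (8) from Proposition 7.7. As a preliminary step, since $T_t$ is positive and contractive on $L_p(\Omega,\mu)$ for every $p\in[p_0,p_1]$ and $-A$ generates a bounded analytic semigroup there, a Banach lattice domination argument produces a bounded extension $\Tkr_t$ to both $L_p(\Omega,X_0)$ and $L_p(\Omega,H)$ for $p\in(p_0,p_1)$; bounded analyticity is transferred by reproducing the resolvent estimates for the generator $\Akr$ on the vector-valued side.

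At the Hilbert endpoint, Corollary 9.1 gives the scalar Paley--Littlewood estimate for $A$ on $L_p(\Omega)$. Since $H$ is Hilbert, Remark 3.7 applied to the function lattice $L_p(\Omega,H)\simeq L_p(\Omega;\ell_2)$ identifies the generalised square function $\|\psi(\cdot\,\Akr)f\|_{\gamma(\R_+,\,dt/t,\,L_p(\Omega,H))}$ with the classical $L_p(L_2)$ square function on the product measure space. This reduces the square function estimate for $\Akr$ to the scalar case of Corollary 9.1, and Theorem 7.2 (c)$\Rightarrow$(d) then yields an $H^\infty(\Sigma(\sigma_1))$-calculus of $\Akr$ on $L_p(\Omega,H)$ for some $\sigma_1<\pi/2$.

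At the UMD endpoint one combines a Fendler/Akcoglu-type dilation of the positive contraction bounded analytic semigroup $T_t$ into a group of positive isometries on a larger $L_p$-space with the Coifman--Weiss transference principle. Because $X_0$ is UMD, the Hilbert transform is bounded on the corresponding vector-valued $L_p$-space, which is exactly what is needed to transfer the trivial $H^\infty$-calculus of the translation group to $\Akr$ on $L_p(\Omega,X_0)$, yielding an $H^\infty(\Sigma(\sigma_2))$-calculus with $\sigma_2<\pi/2$. The identification
\[
L_p(\Omega,[X_0,H]_\theta)=[L_p(\Omega,X_0),L_p(\Omega,H)]_\theta
\]
and the complex interpolation of $H^\infty$-functional calculi (the interpolated angle being at most $\max(\sigma_1,\sigma_2)<\pi/2$) then give the $H^\infty(\Sigma(\sigma))$-calculus of $\Akr$ on $L_p(\Omega,X)$ for some $\sigma<\pi/2$. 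Since $\sigma<\pi/2$, the function $g_\beta(\lambda)=\lambda^\beta e^{-\lambda}$ belongs to $H_0^\infty(\Sigma(\sigma))$ for every $\beta>0$, so Proposition 7.7 applied to $\Akr$ with $\psi=\varphi=g_\beta$ and $f\equiv 1$ delivers both inequalities of (8) simultaneously.

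The principal obstacle is the UMD endpoint: producing the $H^\infty$-calculus on $L_p(\Omega,X_0)$ from positivity, contractivity, bounded analyticity and UMD of $X_0$ alone, without any additional structural assumption on $T_t$. The dilation has to reach beyond Stein's symmetric diffusion framework and must be compatible with the sector of analyticity, while the transference step has to exploit the UMD property of $X_0$ in a precise way in order to dominate the singular integrals that appear when passing from the translation group back to the sectorial operator $\Akr$.
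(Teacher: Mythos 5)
Your overall architecture (extend $T_t$ to the two endpoint spaces, interpolate, then invoke Proposition 7.7 with $\psi=\varphi=g_\beta$) is the right one, but the proof has a genuine gap exactly where you flag ``the principal obstacle'': you need an $H^\infty(\Sigma(\sigma_2))$-calculus with $\sigma_2<\frac{\pi}{2}$ for $\Akr$ on $L_p(\Omega,X_0)$, and the Fendler dilation combined with Coifman--Weiss/Hieber--Pr\"uss transference only produces a calculus on sectors $\Sigma(\nu)$ with $\nu>\frac{\pi}{2}$: the transferred object is a group of isometries, whose calculus lives on strips, hence after the exponential change of variables on sectors strictly containing the right half plane. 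Obtaining an angle below $\frac{\pi}{2}$ on $L_p(\Omega,X_0)$ for a general UMD space $X_0$ from positivity, contractivity and analyticity alone is not available by these methods --- this is precisely why the theorem assumes $X=[X_0,H]_\theta$ rather than merely that $X$ is UMD. Consequently your final step, ``the interpolated angle being at most $\max(\sigma_1,\sigma_2)<\frac{\pi}{2}$'', rests on an input that this route cannot supply.

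The paper resolves this differently: it never needs an angle below $\frac{\pi}{2}$ at the UMD endpoint. Transference gives $\Akr$ an $H^\infty(\Sigma(\nu))$-calculus on $L_p(\Omega,X_0)$ (and on $L_p(\Omega,X)$, $L_p(\Omega,H)$) for every $\nu>\frac{\pi}{2}$. At the Hilbert endpoint it does not argue via vector-valued square functions at all (your reduction of the $\gamma$-square function on $L_p(\Omega,H)$ to the scalar case of Corollary 9.1 would itself require a Marcinkiewicz--Zygmund-type tensorization and control of $\tilde{\omega}_\gamma(\Akr)$ that you do not supply); instead it uses that the scalar operator $A$ has an $H^\infty(\Sigma(\sigma))$-calculus with $\sigma<\frac{\pi}{2}$ on $L_2(\Omega)$, tensors this with $H$ by the Hilbert-space extension principle (S3) (\cite{KuWe}, Lemma 11.11) to get the same angle on $L_2(\Omega,H)$, and then interpolates in the integrability parameter $q$ against the $\nu>\frac{\pi}{2}$ calculus on $L_q(\Omega,H)$ for $q$ near $p_0,p_1$. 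The decisive tool is \cite{KKW}, Proposition 4.9: under complex interpolation the calculus angle interpolates like $(1-\theta)\nu+\theta\mu$, so one endpoint angle strictly below $\frac{\pi}{2}$ together with the other endpoint angle arbitrarily close to $\frac{\pi}{2}$ from above already yields an angle below $\frac{\pi}{2}$ on the intermediate space. Applying this once more to $[L_p(\Omega,X_0),L_p(\Omega,H)]_\theta=L_p(\Omega,X)$ finishes the proof. If you replace your UMD-endpoint claim by ``angle $\nu$ for every $\nu>\frac{\pi}{2}$'' and invoke the angle-interpolation result in place of the $\max$ bound, your argument becomes essentially the paper's.
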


\begin{proof}
By a theorem of Fendler (\cite{Fend}) the positive contraction semigroup $T_t$ on $L_p(\Omega), p_0\leq p \leq p_1$, has a dilation to a group of positive isometries $U_t$ on a space $L_p(\tilde{\Omega})$, i.e. $JT_t = PU_t J$, where $J:L_p(\Omega) \to L_p(\tilde{\Omega})$ is a positive embedding and $P:L_p(\tilde{\Omega}) \to L_p(\Omega)$ a positive projection. \newline
By a standard extension theorem for positive operators (see e.g. \cite{KuWe}, 10.14), $T_t$, $U_t$, $P$ and $J$ can be extended to a contractive semigroup $\Tkr_t$ on $L_p(\Omega,X)$ and a group of isometries $\Ukr_t$ on $L_p(\tilde{\Omega},X)$ so that $\Jkr \Tkr_t = \Pkr \Ukr_t \Jkr$. Since $X$ is a UMD space the generator  $\Bkr$ of $\Ukr_t$ on $L_p(\tilde{\Omega})$ and therefore by the dilation relation $\Akr$ on $L_p(\Omega,X)$ have an $H^\infty(\Sigma(\nu))$-calculus for all $\nu > \frac{\pi}{2}$, cf. \cite{HP}. The same is true for $\Akr$ on $L_p(\Omega,H)$ for $p_0<p<p_1$. However, on $L_2(\Omega,H)$, $\Akr$ has an $H^\infty(\Sigma(\sigma))$-calculus for the same $\sigma < \frac{\pi}{2}$ that appears in our assumption on $A$. This is true since we can extend the bounded operators $\Psi(A)$, $\Psi \in H^\infty(\Sigma(\sigma))$, to operators $\Psikr$ on $L_2(\Omega,H)$ (see e.g. \cite{KuWe}, Lemma 11.11). By complex interpolation in the $L_q(\Omega,H)$ scale, $\Akr$ has an $H^\infty(\Sigma_\mu)$-calculus with $\mu < \frac{\pi}{2}$ on $L_p(\Omega,H)$ for all $p_0<p<p_1$ (cf. \cite{KKW}, Proposition 4.9). Since $[L_p(\Omega,X_0),L_p(\Omega,H)]_\theta = L_p(\Omega,X)$, the same proposition implies now that $\Akr$ has an $H^\infty(\Sigma_\mu)$-calculus with $\mu < \frac{\pi}{2}$ on $L_p(\Omega,X)$ for $p_0<p<p_1$. Now we can apply 7.7 to $\psi=g_\beta$.
\end{proof}

\begin{remark}
The theorem holds for all Banach lattices $X$ with the UMD property. Indeed, such $X$ are always interpolation spaces $X=[X_0,H]_\theta$ with some UMD Banach lattice $X_0$ and a Hilbert space $H$ as shown in \cite{Span}. Note also that in a Banach lattice $X$, the Paley-Littlewood estimate (\ref{8}) for $\beta\in\N$ takes the more traditional form
\begin{equation*}
\frac{1}{C_p} \| f \|_{L_p(\Omega,X)} \leq \left( \int_\Omega \left\Vert \int_0^\infty \left\vert t^\beta \left( \frac{d^\beta}{dt^\beta} T_t f \right)(\cdot) \right\vert^2 \frac{dt}{t} \right\Vert_X^p d\mu(\cdot) \right)^{\frac{1}{p}} \leq C_p \| f \|_{L_p(\Omega,X)}
\end{equation*}

(see 3.6). For the Laplace operator and the Gaussian semigroup on $L_p(\mathbb{T}^n,X)$, this was already shown in \cite{Xu}, Theorem 4.1.
\end{remark}

\begin{corollary}
Let $p\in(1,\infty)$. Suppose that $(-A)$ generates a bounded analytic semigroup on $L_p(\Omega,\mu)$ such that $T_t$, $t>0$, is contractive and positive on $L_p(\Omega)$. \\
Then for every UMD-space $X$ and $\alpha \in (0,1)$, the fractional power ${\bf (A^\alpha)}^\otimes$ has an $H^\infty(\Sigma(\sigma))$-calculus for some $\sigma < \frac{\pi}{2}$ on $L_p(\Omega,X)$ and the semigroup generated by ${\bf (A^\alpha)}^\otimes$ satisfies the Paley-Littlewood estimate (\ref{8}) for all $\beta>0$.
\end{corollary}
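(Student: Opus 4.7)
The plan is to combine the dilation argument from the proof of Theorem~9.3 with the observation that a fractional power $0<\alpha<1$ contracts the $H^{\infty}$--angle, which removes the need for any Hilbert--space interpolation.

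First I would apply Fendler's dilation theorem \cite{Fend}, which is available for a single $p\in(1,\infty)$ and a positive contractive semigroup, to dilate $T_t$ to a $C_0$--group of positive isometries $U_t$ on some $L_p(\tilde\Omega)$: one obtains positive operators $J\colon L_p(\Omega)\to L_p(\tilde\Omega)$ and $P\colon L_p(\tilde\Omega)\to L_p(\Omega)$ with $JT_t=PU_tJ$ for $t\ge 0$. Exactly as in Theorem~9.3, these operators lift positively to the $X$--valued setting, yielding a contractive semigroup $\Tkr_t$ on $L_p(\Omega,X)$ and a $C_0$--group of isometries $\Ukr_t$ on $L_p(\tilde\Omega,X)$ with $\Jkr\Tkr_t=\Pkr\Ukr_t\Jkr$. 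Since $L_p(\tilde\Omega,X)$ inherits the UMD property from $X$, the Hieber--Pr\"uss theorem \cite{HP} endows the generator $\Bkr$ of the bounded group $\Ukr_t$ with an $H^{\infty}(S(a))$--calculus for every $a>0$. Transferring this calculus through the dilation identity as in the proof of Theorem~9.3 provides $\Akr$ with an $H^{\infty}(\Sigma(\nu))$--calculus on $L_p(\Omega,X)$ for every $\nu>\pi/2$.

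Next I would invoke the composition rule for fractional powers: for $f\in H^{\infty}(\Sigma(\alpha\nu))$ the function $\tilde f(\lambda):=f(\lambda^{\alpha})$ belongs to $H^{\infty}(\Sigma(\nu))$ and $f(\Akr^{\alpha})=\tilde f(\Akr)$. The preceding step therefore gives ${\bf(A^{\alpha})}^{\otimes}=\Akr^{\alpha}$ an $H^{\infty}(\Sigma(\alpha\nu))$--calculus for every $\nu>\pi/2$. Since $\alpha<1$, choosing $\nu>\pi/2$ sufficiently close to $\pi/2$ yields $\alpha\nu<\pi/2$, and we obtain the required $H^{\infty}(\Sigma(\sigma))$--calculus for some $\sigma<\pi/2$.

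Finally, with this calculus in hand I would apply Proposition~7.7 to ${\bf(A^{\alpha})}^{\otimes}$: for any $\beta>0$ the function $\psi(\lambda)=\lambda^{\beta}e^{-\lambda}$ belongs to $H_0^{\infty}(\Sigma(\sigma'))$ for every $\sigma'\in(\sigma,\pi/2)$, and the last inequality in 7.7 (applied with $f\equiv 1$) yields
\[
\tfrac{1}{C}\|f-E_0 f\|_{L_p(\Omega,X)}\;\le\;\|\psi(\,\cdot\,{\bf(A^{\alpha})}^{\otimes})f\|_{\gamma(\R_{+},\tfrac{dt}{t},L_p(\Omega,X))}\;\le\;C\|f\|_{L_p(\Omega,X)},
\]
which is precisely the Paley--Littlewood estimate \eqref{8} for the semigroup generated by $-{\bf(A^{\alpha})}^{\otimes}$. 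The main obstacle I expect is the careful execution of Step~1, namely verifying that Fendler's single--$p$ dilation together with the positive $X$--valued extension and Hieber--Pr\"uss actually transfers the calculus to $\Akr$ with the claimed angle $\nu>\pi/2$; once this is in place, the fractional--power contraction of angles and Proposition~7.7 complete the argument essentially formally.
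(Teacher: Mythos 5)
Your argument is correct and coincides with the paper's proof: the paper likewise invokes the first part of the proof of Theorem 9.3 (Fendler's dilation, the positive $X$--valued extension, and Hieber--Pr\"uss) to give $\Akr$ an $H^{\infty}(\Sigma(\nu))$--calculus for all $\nu>\pi/2$, then uses the identity ${\bf (A^{\alpha})}^{\otimes}=(\Akr)^{\alpha}$ and the angle contraction $\sigma=\alpha\nu<\pi/2$ for $\nu$ close to $\pi/2$, and finally applies Proposition 7.7 with $\psi=g_{\beta}$. Your write-up merely spells out the dilation step that the paper cites by reference, so no further comparison is needed.
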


\begin{proof}
The first part of the argument of Theorem 9.3 shows that $\Akr$ has an $H^\infty(\Sigma(\nu))$-calculus for all $\nu > \frac{\pi}{2}$ on $L_p(\Omega,X)$. Then ${\bf (A^\alpha)}^\otimes = (\Akr)^\alpha$ has an $H^\infty(\Sigma(\sigma))$-calculus with $\sigma=\alpha\nu < \frac{\pi}{2}$ for $\nu$ close enough to $\frac{\pi}{2}$. Apply now 7.7 again.
\end{proof}

\begin{remark}
For diffusion semigroups satisfying (1) to (4), the results of 9.2, 9.3 and 9.4 were shown independently by Hyt\"{o}nen in \cite{Hyt1}. Instead of using the $H^\infty$-calculus, he extends the original argument of Stein in \cite{St} to the vector-valued case. However, our results also cover semigroups defined only on a part of the $L_p$ scale.
\end{remark}

\end{document}